\newtheorem{theorem}{Theorem}[section]
\newtheorem*{theorem*}{Theorem}
\newtheorem{lemma}[theorem]{Lemma}
\newtheorem{corollary}[theorem]{Corollary}
\newtheorem{proposition}[theorem]{Proposition}
\theoremstyle{definition}
\newtheorem{definition}[theorem]{Definition}
\newtheorem{remark}[theorem]{Remark}
\newtheorem{example}[theorem]{Example}
\def\C{{\mathbb C}}
\def\G{{\mathbb G}}
\def\LG{{\mathbb L}{\mathbb G}}
\def\P{{\mathbb P}}
\def\Q{{\mathbb Q}}
\def\R{{\mathbb R}}
\def\Z{{\mathbb Z}}
\def\cD{{\mathcal D}}
\def\cE{{\mathcal E}}
\def\cL{L}
\def\cM{{\mathcal M}}
\def\cN{{\mathcal{N}}}
\def\cO{{\mathcal{O}}}
\def\cU{{\mathcal U}}
\def\cV{{\mathcal V}}
\def\cY{{\mathcal Y}}
\def\Q{{\mathbb{Q}}}
\def\G{{\mathbb{G}}}
\def\fg{{\mathfrak g}}
\def\fh{{\mathfrak h}}
\def\fp{{\mathfrak p}}
\def\To{T}
\def\operatorname#1{\mathop{\rm #1}\nolimits}
\def\DA{{\rm A}}
\def\DB{{\rm B}}
\def\DC{{\rm C}}
\def\DD{{\rm D}}
\def\DE{{\rm E}}
\def\DF{{\rm F}}
\def\DG{{\rm G}}
\def\Proj{\operatorname{Proj}}
\def\Aut{\operatorname{Aut}}
\def\Exc{\operatorname{Exc}}
\def\Hom{\operatorname{Hom}}
\def\Pic{\operatorname{Pic}}
\def\Hom{\operatorname{Hom}}
\def\codim{\operatorname{codim}}
\def\id{\operatorname{id}}
\def\rank{\operatorname{rank}}
\def\rk{\operatorname{rk}}
\def\deg{\operatorname{deg}}
\def\Bs{\operatorname{Bs}}
\def\det{\operatorname{det}}
\def\qed{\hspace{\fill}$\rule{2mm}{2mm}$}
\def\NE{{\operatorname{NE}}}
\def\Nu{{\operatorname{N_1}}}
\def\NU{{\operatorname{N^1}}}
\newcommand{\cNE}[1]{\overline{\NE(#1)}}
\def\ad{\operatorname{ad}}
\def\aj{\operatorname{ad}}
\def\SP{\operatorname{Sp}}
\def\Proj{\operatorname{Proj}}
\def\gen{\operatorname{gen}}
\newcommand{\pb}{\ar@{}[dr]|{\text{\pigpenfont J}}}
\def\Mo{\operatorname{\hspace{0cm}M}}
\def\ol{\overline}
\newcommand{\shse}[3]{0 ~\ra ~#1~ \lra ~#2~ \lra ~#3~ \ra~ 0}
\newcommand{\xleftrightarrow}[2][]{\ext@arrow 3359\leftrightarrowfill@{#1}{#2}}
\newcommand{\xdasharrow}[2][->]{
\tikz[baseline=-\the\dimexpr\fontdimen22\textfont2\relax]{
\node[anchor=south,font=\scriptsize, inner ysep=1.5pt,outer xsep=2.2pt](x){#2};
\draw[shorten <=3.4pt,shorten >=3.4pt,dashed,#1](x.south west)--(x.south east);
}}
\newcommand{\hooklongrightarrow}{\lhook\joinrel\longrightarrow}
\newcommand{\hooklongleftarrow}{\longleftarrow\joinrel\rhook}
\title{Small bandwidth  $\C^*$-actions and birational geometry}
\author[Occhetta]{Gianluca Occhetta} \address{Dipartimento di
  Matematica, Universit\`a di Trento, via Sommarive 14 I-38123 Povo di
  Trento (TN), Italy} \thanks{First and third author supported by PRIN project
  ``Geometria delle variet\`a algebriche'' and by MIUR project FFABR.  First, second and fourth author supported by the Department of Mathematics of the University of
  Trento. First, third and fourth author supported by the Polish National
  Science Center project 2013/08/A/ST1/00804. Second author supported by the Grant HA4383/-1 ``ATAG'' by the German Science Agency (DFG) and Thematic Einstein Semester ``Varieties, Polyhedra, Computation'' by the Berlin Einstein Foundation. Second and fourth author supported by the Polish National Science Center project 2016/23/G/ST1/04282.}
\email{gianluca.occhetta@unitn.it, eduardo.solaconde@unitn.it}
\author[Romano]{Eleonora A. Romano}
\address{Instytut Matematyki UW, Banacha 2, 02-097 Warszawa, Poland}
\email{J.Wisniewski@uw.edu.pl, elrom@mimuw.edu.pl}
\author[Sol\'a Conde]{Luis E. Sol\'a Conde}
\author[Wi\'sniewski]{Jaros\l{}aw A. Wi\'sniewski}
\subjclass[2010]{Primary 14L30; Secondary 14E05, 14M17, 14M25} 
\newcommand{\gotodo}[1]{\todo[color=yellow!30]{go: #1}}
\newcommand\ignore[1]{}
\def\Div{\operatorname{Div}}
\def\div{\operatorname{div}}
\DeclareMathOperator{\Cl}{Cl}
\DeclareMathOperator{\HH}{H}
\DeclareMathOperator{\cone}{Cone}
\newcommand\CC{{\mathbb{C}}}
\newcommand\PP{{\mathbb{P}}}
\newcommand\RR{{\mathbb{R}}}
\newcommand\QQ{{\mathbb{Q}}}
\newcommand\ZZ{{\mathbb{Z}}}
\newcommand\ra{{\ \rightarrow\ }}
\newcommand\lra{\longrightarrow}
\newcommand\iso{{\ \cong\ }}
\begin{document}
\begin{abstract} 
In this paper we study smooth projective varieties and polarized pairs with an action of a one dimensional complex torus. As a main tool, we define birational geometric counterparts of these actions, that, under certain assumptions, encode the information necessary to reconstruct them. In particular, we consider some cases of actions of low complexity --measured in terms of two invariants of the action, called bandwidth and bordism rank-- and discuss how they are determined by well known birational transformations, namely Atiyah flips and Cremona transformations.
\end{abstract}
\maketitle

\tableofcontents


\section{Introduction}

\subsection{Background} Let $X$ be a complex variety with an algebraic action of
a one dimensional complex torus $\CC^*$. It is a classical problem in Mumford's
Geometric Invariant Theory to determine subsets of $X$ on which there exists a good
quotient of the action, see e.g.~\cite{BBS}. Upon the choice of such subsets one
obtains different quotients which are birationally equivalent varieties, leading to the natural question of relating birational geometry of quotients with
the original $\CC^*$-action. This phenomenon was placed  in the context of the
Minimal Model Program in the early 1990's, see the works of Thaddeus and Reid,
\cite{ReidToric,ReidFlip,Thaddeus1994,Thaddeus1996}.
Later, around 2000, the actions of $\CC^*$ have been used to prove Oda
Conjecture, \cite{Morelli}, and weak Hironaka Conjecture, \cite{Wlodarczyk}. In
the latter two papers, Morelli and W{\l}odarczyk introduced the notion of {\em
algebraic cobordism} which enabled to understand birational maps of algebraic
varieties in terms of quotients of varieties with $\CC^*$-actions. On the other hand, the concept of an action of an algebraic torus of  higher rank associated to the class group of a variety is used for Mori Dream Spaces, \cite{HuKeel,CoxRings,CasciniLazic,CortiLazic}. In this approach, different small $\QQ$-factorial modifications of a given Mori Dream Space arise as different GIT quotients of the affine spectrum of its Cox ring.

The present paper goes in the opposite direction. Here the link with birational
geometry is used to understand varieties  with a $\CC^*$-action. We note that apart of the situation when the $\CC^*$-action is obtained by
reducing the action of a bigger group as in the case of toric or
homogeneous varieties, only low dimensional cases are well understood. For
example, see a recent paper \cite{CPS} for a
classification of Fano threefolds with infinite automorphism groups, which
includes the cases of Fano 3-folds with $\CC^*$-actions. 

\subsection{Contents of the paper} Let $X$ be a smooth complex projective variety with a
nontrivial algebraic action of a complex torus $H=\CC^*$ with coordinate
$t\in\CC^*$. We choose an ample
bundle $L$ over $X$ and consider a linearization of the action of $H$ on the
pair $(X,L)$.
 By the {\em source} and the {\em sink} of the action we understand the unique
fixed point components $Y_+,Y_-\subset X^H$ such that for a generic point $x\in X$ we
have $\lim_{t\to 0} t^{\pm 1}\cdot x\in Y_\pm$, respectively. The {\em bandwidth} of
the action of $H$ on $(X,L)$ is a non-negative integer which measures the
difference of the weights of the action of $H$ on fibers of $L$ over $Y_+$ and
$Y_-$. We introduce these and other related concepts in Section \ref{sec:prelim}, establishing the notation that we will use throughout the paper, and discussing some basic features of $\C^*$-actions and linearizations.

In Section \ref{sec:Btype_Bordism} we introduce B-type $\C^*$-actions and bordisms.
We say that an action is of {\em B-type} (Definition \ref{def:bord}) if both
$Y_+$ and $Y_-$ are divisors in $X$; note that if an arbitrary action is {\em
equalized}, that is if it has weights $+1$ and $-1$ at the normal bundle of the
source, respectively sink, then blowing them up we get a B-type action (Lemma \ref{lem:setup_bir}); this result has been recently extended to the non-equalized case in \cite[Lemma~4.4]{BaRo}. The
choice of the sink and source of a general orbit of a B-type action yields
a birational map $\psi$ between $Y_-$ and $Y_+$, see Lemma \ref{lem:birational_map_b}.

We say that a B-type action is a {\em bordism} (Definition \ref{def:simpleBaction}) if every $H$-invariant prime divisor $D$ on $X$ is either $Y_+$, $Y_-$ or both $D\cap Y_-$ and $D\cap Y_+$
are divisors in $Y_-$ and $Y_+$. In case of a
bordism, we prove that $Y_-$ and $Y_+$ are isomorphic in codimension $1$ (Lemma \ref{lem:bordism=iso-in-codim1}). The {\em rank} of the
bordism is, by definition, the corank of $\Pic(X)^H\subset\Pic (X)$ minus one,
where $\Pic(X)^H$ denotes the subgroup of line bundles which are trivial on the closures of
orbits of the $H$-action.

Generally, bordisms of small rank and  small bandwidth $\C^*$-actions yield relatively
simple birational maps between $Y_+$ and $Y_-$. In particular, we deal with bandwidth one
actions in Section \ref{sec:BW1}. The main result in that Section states the following (see Theorem \ref{thm:drums} for a more detailed statement):

\begin{theorem*}
Any action of $\C^*$ of bandwidth one on a smooth polarized pair $(X,L)$ with $\rho_X=1$ and $X\neq \P^n$ is completely 
determined by a smooth projective variety $Y$ with $\rho_Y=2$ admitting two projective bundle structures.
\end{theorem*}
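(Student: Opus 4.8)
The plan is to exploit the blow-up construction from Lemma \ref{lem:setup_bir}. Starting with a bandwidth one action of $\C^*$ on $(X,L)$ with $\rho_X=1$ and $X\neq\P^n$, I would first argue that such an action must be equalized: the bandwidth being one leaves essentially no room for the weights on the normal bundles of the source $Y_+$ and sink $Y_-$ to be anything other than $\pm 1$, since the weights along any orbit must interpolate between the weights at the source and sink and the bandwidth measures exactly the total span. Once equalization is established, blowing up $Y_+$ and $Y_-$ produces a B-type action on a variety $\tl X$ in which the proper transforms of the source and sink, call them $\hY_\pm$, are divisors, and $\rho_{\tl X}=\rho_X+ (\text{number of blown-up components}) = 1 + 2 = 3$ if both $Y_\pm$ are points, or less if one of them already has positive dimension; I would need to track this carefully. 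The birational map $\psi\colon Y_-\dashrightarrow Y_+$ of Lemma \ref{lem:birational_map_b} then becomes the key object.

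Next I would set $Y := \hY_+$ (equivalently $\hY_-$, since for a bandwidth one B-type action obtained this way the geometry forces $\hY_+\cong\hY_-$; in the bordism language this is Lemma \ref{lem:bordism=iso-in-codim1}, but bandwidth one should give an honest isomorphism, not just isomorphism in codimension one). The two projective bundle structures on $Y$ arise as follows: the original $X$, together with the $\C^*$-action, gives a $\P^1$-bundle-like structure on the open set of non-fixed orbits, and the two boundary divisors $\hY_\pm$ inside $\tl X$ each map to $X$ contracting either the exceptional divisor over $Y_+$ or over $Y_-$. Concretely, the normal bundle of $Y_+$ in $X$ has, after projectivization, total space that maps to $Y$; this is one projective bundle structure $p_+\colon Y\to Y_+$, and symmetrically $p_-\colon Y\to Y_-$. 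Because the action has bandwidth one, the "middle" GIT quotient of $X$ is precisely $Y$, and the two elementary contractions of the two extremal rays of $\overline{\NE}(Y)$ recover $Y_+$ and $Y_-$; that $\rho_Y=2$ follows because $\rho_X=1$ contributes one ray and the $\C^*$-action contributes exactly one more (the bandwidth being one means there is a single wall to cross).

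For the converse—reconstructing $(X,L)$ from $Y$ with its two $\P^k$-bundle projections $p_\pm\colon Y\to Y_\pm$—I would take the two tautological line bundles $\cO_{p_\pm}(1)$ and form the appropriate $\P^1$-bundle or, more precisely, reverse the blow-up: the variety $X$ is obtained by a birational contraction of a $\P^1$-bundle over $Y$, or by gluing the two bundles $\P(\cO\oplus\cE_\pm)$ along $Y$. The $\C^*$-action on $X$ is the one scaling the $\P^1$-fibers, and the polarization $L$ is reconstructed from $\cO_{p_+}(1)$ and $\cO_{p_-}(1)$ together with the bandwidth one normalization of weights. One must check this $X$ is smooth, projective, has $\rho_X=1$, and carries a bandwidth one action inducing the original data on $Y$; smoothness is where the projective bundle hypothesis (as opposed to a mere fibration) is used.

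The main obstacle I anticipate is the bookkeeping around Picard numbers and the precise identification of $Y$ as a GIT quotient: one must show that crossing the single wall of the bandwidth one action interchanges the two projective bundle structures, and that no other birational modification intervenes—this is essentially the statement that a bandwidth one action is as simple as possible, with the "Atiyah flip" of the general theory degenerating here to a plain blow-up/blow-down. Ruling out $X=\P^n$ is exactly the degenerate case where $Y_+$ or $Y_-$ fails to produce a genuine second bundle structure (the relevant quotient collapses), so that hypothesis is not cosmetic but structural; I would handle it by showing that $X=\P^n$ is precisely the case where one of the two extremal contractions of $Y$ is to a point in a way incompatible with $\rho_Y=2$, or rather is the unique case where the two bundle structures coincide.
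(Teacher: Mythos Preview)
Your overall strategy---blow up the sink and source, identify the exceptional divisors via Lemma~\ref{lem:birational_map_b}, and read off two projective bundle structures---is exactly the paper's approach. But several of the supporting arguments are wrong or missing.

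First, your Picard number bookkeeping is off. Blowing up a smooth connected center always increases $\rho$ by one, regardless of its dimension, so $\rho_{\tl X}=3$ in every case. More importantly, the quantity you need is $\rho_Y$, and your justification (``$\rho_X=1$ contributes one ray and the $\C^*$-action contributes one more'') is not a proof. The correct argument is that $Y=\hY_-=\P(N^\vee_{Y_-/X})$ is a projective bundle over $Y_-$, so $\rho_Y=\rho_{Y_-}+1$; then Lemma~\ref{lem:Pic_extremal}(1) gives $\rho_{Y_-}=1$, \emph{provided} $\dim Y_->0$. This is precisely where the hypothesis $X\neq\P^n$ enters: by \cite[Theorem~3.1]{RW} (see Example~\ref{ex:projdrum}), the only bandwidth one action with an isolated extremal fixed point is on $\P^n$. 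Your alternative characterization of the $\P^n$ case as ``the two bundle structures coincide'' is a different degeneration, and conflating them obscures the logic.

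Second, you have not shown that the two projective bundle structures $Y\to Y_-$ and $Y\to Y_+$ are distinct. This requires an argument: the paper observes that if they coincided, then the closures of orbits would give a positive-dimensional family of $L$-degree one curves through two fixed points, contradicting Bend and Break.

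Third, your reconstruction direction is too vague to be a proof. The paper's construction is concrete: form $\P(L_-\oplus L_+)$ over $Y$ (the drum, Definition~\ref{def:drum}) and contract via $\cO(1)$; the smoothness of $X$ then follows from the projective bundle hypothesis via \cite[Theorem~5.1]{AO2}. Your description of ``gluing $\P(\cO\oplus\cE_\pm)$ along $Y$'' does not specify the variety or verify smoothness.
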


We note here that the same conclusion is true for bordisms
of rank zero (see Lemma \ref{lem:rank0bordism}).

Sections \ref{sect:appendix} and \ref{sec:BW2} deal with the case of bordisms of rank one. In particular we will show the close relation among bordisms of rank one and Atiyah flips (Definition \ref{def:Atiyah.assumptions}), in the spirit of the work of Reid, Thaddeus, Morelli and W{\l}odarczyk:

\begin{theorem*}
Let $(X,L)$ be a polarized pair with $X$ smooth, admitting an equalized $\C^*$-action of bandwidth two that is a bordism of rank one. Then the variety $X$ and the action 
are completely determined by the birational map $\psi$ induced between the sink and the source, which is an Atiyah flip.  
\end{theorem*}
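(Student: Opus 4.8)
The plan is to prove the statement in two halves: that the birational map $\psi\colon Y_-\dashrightarrow Y_+$ induced on sink and source is an Atiyah flip, and that, conversely, $X$ with its $\C^*$-action and polarization can be recovered from $\psi$. First I would unwind the hypotheses to pin down the fixed locus. As a bordism the action is of B-type, so $Y_+$ and $Y_-$ are disjoint prime divisors; by Lemma~\ref{lem:birational_map_b} the closure of a general orbit is a smooth rational curve meeting each $Y_\pm$ transversally in one point, and $\psi$ carries the sink of such a curve to its source. Since $Y_\pm$ are divisors, $N_{Y_\pm/X}$ is a line bundle, so \emph{every} point of $Y_-$ is the sink of a unique orbit and \emph{every} point of $Y_+$ the source of a unique orbit --- a fact I will use repeatedly, and which also shows that the GIT quotients of $(X,L)$ on the two chambers are naturally identified with $Y_-$ and $Y_+$. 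Normalising the linearization so that $L$ has weight $0$ on $Y_-$, bandwidth two forces weight $2$ on $Y_+$ and weight $1$ on every other (``inner'') fixed component, and rank one forces exactly one inner component $Z$: from the classes of the orbit closures one checks that the corank of $\Pic(X)^H$ in $\Pic(X)$ exceeds the number of inner fixed components by exactly one. Writing $N_{Z/X}=\cN^+\oplus\cN^-$ for the splitting into positive- and negative-weight subbundles, the $L$-weight bookkeeping along the orbits joining $Z$ to $Y_\pm$ forces the $H$-weights on $\cN^\pm$ to be $\pm1$.

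Next I would show that $\psi$ is an Atiyah flip. The orbits whose source lies on $Z$ are parametrised by $\PP_Z(\cN^+)$, and the weight count shows each has its sink on $Y_-$; let $B_-\subseteq Y_-$ be the locus swept out by these sinks, and define $B_+\subseteq Y_+$ symmetrically from $\PP_Z(\cN^-)$ and the corresponding sources. Using that each point of $Y_\pm$ lies on a unique orbit, one sees that $\PP_Z(\cN^+)\to B_-$ and $\PP_Z(\cN^-)\to B_+$ are isomorphisms and that $\psi$, respectively $\psi^{-1}$, is a morphism exactly on $Y_-\setminus B_-$, respectively $Y_+\setminus B_+$; here the bordism hypothesis on $H$-invariant divisors is used both to exclude further indeterminacy and to force $B_\mp$ to have codimension at least two, so that $\psi$ is a genuine flip. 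A local study of the $H$-action along an orbit degenerating onto $Z$ --- where the weights $\pm1$ on $\cN^\pm$ and the equalization along $Y_\pm$ enter --- then identifies the blow-up of $Y_-$ along $B_-$ with the blow-up of $Y_+$ along $B_+$, giving a single smooth $\widehat Y$ with exceptional divisor $E\cong\PP_Z(\cN^+)\times_Z\PP_Z(\cN^-)$ mapped onto $B_-$ and $B_+$ by the two projections. This is exactly the assertion that $\psi$ is the Atiyah flip determined by $(Z,\cN^+,\cN^-)$ in the sense of Definition~\ref{def:Atiyah.assumptions}.

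For the converse, observe that the triple $(Z,\cN^+,\cN^-)$ --- and hence the whole flip diagram, together with its common resolution $\widehat Y$ --- is intrinsic to $\psi$, being the base and the projectivised (co)normal data of the indeterminacy loci of $\psi^{\pm1}$. It then remains to rebuild the $\C^*$-variety $X$ and the pair $(X,L)$ from this diagram. Following the philosophy of the rank-zero and bandwidth-one cases and the cobordism viewpoint of Morelli and W{\l}odarczyk, I would realise $X$ as a prescribed $\C^*$-equivariant modification of a $\PP^1$-bundle over $\widehat Y$, built so that its two extreme fixed divisors reproduce $Y_-$ and $Y_+$ and its unique inner fixed component reproduces $Z$, with $L$ determined by an ample class on $Y_\pm$ and the weight normalization. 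One then checks that this $X$ is smooth and projective, that the resulting action is equalized of bandwidth two and a bordism of rank one inducing back $\psi$, and that, fed the $\psi$ attached to a given $(X,L)$, the construction returns $(X,L)$. Since every discrete and continuous invariant of the action ($Y_\pm$, $Z$, $\cN^\pm$, the weights and $L$) has been expressed through $\psi$, this proves that $X$ and the action are completely determined by $\psi$, and shows in passing that every Atiyah flip satisfying the constraints above arises this way.

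I expect the reconstruction to be the main obstacle: proving that the flip diagram glues back to a \emph{smooth projective} variety carrying exactly the prescribed equalized action of bandwidth two and rank one --- with the correct polarization and no spurious moduli --- and that it agrees with the original $X$. The other delicate point, inside the first half, is the local normal-bundle computation at $Z$ together with the use of the bordism condition to stop extra $H$-invariant divisors from enlarging the indeterminacy of $\psi^{\pm1}$; this is where the equalization and bordism hypotheses are genuinely consumed.
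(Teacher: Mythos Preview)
Your two--part strategy (show $\psi$ is an Atiyah flip; then reconstruct $X$ from $\psi$) matches the paper's outline, but there are a factual error and a genuine gap in the reconstruction.

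First, the claim that ``rank one forces exactly one inner component $Z$'' is false. Rank one constrains the space $\Nu(X)^H$ of numerical classes of orbit closures to be two--dimensional (Lemma~\ref{rank1bordism-curves}: the classes $[C_-],[C_+]$ span it), but several disjoint inner components $Y_0^j$ can share the same classes $[C_\pm]$. The paper keeps track of this throughout (Lemma~\ref{lem:Cpm}, Lemma~\ref{lem:smoothZ}, Definition~\ref{def:Atiyah.assumptions}), and the exceptional locus of $\psi$ is the \emph{disjoint union} $\bigcup_j Z_-^j\simeq\bigsqcup_j\PP(N^+(Y_0^j)^\vee)$. This is fixable, but your normal--bundle and blow--up discussion needs to be rewritten accordingly. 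Also note that what Definition~\ref{def:Atiyah.assumptions} asks for is a pair of \emph{small contractions} $\varphi_\pm:Y_\pm\to Y'$, not a common blow--up; the paper obtains $Y'$ as the image of an $H$--trivial semiample line bundle on $X$ (see the proof of Theorem~\ref{bordism=>unique}, and Proposition~\ref{prop:Atiyah_adjoint} for the version with explicit hypotheses), then checks the toric local model, rather than first constructing $\widehat Y$.

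The real gap is your reconstruction step. Your proposal to realise $X$ as a $\C^*$--equivariant modification of a $\PP^1$--bundle over the common resolution $\widehat Y$ is left entirely unspecified, and indeed you flag it as the main obstacle. The paper's key idea here is different and concrete: one shows that $X$ is \emph{itself} the Atiyah flip of the $\PP^1$--bundle $\PP_{Y_-}(\cO\oplus N_{Y_-/X})$ over $Y_-$ (not over $\widehat Y$). Concretely, using that $Y_-$ is nef on $\Nu(X)^H\cap\cNE{X}$ (Corollary~\ref{cor:sinknef}) one produces an \emph{elementary} small contraction $\eta_-:X\to X'_-$ contracting the ray $\R_+[C_-]$, identifies its exceptional locus with the $\PP^{r_j+1}$--bundles $\overline{X^+(Y_0^j)}\to Y_0^j$, and then uses the local toric bordism of Section~\ref{ssect:toricbordism} to see that the flip of $\eta_-$ is exactly $\PP_{Y_-}(\cO\oplus N_{Y_-/X})$. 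Since this $\PP^1$--bundle is determined by $(Y_-,N_{Y_-/X})$, and the flip of a small contraction is unique, $X$ is determined. This is the content of Theorems~\ref{flip=>bordism} and~\ref{bordism=>unique}; the missing ingredient in your proposal is precisely this identification of $X$ with a flip of a $\PP^1$--bundle over $Y_-$, together with the toric local computations that make the glueing rigorous.
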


Our arguments (see Theorem \ref{flip=>bordism} and Corollary \ref{cor:flip=>bordism} for the precise statements) show how to construct such an action out of an Atiyah flip: 
$$\begin{tikzcd}[cramped] Y_- \arrow[leftrightarrow,
dashed]{rr}\arrow{dr}&&Y_+\arrow{dl}\\ &Y'& \end{tikzcd}$$ 
More concretely, we will show that the variety $X$ in question is obtained as the Atiyah flip of
$\PP^1$-bundles over $Y_-$ and $Y_+$, and it fits into the following diagram
$$\begin{tikzcd}Y_-\arrow[hook]{r}\arrow{dr}&X\arrow{d}&Y_+\arrow[hook']{l}\arrow{dl} \\
&Y'&\end{tikzcd}$$
The fixed point components of the $\CC^*$-action on $X$ which are different from $Y_-$ and $Y_+$ can be identified with the center of the flip in $Y'$.

Subsequently, we show that the action is completely determined by the flip in Theorem \ref{bordism=>unique}. Throughout the proofs we use a number of toric calculations (see Section \ref{sec:BW2}), that are needed for proving the claimed results about bordisms. 
Our results are then completed with the geometric description of some examples of bandwidth two actions on rational homogeneous manifolds associated to short gradings on simple Lie algebras. In the case in which the Picard number of the manifold is one, blowing up the source and sink we get rank one bordisms which fulfill the conditions described above. Thus they are completely determined by the source and sink of the $\CC^*$-action and their normal bundles, see Corollary \ref{cor:adjointBW2}.

Finally, in Section \ref{sec:BW3} we deal with bandwidth three equalized actions for
which the sink and the source are isolated points. After blowing them up, we get B-type actions with sink and source being projective spaces with normal bundle
$\cO(-1)$, and Cremona transformations among these projective spaces. The results contained in the Section can be summarized as follows: 

\begin{theorem*}
Let $(X,L)$ be a polarized pair with $X$ smooth, $\rho_X=1$, admitting an equalized $\C^*$-action of bandwidth three with isolated extremal fixed points. Then this action is completely 
determined by the Cremona transformation $\psi$ induced by it. Moreover, $\psi$ and its inverse are special quadratic Cremona transformations, and the complete list of the varieties $X$ admitting such an action is:
$$\LG(2,5)\simeq\DC_3(3),\,\,\, \G(2,5)\simeq \DA_5(3),\,\,\, S^{15}\simeq \DD_6(6),\,\,\, \DE_7(7).$$
\end{theorem*}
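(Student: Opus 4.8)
The plan is to carry out a reduction to the B-type situation, then play the classification of the resulting birational map against the list of known quadratic Cremona transformations. First I would apply the blow-up construction of Lemma~\ref{lem:setup_bir} to the sink and source: since the action on $(X,L)$ is equalized and $Y_\pm$ are isolated points, blowing them up produces a smooth projective variety $\widehat X$ with a B-type $\C^*$-action whose new sink and source are the exceptional divisors $\widehat Y_\pm\cong\P^{n-1}$, each with normal bundle $\cO(-1)$. Because $\rho_X=1$, the invariant part $\Pic(\widehat X)^H$ has corank one, so this B-type action is a bordism of rank one (in the sense of Definition~\ref{def:simpleBaction}), and by Lemma~\ref{lem:bordism=iso-in-codim1} the birational map $\psi\colon\widehat Y_-=\P^{n-1}\dashrightarrow\widehat Y_+=\P^{n-1}$ is an isomorphism in codimension one. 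The first serious task is to show that $\psi$ is a \emph{quadratic} Cremona transformation: one analyzes the indeterminacy loci of $\psi$ and $\psi^{-1}$ using the fact that the bandwidth is three, so that the Bia\l ynicki-Birula decomposition of $\widehat X$ has exactly one interior fixed-point component $F$ besides sink and source, and the geometry of orbits limiting to $F$ forces the linear system defining $\psi$ to be cut out by quadrics (the graph of $\psi$ sits inside $\widehat X$, and the two resolutions $\widehat X\to\P^{n-1}$ have exceptional behaviour only along $F$).

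Next I would invoke the structural results on rank-one bordisms from Sections~\ref{sect:appendix} and~\ref{sec:BW2} (in particular Theorem~\ref{flip=>bordism} and Theorem~\ref{bordism=>unique}): the action on $X$, equivalently on $\widehat X$, is completely determined by $\psi$, and conversely $\widehat X$ is reconstructed as the Atiyah flip of the two $\P^1$-bundles over $\widehat Y_\pm$ associated to the linear system of $\psi$. This gives the ``completely determined by the Cremona transformation'' assertion and reduces the classification of $X$ to the classification of the possible $\psi$. At this point I would bring in the theory of special Cremona transformations of $\P^{n-1}$ --- those whose base locus is smooth and irreducible --- for which special quadratic ones have been classified (Ein--Shepherd-Barron, Semple, etc.): the base locus of a quadro-quadric Cremona transformation is one of a short list of Severi-type varieties, namely a Veronese surface $v_2(\P^2)\subset\P^5$, the Segre $\P^1\times\P^2\subset\P^5$... wait, rather the four Severi varieties $v_2(\P^2)\subset\P^5$, $\P^2\times\P^2\subset\P^8$, $\G(2,6)\subset\P^{14}$, and the $E_6$-variety $\mathbb{OP}^2\subset\P^{26}$, which correspond exactly to the four items in the list.

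The final step is to match each such $\psi$ with the variety $X$ it produces and identify $X$ with the homogeneous model on the list. For each of the four Severi base loci one runs the reconstruction of the previous paragraph --- flip the two $\P^1$-bundles --- and recognizes the outcome: the short gradings on the simple Lie algebras of types $\mathrm{C}_3$, $\mathrm{A}_5$, $\mathrm{D}_6$, $\mathrm{E}_7$ give $\C^*$-actions of bandwidth three with isolated extremal fixed points on $\LG(2,5)$, $\G(2,5)$, the spinor tenfold $S^{15}$, and $\DE_7(7)$ respectively (this is exactly the content foreshadowed by Corollary~\ref{cor:adjointBW2} and the discussion of short gradings), and by the uniqueness statement these exhaust the possibilities. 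The isomorphisms $\LG(2,5)\simeq\DC_3(3)$ etc. are then just the standard identifications of these homogeneous spaces with the corresponding cominuscule/adjoint varieties. I expect the \textbf{main obstacle} to be the quadraticity step: proving directly from ``bandwidth three'' that the Bia\l ynicki-Birula cells and the orbit closures through the middle fixed locus force the relevant linear system to consist of quadrics (and that the base locus is smooth and irreducible, so that the classification of \emph{special} Cremona transformations applies), rather than merely of higher-degree forms, is where the delicate interplay between the $\C^*$-weights on $L$ and the projective geometry of $\widehat Y_\pm$ has to be pinned down; the subsequent identification with the four homogeneous varieties, while requiring care, is essentially bookkeeping once the quadro-quadric structure is in hand.
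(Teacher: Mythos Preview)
Your proposal contains two concrete errors that derail the argument.

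First, the B-type action on $\widehat X$ is \emph{not} a bordism. When the sink $Y_-$ is an isolated point, Lemma~\ref{lem:Pic_extremal}(2) guarantees an inner component $Y$ with $\nu^+(Y)=1$, so condition~$(\star\star)$ of Corollary~\ref{cor:codim1} fails. The paper flags this explicitly in the remark following Lemma~\ref{lem:BW2bordism}: the bordism construction of Lemma~\ref{lem:setup_bir} requires positive-dimensional sink and source. Consequently the rank-one bordism machinery of Sections~\ref{sect:appendix}--\ref{sec:BW2} (Theorems~\ref{flip=>bordism} and~\ref{bordism=>unique}) simply does not apply here, and your reconstruction-from-$\psi$ step has no foundation.

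Second, you assert there is exactly one interior fixed component $F$. In fact there are \emph{two}, denoted $Y_1$ and $Y_2$ in the paper (this is part of the input from Theorem~\ref{thm:bw3}(3)), and the interplay between them is essential: the exceptional locus of $\psi$ is only $Z_-^1$, not $Z_-^1\cup Z_-^2$, because $\psi$ extends across $Z_-^2\setminus Z_-^1$ (Lemma~\ref{lem:BW3_exc_loc}). The paper's route is then quite different from yours: instead of any flip description, one blows up $X_\flat$ along the closures $\cU_1=\overline{X_\flat^+(Y_1)}$ and $\cU_2=\overline{X_\flat^-(Y_2)}$ and uses Nakano contractibility to blow down to a $\PP^1$-bundle $X'$ over the blowup $Y$ of $\PP^{n-1}$ along $Z_-^1$ (Proposition~\ref{prop:birBW3}). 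This single blowup resolves $\psi$, proving it is a \emph{special} Cremona transformation; quadraticity then comes from an explicit intersection-theoretic computation of the type $(m_1,m_2)$ against $-K_Y$ (proof of Theorem~\ref{thm:bandwidth3->Cremona}), not from any abstract weight argument. Uniqueness of $X$ given $\psi$ is obtained by identifying the rank-two bundle defining $X'$ as $\cO\oplus\cO(H_1-H_2)$ (Proposition~\ref{prop:bw3class}), and only then does one match with the four homogeneous models.
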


Our arguments show how to construct an equalized bandwidth three action out of a special Cremona transformation. The precise statements that we will prove are Theorems \ref{thm:bandwidth3->Cremona}, \ref{thm:BW3end} (see also Proposition \ref{prop:bw3class}).

In particular, this result completes the classification
of bandwidth three varieties with an equalized action for which sink and source are isolated  points, which was partially established in \cite{RW} 
(see Theorem \ref{thm:bw3}). This theorem has been used in the paper \cite{WORS2} to extend results in \cite{BWW,RW} about the classification of complex contact manifolds.

\subsection{Notation, conventions} We work over $\mathbb{C}$. Let $X$ be a
normal projective variety of arbitrary dimension. Let $\NU(X)$
(respectively $\Nu(X)$) be the real vector space of Cartier divisors
(respectively, 1-cycles on $X$), modulo numerical equivalence.  We denote by
$\rho_{X}:= \dim{\NU(X)}=\dim{\Nu(X)}$ the \textit{Picard number} of $X$, and by $[\cdot]$ the numerical equivalence classes in $\Nu(X)$ and $\NU(X)$. We denote by $\cNE{X}\subset \Nu(X)$ the closure of the convex cone generated by classes of effective curves. Given a line bundle $\cL\in\Pic(X)$, we denote by $\Bs|\cL|$ the base locus of the complete linear system of $\cL$.

A \textit{contraction} $\varphi\colon X\to Y$ is a surjective morphism with connected fibers onto a normal projective variety. It is \textit{elementary} if $\rho_X-\rho_Y=1$.
If $\dim{X}>\dim{Y}$ then $\varphi$ is of \textit{fiber type}, otherwise it is \textit{birational}. Let $\Exc{(\varphi)}$ be the exceptional locus of $\varphi$, i.e., the locus where $\varphi$ is not an isomorphism; if $\varphi$ is birational and $\codim{\Exc{(\varphi)}}\geq 2$ we say that $\varphi$ is {\em small}, otherwise it is {\em divisorial}. The push-forward of 1-cycles defined by $\varphi$ is a surjective linear map $\varphi_{*}\colon \Nu(X)\to \Nu(Y)$. We denote by $\Nu(X/Y)$ its kernel.

Throughout this paper $H:=\C^*$ is a complex torus of dimension one. If $H$ acts on a variety $X$ we will denote by $X^H$ the fixed locus of the action. All the actions considered in the paper are assumed to be nontrivial. 

\medskip

\noindent{\bf Acknowledgements:} The authors would like to thank two anonymous referees for their valuable comments, which helped improving the exposition of this paper. 


\section{Preliminaries}\label{sec:prelim}

In this section we introduce some background material on $H$-actions that we will use later on, as well as some terminology and notation regarding rational homogeneous spaces that we will need to present examples, and to state some results contained in the successive sections.

\subsection{$\C^*$-actions}\label{ssec:Cstar}

Let $X$ be a smooth complex projective variety of dimension $n$, 
and let $H= \C^*$ be an algebraic torus which acts nontrivially on $X$; the fixed locus of the action decomposes in connected components as
$$X^H=\bigsqcup_{i\in I} Y_i,$$ 
and we denote by   $\cY=\{Y_i\}_{i\in I}$ the set of the connected components of $X^H$. By \cite[Main Theorem]{IVERSEN}  the $Y_i$'s are smooth, therefore irreducible.
  
 For every $Y\in \cY$ the $H$-action on $TX_{\mid Y}$ gives a  decomposition
\begin{equation} \label{eq:dectang}
TX_{\mid Y} =T^{+}\oplus T^{0}\oplus T^{-},
\end{equation}
where $T^{+}$, $T^{0}$, $T^{-}$ are  the
subbundles of $TX_{\mid Y}$ where the torus acts with positive,
zero or negative weights, respectively. Then, by local linearization, $T^0=TY$ and
\begin{equation} \label{eq:decnorm}
T^{+}\oplus T^{-}=N_{Y/X}=N^+(Y)\oplus N^-(Y)
\end{equation}
is the decomposition of the normal bundle $N_{Y/X}$ into summands on
which $H$ acts with positive, respectively, negative weights. 
We set
\begin{equation} \label{eq:nudef}
\nu^{+}(Y):=\rank{N^+(Y)}, \qquad  \nu^{-}(Y):=\rank{N^-(Y)}.
\end{equation}

In this paper we will pay special attention to the case in which the action of $H$ on the normal bundles $N_{Y/X}$ is simple, in the following sense (see \cite[Definition~1.6]{RW}):

\begin{definition}
\label{def-equalized}
We say that the $H$-action on $X$ is {\em equalized at a fixed component} $Y\in \cY$ if the torus acts
on $N^+(Y)$ with all weights $+1$ and on $N^-(Y)$ with all weights $-1$. The action is {\em equalized} if it is equalized at each fixed component.
\end{definition}

Note that the closure $C$ of a $1$-dimensional orbit of an $H$-action is a rational curve. The following definition concerns the infinitesimal behaviour of the induced action of $H$ on the  fixed points of $C$.
 
\begin{definition}\label{def:delta}
Let $C=\overline{H\cdot x}$ be an $H$-invariant rational curve in $X$. 
The extension of the $H$-action to the normalization $f:\P^1\to C\subset X$  has exactly two fixed points, that we denote by $z_+,z_-\in \P^1$. We choose the signs so that, if $z\in \P^1$ is a general point,
$$z_{\pm}=\lim_{t^{\pm 1}\to 0}tz.$$ Their images $x_+:=f(z_+),x_-:=f(z_-)\in C$ are called, respectively, the {\it source} and the {\it sink} of the orbit of $x$.
Note that we have a linear action of $H$ on $T_{\P^1,z_+}$, whose weight is denoted by $\delta(z_+)$. Clearly the weight $\delta(z_-)$ of the lifted action on  $T_{\P^1,z_-}$ is equal to $-\delta(z_+)$. We set:
$$
\delta(C):=\delta(z_+).
$$ 
\end{definition}  
  
Since the $H$-invariant curves of the form $\overline{H\cdot x}$ are rational, the uniruledness of $X$ follows. In fact we may state:

\begin{lemma}\label{lem:uniruled}
A smooth projective variety $X$ with an $H$-action is uniruled. In particular, if $\rho_X=1$ then $X$ is a Fano manifold, hence rationally connected.
\end{lemma}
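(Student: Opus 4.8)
The plan is to produce, through a general point of $X$, a rational curve, namely the closure of a generic one-dimensional orbit. First I would observe that since the $H$-action on $X$ is nontrivial, there is a point $x\in X$ whose orbit $H\cdot x$ is one-dimensional; in fact the locus of points with one-dimensional orbit is open and dense, because its complement is exactly $X^H$, which is a proper closed subset (the action being nontrivial). For such an $x$, the orbit map $H\to H\cdot x$ extends to a morphism $f\colon\P^1\to X$ from the normalization: this is the standard fact that the closure $C=\overline{H\cdot x}$ of a one-dimensional torus orbit is a (possibly singular) rational curve, already recorded in the paragraph preceding Definition \ref{def:delta}. Letting $x$ vary over a dense open subset $U\subset X$ consisting of points with one-dimensional orbit, the curves $C=\overline{H\cdot x}$ sweep out $U$, so a rational curve passes through a general point of $X$; hence $X$ is uniruled.

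For the second assertion, suppose $\rho_X=1$. A uniruled smooth projective variety has Kodaira dimension $-\infty$, and with Picard number one this forces $-K_X$ to be ample: indeed $\NU(X)$ is one-dimensional, generated by the class of any ample divisor, so $K_X$ is a rational multiple of an ample class; it cannot be nef (else $X$ would not be uniruled, by e.g. the cone theorem / the fact that a nef canonical class precludes rational curves through a general point), hence $-K_X$ is ample and $X$ is Fano. Finally, a smooth Fano variety is rationally connected by the theorem of Campana and Kollár–Miyaoka–Mori; this gives the last clause.

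The only point requiring a little care is the claim that the complement of the one-dimensional-orbit locus is precisely $X^H$, i.e. that there are no zero-dimensional orbits other than fixed points and no positive-dimensional orbits of dimension $>1$; but $H=\C^*$ has dimension one, so every orbit has dimension $0$ or $1$, and a zero-dimensional orbit is a single fixed point since $H$ is connected. Thus the locus of points with one-dimensional orbit is $X\setminus X^H$, which is open and, as the action is nontrivial, nonempty and therefore dense in the irreducible variety $X$. I do not expect any serious obstacle; the statement is essentially a packaging of well-known facts, the only inputs beyond elementary orbit considerations being the rationality of the orbit closure (already in the excerpt) and the Fano $\Rightarrow$ rationally connected theorem.
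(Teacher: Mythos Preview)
Your proposal is correct and follows essentially the same approach as the paper: the closure of a general $H$-orbit is a rational curve through a general point, hence $X$ is uniruled, and then uniruledness together with $\rho_X=1$ forces $-K_X$ to be ample, so $X$ is Fano and therefore rationally connected. The paper's proof is terser (it simply cites references for the implications ``rational curve through a general point $\Rightarrow$ uniruled'' and ``Fano $\Rightarrow$ rationally connected''), whereas you spell out the intermediate step that $K_X$ cannot be nef; both arguments are the same in substance.
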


\begin{proof}
It is a known fact (see for instance \cite[Remark~4.2 (4)]{De}) that the existence of a rational curve through a general point of a variety $X$ (defined over an algebraically closed uncountable field) implies its uniruledness. 
If moreover $\rho_X=1$, then $X$ is Fano, hence rationally connected (see \cite[Proposition~5.16]{De}). 
\end{proof}


A fundamental result in the theory of torus actions is the so-called Bia{\l}ynicki-Birula decomposition ({\em BB-decomposition}, for short), that we shall introduce now as presented in \cite[Theorem 4.2, Theorem 4.4]{CARRELL} (see \cite{BB} for the original exposition). For every fixed point component $Y\in \cY$ we define the {\em BB-cells} associated to $Y$ as follows:
$$X^+(Y):=\{x\in X: \lim_{t\rightarrow 0} t x\in Y\}{\rm \ \ and \ \
}X^-(Y):=\{x\in X: \lim_{t\to 0}t^{-1} x\in Y\}.$$
Then we have:

\begin{theorem}
\label{thm:BB_decomposition}
In the situation described above the following hold:
 \begin{itemize}

\item [(1)] $X^{\pm}(Y)$ are locally closed subsets and there are two decompositions
 $$X=\bigsqcup_{Y\in \cY}X^+(Y)=\bigsqcup_{Y\in \cY}X^-(Y).$$
\item [(2)] For every $Y\in\cY$ there are $H$-equivariant isomorphisms $X^+(Y)\simeq N^{+}(Y)$ and $X^-(Y)\simeq N^{-}(Y)$ lifting the natural maps
 $X^\pm(Y)\rightarrow Y$. Moreover, the map $X^\pm(Y)\rightarrow Y$ is algebraic and is a $\CC^{\nu^\pm(Y)}$-fibration. 
\item [(3)] There are decompositions in homology:
 $$H_m(X,\ZZ)\simeq\bigoplus_{Y\in \cY}H_{m-2\nu^+(Y)}(Y,\ZZ)\simeq \bigoplus_{Y\in \cY}H_{m-2\nu^-(Y)}(Y,\ZZ).$$ \end{itemize}
\end{theorem}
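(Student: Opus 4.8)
The assertion is classical and I will only sketch the main steps, essentially following \cite{BB} and \cite{CARRELL}. Recall that the components $Y\in\cY$ are smooth by \cite{IVERSEN}, so $N_{Y/X}$ and the summands in \eqref{eq:decnorm} are honest vector bundles. The plan is: (a) get the set-theoretic decomposition from properness of $X$; (b) identify the action near each $Y$ with the linear action on $N_{Y/X}$; (c) globalize this to the bundle structure in (2); and (d) deduce the homology splitting (3) from the resulting filtration. For (a), given $x\in X$, the orbit map $\sigma_x\colon H\to X$, $t\mapsto t\cdot x$, extends uniquely, by the valuative criterion of properness, to a morphism $\overline{\sigma}_x\colon\AAA^1\to X$; the point $\overline{\sigma}_x(0)=\lim_{t\to 0}t\cdot x$ is $H$-fixed, hence lies in a unique component $Y\in\cY$. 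Thus each $x$ lies in exactly one cell $X^+(Y)$, which gives $X=\bigsqcup_{Y}X^+(Y)$, and replacing $t$ by $t^{-1}$ yields the decomposition into the $X^-(Y)$.

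For (b), fix $Y\in\cY$. Using that $X$ is smooth and that $H$ is a torus -- so that, by Sumihiro's theorem, $X$ is covered by $H$-invariant affine opens on which the action can be linearized -- one identifies an \'etale (or analytic) neighbourhood of $Y$ in $X$, $H$-equivariantly, with a neighbourhood of the zero section of the total space of $N_{Y/X}$ carrying the fibrewise linear action, $Y$ corresponding to the zero section. Writing a normal vector in weight coordinates according to \eqref{eq:decnorm}, one checks that $\lim_{t\to 0}t\cdot v$ exists if and only if the $N^-(Y)$-component of $v$ vanishes, in which case it equals the image of $v$ in $Y$. Hence, near $Y$, the cell $X^+(Y)$ is the total space of $N^+(Y)$: it is smooth and locally closed, and $x\mapsto\lim_{t\to 0}t\cdot x$ restricts to the bundle projection $N^+(Y)\to Y$; symmetrically for $X^-(Y)$ and $N^-(Y)$.

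For (c), every $x\in X^+(Y)$ satisfies $\overline{\sigma}_x(0)\in Y$, so $t\cdot x$ enters the neighbourhood of step (b) as $t\to 0$, i.e.\ the $H$-action contracts $X^+(Y)$ onto $Y$. Propagating the local identifications of step (b) along orbits (rescaling by $H$) yields an $H$-equivariant isomorphism $X^+(Y)\simeq N^+(Y)$ over $Y$; in particular $X^+(Y)$ is a smooth locally closed subvariety and $X^+(Y)\to Y$ is a $\CC^{\nu^+(Y)}$-fibration, and likewise for $X^-(Y)$. This establishes (1) and (2). For (d), choose a linearization of an ample line bundle and order $\cY=\{Y_1,\dots,Y_k\}$ by the corresponding weights so that each $Z_j:=\bigsqcup_{i\le j}X^+(Y_i)$ is closed in $X$, with $Z_j\setminus Z_{j-1}=X^+(Y_j)$. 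The Thom isomorphism gives $H_m(Z_j,Z_{j-1};\ZZ)\simeq H_{m-2\nu^+(Y_j)}(Y_j;\ZZ)$, and the long exact sequences of the pairs $(Z_j,Z_{j-1})$ split -- the Bia{\l}ynicki-Birula decomposition is filtrable and perfect, see \cite{BB} -- so assembling them gives $H_m(X;\ZZ)\simeq\bigoplus_{Y\in\cY}H_{m-2\nu^+(Y)}(Y;\ZZ)$; the opposite action yields the $\nu^-$ version.

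The main obstacle is step (c): upgrading the local linearization near the fixed components to a global, \emph{algebraic} identification of the Bia{\l}ynicki-Birula cells with the total spaces of the positive, respectively negative, normal bundles -- rather than merely with their set-theoretic attractors. One way to organize this is to realize $X^+(Y)$ as an open-and-closed piece of the scheme of $\CC^*$-equivariant morphisms $\AAA^1\to X$ (for the scaling action on $\AAA^1$), with evaluation at $0$ recording the limit and evaluation at $1$ the inclusion into $X$. Once such a structure theorem is in place, part (1) and the fibration statement of (2) are immediate, and part (3) reduces to the Thom isomorphism together with the degeneration of the spectral sequence of the filtration $Z_1\subset\cdots\subset Z_k=X$.
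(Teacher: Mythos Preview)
The paper does not supply its own proof of this theorem: it is stated as background and attributed to \cite[Theorem~4.2, Theorem~4.4]{CARRELL} and \cite{BB}. So there is no ``paper's proof'' to compare against; your sketch is essentially the classical argument those references contain, and it is correctly organized. Your identification of step~(c) as the genuine content is apt, and the functor-of-points description of $X^+(Y)$ via $H$-equivariant maps $\AAA^1\to X$ is a clean way to handle it (this is the approach later systematized by Drinfeld). One small caveat on step~(d): the ordering of the $Y_j$ by a single weight need not be a total order when distinct components share the same weight, so one should either refine arbitrarily within a weight level or phrase the filtration in terms of the partial order given by closure of cells; either way the spectral sequence still degenerates.
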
 

We note that, in general, the morphism $X^\pm(Y)\rightarrow Y$ is 
merely an affine bundle, and not a vector bundle. 

\begin{remark}\label{rem:sinksource}
As a consequence of Theorem \ref{thm:BB_decomposition} (1), there exists a unique component $Y_+\in \cY$ (resp. $Y_-\in \cY$) such that $X^+(Y_+)$ (resp. $X^-(Y_-)$) is a dense open set in $X$. We will call $Y_+$ and $Y_-$ the \textit{source} and the \textit{sink} of the action, and refer to them as the \textit{extremal fixed components} of the action. We will call  \textit{inner components}  the fixed point components which are not extremal.
\end{remark}

Another consequence of Theorem \ref{thm:BB_decomposition} is that the rational connectedness of a projective manifold $X$ is inherited by the extremal fixed point components of an $H$-action defined on it:


\begin{lemma} \label{lem:extremal_rational_conn} Let $X$ be a complex projective manifold  with an $H$-action.
If $X$ is rationally connected, then also the source and the sink are rationally connected. \end{lemma}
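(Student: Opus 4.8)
The plan is to exploit the BB-decomposition (Theorem \ref{thm:BB_decomposition}) together with the fact that rational connectedness is preserved under dominant morphisms. First I would recall from Remark \ref{rem:sinksource} that the source $Y_+$ carries the BB-cell $X^+(Y_+)$ which is a dense open subset of $X$; moreover, by Theorem \ref{thm:BB_decomposition}(2), the natural map $X^+(Y_+)\to Y_+$ is an algebraic $\CC^{\nu^+(Y_+)}$-fibration, hence in particular a surjective morphism. So I have a diagram in which $Y_+$ is dominated by a dense open subset $U:=X^+(Y_+)$ of the rationally connected manifold $X$.

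The key step is then: a dense open subset of a rationally connected variety is again rationally connected, and the image of a rationally connected variety under a dominant morphism is rationally connected. For the first point, since $X$ is a smooth projective rationally connected variety, a general pair of points of $X$ is joined by a (very free) rational curve; intersecting with the dense open $U$ one sees that a general pair of points of $U$ is joined by a rational curve contained in $U$ (a general deformation of a very free curve through two general points of $U$ stays inside $U$, as the complement $X\setminus U$ has codimension at least one and the curves move in a family covering $X\times X$). Alternatively, one can simply invoke that rational connectedness is a birational invariant among smooth proper varieties and is inherited by dense open subsets. For the second point, if $f\colon U\to Y_+$ is dominant and $U$ is rationally connected, then joining two general points of $Y_+$ by lifting them to two general points of $U$ and pushing forward a connecting rational curve shows $Y_+$ is rationally connected. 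Applying this to $U=X^+(Y_+)\to Y_+$ gives that the source $Y_+$ is rationally connected; the same argument with $X^-(Y_-)\to Y_-$ gives it for the sink $Y_-$.

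The main (and essentially only) subtlety to handle carefully is the claim that the dense open cell $X^+(Y_+)$ inherits rational connectedness from $X$ — equivalently that one may choose the connecting rational curves to avoid the lower-dimensional complement $X\setminus X^+(Y_+)$. I would phrase this by taking a very free rational curve through a general point of $X$, noting that such curves cover an open dense subset of $X\times X$, and that the condition of meeting the proper closed subset $X\setminus X^+(Y_+)$ is a closed condition on this family that is not satisfied by a general member; alternatively one cites the standard fact (e.g.\ \cite[Chapter IV]{Kollar} or \cite{De}) that rational connectedness passes to dense open subsets and is preserved under dominant rational maps. Once this is in hand, the rest is immediate, as explained above.
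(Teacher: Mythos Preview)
Your overall strategy---use the BB-cell projection $X^+(Y_+)\to Y_+$ and the preservation of rational connectedness under dominant maps---is valid and leads to a correct proof, but one of your justifications is wrong. You write that a general very free curve through two general points of $U=X^+(Y_+)$ will avoid the complement $X\setminus U$ ``as the complement has codimension at least one.'' This fails precisely when $X\setminus U$ contains a divisor: any curve $C$ with $C\cdot D>0$ for such a divisor $D$ meets $D$, and so do all its deformations, since the intersection number is constant in a flat family. This actually occurs in the situations studied in the paper---for a B-type action (Definition~\ref{def:bord}) the sink $Y_-$ is a divisor contained in $X\setminus X^+(Y_+)$, so no complete rational curve of positive $Y_-$-degree can lie in $U$. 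Your alternative phrasing, citing the standard fact that the image of a smooth projective rationally connected variety under a dominant rational map to a smooth proper variety is again rationally connected, is the correct way to complete the argument: the BB-cell map gives a dominant rational map $X\dashrightarrow Y_+$, and the conclusion follows directly.

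The paper takes a genuinely different route. Rather than invoking the general fact about dominant rational maps, it argues directly: given $y,y'\in Y_-$, take an irreducible rational curve $C\subset X$ through them (using rational connectedness of $X$), and consider the limit cycle $C'=\lim_{t^{-1}\to 0} tC$. The components of $C'$ are rational, and since $C\cap X^-(Y_-)$ is dense and connected in $C$, the intersection $C'\cap Y_-$ contains a connected chain of rational curves through the fixed points $y,y'$. Thus $Y_-$ is rationally chain connected, and being smooth, rationally connected (cf.\ \cite[Corollary~4.28]{De}). Your approach is quicker once one is willing to cite the black-box result on rational connectedness under dominant rational maps; the paper's approach is self-contained, exhibits the connecting chain explicitly via the $\CC^*$-specialization, and avoids any discussion of what rational connectedness should mean for the non-proper variety $U$.
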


\begin{proof} 
Let us take two points $y,y'\in Y_-$. The rational connectedness of $X$ implies that we may find an irreducible rational curve $C$ passing through $y,y'$. We then consider the limit cycle $C'$ of $tC$ when $t^{-1}$ goes to $0$; its irreducible components are rational curves. Since  $C\cap X^-(Y_-)$ is a dense open subset of $C$, hence connected, it follows that $C'\cap Y_-$ contains a connected cycle passing through $y,y'$. 
This shows that $Y_-$ is rationally chain connected, hence, since it is smooth, rationally connected (cf. \cite[Corollary 4.28]{De}). The same holds for $Y_+$.
\end{proof}

\begin{example}\label{rem:innerRC}
An analogous statement for  inner fixed components does not hold. Consider for instance the natural action of $H$ on $\P^1$, and its lift to the product $X':=\P^1\times\P^2$, whose extremal fixed point components are $\{0\}\times\P^2$, $\{\infty\}\times\P^2$. We consider a smooth non rational curve $C\subset\{0\}\times\P^2$ and define $X$ as the blow up of $X'$ along $C$. Since $C\subset X'$ consists of $H$-fixed points, the action of $H$ on $X'$ extends to an action on $X$ and, by construction, $X^H$ contains an inner fixed point component isomorphic to $C$.
\end{example}

%
%
%


The following Lemma was partially observed in \cite[Lemma~3.4]{BWW}, and it is a consequence of Theorem \ref{thm:BB_decomposition} (3). A similar statement holds replacing $Y_-$ with $Y_+$ and $\nu^+$ with $\nu^-$.

\begin{lemma} \label{lem:Pic_extremal}Let $X$ be a complex projective manifold with an $H$-action. Assume that $\Pic(X)=\ZZ \cL$, and let $Y_-$ be its sink. Then:
\begin{enumerate}
\item if $\dim Y_->0$, then  $Y_-$ is a Fano manifold and $\Pic(Y_-)=\Z{\cL_{|Y_-}}$; moreover $\nu^+(Y) \ge 2$ for every fixed point component $Y$ different from $Y_-$.
\item if $\dim Y_-=0$, $\dim X\geq 2$, then there exists a unique inner fixed component $Y$ such that $\nu^+(Y) =1$ and $\nu^+(Y') \ge 2$, for every fixed point component $Y'$ different from $Y_-$ and $Y$.
\end{enumerate}
\end{lemma}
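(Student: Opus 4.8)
The plan is to exploit the homology decomposition in Theorem \ref{thm:BB_decomposition}(3) in low degree, using that $X$ is Fano (by Lemma \ref{lem:uniruled}, since $\rho_X=1$) and rationally connected, so that $H_1(X,\ZZ)=0$ and $H_2(X,\ZZ)\cong\ZZ$. I would choose the sign convention so that the relevant decomposition is the one indexed by $\nu^+$: namely $H_m(X,\ZZ)\cong\bigoplus_{Y\in\cY}H_{m-2\nu^+(Y)}(Y,\ZZ)$. Note that for the sink $Y_-$ one has $\nu^+(Y_-)=0$, since $N^+(Y_-)=0$ (the BB-cell $X^-(Y_-)$ is dense, so $X^+(Y_-)\cong N^+(Y_-)$ has dimension $\dim Y_-$). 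For every other component $Y$ one has $\nu^+(Y)\geq 1$, because $X^+(Y)$ is not dense and hence $Y\neq Y_+$ forces a positive-weight direction — more precisely, the source is the only component with $\nu^-(Y)=0$, equivalently (via the dimension count $\dim X=\dim Y+\nu^+(Y)+\nu^-(Y)$) every component other than $Y_-$ has $\nu^+(Y)\geq 1$. I will establish this basic inequality first as a preliminary step.

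\textbf{Step 1: the inequalities on $\nu^+$.} Reading off $H_1$: since $H_1(X,\ZZ)=0$, the decomposition gives $0=\bigoplus_Y H_{1-2\nu^+(Y)}(Y,\ZZ)$, which is automatic. The key is $H_2$: $\ZZ\cong H_2(X,\ZZ)\cong\bigoplus_Y H_{2-2\nu^+(Y)}(Y,\ZZ)$. The contribution of $Y_-$ (with $\nu^+=0$) is $H_2(Y_-,\ZZ)$; the contribution of a component with $\nu^+(Y)=1$ is $H_0(Y,\ZZ)=\ZZ$ (as each $Y$ is irreducible); components with $\nu^+(Y)\geq 2$ contribute $H_{\leq -2}=0$. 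Likewise $H_0(X,\ZZ)=\ZZ$ picks out only $Y_-$ (the unique component with $\nu^+=0$), consistent with $Y_-$ connected, so $H_0(Y_-,\ZZ)=\ZZ$.

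\textbf{Step 2: case $\dim Y_->0$.} Here I claim $H_2(Y_-,\ZZ)$ already accounts for the whole $\ZZ=H_2(X,\ZZ)$. Indeed, by Lemma \ref{lem:extremal_rational_conn}, $Y_-$ is rationally connected, hence $H_1(Y_-,\ZZ)=0$ and $H_2(Y_-,\ZZ)$ is torsion-free of rank $\rho_{Y_-}\geq 1$; since $H_2(X,\ZZ)=\ZZ$ and $H_2(Y_-,\ZZ)$ is a direct summand, we get $H_2(Y_-,\ZZ)=\ZZ$, so $\rho_{Y_-}=1$, and moreover there is no room for any component with $\nu^+(Y)=1$. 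Thus every $Y\neq Y_-$ has $\nu^+(Y)\geq 2$. To identify the generator: the inclusion $\iota\colon Y_-\hookrightarrow X$ together with the isomorphism $H_2(Y_-,\ZZ)\xrightarrow{\sim}H_2(X,\ZZ)$ (this is exactly the BB-summand map, which is induced by $\iota_*$ up to the affine-bundle retraction) shows $\iota_*$ is an isomorphism on $H_2$, hence dually $\iota^*\colon\NU(X)\to\NU(Y_-)$ is an isomorphism of Néron–Severi groups; since $\Pic(X)=\ZZ\cL$ and for a rationally connected (hence simply connected in the relevant sense — at least $H^1(\cO)=H^2(\cO)=0$) manifold $\Pic=\NU$, we conclude $\Pic(Y_-)=\ZZ\,\iota^*\cL=\ZZ\,\cL_{|Y_-}$. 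Fano-ness of $Y_-$: $Y_-$ is rationally connected with $\rho_{Y_-}=1$, and by the adjunction-type computation of the canonical class along a fixed component ($-K_{Y_-}=(-K_X)_{|Y_-}-\det N_{Y_-/X}$, or more directly since a rationally connected manifold of Picard number one is Fano once $-K$ is nonzero effective), $-K_{Y_-}$ is a positive multiple of the ample generator, so $Y_-$ is Fano. (The cleanest route: $\rho_{Y_-}=1$ and $Y_-$ uniruled $\Rightarrow$ $-K_{Y_-}$ ample.)

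\textbf{Step 3: case $\dim Y_-=0$, $\dim X\geq 2$.} Now $H_2(Y_-,\ZZ)=0$, so the whole of $\ZZ\cong H_2(X,\ZZ)$ must come from $\bigoplus_{\nu^+(Y)=1}H_0(Y,\ZZ)=\ZZ^{\#\{Y:\nu^+(Y)=1\}}$. Hence there is exactly one component $Y$ with $\nu^+(Y)=1$, and it is necessarily an inner component: it is not $Y_-$ (which has $\nu^+=0$), and it is not $Y_+$ unless $Y_+$ happens to coincide — but if $\dim X\geq 2$ and $Y_+$ had $\nu^+(Y_+)=1$ then $\dim Y_++\nu^-(Y_+)=\dim X-1\geq 1$ with $\nu^-(Y_+)=0$, forcing $\dim Y_+\geq 1$; this is not excluded a priori, so I should argue instead that the statement "inner" in the lemma is to be read as "not extremal", and then rule out $Y=Y_+$ by noting that $Y_+$ is the unique component with $\nu^-=0$ while a component with $\nu^+=1$ and $\dim X\geq 2$ still has $\dim Y+\nu^-(Y)=\dim X-1\geq 1$ — hmm. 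The correct resolution: since $\dim Y_-=0$ we are in the genuinely different regime and I will instead invoke the dual ($\nu^-$) decomposition at $Y_+$. Let me re-set the argument: by symmetry (applying the $H_0,H_2$ analysis to the other BB-decomposition) $Y_+$ is also a point, so $\dim Y_+=0$ and $\nu^+(Y_+)=\dim X\geq 2$; hence $Y_+$ is not the component with $\nu^+=1$. Therefore the unique $Y$ with $\nu^+(Y)=1$ is neither $Y_-$ nor $Y_+$, i.e. it is inner, and all other components $Y'\neq Y_-$ satisfy $\nu^+(Y')\geq 2$. This closes the case.

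\textbf{Main obstacle.} The delicate point is Step 2, making rigorous the identification of the BB-homology summand with the image of $\iota_*\colon H_*(Y_-)\to H_*(X)$ and transferring this to an isomorphism $\Pic(X)\cong\Pic(Y_-)$ — one must know that for these rationally connected manifolds $H^1(\cO)=H^2(\cO)=0$, so that Picard group equals $H^2(\cdot,\ZZ)$ and is torsion-free, and that the BB decomposition is compatible with the cycle-class/restriction maps in the expected way (this is standard but needs the explicit form of the BB map, e.g. via the closure-of-cells filtration). Everything else is bookkeeping with the homology decomposition and the dimension identity $\dim X=\dim Y+\nu^+(Y)+\nu^-(Y)$.
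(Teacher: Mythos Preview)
Your approach is essentially the paper's: both arguments read off the BB decomposition of $H_2(X,\ZZ)$ from Theorem \ref{thm:BB_decomposition}(3), using Lemmas \ref{lem:uniruled} and \ref{lem:extremal_rational_conn} to know that $X$ and $Y_-$ are rationally connected (hence $\Pic\simeq H^2(\cdot,\ZZ)$ torsion-free), and then in case (1) identify $H_2(Y_-,\ZZ)\xrightarrow{\sim}H_2(X,\ZZ)$ with the map induced by inclusion to conclude $\Pic(X)\to\Pic(Y_-)$ is an isomorphism, while in case (2) count the $\nu^+=1$ summands. Your ``main obstacle'' is precisely what the paper handles by citing \cite[Section~IIb]{CaSo} for the compatibility of the BB summand with $\iota_*$.

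There is one genuine slip in Step 3. Your symmetry argument that $Y_+$ must also be a point is false: nothing in the hypotheses forces this. Take $X=\P^n$ with $t\cdot(x_0:\cdots:x_n)=(x_0:tx_1:\cdots:tx_n)$; here $Y_-=\{(1:0:\cdots:0)\}$ is a point, $Y_+=\{x_0=0\}\cong\P^{n-1}$ is a divisor with $\nu^+(Y_+)=1$, and there are no inner components at all. In fact the paper's own proof only concludes that there is a unique $Y\in\cY\setminus\{Y_-\}$ with $\nu^+(Y)=1$, and does not argue $Y\neq Y_+$; the word ``inner'' in the statement should be read as ``different from $Y_-$'' (and is used that way in the applications). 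So drop the symmetry claim and conclude exactly as the paper does: the $H_2$-decomposition forces a unique $Y\neq Y_-$ with $\nu^+(Y)=1$, and $\nu^+(Y')\geq 2$ for all other $Y'\neq Y_-$.
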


\begin{proof}
Suppose that $\dim Y_->0$ so that, in particular, $\rank H_2(Y_-,\ZZ)\geq 1$. Since $X$ is rationally connected by Lemma \ref{lem:uniruled}, $$\Pic(X) \simeq \HH^2(X, \ZZ) \simeq \Hom_\Z(\HH_2(X, \ZZ),\ZZ),$$ and from  Theorem \ref{thm:BB_decomposition} (3) we get $\HH_2(X, \ZZ) \simeq \HH_2(Y_-,\ZZ)$ and $\nu^+(Y)\ge 2$ for any fixed point component different from the sink. Note that, as explained in \cite[Section~IIb]{CaSo}, this isomorphism is induced by the inclusion $Y_-\subset X$. Since  $Y_-$ is rationally connected by Lemma \ref{lem:extremal_rational_conn}, 
the restriction of the Picard groups $\Pic(X)\to \Pic(Y_-)$ is an isomorphism, as well. Finally, since $Y_-$ is rationally connected of Picard number one, then it is a Fano manifold. 

In the case in which  $Y_-$ consists of an isolated point, using the decomposition provided by Theorem \ref{thm:BB_decomposition} (3), we obtain:
$$
H_2(X,\Z)\simeq \bigoplus_{Y\neq Y_-} H_{2-2\nu^+(Y)}(Y,\Z)\oplus H_{2-2\nu^+(Y_-)}(Y_-,\Z)\simeq\bigoplus_{Y\neq Y_-} H_{2-2\nu^+(Y)}(Y,\Z).
$$
This equality follows from the fact that $\nu^+(Y_-)=0$. 
Since $\nu^+(Y)\geq 1$ for all $Y$, the summand provided by  $Y\in \cY\setminus \{Y_-\}$ can only be different from zero if $\nu^+(Y)=1$, and in this case the corresponding group would be $H_0(Y,\Z)\simeq\Z$. We conclude that this happens for a unique component $Y\in \cY\setminus \{Y_-\}$. This finishes the proof.
\end{proof}

We observe now that Theorem \ref{thm:BB_decomposition} (2) provides some geometric insight on the concept of equalization (Definition \ref{def-equalized}). In fact, given a fixed point component $Y$, the isomorphisms $X^{\pm}(Y)\simeq N^{\pm}(Y)$ send closures of $1$-dimensional orbits in $X^{\pm} (Y)$ to curves in the fibers of $N^{\pm}(Y)$ over $Y$. This allows us to identify the weights of the action on $N^{\pm}(Y)$ with values $\delta(C)$ of $H$-invariant curves $C$ in $X$ (see Definition \ref{def:delta}). Moreover, if all the weights of the action of $N^{\pm}(Y)$ are equal to $\pm 1$ then the images of these curves into $N^{\pm}(Y)$ are all lines, 
since in this case the action on every vector space $N^{\pm}(Y)_y$, $y\in Y$, is homothetical. Summing up, we may state the following result:
 

\begin{corollary}\label{cor:allequalized}
Let $X$ be a smooth projective variety supporting an $H$-action.
Then:
\begin{enumerate}
\item[(i)] If the action is equalized at a component $Y$, then any $H$-invariant curve having an extremal fixed point $x'$ in $Y$ is smooth at $x'$.
\item[(ii)] The action is equalized if and only if $\delta(C)=1$ for every $H$-invariant rational curve $C$ on $X$.
\item[(iii)] If the action is equalized at the source $Y_+$ or at the sink $Y_-$, then the isotropy group at the orbit of a general point is trivial. In particular the action is faithful.
\item[(iv)] If the action is equalized at $Y_+$ and $Y_-$, and every nontrivial orbit has source in $Y_+$ or sink in $Y_-$ (or both), then the action is equalized.
\end{enumerate}
\end{corollary}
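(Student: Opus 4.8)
The plan is to deduce all four assertions from the $H$-equivariant isomorphisms $X^{\pm}(Y)\simeq N^{\pm}(Y)$ of Theorem~\ref{thm:BB_decomposition}~(2), together with the elementary remark that the closure of a nontrivial $\C^*$-orbit inside a vector space on which $\C^*$ acts with a single weight is, together with the origin, a line. For (i): if the action is equalized at $Y$ and $C$ is an $H$-invariant curve with an extremal fixed point $x'\in Y$, then $x'$ is the source or the sink of the orbit of $C$; assuming it is the source, the orbit together with $x'$ lies in $X^+(Y)$ and is carried by $X^+(Y)\simeq N^+(Y)$ onto the closure of a $\C^*$-orbit in a single fibre $N^+(Y)_{x'}$, on which $H$ acts homothetically since all its weights are $+1$. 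That closure is a line through the origin, hence smooth, and since $X^+(Y)\simeq N^+(Y)$ is an isomorphism of varieties, $C$ is smooth at $x'$; the case in which $x'$ is the sink is symmetric, using $X^-(Y)\simeq N^-(Y)$.

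For (ii): if the action is equalized and $C=\overline{H\cdot x}$ has source $x_+\in Y$, then $C$ is smooth at $x_+$ by (i), so $\delta(C)=\delta(z_+)$ is the weight of $H$ on the line $T_{C,x_+}\subset N^+(Y)_{x_+}$, which is $+1$. Conversely, assuming $\delta(C)=1$ for every $H$-invariant rational curve, fix a component $Y\in\cY$ and a positive weight $w$ of $N^+(Y)$, pick $y\in Y$ and a nonzero $w$-eigenvector $v\in N^+(Y)_y$, and let $x$ be the corresponding point of $X^+(Y)$; then $\lim_{t\to 0}t\cdot x=y$, so $C:=\overline{H\cdot x}$ has source $y$, its intersection with $X^+(Y)$ is the line $\C v$ (in particular $C$ is smooth at $y$), and $\delta(C)$ equals the weight of $H$ on $T_{C,y}=\C v$, which is $w$; thus $w=1$. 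The symmetric construction with sinks and $N^-(Y)$ shows every negative weight equals $-1$, so the action is equalized.

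For (iii): assuming the action is equalized at $Y_+$ (the sink case being identical), a general point $x$ lies in the dense open set $X^+(Y_+)$ and corresponds under $X^+(Y_+)\simeq N^+(Y_+)$ to a nonzero vector $v\in N^+(Y_+)_y$, because the zero section is the proper subvariety $Y_+$; since all weights of $N^+(Y_+)$ are $+1$, $t\cdot v=tv$, so the isotropy of $x$ equals $\{t:tv=v\}=\{1\}$, and in particular the kernel of $H\to\Aut(X)$ is trivial. For (iv): by (ii) it suffices to prove $\delta(C)=1$ for every $H$-invariant rational curve $C=\overline{H\cdot x}$; by hypothesis its source lies in $Y_+$ or its sink lies in $Y_-$. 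In the first case (i), applied at $Y_+$, gives that $C$ is smooth at $x_+$ and that $H$ acts on $T_{C,x_+}\subset N^+(Y_+)_{x_+}$ with weight $+1$, so $\delta(C)=\delta(z_+)=1$; in the second case, symmetrically, $\delta(z_-)=-1$, hence $\delta(C)=-\delta(z_-)=1$.

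No step is genuinely hard; the one point requiring care throughout is the dictionary — already sketched in the paragraph preceding the statement — between the weight $\delta(C)$ of an invariant curve at an extremal fixed point $x'\in Y$ and the weight of $N^{\pm}(Y)$ on the tangent line $T_{C,x'}\subset N^{\pm}(Y)_{x'}$, which in turn relies on $C$ being smooth at $x'$ so that this tangent line is well-defined and carries the same weight as $T_{\P^1,z_\pm}$ via the equivariant normalization map.
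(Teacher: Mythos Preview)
Your proof is correct and follows essentially the same approach as the paper: the paper does not write out a formal proof of this corollary, but the paragraph immediately preceding it sketches exactly your argument, namely the use of the $H$-equivariant isomorphisms $X^{\pm}(Y)\simeq N^{\pm}(Y)$ from Theorem~\ref{thm:BB_decomposition}(2) to identify orbit closures near an extremal fixed point with curves in the fibres of $N^{\pm}(Y)$, and the observation that these are lines when the action on the fibres is homothetical. Your writeup simply fills in the details the paper leaves implicit.
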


\subsection{$\C^*$-actions on polarized pairs}

For an arbitrary line bundle $\cL \in \Pic (X)$ we denote by $\mu_\cL\colon H\times \cL\rightarrow \cL$ (or simply by $\mu$) a \textit{linearization} of the action of $H$ on the line bundle $\cL$; linearizations always exists by \cite[Proposition 2.4]{KKLV}. By abuse, we continue to denote by
$\mu_\cL\colon \cY \to M(H):=\Hom(H,\C^*) \simeq \Z$ the associated map on the set of fixed point components. 
Note also that given a linearization on $\cL$ we may construct other linearizations by adding characters of $H$ to it; in other words, we may choose the linearization so that $\mu_{\cL}$ has a prescribed value on a given fixed point component. 

\begin{definition} Given a smooth complex projective variety $X$ an ample line bundle $\cL$ on $X$ we will call  $(X,\cL)$ a \textit{polarized pair}.
\end{definition}

\begin{definition}\label{def:bandwidth} Let $(X,\cL)$ be a polarized pair with an $H$-action admitting a linearization $\mu$ on $L$. The \textit{bandwidth} of the action on the pair $(X,\cL)$ is defined as $|\mu|=\mu_{\max}-\mu_{\min}$, where $\mu_{\max}$ and $\mu_{\min}$ denote the maximal and minimal value of the function $\mu_\cL$.
 \end{definition}
 
 \begin{remark}\label{rem:bandwidth}
 Note that the values $\mu=\mu_{\max},\mu_{\min}$ are attained at the source and the sink, respectively. In fact, the claim is obvious when the pair is $(\P^n,\cO(1))$, as one can check easily by choosing a system of coordinates of $\P^n$ on which the $H$-action diagonalizes. In general, given an $H$-action on $(X,\cL)$, since $\mu_{m\cL}=m\mu_{\cL}$, we may assume that $\cL$ is very ample, and consider the extension of the $H$-action via the inclusion $X\hookrightarrow\P(V)$, where $V=\HH^0(X,\cL)$. Let $V=V_{a_0}\oplus\dots \oplus V_{a_r}$ be the weight decomposition of the action, with $a_0<\dots<a_r$. Since $X\subset\P(V)$ is nondegenerate, the general point $x\in X$ can be written as the homothety class of $\sum_{i=0}^rv_i$, where $v_i\in V_{a_i}^\vee$ is a nonzero vector for all $i$. In particular, $\lim_{t\to 0}t^{-1}x\in\P(V_{a_0})$, $\lim_{t\to 0}tx\in\P(V_{a_r})$, which shows that $X\cap \P(V_{a_0}), X\cap \P(V_{a_r})\neq\emptyset$, and that $\mu_{\min}=a_0$, $\mu_{\max}=a_r$ are attained respectively at the sink and at the source of the $H$-action on $X$.
 \end{remark}
 

The AM vs FM equality, which has been introduced in \cite[$\S$2.A]{RW}, relates the amplitude of a line bundle on $\PP^1$ with the difference of weights of the action on the fibers of the line bundle over the fixed points and the weight $\delta(y_+)$ of the action on the tangent space $T_{y_+}\PP^1$ of $\P^1$ at the source $y_+$. 
We will discuss now some consequences of this equality, and  generalize it to vector bundles (cf. Lemma \ref{lem:generalizedAMvsFM}).  

\begin{lemma}\label{lem:AMvsFM} \cite[Lemma 2.2]{RW}
Let $H\times\PP^1\rightarrow\PP^1$ be an
action with source and sink $y_+$ and $y_-$. Consider a line bundle $\cL$
over $\PP^1$ with linearization $\mu_\cL$. Then
\begin{equation}\tag{AM vs FM}
\mu_\cL(y_+)-\mu_\cL(y_-)=\delta(y_+)\cdot \deg\cL.
\end{equation}
\end{lemma}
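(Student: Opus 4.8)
The plan is to reduce the statement to the computation of a single universal constant. First I would observe that the left-hand side $\mu_\cL(y_+)-\mu_\cL(y_-)$ does not depend on the choice of linearization on $\cL$: any two linearizations differ by a character $\chi\in M(H)\simeq\Z$, and passing from one to the other adds $\chi$ to both $\mu_\cL(y_+)$ and $\mu_\cL(y_-)$. Consequently the assignment $\cL\mapsto\mu_\cL(y_+)-\mu_\cL(y_-)$ descends to a well-defined function on isomorphism classes of line bundles (transport a linearization along an isomorphism; the fixed-point weights are unchanged), and since $\mu_{\cL\otimes\cL'}$ may be taken to be $\mu_\cL+\mu_{\cL'}$, with weights at fixed points adding, this function is additive. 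Hence it is a group homomorphism $\phi\colon\Pic(\PP^1)\to\Z$; as $\Pic(\PP^1)\simeq\Z$ is generated by $\cO_{\PP^1}(1)$, we get $\phi(\cL)=c\cdot\deg\cL$ for the integer $c:=\phi(\cO_{\PP^1}(1))$.

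The remaining task is to identify $c$ with $\delta(y_+)$, and for this it suffices to evaluate $\phi$ on any one line bundle of nonzero degree. The most economical choice is $\cL=T_{\PP^1}$, equipped with the canonical linearization induced by the differential of the $H$-action on $\PP^1$: its fiber over $y_\pm$ is $T_{y_\pm}\PP^1$, on which $H$ acts with weight $\delta(y_+)$ over the source and weight $\delta(y_-)=-\delta(y_+)$ over the sink, precisely by Definition \ref{def:delta} applied to $C=\PP^1$. Thus $\phi(T_{\PP^1})=\delta(y_+)-(-\delta(y_+))=2\delta(y_+)$, while $\deg T_{\PP^1}=2$, so $2c=2\delta(y_+)$ and $c=\delta(y_+)$, which is the assertion. (Alternatively one can write the action in affine coordinates as $t\cdot[x_0:x_1]=[x_0:t^a x_1]$ and read off $\phi(\cO_{\PP^1}(1))=a=\delta(y_+)$ directly from the natural linearization of the tautological bundle; I prefer the tangent-bundle computation as it avoids coordinates and connects straight to Definition \ref{def:delta}.)

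I do not expect a genuine obstacle here. The only point needing a little care is the well-definedness of $\phi$ as a homomorphism on $\Pic(\PP^1)$ — namely that two linearized line bundles with the same underlying bundle have the same source-to-sink weight gap — which is exactly what the character-shift observation delivers, together with the fact that every line bundle on $\PP^1$ admits a linearization. Everything else is formal bookkeeping of weights at the two fixed points.
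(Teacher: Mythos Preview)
Your argument is correct. Note, however, that the paper does not actually give its own proof of this lemma: it simply cites \cite[Lemma~2.2]{RW} and moves on, so there is no ``paper's proof'' to compare against beyond that reference. Your reduction to a homomorphism $\phi\colon\Pic(\PP^1)\to\Z$ followed by evaluation on $T_{\PP^1}$ with its canonical linearization is a clean, coordinate-free version of what one finds in the cited source (which typically sets up an explicit action $t\cdot[x_0:x_1]=[x_0:t^a x_1]$ and reads off the weights on $\cO(1)$ directly). Both routes are essentially the same computation; yours has the minor advantage of tying directly into Definition~\ref{def:delta} without choosing coordinates.
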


%
%

\begin{corollary}\label{cor:sum_of_orbits}
Let $X$ be a smooth projective variety with an $H$-action. Denote by $Y_+$ and $Y_-$, respectively, the source and the sink, and by $C_{\gen}$ the closure of a general orbit.
Let $C_1,\dots,C_m$ be 
$H$-invariant irreducible curves, with source and sink $y_+^{i}$, $y_-^{i}$, such that $C_1\cap Y_+\ne\emptyset\ne C_m\cap Y_-$ and $y_-^i$  is contained in the fixed
point component containing  $y_+^{i+1}$.
Then, in $\Nu(X)$, we have $$\sum_i
\delta(C_i)[C_i]=\delta(C_{\gen})[C_{\gen}].$$
In particular, if the action is equalized, then  
$$\sum_i [C_i]=[C_{\gen}].$$
\end{corollary}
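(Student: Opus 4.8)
The statement to prove is Corollary~\ref{cor:sum_of_orbits}: given a chain of $H$-invariant irreducible curves $C_1,\dots,C_m$ linking the source $Y_+$ to the sink $Y_-$ (with consecutive curves meeting along fixed components), one has $\sum_i\delta(C_i)[C_i]=\delta(C_{\gen})[C_{\gen}]$ in $\Nu(X)$, and the equalized case follows by Corollary~\ref{cor:allequalized}(ii) since then every $\delta$ equals $1$. The plan is to prove the numerical identity by pairing both sides against an arbitrary line bundle $\cL\in\Pic(X)$, using the fact that numerical equivalence classes of $1$-cycles are detected by intersection with divisors, and then invoking the (AM vs FM) equality of Lemma~\ref{lem:AMvsFM} on each curve.

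First I would fix a linearization $\mu=\mu_\cL$ of the $H$-action on an arbitrary $\cL\in\Pic(X)$; this exists by \cite[Proposition~2.4]{KKLV}. For each invariant curve $C$ with source $y_+$ and sink $y_-$, pulling back $\cL$ along the normalization $f\colon\PP^1\to C$ and applying Lemma~\ref{lem:AMvsFM} gives
\begin{equation*}
\mu(y_+^{C})-\mu(y_-^{C})=\delta(C)\cdot\deg(f^*\cL)=\delta(C)\,(\cL\cdot C),
\end{equation*}
where $\mu(y_\pm^{C})$ denotes the weight of the $H$-action on the fiber of $\cL$ over the fixed point $y_\pm^{C}$, which depends only on the fixed component containing that point (and on the chosen linearization $\mu$, not on $C$). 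Now I would apply this to each $C_i$ and to $C_{\gen}$, whose source lies in $Y_+$ and whose sink lies in $Y_-$ (Remark~\ref{rem:sinksource}). Summing the $m$ equalities for the chain, the right-hand sides telescope: because $y_-^{i}$ lies in the same fixed component as $y_+^{i+1}$, the value $\mu(y_-^{i})=\mu(y_+^{i+1})$ cancels, leaving $\mu(y_+^{1})-\mu(y_-^{m})$. Since $C_1\cap Y_+\neq\emptyset$ we have $y_+^{1}\in Y_+$ so $\mu(y_+^{1})=\mu_{\max}$ (the value of $\mu$ on the source), and similarly $\mu(y_-^{m})=\mu_{\min}$; hence $\sum_i\delta(C_i)(\cL\cdot C_i)=\mu_{\max}-\mu_{\min}=\delta(C_{\gen})(\cL\cdot C_{\gen})$.

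Since $\cL\in\Pic(X)$ was arbitrary and classes in $\Nu(X)$ are determined by their intersection numbers against all line bundles, this yields $\sum_i\delta(C_i)[C_i]=\delta(C_{\gen})[C_{\gen}]$ in $\Nu(X)$. For the equalized case, Corollary~\ref{cor:allequalized}(ii) gives $\delta(C_i)=\delta(C_{\gen})=1$ for every $H$-invariant rational curve, so the identity becomes $\sum_i[C_i]=[C_{\gen}]$. The only genuinely delicate point is the bookkeeping in the telescoping sum: one must be careful that the weight of $\cL$ at a fixed point depends only on its fixed component (so that $\mu(y_-^i)=\mu(y_+^{i+1})$ is legitimate even when the shared component is positive-dimensional), and that the sign conventions for source/sink in Definition~\ref{def:delta} match those making the chain run from $Y_+$ to $Y_-$ in the right order; both are immediate from the definitions, so I do not expect any real obstacle here.
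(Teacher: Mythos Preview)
Your proposal is correct and follows essentially the same approach as the paper: apply Lemma~\ref{lem:AMvsFM} to each $C_i$ and to $C_{\gen}$ with respect to an arbitrary line bundle $\cL$, telescope the resulting sum using that $\mu_\cL$ is constant on fixed components, and conclude by varying $\cL$; the equalized case follows from Corollary~\ref{cor:allequalized}(ii). The only cosmetic difference is that the paper writes $\mu_\cL(y_+),\mu_\cL(y_-)$ for the weights at the source and sink of $C_{\gen}$ rather than $\mu_{\max},\mu_{\min}$ (the latter notation being reserved for ample $\cL$), but your meaning is clear.
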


\begin{proof}  Let  $\cL$ be a line bundle on $X$, with linearization $\mu_{\cL}$. By Lemma \ref{lem:AMvsFM},
 $$\delta(C_{\gen})\cdot \deg f^*{\cL}=\mu_\cL(y_+)-\mu_\cL(y_-),$$ 
where $y_+$ and $y_-$ are respectively the source and the sink of $C_{\gen}$.
Again by Lemma \ref{lem:AMvsFM} we get 
$$\delta(C_i)\cdot \deg f_{i}^*\cL=\mu_\cL(y_+^{i})-\mu_\cL(y_-^{i}).$$
 Our assumptions imply that $\mu_\cL(y_-^{i})=\mu_\cL(y_+^{i+1})$ for $i=1,\dots,m-1$, $\mu_{\cL}(y_-^{m})=\mu_{\cL}(y_-)$, and $\mu_\cL(y_+^{1})=\mu_\cL(y_+)$. Therefore, using the above equalities, we can write 
 $$\delta(C_{\gen})(\cL\cdot C_{\gen})=\sum_{i=1,\dots,m} (\mu_\cL(y_+^{i})-\mu_\cL(y_-^{i}))=\sum_{j=1,\dots,m} \delta(C_{i}) (\cL\cdot C_i),$$
and, since we can repeat this procedure for every line bundle $\cL$ on $X$, we get the statement. The last claim follows from Corollary \ref{cor:allequalized} (ii).
\end{proof}

Another consequence of Lemma \ref{lem:AMvsFM} is that there exists a chain of $H$-invariant curves linking any point with the sink and the source:
\begin{corollary}\label{cor:chain}
Let $(X,\cL)$ be a polarized pair with an $H$-action. 
Given a point $x\in X$ there exist a sequence of $H$-invariant irreducible curves $C_1,\dots,C_m$, such that $C_i\cap C_{i+1}$ is an $H$-fixed point for every $i$, $C_1\cap Y_-\neq \emptyset\neq C_m\cap Y_+$, and such that $x\in \bigcup_i C_i$.  
\end{corollary}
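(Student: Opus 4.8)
The plan is to connect $x$ to the source and sink by using the BB-decomposition to climb up and down a chain of fixed components. First I would recall that by Theorem \ref{thm:BB_decomposition} (1) the point $x$ lies in exactly one cell $X^+(Y)$ for some fixed component $Y\in\cY$; by part (2), $X^+(Y)$ is a $\CC^{\nu^+(Y)}$-fibration over $Y$, and the closure of the orbit $\overline{H\cdot x}$ is an $H$-invariant curve whose sink is $x$ itself (the limit $\lim_{t\to 0}tx$, which lies in $Y$) and whose source is some fixed point $x'\in X$ lying in a different component $Y'$, \emph{unless} $x$ is already a fixed point or $Y=Y_+$. If $Y\ne Y_+$, then $\nu^-(Y)>0$ (since $Y$ is not the source, $X^+(Y)$ is not dense, but more to the point the orbit of $x$ flows out of $Y$ in the direction $t\to\infty$), so the source $x'$ of $\overline{H\cdot x}$ lies strictly "above" $Y$ in the sense that $\mu_\cL(Y')>\mu_\cL(Y)$ for any linearization $\mu_\cL$ of an ample $\cL$: this is exactly (AM vs FM) of Lemma \ref{lem:AMvsFM}, since $\delta>0$ at the source and $\deg(\cL_{|\overline{H\cdot x}})>0$ by ampleness.

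Next I would iterate this construction starting from $x'\in Y'$: pick a point in $X^+(Y')$ near $x'$, or more cleanly, observe that it suffices to produce, for every fixed point $z$ with $z\notin Y_+$, an $H$-invariant irreducible curve through $z$ whose source lies in a strictly higher component. For this one uses part (2) again: $z\in Y'$ and $X^+(Y')\simeq N^+(Y')$ as $H$-varieties, and since $Y'\ne Y_+$ we have $\nu^+(Y')$ arbitrary but what we need is a curve going \emph{up}, i.e. we instead look at $X^-(Y')\simeq N^-(Y')$; because $Y'\ne Y_+$, it follows from Theorem \ref{thm:BB_decomposition} that $\nu^-(Y')\ge 1$, so $N^-(Y')$ has a nonzero fiber over $z$, and a line in that fiber is (the germ of) an $H$-invariant curve $C$ with sink $z$ and source a fixed point in a higher component. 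Each such step strictly increases the value of $\mu_\cL$ on the fixed components; since $\cY$ is finite and $\mu_\cL$ takes finitely many values, the process terminates, and it can only terminate at $Y_+$ (the unique component from which the upward orbit does not escape). Symmetrically, running the BB-decomposition for the $t^{-1}\to 0$ flow — equivalently applying the same argument to the inverted action — produces a downward chain from $x$ ending at $Y_-$: start from $\overline{H\cdot x}$, whose source is a fixed point in a strictly higher component is not what we want; rather, note $x$ itself is the sink of $\overline{H\cdot x}$ already lies in $Y$, so to go down from $x$ toward $Y_-$ we descend from $Y$: if $Y\ne Y_-$ then $\nu^+(Y)\ge 1$, and a line in a fiber of $N^+(Y)$ over (a point mapping to) the base gives a curve with source in $Y$ and sink strictly lower. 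Concatenating the two chains along the common fixed point $x$ (or the fixed components through which they pass) gives $C_1,\dots,C_m$ with $C_i\cap C_{i+1}$ an $H$-fixed point, $C_1\cap Y_-\ne\emptyset\ne C_m\cap Y_+$, and $x\in\bigcup_i C_i$.

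I expect the main obstacle to be the bookkeeping at the junction points: the curve produced at one step has a fixed \emph{point} as its source, but the next step must start from a point of the fixed \emph{component} containing it, and one must be careful that the new curve's sink can be chosen to be exactly that same fixed point (not merely a point in the same component) so that $C_i\cap C_{i+1}$ is genuinely a point of $X^H$. This is handled by the $H$-equivariant identifications $X^\pm(Y)\simeq N^\pm(Y)$ of Theorem \ref{thm:BB_decomposition} (2), which let one take, over the chosen point of $Y$, an honest line in the appropriate summand of the normal bundle; its closure in $X$ is the desired $H$-invariant curve passing through the prescribed fixed point. The termination and monotonicity are then immediate from (AM vs FM) applied with a fixed ample linearization, since that lemma forces $\mu_\cL$ to be strictly monotone along each curve in the chain.
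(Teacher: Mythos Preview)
Your approach is essentially the paper's: take the closure of the orbit through $x$, then from each endpoint iterate using the BB-cells (a nonzero fiber of $N^{\mp}(Y)$ over the given fixed point), with monotonicity of $\mu_\cL$ from Lemma~\ref{lem:AMvsFM} forcing termination at $Y_\pm$. The paper's proof is the same argument, stated more tersely and only for the downward direction.

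One correction worth making: in your first paragraph the words ``source'' and ``sink'' are swapped. By Definition~\ref{def:delta}, the \emph{source} of an orbit is $\lim_{t\to 0}tx$ and the \emph{sink} is $\lim_{t\to\infty}tx$; so if $x\in X^+(Y)$ then the source of $\overline{H\cdot x}$ lies in $Y$, and its sink $x'$ lies in a component $Y'$ with $\mu_\cL(Y')<\mu_\cL(Y)$ (not $>$). You silently fix this in the second paragraph when you switch to $X^-(Y')$ to go upward, and the final assembly still works because from either endpoint of $\overline{H\cdot x}$ one can build both an ascending and a descending chain; but the exposition would be cleaner if you start the upward chain from the source $\lim_{t\to 0}tx\in Y$ (using $N^-(Y)$, valid when $Y\ne Y_+$) and the downward chain from the sink $x'$ (using $N^+(Y')$, valid when $Y'\ne Y_-$), exactly as the paper does.
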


\begin{proof}
Let $x\in X$ be any point, and let $x_{\pm}:=\lim_{t\to 0}t^{\pm 1}x$. We claim that, either $x_-\in Y_-$, or there exists an $H$-invariant irreducible curve $C$ with source $x_-$ and whose sink $x_-'$ satisfies $\mu_{\cL}(x_-')<\mu_{\cL}(x_-)$. This can be used recursively to show that there exists a connected chain of $H$-invariant irreducible curves linking $x$ with $Y_-$. A similar argument provides a chain linking $x$ with $Y_+$. 

To prove the claim, we note that if $x_-\not\in Y_-$, denoting by $Y$ the fixed point component containing $x_-$, we have that $\nu^+(Y)>0$. Then by Theorem \ref{thm:BB_decomposition}(2), there exists a $1$-dimensional orbit converging to $x_-$ when $t$ goes to $0$; let $C$ be the closure of this curve, and $x_-'$ be its sink. By Lemma \ref{lem:AMvsFM}, we get $\mu_{\cL}(x_-')<\mu_{\cL}(x_-)$.
\end{proof}

Lemma \ref{lem:AMvsFM} can be generalized to vector bundles in the following way:

\begin{lemma}\label{lem:generalizedAMvsFM}
Let $\cE$ be a vector bundle over $\PP^1$ with splitting $\cE\iso\bigoplus_i\cO(d_i)$.
We take an  $H$-action on $\PP^1=\CC^*\cup\{y_+\}\cup\{y_-\}$ with source  $y_+$ and sink  $y_-$, and  consider a linearization of this action on $\cE$. We denote by $a_i$ the weights at $\cE_{y_+}$ and by $b_i$ the weights at $\cE_{y_-}$.
Then, after possibly renumbering the $a_i$'s and the $b_i$'s, we have 
$$a_i-b_i=d_i$$
\end{lemma}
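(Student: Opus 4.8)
The plan is to reduce the statement to the line-bundle case (Lemma \ref{lem:AMvsFM}) by decomposing $\cE$ into its $H$-invariant line subbundles. First I would note that, since $H=\C^*$ is linearly reductive and the linearized $H$-action on $\cE$ restricts to a linear action on each fiber $\cE_{y_+}$ and $\cE_{y_-}$, the bundle $\cE$ is equivariantly filtered (or, over $\P^1$, equivariantly split) into $H$-invariant line subbundles. The cleanest route: pick a general orbit point $y\in\C^*\subset\P^1$ and consider the fiber $\cE_y$ together with the action of the isotropy-translated torus; more robustly, recall that every $H$-equivariant vector bundle on $\P^1$ with a $\C^*$-action decomposes equivariantly as $\cE\iso\bigoplus_i \cL_i$ where each $\cL_i$ is an $H$-equivariant line bundle. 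Granting this, each $\cL_i$ has a degree $d_i$ (so that $\bigoplus_i\cO(d_i)$ is the underlying splitting, after renumbering), and $\cL_i$ carries weights $a_i$ at $y_+$ and $b_i$ at $y_-$. Applying Lemma \ref{lem:AMvsFM} to each $\cL_i$ with $\delta(y_+)=1$ (which holds because $H$ acts on $\P^1$ with source $y_+$ and sink $y_-$ in the standard way — the weight on $T_{y_+}\P^1$ is $+1$ for the tautological action, and in any case one may normalize) gives $a_i-b_i=\delta(y_+)\cdot\deg\cL_i=d_i$, which is exactly the claim.

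The main technical point I would need to justify carefully is the equivariant splitting of $\cE$ into $H$-invariant line bundles. One way: the $H$-action on $\P^1$ has exactly the two fixed points $y_+,y_-$, and $\P^1\setminus\{y_-\}\iso\A^1$, on which $\cE$ trivializes; the linearization gives a weight decomposition $\cE|_{\A^1}=\bigoplus_\alpha \cE_\alpha$ by weights of the $H$-action, but because the base $\A^1$ itself carries a nontrivial $H$-action, these weight spaces are not constant and one must instead argue fiberwise at $y_+$ and twist. A more economical argument is to use that $H$ acts on the total space of $\cE$ commuting with the fiberwise scaling, hence acts on $\P(\cE\oplus\cO)$ fixing the zero and infinity sections over $y_+$ and $y_-$, and then invoke the structure of $\C^*$-actions on ruled surfaces; alternatively one invokes the elementary fact (e.g. via Grothendieck's theorem together with uniqueness of the splitting type and semicontinuity) that an $H$-linearized bundle over $\P^1$ with the given $\C^*$-action splits $H$-equivariantly. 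I expect this equivariant-splitting step to be the only real obstacle; once it is in hand, the rest is a line-by-line application of Lemma \ref{lem:AMvsFM}.

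Concretely, here is the order of steps I would carry out. Step 1: Observe $\delta(y_+)=1$ for the given action on $\P^1$ (or keep it as $\delta(y_+)$ and absorb it; since source/sink are as specified, the tautological normalization gives $+1$). Step 2: Establish that $\cE$ admits an $H$-equivariant decomposition $\cE\iso\bigoplus_{i}\cL_i$ into $H$-linearized line bundles; by Grothendieck's theorem the underlying line bundles are $\cO(d_i)$ for suitable integers $d_i$, matching the given splitting type after renumbering. Step 3: For each $i$, read off the weight $a_i$ of the $H$-action on the fiber $(\cL_i)_{y_+}\subseteq\cE_{y_+}$ and the weight $b_i$ on $(\cL_i)_{y_-}\subseteq\cE_{y_-}$; since the fiber weight decompositions of $\cE_{y_\pm}$ are exactly $\{a_i\}$ and $\{b_i\}$, this is a renumbering of the originally given weights. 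Step 4: Apply Lemma \ref{lem:AMvsFM} to $\cL_i$: $a_i-b_i=\mu_{\cL_i}(y_+)-\mu_{\cL_i}(y_-)=\delta(y_+)\cdot\deg\cL_i=d_i$. Step 5: Conclude that with this common renumbering of the $a_i$'s, $b_i$'s and $d_i$'s, the identity $a_i-b_i=d_i$ holds for all $i$, which is the assertion.
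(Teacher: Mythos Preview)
Your approach is correct in outline, but it differs from the paper's and leans on a step you yourself flag as the obstacle: the full $H$-equivariant splitting of $\cE$ into line subbundles. This fact is true for $\C^*$-equivariant bundles on $\P^1$, but you do not actually prove it; the sketches you offer (weight decomposition over an affine chart, structure of $\C^*$-actions on ruled surfaces, Grothendieck plus semicontinuity) are all vague, and none of them is a proof as written.

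The paper avoids this structural claim entirely. It observes that the canonical decomposition by degree, $\cE=\bigoplus_j V_j\otimes\cO(d_j)$ with the $d_j$ pairwise distinct, is unique and hence automatically $H$-stable; this reduces to the case $\cE=V\otimes\cO(d)$. A twist by an equivariant line bundle (using the line-bundle formula, Lemma~\ref{lem:AMvsFM}) further reduces to $\cE=V\otimes\cO$, and there the evaluation maps $V=H^0(\P^1,\cE)\to\cE_{y_\pm}$ are $H$-equivariant isomorphisms, forcing the multisets of weights at $y_+$ and $y_-$ to coincide. This is strictly more elementary than what you propose: it uses only the uniqueness of the degree decomposition and the cohomology of the trivial bundle, rather than an equivariant Grothendieck theorem. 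Incidentally, the paper's argument, read backwards, \emph{gives} a proof of the equivariant line-bundle splitting you wanted (the weight decomposition of $V$ splits $V\otimes\cO$ equivariantly, then twist back), so the two routes ultimately converge; but the paper's is self-contained where yours is not.

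One small point on your Step~1: the hypothesis does not literally say the action on $\P^1$ is faithful, so $\delta(y_+)=1$ is an implicit normalization. The paper makes the same tacit assumption (its line-bundle step reads $a-b=d$ rather than $a-b=\delta(y_+)\,d$), so this is not a flaw peculiar to your argument.
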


 \begin{proof}
 First we note that, by additivity of characters, the formula behaves well
 with respect to the twist of $\cE$ by any line bundle with any linearization.
 That is, if $\cL=\cO(d)$ has a linearization such that the weights at $\cL_0$
and $\cL_\infty$ are $a$ and $b$, respectively, then, by Lemma \ref{lem:AMvsFM}, one has 
 $a-b=d$ and $\cE(d)$ has a product linearization with weights $a_i+a$
 at $\cE(d)_{y_+}$ and $b_i+b$ at $\cE(d)_{y_-}$.

Next we note that splitting $\cE=\bigoplus_j V_j\otimes\cO(d_j)$, with $V_j$
vector spaces and $d_j$ pairwise different integers, is preserved under the
 $H$-action. Therefore it is enough to prove our formula for vector
 bundles of type $V\otimes\cO(d)$ and, eventually, by the remark above, for
 trivial vector bundles.

If $\cE=V\otimes\cO$ then any linearization of $\cE$ yields a linearization of
$V=H^0(\PP^1,\cE)$. The evaluation of sections at $y_+$ and $y_-$ yields equivariant morphisms $V\rightarrow\cE_{y_+}$ and $V\rightarrow\cE_{y_-}$.
Therefore the weights of the action  at $\cE_{y_+}$ and $\cE_{y_-}$ are the same which concludes the proof of the lemma.
\end{proof}

Lemma \ref{lem:generalizedAMvsFM} can be used to determine the splitting type of the tangent bundle on
orbits of the action. In particular we have the following:

\begin{corollary}\label{cor:splittingTangent}
Let $X$ be a projective variety admitting an equalized action of $H=\CC^*$ such that the
source is a point. Then the restriction of $TX$ to the closure of any
orbit of the action joining the source with a component $Y\in \cY$ is
$$\cO(2)^{\nu^-(Y)}\oplus\cO(1)^{\dim Y}\oplus\cO^{\nu^+(Y)}.$$
\end{corollary}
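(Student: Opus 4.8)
The plan is to reduce the statement to an application of Lemma \ref{lem:generalizedAMvsFM} applied to the vector bundle $f^*TX$ on $\PP^1$, where $f\colon\PP^1\to X$ is the normalization of the closure $C$ of the chosen orbit. First I would fix such an orbit, with source the point $Y_+$ (which is a point by hypothesis) and sink landing in the fixed component $Y$. By Corollary \ref{cor:allequalized}(i), since the action is equalized at the source, the curve $C$ is smooth at the source, and a similar remark using Corollary \ref{cor:allequalized} shows that $f\colon\PP^1\to C$ is of a controlled nature; in any case $f^*TX$ is a rank $n$ vector bundle on $\PP^1$, hence splits as $\bigoplus_i\cO(d_i)$, and what must be shown is that the multiset $\{d_i\}$ is $\{2\ (\nu^-(Y)\text{ times}),\ 1\ (\dim Y\text{ times}),\ 0\ (\nu^+(Y)\text{ times})\}$. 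Note $\nu^-(Y)+\dim Y+\nu^+(Y)=n$, so the count is at least consistent.

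The key computation is to identify the $H$-weights on the fibers $(f^*TX)_{y_+}$ and $(f^*TX)_{y_-}$. At the source $y_+$, which maps to the point $Y_+\in\cY$: here $TY_+=0$, so $TX_{|Y_+}=N^+(Y_+)$ has all weights $+1$ by equalization, hence the $n$ weights of $H$ on $(f^*TX)_{y_+}=T_{X,Y_+}$ are all equal to $+1$. At the sink $y_-$, which maps into $Y$: the fiber decomposes as $TY\oplus N^+(Y)\oplus N^-(Y)$ with weights $0$ (with multiplicity $\dim Y$), $+1$ (with multiplicity $\nu^+(Y)$) and $-1$ (with multiplicity $\nu^-(Y)$) respectively, again by equalization. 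Now Lemma \ref{lem:generalizedAMvsFM} tells us that, after a suitable matching of the weights $a_i$ at $y_+$ with the weights $b_i$ at $y_-$, we have $d_i=a_i-b_i$. Since every $a_i=1$, the possible values of $d_i=1-b_i$ are $1-(-1)=2$, $1-0=1$, and $1-1=0$; and the number of times each $b_i$-value occurs is precisely $\nu^-(Y)$, $\dim Y$, $\nu^+(Y)$. This yields exactly the claimed splitting type $\cO(2)^{\nu^-(Y)}\oplus\cO(1)^{\dim Y}\oplus\cO^{\nu^+(Y)}$.

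The main point requiring care — the step I expect to be the only real obstacle — is the identification of the weights on $(f^*TX)_{y_\pm}$ with the weights appearing in the decompositions \eqref{eq:dectang}–\eqref{eq:decnorm}, i.e.\ checking that pulling back along $f$ does not distort the weights. Since $f$ is $H$-equivariant and $y_\pm$ are $H$-fixed, the fiber $(f^*TX)_{y_\pm}$ is canonically the $H$-module $T_{X,f(y_\pm)}$, so this is immediate; one should just remark that $f(y_+)=Y_+$ is a point (so its tangent space is entirely normal-bundle directions with weight $+1$), and that $f(y_-)$ lies in $Y$ where the splitting \eqref{eq:dectang} applies verbatim. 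No subtlety arises from $f$ possibly being ramified, because the splitting type of $f^*TX$ is computed purely from the two fiber $H$-modules via Lemma \ref{lem:generalizedAMvsFM}, independently of the degree of $f$. Hence the corollary follows.
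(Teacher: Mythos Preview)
Your proof is correct and takes the same approach as the paper: compute the $H$-weights on $TX$ at both endpoints (all $+1$ at the isolated source, and $(-1^{\nu^-(Y)},0^{\dim Y},1^{\nu^+(Y)})$ at $Y$) and apply Lemma \ref{lem:generalizedAMvsFM}. The only difference is that the paper dispenses with your final paragraph by noting directly that equalization forces $C$ to be a smooth rational curve (Corollary \ref{cor:allequalized}(i)), so $f$ is an isomorphism and the question of ramification simply does not arise; your remark that the computation is ``independent of the degree of $f$'' is a bit misleading, since if $f$ genuinely had degree $d>1$ the splitting of $f^*TX$ would scale accordingly and would no longer compute $TX|_C$.
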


\begin{proof}
Since the action is equalized, by Corollary \ref{cor:allequalized} (i) the closures of all orbits are smooth rational curves. Moreover the weights at the source are equal to $1$, while the weights
at $Y$ are $(-1^{\nu^-(Y)}, 0^{\dim Y}, 1^{\nu^+(Y)})$.
\end{proof}
\ignore{
Finally we show how to compute the index of $X$, when $\Pic X=\ZZ L$.

\begin{lemma}\label{lem:index1}\gotodo{We have to decide between this Lemma and Lemma \ref{lem:index}}
 Let $(X,L)$ be a variety with a faithful action of $\CC^*$. Suppose that
$\Pic X=\ZZ L$ and that the bandwidth of the triple $(X,L,H)$ is $|\mu|$. Denote by
 $Y_0$ and $Y_\mu$ the sink and the source of the action. Then the Fano index is equal to $$(w(N^+(Y_\mu)-w(N^-(Y_0))/|\mu|,$$
 where $w(\_)$ denotes the  sums of the weights of the action.

In particular, if the action is equalized, then the Fano index of $X$ is equal to
$\left(2\dim X -\dim Y_\mu- \dim Y_0\right)/|\mu|.$
\end{lemma}

\begin{proof} Let us denote by $i$ the index of $X$, and  by $C$ the general orbit joining the source and the sink of the $\CC^*$-action.
Let $\mu_{-K_{X}}$  be the natural linearization of $\det{T_X}$. We  take the normalization of the general orbit $f\colon \PP^{1}\to C\subset X$, and lift up the action on $ \PP^{1}$ to compute the intersection numbers $-K_X\cdot C$ and $L\cdot C$.
Using that $-K_X=i L$ and  applying Lemma \ref{lem:AMvsFM} to the bundles $f^{*}(-K_X)$ and $f^{*}L$ we get $$-K_X\cdot C=\mu_{-K_{X}}(Y_{\mu})-\mu_{-K_{X}}(Y_{0})=i (L\cdot C)=i |\mu|.$$
Since for every $y \in Y_i$ the character of the action of $\C^*$ on $\det (T_X)_y$  is the sum of weights of the action on $(T_X)_y$  (cf. \cite[Lemma 3.11]{BWW}), and the non zero weights are the weights of the action on $N_{Y_i/X}$ we obtain that $\mu_{-K_{X}}(Y_{\mu})=w(N^+(Y_\mu))$, while $\mu_{-K_{X}}(Y_{0})=w(N^-(Y_0))$ and the result follows. If the action is equalized then all  the weights of the action on $N^+(Y_\mu)$ (resp. on $N^-(Y_0)$) are $+1$ (resp. $-1$).

Then $w(N^+(Y_\mu))=\codim Y_\mu$, and $w(N^-(Y_0))=-\codim Y_0$, hence the claim follows by the above equality. 
\end{proof}}

We finish this section by introducing $H$-trivial line bundles on $X$ and discussing some properties of their base loci.

\begin{definition} \label{def:H-invariant}
A line bundle $\cL\in\Pic (X)$ is called $H$-{\em trivial} 
if it is trivial on the closure of every orbit of $H$. Note that $H$-trivial line bundles are $H$-invariant, but the converse is not true in general; there are many examples of $H$-actions in which the sink $Y_-$ is a divisor, which is $H$-invariant but not $H$-trivial (see Section \ref{sec:Btype_Bordism} below). The subgroup of
$H$-trivial line bundles will be denoted by $\Pic(X)^H<\Pic (X)$. We can define $\NU(X)^H$ in a similar way. In other words, $\NU(X)^H$ is the space orthogonal, with respect to the pairing of $\NU(X)$ and $\Nu(X)$ given by the intersection, to the subspace $\Nu(X)^H$ spanned by the classes of closures of orbits of $H$.
\end{definition}

A very important property of $H$-trivial line bundles is that their spannedness on the sink or the source implies their spannedness on the whole variety: 

\begin{lemma}\label{lem:base-locus}
Let $X$ be a variety with an $H$-action, with source $Y_+$ and sink $Y_-$ and let  $\cL$ be a $H$-trivial line bundle. If either
$\Bs|\cL|\cap Y_-=\emptyset$ or $\Bs|\cL|\cap Y_+=\emptyset$, then
$\Bs|\cL|=\emptyset$.
\end{lemma}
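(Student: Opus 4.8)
The plan is to exploit Corollary~\ref{cor:chain}, which says that every point $x\in X$ lies on a connected chain of $H$-invariant irreducible curves $C_1,\dots,C_m$ joining the sink $Y_-$ to the source $Y_+$. The key observation is that an $H$-trivial line bundle $\cL$ restricts trivially to every such curve $C_i$; in particular, a section of $\cL$ that does not vanish identically on $C_i$ cannot vanish at any point of $C_i$ (a section of $\cO_{\P^1}$ pulled back along the normalization is constant). So, to show $x\notin\Bs|\cL|$ it suffices to produce a single global section of $\cL$ that is nonzero at one point of the chain through $x$ — because nonvanishing then propagates along each $C_i$, and the chain is connected.

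First I would reduce to a statement about equivariant sections. Choose a linearization $\mu$ of $\cL$. The space $V:=\HH^0(X,\cL)$ then carries an $H$-action and decomposes into weight spaces $V=\bigoplus_{j} V_j$; every point of $\Bs|\cL|$ is a common zero of all the weight sections, so it is enough to work with $H$-semiinvariant sections. Suppose, say, $\Bs|\cL|\cap Y_-=\emptyset$; I want to deduce $x\notin\Bs|\cL|$ for arbitrary $x$. Pick the chain $C_1,\dots,C_m$ through $x$ as above, with $C_1\cap Y_-\ne\emptyset$. Pick a point $y\in C_1\cap Y_-$. Since $y\notin\Bs|\cL|$, some semiinvariant section $s$ is nonzero at $y$; because $y$ is a fixed point, $s(y)\ne 0$ means the weight of $\mu$ on the fiber $\cL_y$ coincides with the weight of $s$, and $s$ is nonzero on the whole connected component $Y_-$ only if... — more simply, $s$ restricted to $C_1$ is a section of $\cL|_{C_1}\cong\cO$ which is nonzero at $y$, hence nowhere zero on $C_1$ (after passing to the normalization); in particular $s$ is nonzero at the other endpoint of $C_1$, which is a fixed point lying on $C_2$. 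Iterating along the chain, $s$ is nonzero at every point of $\bigcup_i C_i$, and in particular at $x$. Hence $\Bs|\cL|=\emptyset$. The argument starting from $\Bs|\cL|\cap Y_+=\emptyset$ is identical, running the chain in the other direction.

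The one point that needs a little care — and which I expect to be the main (minor) obstacle — is the transition from a fixed point $p=C_i\cap C_{i+1}$ lying in some inner fixed component $Y$ back onto the next curve: I must know that ``$s$ nonzero at the endpoint of $C_i$'' implies ``$s$ nonzero at the starting point of $C_{i+1}$'', which is immediate since these are the same point $p$, but I also implicitly use that $\cL$ is genuinely $H$-trivial (not merely $H$-invariant) precisely so that $\cL|_{C_i}\cong\cO_{\P^1}$ and a section nonvanishing at one point is nonvanishing everywhere on $C_i$. If $\cL$ were only $H$-invariant this step would fail, which is exactly the phenomenon flagged in Definition~\ref{def:H-invariant}. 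So the proof is essentially a connectedness/propagation argument built on Corollary~\ref{cor:chain} and the triviality of $\cL$ along orbit closures; no deeper input is required.
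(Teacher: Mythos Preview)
Your argument is correct and uses the same core idea as the paper: take a chain of $H$-invariant curves from Corollary~\ref{cor:chain} and propagate along it using the fact that $\cL$ has degree zero on each component. The paper runs the same propagation contrapositively and more cleanly: starting from a hypothetical base point $x$, it observes that if $C_j\cdot\cL=0$ and $C_j$ meets an effective divisor $D\in|\cL|$ then $C_j\subset D$, hence $C_j\subset\Bs|\cL|$; by connectedness the whole chain lies in $\Bs|\cL|$, forcing $\Bs|\cL|$ to meet both $Y_-$ and $Y_+$. This avoids your detour through linearizations and semiinvariant sections, which is harmless but unnecessary.
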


\begin{proof}
Let $x\in \Bs|\cL|$ be a base point, and let $C_1\cup\dots\cup C_m$ be a connected chain of $H$-invariant irreducible curves linking $x$ with the sink and with the source, which exists by Corollary \ref{cor:chain}. By hypothesis $C_i\cdot \cL=0$ for every $i$, and $\bigcup_iC_i\cap \Bs|\cL|\neq \emptyset$. If a component $C_j$ intersects $\Bs|\cL|$, then it meets every effective divisor $D\in|\cL|$, and the equality $C_j\cdot \cL=0$ implies that $C_j$ is contained in each $D\in|\cL|$; it follows that $C_j\subset \Bs|\cL|$. Since $\bigcup_iC_i$ is connected and meets $\Bs|\cL|$, we may conclude that it is contained in $\Bs|\cL|$ and, in particular, $\Bs|\cL|$ intersects both $Y_-$ and $Y_+$.
\end{proof}


\subsection{Rational homogeneous spaces: notation}\label{ssec:notRH}

We will finish this section of preliminaries by introducing the notation we will use when dealing with rational homogeneous varieties.

It is well known that two isogenous semisimple groups have the same projective quotients, hence a rational homogeneous variety $G/P$ is completely determined by the Lie algebra $\fg$ of $G$ and the parabolic subalgebra $\fp\subset \fg$ of $P$ which, up to adjoint action, are determined by the Dynkin diagram $\cD$ of $\fg$ and the choice of a particular set of its nodes.

More concretely, given a semisimple group $G$ with Lie algebra $\fg$, we fix a Borel subgroup $B\subset G$ and a maximal torus $T\subset B$. This choice determines a root system $\Phi$ contained in the character lattice $\Mo(T):=\Hom(T,\C^*)$ of $T$, together with a base of positive simple roots $\Delta$ of $\Phi$, that determines a partition of the root system into subsets of positive and negative roots, $\Phi=\Phi^+\cup \Phi^-$. The finite group $W={\rm N}(T)/T$ which can be shown to be independent of the choice of the maximal torus $T$ of $G$ is called the \textit{Weyl group} of $G$. Any element $\alpha_i\in\Delta$ defines an involution of $\Mo(T)$ sending $\alpha_i$ to $-\alpha_i$ and leaving $\Phi$ invariant, that we denote by $s_i$. The Dynkin diagram $\cD$ of $G$ will be a graph (admitting multiple, oriented edges) whose set of vertices $D$ is in one to one correspondence with $\Delta$, and whose edges encode the behaviour of the involutions $s_i$ on $\Phi$. It follows that $\Phi$ is completely determined by $\cD$, and one can show that the Lie algebra $\fg$ is completely determined by $\Phi$.

For every subset of nodes $I \subset D$ one can consider the subgroup $W(D \setminus I)\subset W$ generated by the elements $s_i$, $i \notin I$. The subgroup $P(D\setminus I):=BW(D \setminus I)B$ is a parabolic subgroup of $G$, and quotient $G/P(D\setminus I)$ is a projective variety, that we call the rational homogeneous variety associated to $G$ and to the set of nodes $I$. Note that the Lie algebra of $P(D\setminus I)$ is equal to
$$
\fp(D\setminus I)=\fh\oplus\bigoplus_{\alpha\in\Phi^+}\fg_\alpha\oplus\bigoplus_{\alpha\in\Phi^+(D\setminus I)}\fg_{-\alpha},
$$
where $\Phi^+(D\setminus I)$ denotes the subset of $\Phi^+$ generated by the positive simple roots $\alpha_j$, $j\in D\setminus I$.

As we have already noted, the variety obtained upon $I$ for another semisimple group $G'$ isogenous to $G$ is the same, and it makes sense to use the following notation:
$$\cD(I):=G/P(D \setminus I).$$

Note that, with this notation, the choice of an inclusion $I\subset J\subset D$ gives rise to a contraction:
$$
\cD(J)\to\cD(I).
$$

For instance, $\cD(D)=G/P(\emptyset)=G/B$ is the complete flag variety associated to $G$, and  $\cD(\emptyset)=G/G$ is always a point. In the cases in which $\cD$ is disconnected, a rational homogeneous variety $\cD(I)$ is a product, whose factors correspond to the connected components $\cD_i$ of $\cD$, marked on the nodes of $I$ contained in $\cD_i$.
The rational homogeneous varieties associated to Dynkin diagrams of classical type ($\DA_n$, $\DB_n$, $\DC_n$, $\DD_n$) can be easily described in classical language, that we will use in the cases in which a more detailed geometric description of certain rational homogeneous varieties is convenient. For instance:
 $$\begin{array}{l}\DA_n(1)\simeq \P^n,\,\, \DA_n(n)\simeq\P^{n\vee},\,\, 
 \DA_n(i)\simeq\G(i-1,n)=\{\mbox{subspaces }\P^{i-1} \subset\P^n\},\\[2pt]
 \DA_n(1,n)\simeq\P(T_{\P^n}),\\[2pt]
 \DB_n(1)\simeq\Q^{2n-1}\subset\P^{2n}, \,\,\DD_n(1)\simeq\Q^{2n-2}\subset\P^{2n-1} \mbox{(smooth quadrics)},\\[2pt]
 \DC_3(3)\simeq \LG(2,5)\mbox{ (Lagrangian Grassmannian parametrizing Lagrangian $\P^2\subset\P^5$}\\[1pt]
 \mbox{w.r.t. a contact form in $\P^5$)},\\[2pt]  
 \DD_6(6)\simeq S^{15} \mbox{ (Spinor variety, parametrizing one of the two connected families of }\\[1pt]
 \mbox{subspaces $\P^5\subset \P^{11}$ that are contained in a smooth quadric $\Q^{10}\subset \P^{11}$).}
 \end{array}
 $$




\section{B-type torus actions and bordisms}\label{sec:Btype_Bordism}

In this section we discuss the concepts of B-type $\C^*$-action and bordism. In a nutshell, the conditions defining these two types of actions will allow us to define a birational morphism between the  sink and the source, regular in codimension one, which encodes many properties of the action, and that, in certain situations, will determine it.

\subsection{B-type torus actions}

\begin{definition}\label{def:bord}
Let  $X$ be a complex projective manifold 
with an action of $H=\CC^*$. We say that the action is of {\em B-type} if its extremal fixed components $Y_-$ and $Y_+$ are codimension one subvarieties.
\end{definition}

By Theorem \ref{thm:BB_decomposition} (2), an $H$-invariant open neighborhood of $Y_\pm$ in $X$ (the BB-cell $X^{\pm}(Y_{\pm})$) is $H$-isomorphic to the line bundle $N_{Y_{\pm}/X}$. Two important consequences of this fact are:


\begin{remark}\label{rem:Btypeequalized}
If the action of $H$ is faithful, then the action on a general fiber of $\pi_{\pm}\colon N_{Y_{\pm}/X}\to Y_\pm$ is the homothety action or its inverse.  Since the weights of the action on the fibers of $\pi_{\pm}$ does not depend on the chosen point in $Y_\pm$,  a faithful B-type action is equalized at the extremal fixed components. 
The converse is true for any $H$-action, as observed in Corollary \ref{cor:allequalized} (iii).
\end{remark}

\begin{remark}\label{rem:Btypeintnumber}
In particular, if a B-type action of $H$ is faithful, the $H$-isomorphism $X^{\pm}(Y_{\pm})\simeq N_{Y_{\pm}/X}$ given by Theorem \ref{thm:BB_decomposition}(2) implies that the closure  $C$ of an orbit meeting $Y_-$ (resp. $Y_+$) meets $Y_-$ transversally; in particular $Y_- \cdot C=1$ (resp. $Y_+ \cdot C=1$).
\end{remark}


Given an $H$-action on $X$ with source $Y_+$ and sink $Y_-$, and  inner fixed components $Y_i$, $i\in I$, we consider the subvarieties  $Z_-^i:=\ol{X^+(Y_{i})}\cap Y_{-}\subset Y_{-}$, $Z_+^i:=\ol{X^-(Y_{i})}\cap Y_{+}\subset Y_{+}$ and we set: $$Z_-:=\bigcup_{i\in I}Z_-^i,\quad  Z_+:=\bigcup_{i\in I}Z_+^i.$$

\begin{lemma} \label{lem:birational_map_b} In the above notation, let $X$ be a complex projective manifold  with a B-type $H$-action. 
Then there exists an isomorphism:
$$
\psi:Y_{-}\setminus Z_{-}\lra Y_{+}\setminus Z_+,
$$
assigning to every point $x$ of $Y_{-}\setminus Z_{-}$ the limit for $t \to 0$ of the unique orbit having limit for $t^{-1} \to 0$ equal to $x$.
\end{lemma}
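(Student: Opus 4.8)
The plan is to use the Bia{\l}ynicki--Birula decomposition (Theorem \ref{thm:BB_decomposition}) to match up orbits. For a general point $x\in X$ we have $\lim_{t^{-1}\to 0}t\cdot x\in Y_-$ and $\lim_{t\to 0}t\cdot x\in Y_+$, since $Y_-,Y_+$ are the sink and the source. More precisely, by part (1) of the theorem the BB-cells $X^+(Y_+)$ and $X^-(Y_-)$ are dense open subsets of $X$, and since the action is of B-type, the complements are closely tied to the inner components. First I would show that $X^-(Y_-)\cap X^+(Y_+)$ is precisely the set of points lying on a one-dimensional orbit whose sink is in $Y_-$ and whose source is in $Y_+$, together with $Y_-\cap Y_+$ if nonempty; the points excluded are exactly those on orbits through an inner component, i.e.\ whose closure meets some $Y_i$.

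Next I would produce the map. By Theorem \ref{thm:BB_decomposition}(2), the projection $X^-(Y_-)\to Y_-$ realizes $X^-(Y_-)$ as (the total space of) the line bundle $N_{Y_-/X}=N^-(Y_-)$ over $Y_-$, the zero section being $Y_-$ itself; similarly $X^+(Y_+)\to Y_+$ realizes $X^+(Y_+)$ as $N_{Y_+/X}$ over $Y_+$. Restricting to the open set $U:=X^-(Y_-)\cap X^+(Y_+)$, both projections $p_-:U\to Y_-$ and $p_+:U\to Y_+$ are defined; I claim each is injective on the subset $U_0\subset U$ consisting of points on nontrivial orbits (equivalently, outside $Y_-\cap Y_+$). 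Indeed a point $x\in U_0$ lies on a unique one-dimensional orbit $\overline{H\cdot x}$ (uniqueness because two distinct orbit-closures through $x$ would force $x$ fixed), and this orbit has a single sink in $Y_-$ and a single source in $Y_+$; conversely, given $y\in Y_-$ not in $Z_-$, the fibre $p_-^{-1}(y)\cong N^-(Y_-)_y$ is a line whose nonzero points all lie on the single orbit through $y$, and by the B-type hypothesis (the weight on that line is $-1$ up to the faithfulness reduction, but more fundamentally the fibre is one-dimensional) this orbit escapes the locus of inner components exactly when $y\notin Z_-$. So $p_-$ restricts to an isomorphism $p_-:U_0\xrightarrow{\sim} Y_-\setminus Z_-$ and likewise $p_+:U_0\xrightarrow{\sim}Y_+\setminus Z_+$. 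Defining $\psi:=p_+\circ p_-^{-1}$ gives the desired map, and by construction it sends $x\in Y_-\setminus Z_-$ to the source of the unique orbit with sink $x$.

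It remains to identify $Z_\pm$ correctly and to check these are genuine algebraic morphisms. That $p_\pm$ are algebraic follows from Theorem \ref{thm:BB_decomposition}(2) (the maps $X^\pm(Y)\to Y$ are algebraic fibrations); the identifications $U_0$ with the complements of $Z_\pm$ come from unwinding the definitions $Z_-^i=\overline{X^+(Y_i)}\cap Y_-$: a point $y\in Y_-$ lies in $Z_-^i$ precisely when the orbit-closure through the generic point of the line $p_-^{-1}(y)$ degenerates into $Y_i$, i.e.\ when that orbit has source (or an intermediate fixed point) on $Y_i$ rather than on $Y_+$. I expect the main obstacle to be exactly this bookkeeping: showing rigorously that $p_-^{-1}(Y_-\setminus Z_-)=U_0=p_+^{-1}(Y_+\setminus Z_+)$, i.e.\ that the two a priori different "bad loci" (orbits that fail to descend from $Y_-$ past inner components, versus orbits that fail to rise to $Y_+$) actually cut out the same open subset of $U$. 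This is where one must carefully use that $\dim N_{Y_\pm/X}=1$ (so each fibre is a single line, hence a single orbit plus its limit points) together with the uniqueness of the orbit through a general point, to conclude that an orbit with sink outside $Z_-$ has no intermediate fixed points at all and therefore has source outside $Z_+$, and vice versa. Once that symmetry is established, $\psi$ and $\psi^{-1}=p_-\circ p_+^{-1}$ are mutually inverse isomorphisms and the proof is complete.
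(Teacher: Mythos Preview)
Your overall strategy is the same as the paper's: identify the common open subset $U_0=X^-(Y_-)\cap X^+(Y_+)\setminus(Y_-\cup Y_+)$ of $X$ and use the two BB projections $p_\pm$ to compare $Y_-\setminus Z_-$ and $Y_+\setminus Z_+$. However, there is a genuine error in your execution. You claim that $p_-:U_0\to Y_-\setminus Z_-$ is \emph{injective}, and then write ``$p_-$ restricts to an isomorphism $p_-:U_0\xrightarrow{\sim}Y_-\setminus Z_-$''. This is false on dimension grounds: $\dim U_0=\dim X=n$ while $\dim Y_-=n-1$. Indeed you yourself observe that the fibre $p_-^{-1}(y)$ is the punctured line $N^-(Y_-)_y\setminus\{0\}\cong\CC^*$, which is an entire $H$-orbit, not a single point. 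So the composition $p_+\circ p_-^{-1}$ is not defined as written.

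The correct statement, and exactly what the paper uses, is that $p_-|_{U_0}$ and $p_+|_{U_0}$ are both \emph{geometric quotients} of $U_0$ by $H$: the complement of the zero section in a line bundle is a principal $\CC^*$-bundle over its base. Since the two quotient maps have the same source $U_0$ with the same $H$-action, their targets $Y_-\setminus Z_-$ and $Y_+\setminus Z_+$ are canonically isomorphic, and $\psi$ is the induced map between them. Your bookkeeping identifying $p_\pm^{-1}(Y_\pm\setminus Z_\pm)$ with $U_0$ is fine; once you replace ``isomorphism'' by ``geometric quotient'' and define $\psi$ accordingly, the argument goes through and coincides with the paper's.
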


\begin{proof}
The complementary set of the zero section $N_{Y_\pm/X}^0\subset N_{Y_\pm/X}$ is a principal $\C^*$-bundle, from which $Y_\pm$ can be defined as the geometric quotient by the action of $H$; it is an algebraic map, mapping every $x$ to $\lim_{t\to 0} t^{\pm 1}\cdot x\in Y_\pm$.

We may now consider the open subsets $U_\pm=N_{Y_\pm/X}^0\setminus \pi_\pm^{-1}(Z_\pm)$; these two subsets are isomorphic since they can be identified in $X$ with the union of the set of orbits of the action having limiting points at $Y_{-}$ and $Y_{+}$. It then follows that their quotients by the action of $H$, $Y_-\setminus Z_-$ and $Y_+\setminus Z_+$,  are isomorphic.
\end{proof}

We will now show that, under certain mild conditions, the map $\psi$ constructed in Lemma \ref{lem:birational_map_b}, understood as a rational map from $Y_-$ to $Y_+$, is an isomorphism in codimension one.  Let us start by observing the following:

\begin{lemma} \label{eq:codimZ0}Assume that  the $H$-action on $X$ is of B-type, and let $Y_i$ be an inner fixed point component as above. If $Z_-^i \not = \emptyset$ then $\codim(Z_-^i,Y_-)=\nu^-(Y_{i})$. If $Z_+^i \not = \emptyset$ then  $\codim(Z_+^i,Y_+)=\nu^+(Y_{i})$. 
\end{lemma}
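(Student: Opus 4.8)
The plan is to analyze the closure $W:=\overline{X^+(Y_i)}$ directly, using the description of a neighbourhood of the sink coming from Theorem \ref{thm:BB_decomposition}(2). First I would record the dimension count: by Theorem \ref{thm:BB_decomposition}(2) the cell $X^+(Y_i)$ is a $\C^{\nu^+(Y_i)}$-fibration over the smooth connected variety $Y_i$, so $W$ is irreducible of dimension $\dim Y_i+\nu^+(Y_i)$, which equals $\dim X-\nu^-(Y_i)$ by the decomposition $TX_{\mid Y_i}=T^0\oplus T^+\oplus T^-$ with $T^0=TY_i$. Since the action is of B-type, $\dim Y_-=\dim X-1$, so the target equality $\codim(Z_-^i,Y_-)=\nu^-(Y_i)$ is equivalent to $\dim Z_-^i=\dim W-1$.

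Next I would pass to the open neighbourhood $X^-(Y_-)$ of the sink. By Theorem \ref{thm:BB_decomposition}(2) there is an $H$-equivariant isomorphism $X^-(Y_-)\simeq N^-(Y_-)=N_{Y_-/X}$ over $Y_-$; as $Y_-$ is a divisor this is a line bundle $\pi_-\colon N_{Y_-/X}\to Y_-$ on which $H$ acts fibrewise by $t\cdot(y,v)=(y,t^a v)$ for some integer $a<0$. Set $W':=W\cap X^-(Y_-)$, an $H$-invariant open subset of $W$ which contains $W\cap Y_-=Z_-^i$, since $Y_-$ is the zero section of $X^-(Y_-)$. Under the hypothesis $Z_-^i\neq\emptyset$ this makes $W'$ a nonempty open subset of the irreducible variety $W$, hence dense, so $\dim W'=\dim W$; moreover $W'$ is not contained in the zero section, for otherwise $Y_i\subseteq W=\overline{W'}\subseteq Y_-$, contradicting that $Y_i$ and $Y_-$ are distinct connected components of $X^H$.

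Then I would identify $Z_-^i$ with $\pi_-(W')$ and compute fibre dimensions. The local point is that an $H$-orbit inside a fibre line $\pi_-^{-1}(y)\cong\C$ is the whole punctured line $\{(y,s):s\neq 0\}$, whose closure contains $(y,0)$; hence, since $W'$ is $H$-invariant and $W$ is closed, whenever $(y,v)\in W'$ with $v\neq 0$ the entire fibre line over $y$ lies in $W'$, and in particular $(y,0)\in W\cap Y_-=Z_-^i$. Together with the inclusion $Z_-^i\subseteq W'$ this yields $\pi_-(W')=Z_-^i$, which is therefore irreducible as the image of the irreducible $W'$. Now each fibre of $\pi_-|_{W'}$ is an $H$-invariant locally closed subset of $\C$, hence one of $\emptyset,\{0\},\C^{*},\C$; over the dense constructible set $\pi_-(W'\setminus Y_-)$ it is the full line $\C$, so the generic fibre of the dominant morphism $\pi_-|_{W'}\colon W'\to Z_-^i$ is one-dimensional. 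Consequently $\dim Z_-^i=\dim W'-1=\dim X-\nu^-(Y_i)-1=\dim Y_--\nu^-(Y_i)$, which is the claim for $Z_-^i$.

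Finally, the statement about $Z_+^i$ follows by applying the same argument to the $H$-action reparametrised by $t\mapsto t^{-1}$, which interchanges source and sink, the cells $X^+$ and $X^-$, and the ranks $\nu^+$ and $\nu^-$, so that $Z_+^i$ becomes the ``$Z_-$'' of the inner component $Y_i$ for the new action. The only genuinely delicate step is the fibre-dimension bookkeeping in the third paragraph, and in particular ruling out that the generic fibre of $\pi_-|_{W'}$ is just $\{0\}$ (which would force $W'$, hence $W$, into the zero section $Y_-$); this is precisely what the $H$-invariance of $W'$ together with the limit $\lim_{t\to 0}t^{-1}\cdot(y,v)=(y,0)$ rules out.
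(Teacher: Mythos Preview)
Your proof is correct and follows the same dimension-counting idea as the paper: both reduce to the equality $\dim\overline{X^+(Y_i)}=\dim Z_-^i+1$, combined with $\dim X^+(Y_i)=\dim Y_i+\nu^+(Y_i)$ from the BB-decomposition and $\dim Y_-=\dim X-1$ from the B-type hypothesis. The paper simply asserts $\dim X^+(Y_i)=\dim Z_-^i+1$ in one line, whereas you unpack it carefully via the line-bundle description $X^-(Y_-)\simeq N_{Y_-/X}$, showing $\pi_-(W')=Z_-^i$ and that the generic fibre of $\pi_-|_{W'}$ is a full line; this fibre-dimension bookkeeping is exactly the content the paper elides.
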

\begin{proof}
Assume that $Z_-^i \not = \emptyset$. Then,
by  BB-decomposition:
$$
\dim(Y_{i})+\nu^+(Y_{i})=\dim {X}^+(Y_{i})=\dim (Z_-^i)+1,
$$
so we have that
$\dim (Z_-^i)=\dim (Y_{i})+\nu^+(Y_{i})-1$, and, since by assumption $Y_-$ has codimension one in $X$, we get:
\begin{center}
$\codim(Z_-^i,Y_{-})=\dim X-\dim (Y_{i})-\nu^+(Y_{i})=\nu^-(Y_{i})$.
\end{center}
A similar argument provides $\codim(Z_+^i,Y_{+})=\nu^+(Y_{i})$.
\end{proof}

The following statement has been proven in \cite[Theorem~3]{CaSo}:
\begin{lemma}\label{lem:CaSo}
Let $X$ be a smooth projective variety admitting an $H$-action, and let $Y_-$ be its sink. There exists a short exact sequence:
$$\shse{\sum_{\substack{Y\in\cY\\\nu^+(Y)=1}}\Z\left[\overline{X^-(Y)}\right]}{\Pic(X)}{\Pic(Y_-)}.$$
\end{lemma}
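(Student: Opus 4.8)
The idea is to realize the whole sequence through the Bia\l{}ynicki--Birula stratification by the minus-cells $X^{-}(Y)$: the open cell $X^{-}(Y_-)$ accounts for $\Pic(Y_-)$, the divisorial closed cells account for the first term, and Theorem \ref{thm:BB_decomposition} takes care of exactness.

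\emph{Step 1: the surjection $\Pic(X)\twoheadrightarrow\Pic(Y_-)$.} By Remark \ref{rem:sinksource} the cell $U:=X^{-}(Y_-)$ is open and dense in $X$, and by Theorem \ref{thm:BB_decomposition}(2) it is isomorphic to the total space of the vector bundle $N^{-}(Y_-)\to Y_-$; since $Y_-$ is smooth, pullback is an isomorphism $\Pic(Y_-)\xrightarrow{\ \sim\ }\Pic(U)$. Composing its inverse with restriction along the open immersion $j\colon U\hookrightarrow X$ gives a homomorphism $r\colon\Pic(X)\to\Pic(Y_-)$. It is surjective: given $\cL'\in\Pic(Y_-)$, write its pullback to $U$ as $\cO_U(D_U)$, let $D:=\overline{D_U}$ be the closure in $X$ (a Cartier divisor, as $X$ is smooth), and note $r(\cO_X(D))=\cL'$.

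\emph{Step 2: $\ker r$ is generated by the divisorial minus-cells.} By Theorem \ref{thm:BB_decomposition}(1) the complement $X\setminus U=\bigsqcup_{Y\neq Y_-}X^{-}(Y)$ is closed, and from the decomposition \eqref{eq:dectang} one has $\dim X^{-}(Y)=\dim Y+\nu^{-}(Y)=\dim X-\nu^{+}(Y)$ (cf.\ Lemma \ref{eq:codimZ0}), so the codimension-one irreducible components of $X\setminus U$ are exactly the closures $\overline{X^{-}(Y)}$ with $\nu^{+}(Y)=1$; these are pairwise distinct, since disjoint dense subsets cannot share a closure. Now $[\cO_X(D)]\in\ker r$ means $D|_U$ is the divisor of a rational function $g$ on $X$; replacing $D$ by $D$ minus the divisor of $g$ we may assume $\supp D\subset X\setminus U$, hence $D=\sum_{\nu^{+}(Y)=1}a_Y\,\overline{X^{-}(Y)}$. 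Conversely each $\cO_X(\overline{X^{-}(Y)})$ restricts trivially to $U$ because $\overline{X^{-}(Y)}\subset X\setminus U$. Therefore the image of the natural map $\bigoplus_{\nu^{+}(Y)=1}\Z[\overline{X^{-}(Y)}]\to\Pic(X)$ equals $\ker r$, and the sequence is exact except possibly at the left.

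\emph{Step 3: injectivity (the main point).} It remains to show the classes $[\overline{X^{-}(Y)}]$ with $\nu^{+}(Y)=1$ are $\Z$-linearly independent in $\Pic(X)$. I would push them into $H_{2n-2}(X,\Z)$, $n=\dim X$, via their fundamental classes and use the second homological isomorphism of Theorem \ref{thm:BB_decomposition}(3), namely $H_{2n-2}(X,\Z)\cong\bigoplus_{Y\in\cY}H_{2n-2-2\nu^{-}(Y)}(Y,\Z)$, in which the top-degree part of the $Y$-summand is generated by the fundamental class of the cell closure $\overline{X^{-}(Y)}$. For $\nu^{+}(Y)=1$ this summand is $H_{2\dim Y}(Y,\Z)\cong\Z$, generated by $[\overline{X^{-}(Y)}]$; for $\nu^{+}(Y)\geq 2$ the degree $2n-2-2\nu^{-}(Y)$ exceeds $2\dim Y$ and the summand vanishes; the remaining summand (for $Y_-$) is $H_{2\dim Y_- -2}(Y_-,\Z)$. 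Hence the classes $[\overline{X^{-}(Y)}]$ with $\nu^{+}(Y)=1$ extend to a $\Z$-basis of $H_{2n-2}(X,\Z)$, so they are independent there, and a fortiori independent in $\Pic(X)$ (via $\cL\mapsto \mathrm{PD}(c_1(\cL))$, using Poincar\'e duality on the smooth projective variety $X$). Combining Steps~1--3 yields the asserted short exact sequence. The genuinely delicate point is exactly this last step; a more hands-on alternative is to pair each $\overline{X^{-}(Y)}$ with the closure of a suitable one-dimensional orbit inside $X^{+}(Y)$ and arrange a triangular intersection matrix, but that requires careful bookkeeping with the partial order on $\cY$ coming from the stratification, so the homological argument is cleaner.
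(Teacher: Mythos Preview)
The paper does not actually prove this lemma; it simply quotes it as \cite[Theorem~3]{CaSo} (Carrell--Sommese). Your argument is essentially a reconstruction of that proof and is correct: Steps~1 and~2 are the standard ``open subset with divisorial complement'' exact sequence for $\Pic$, applied to the open minus-cell $U=X^-(Y_-)$, and Step~3 is exactly the point where one needs the Bia\l{}ynicki--Birula homology decomposition rather than just the set-theoretic stratification.

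One comment on Step~3: your appeal to Theorem~\ref{thm:BB_decomposition}(3) is correct, but the phrase ``the top-degree part of the $Y$-summand is generated by the fundamental class of the cell closure $\overline{X^{-}(Y)}$'' is doing real work and deserves a word of justification. The isomorphism in Theorem~\ref{thm:BB_decomposition}(3) is not merely abstract: it arises from the filtration of $X$ by unions of minus-cell closures, whose associated graded in Borel--Moore homology is $\bigoplus_Y H^{BM}_{*}(X^-(Y))\cong\bigoplus_Y H_{*-2\nu^-(Y)}(Y,\Z)$, and the fundamental class $[\overline{X^-(Y)}]$ lies in the corresponding filtration step and projects to the generator $[Y]\in H_{2\dim Y}(Y,\Z)$. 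What you actually use, then, is that the \emph{leading terms} of the classes $[\overline{X^-(Y)}]$ (for $\nu^+(Y)=1$) in the associated graded are independent, which forces independence in $H_{2n-2}(X,\Z)$. This is standard in the BB literature (see \cite{BB} or \cite[\S4]{CARRELL}), but it would strengthen the write-up to say so explicitly rather than to treat the splitting as canonical. Your final remark that independence in $H_{2n-2}(X,\Z)$ implies independence in $\Pic(X)$ is fine: independence survives passage to any quotient, and the cycle map $\Pic(X)\to H_{2n-2}(X,\Z)$ is a group homomorphism.
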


A consequence of this is the following equivalence:

\begin{corollary}\label{cor:codim1} Let $X$ be a smooth projective variety with a B-type $H$-action.
 Then the following conditions are equivalent:
\begin{itemize}
\item[$(\star)$] the restriction map $\Pic(X)\to \Pic(Y_-)$ fits into a short exact sequence
$$\shse{\Z [Y_+]}{\Pic(X)}{\Pic(Y_-)};$$
\item[$(\star\star)$] $\nu^+(Y_{j})
\geq 2$, for every inner fixed point component $Y_j$.
\end{itemize}
Moreover, if these conditions hold, then $\codim{(Z_+, Y_+)}\geq 2$.
\end{corollary}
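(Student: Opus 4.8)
The plan is to read off the equivalence directly from Lemma~\ref{lem:CaSo}, after first recording the numerical constraints that the B-type hypothesis imposes on the extremal and inner fixed components. I would begin by noting that, since $Y_+$ is the source, the cell $X^+(Y_+)\simeq N^+(Y_+)$ is dense in $X$, so $\dim Y_+ + \nu^+(Y_+)=\dim X$; because $Y_+$ has codimension one, this forces $\nu^+(Y_+)=1$ and $\nu^-(Y_+)=0$. Consequently $N^-(Y_+)$ has rank zero, the cell $X^-(Y_+)\simeq N^-(Y_+)$ equals $Y_+$, and hence $[\overline{X^-(Y_+)}]=[Y_+]$ in $\Pic(X)$. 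Symmetrically $\nu^+(Y_-)=0$, and I would observe that every \emph{inner} component $Y_j$ satisfies $\nu^+(Y_j)\ge 1$: otherwise $X^-(Y_j)\simeq N^-(Y_j)$ would have dimension $\dim Y_j+\nu^-(Y_j)=\dim X$, making $Y_j$ the unique sink, a contradiction.

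Next I would invoke Lemma~\ref{lem:CaSo}: the kernel of the restriction map $\Pic(X)\to\Pic(Y_-)$ is freely generated by the classes $[\overline{X^-(Y)}]$, with $Y$ ranging over the fixed components for which $\nu^+(Y)=1$. By the previous step $Y_+$ is always one of these and contributes the generator $[Y_+]$, while the remaining contributors are exactly the inner components $Y_j$ with $\nu^+(Y_j)=1$. Therefore $(\star)$ — the requirement that this kernel be the rank-one group $\Z[Y_+]$ — holds precisely when there is no inner component with $\nu^+=1$; combined with $\nu^+(Y_j)\ge 1$ for all inner $Y_j$, this is exactly $(\star\star)$. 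This yields the equivalence in both directions.

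For the final assertion, assuming $(\star\star)$, I would write $Z_+=\bigcup_{i\in I}Z_+^i$ with $Z_+^i=\overline{X^-(Y_i)}\cap Y_+$ over the inner components $Y_i$, and apply Lemma~\ref{eq:codimZ0}: whenever $Z_+^i\neq\emptyset$ we obtain $\codim(Z_+^i,Y_+)=\nu^+(Y_i)\ge 2$, and since $Z_+$ is a finite union of such subvarieties, $\codim(Z_+,Y_+)\ge 2$.

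I do not expect a genuine obstacle here: the substantive input is already packaged in Lemmas~\ref{lem:CaSo} and~\ref{eq:codimZ0}, so the only care needed is the bookkeeping of which fixed components satisfy $\nu^+=1$ (the source, and possibly some inner components, but never the sink) together with the identification $[\overline{X^-(Y_+)}]=[Y_+]$ forced by $\nu^-(Y_+)=0$.
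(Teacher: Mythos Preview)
Your proof is correct and follows essentially the same approach as the paper, reducing the equivalence to Lemma~\ref{lem:CaSo} and deducing the codimension statement from Lemma~\ref{eq:codimZ0}. The only minor difference is in the direction $(\star)\Rightarrow(\star\star)$: you invoke the linear independence of the classes $[\overline{X^-(Y)}]$ encoded in the exact sequence, whereas the paper instead argues directly that an inner component $Y$ with $\nu^+(Y)=1$ would force $[\overline{X^-(Y)}]$ to be a multiple of $[Y_+]$, and then rules this out by computing $Y_+\cdot C_{\gen}=1$ versus $\overline{X^-(Y)}\cdot C_{\gen}=0$.
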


\begin{proof}
Note that $X^-(Y_+)=Y_+$, and that for a B-type $H$-action we have $\nu^+(Y_+)=1$, then $(\star\star)\Rightarrow (\star)$ follows directly from Lemma \ref{lem:CaSo}. For the converse we note that $(\star)$ together with Lemma \ref{lem:CaSo} implies that for any inner component $Y$ with $\nu^+(Y)=1$, the divisor $\overline{X^-(Y)}$ is numerically proportional to $Y_+$. But this is not possible since $\overline{X^-(Y)}$ is not numerically trivial and, denoting by $C_{\gen}$ the closure of a general orbit in $X$, we have that $Y_+\cdot C_{\gen}=1$, $\overline{X^-(Y)}\cdot C_{\gen}=0$. 
\end{proof}



\begin{definition}\label{def:simpleBaction}%
If a B-type action on $X$ satisfies the equivalent conditions of Corollary
\ref{cor:codim1} for both the source $Y_+$ and the sink $Y_-$ of the action, 
then the triple $Y_-\hooklongrightarrow X \hooklongleftarrow Y_+$
together with the action is called a {\it bordism}.
\end{definition}

In particular, for a bordism, the last statement of Corollary \ref{cor:codim1} implies:

\begin{lemma}\label{lem:bordism=iso-in-codim1}
If $Y_-\hooklongrightarrow X \hooklongleftarrow Y_+$ is a bordism then the birational map
$\psi: Y_-\dashrightarrow Y_+$ defined in Lemma \ref{lem:birational_map_b} is an isomorphism in codimension one. In particular, the push-forward
maps of divisors $\psi_*: \Div (Y_-)\rightarrow \Div (Y_+)$ and linear equivalence classes
$\psi_*: \Cl (Y_-)=\Pic (Y_-)\rightarrow \Cl (Y_+)=\Pic (Y_+)$ are isomorphisms.
\end{lemma}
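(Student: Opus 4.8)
The plan is to combine the isomorphism $\psi\colon Y_-\setminus Z_-\to Y_+\setminus Z_+$ from Lemma~\ref{lem:birational_map_b} with the codimension estimates established in Corollary~\ref{cor:codim1}. Recall that a bordism satisfies condition $(\star\star)$ at both the sink and the source, so $\nu^+(Y_i)\geq 2$ for every inner component $Y_i$ (which, by $(\star\star)$ applied via the source, also forces $\nu^-(Y_i)\geq 2$, since the source plays the role of the sink for the inverse action $t\mapsto t^{-1}$). By Lemma~\ref{eq:codimZ0}, whenever $Z_-^i\neq\emptyset$ we have $\codim(Z_-^i,Y_-)=\nu^-(Y_i)\geq 2$, and likewise $\codim(Z_+^i,Y_+)=\nu^+(Y_i)\geq 2$. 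Taking the union over the finitely many inner components, $\codim(Z_-,Y_-)\geq 2$ and $\codim(Z_+,Y_+)\geq 2$.

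First I would note that $\psi\colon Y_-\dashrightarrow Y_+$, restricted to the open set $Y_-\setminus Z_-$ whose complement has codimension at least two, is an isomorphism onto $Y_+\setminus Z_+$, whose complement also has codimension at least two. Since $Y_-$ and $Y_+$ are smooth (they are fixed point components, smooth by \cite{IVERSEN}) and in particular normal, and the two open subsets are identified, $\psi$ is an isomorphism in codimension one: the indeterminacy locus of $\psi$ is contained in $Z_-$, hence has codimension at least two, and the same holds for $\psi^{-1}$ and $Z_+$. This is precisely the assertion of the first sentence.

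Next I would deduce the statements about push-forward. Since $\psi$ is defined and an isomorphism on an open set $U_-\subset Y_-$ with $\codim(Y_-\setminus U_-,Y_-)\geq 2$, every prime divisor $D\subset Y_-$ meets $U_-$ in a nonempty open subset (as $D$ has codimension one and cannot be contained in $Y_-\setminus U_-$), and $\psi(D\cap U_-)$ is a locally closed subset of $Y_+$ of codimension one whose closure $\psi_*D$ is a prime divisor; symmetrically for $\psi^{-1}$. Thus $\psi_*\colon\Div(Y_-)\to\Div(Y_+)$ is a bijection on prime divisors, hence an isomorphism of free abelian groups. For the class groups, one uses that $Y_-$ and $Y_+$ are smooth (so $\Cl=\Pic$) and that an isomorphism in codimension one induces an isomorphism $\Cl(Y_-)\xrightarrow{\sim}\Cl(Y_+)$: indeed $\psi^*$ on the level of Weil divisor classes is well-defined via restriction to $U_-$ and inverse to $\psi_*$, and principal divisors correspond to principal divisors because a rational function on $Y_-$ pulls back, via the isomorphism $U_+\cong U_-$, to a rational function on $Y_+$ with the matching divisor (the codimension-two discrepancy loci contribute nothing to divisor classes).

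I do not expect a serious obstacle here; the content is almost entirely bookkeeping once Corollary~\ref{cor:codim1} and Lemma~\ref{eq:codimZ0} are in hand. The one point that deserves care is the passage from "$(\star\star)$ at $Y_-$ and $Y_+$" to the bound $\nu^-(Y_i)\geq 2$: condition $(\star\star)$ as stated in Corollary~\ref{cor:codim1} only directly gives $\nu^+(Y_j)\geq 2$ for inner components, and one must invoke the symmetry of the bordism condition under reversing the action to get the $\nu^-$ bound, which is what makes $\codim(Z_-,Y_-)\geq 2$ (rather than just $\codim(Z_+,Y_+)\geq 2$) hold. Everything else is the standard fact that a birational map between smooth (or normal) varieties that is an isomorphism outside codimension two induces isomorphisms on divisors and divisor class groups.
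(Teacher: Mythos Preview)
Your proposal is correct and follows essentially the same approach as the paper: the paper simply states that the lemma is implied by the last statement of Corollary~\ref{cor:codim1} (applied at both extremal components via the definition of bordism), which is exactly the codimension estimate $\codim(Z_\pm,Y_\pm)\geq 2$ that you spell out using Lemma~\ref{eq:codimZ0} and the symmetry of the bordism condition. Your explicit remark about needing the inverted action to get the $\nu^-$ bound is the right way to unpack that symmetry; the paper leaves this implicit.
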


The following statement shows that the technical conditions of Corollary \ref{cor:codim1} are  fulfilled by the B-type $H$-actions obtained by blowing-up along the source and the sink a variety with an equalized $H$-action, whenever these  components are positive dimensional (note this condition is necessary by Lemma \ref{lem:Pic_extremal} (2)). This construction will come in handy in Sections \ref{sec:BW1} and \ref{sec:BW22},  to study varieties of bandwidth $1$ and $2$.

\begin{lemma}\label{lem:setup_bir}
Let $X$ be a smooth projective variety with an $H$-action which is equalized at the source $Y_{+}$ and at the sink $Y_-$. Let $X_\flat$ be the blowup of $X$ along $Y_+$ and $Y_-$, with exceptional divisors $Y_+^\flat,Y_-^\flat$.
Then the action extends to a  
 B-type action on $X_\flat$, equalized at the source $Y_+^\flat$ and at the sink $Y_-^\flat$.
If moreover $\rho_X=1$  and both $Y_+$ and $Y_-$ are positive dimensional, then $Y_-^\flat\hooklongrightarrow X_\flat \hooklongleftarrow Y_+^\flat$ is a bordism.
\end{lemma}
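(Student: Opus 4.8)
The plan is to lift the action to $X_\flat$, identify its extremal fixed components, and then verify the numerical condition $(\star\star)$ of Corollary \ref{cor:codim1}. First I would record that $Y_+$ and $Y_-$, being distinct connected components of $X^H$, are disjoint smooth subvarieties, so that $X_\flat$ is again a smooth projective variety and, by functoriality of the blowup along the $H$-invariant centres $Y_\pm$, the $H$-action lifts to $X_\flat$. The blowup morphism $\pi\colon X_\flat\to X$ is proper, $H$-equivariant, and restricts to an isomorphism over $X\setminus(Y_+\cup Y_-)$, hence is equivariantly birational; in particular the lifted action is nontrivial, and since the action on $X$ is equalized at $Y_+$ it is faithful by Corollary \ref{cor:allequalized} (iii), so the lifted action is faithful as well.

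Next I would analyse the exceptional divisors. Because the action on $X$ is equalized at the source $Y_+$ and $\nu^-(Y_+)=0$, the normal bundle $N_{Y_+/X}=N^+(Y_+)$ carries the fibrewise homothety action of $H$ (all weights $+1$); hence $H$ acts trivially on $Y_+^\flat=\PP(N_{Y_+/X})$, i.e. $Y_+^\flat$ is pointwise $H$-fixed, and symmetrically so is $Y_-^\flat$. Moreover, for a general point $x\in X_\flat$, properness and equivariance of $\pi$ give $\pi(\lim_{t\to 0}t x)=\lim_{t\to 0}t\,\pi(x)\in Y_+$, so $\lim_{t\to 0}t x\in\pi^{-1}(Y_+)=Y_+^\flat$; this shows that $X^+(Y_+^\flat)$ is dense, so $Y_+^\flat$ is the source of the lifted action, and symmetrically $Y_-^\flat$ is its sink. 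Since $Y_\pm^\flat$ are divisors in $X_\flat$ (the exceptional divisors when $\codim Y_\pm\geq 2$, or $Y_\pm$ themselves otherwise), the lifted action is of B-type; being faithful, it is equalized at $Y_\pm^\flat$ by Remark \ref{rem:Btypeequalized}. This settles the first assertion.

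For the second assertion, assume $\rho_X=1$ and $\dim Y_\pm>0$. The crucial observation is that the fixed locus of $X_\flat$ is $Y_+^\flat\sqcup Y_-^\flat\sqcup\bigsqcup_{i}Y_i$, where the $Y_i$ are the inner fixed components of $X$: no new fixed components can occur inside $Y_\pm^\flat$ since the $H$-action there is trivial, and each $Y_i$ is disjoint from the centres $Y_\pm$, hence unaffected by $\pi$ and sitting inside $X_\flat$ with an $H$-stable neighbourhood equivariantly isomorphic to one in $X$; in particular $\nu^{\pm}_{X_\flat}(Y_i)=\nu^{\pm}_X(Y_i)$. Now $X$ is Fano with $\Pic(X)=\Z\cL$ by Lemma \ref{lem:uniruled}, so Lemma \ref{lem:Pic_extremal} (1), applied to the action on $X$ and to its inverse (which exchanges source and sink), yields $\nu^+_X(Y)\geq 2$ for every $Y\neq Y_-$ and $\nu^-_X(Y)\geq 2$ for every $Y\neq Y_+$. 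Hence every inner fixed component $Y_j$ of $X_\flat$ satisfies $\nu^+_{X_\flat}(Y_j)\geq 2$ and $\nu^-_{X_\flat}(Y_j)\geq 2$, i.e. condition $(\star\star)$ of Corollary \ref{cor:codim1} holds for both $Y_+^\flat$ and $Y_-^\flat$; by Definition \ref{def:simpleBaction} the triple $Y_-^\flat\hooklongrightarrow X_\flat\hooklongleftarrow Y_+^\flat$ is a bordism.

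The only genuinely delicate step is the identification of the fixed locus of $X_\flat$ in the last paragraph: one must make sure that blowing up creates no hidden fixed components inside the exceptional divisors — which is exactly where equalization at $Y_\pm$ is used, since it forces the induced $H$-action on $\PP(N_{Y_\pm/X})$ to be trivial — and that $\pi$ does not alter the normal-bundle weights at the inner components. Everything else is a formal consequence of the BB-decomposition (Theorem \ref{thm:BB_decomposition}) and the results already established.
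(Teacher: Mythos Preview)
Your argument is correct and follows essentially the same approach as the paper: equalization at $Y_\pm$ forces the $H$-action on the fibers of $N_{Y_\pm/X}$ to be homothetical, hence trivial on the projectivized normal bundles $Y_\pm^\flat$, which become the (codimension-one) source and sink of a faithful B-type action; then Lemma~\ref{lem:Pic_extremal}(1), applied to the action and its inverse under the assumption $\rho_X=1$ and $\dim Y_\pm>0$, gives $\nu^\pm(Y_j)\geq 2$ for every inner component, and since blowing up away from the $Y_j$ does not alter these ranks, condition $(\star\star)$ of Corollary~\ref{cor:codim1} holds on both sides. Your write-up is somewhat more explicit than the paper's (e.g.\ the verification that $Y_+^\flat$ is really the source via limits, and the remark on the case $\codim Y_\pm=1$), but the logical structure and the references invoked are the same.
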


\begin{proof}
The sink and the source are fixed point components, therefore there is an induced action on their normal bundle, hence on the exceptional divisors $Y_\pm^\flat:=\P(N^\vee_{Y_\pm/X})$. The fact that the action is equalized at $Y_\pm$ yields that $H$ acts homothetically on the fibers of the normal bundles $N_{Y_\pm/X}$, hence  the induced action on $X_\flat$ is trivial along the fibers of the blowup.
In particular the exceptional divisors $Y_+^\flat,Y_-^\flat$ are fixed point components and are the source and the sink of the induced action. Since the action on $X$ is faithful (see Corollary \ref{cor:allequalized}), it is also faithful on $X_\flat$, and so it is equalized at its sink and source by Remark \ref{rem:Btypeequalized}. 

For the second part, note first that, by Lemma \ref{lem:uniruled}, $X$ is rationally connected.
Moreover, by Lemma \ref{lem:Pic_extremal} (1), it follows that $\nu^\pm(Y_j)\geq 2$ for every inner fixed component $Y_j$. Since blowing up the sink and the source does not change these values, we may claim that $X_\flat$ satisfies condition ($\star\star$) of Corollary \ref{cor:codim1}. 
\end{proof}

Let $Y_-\hooklongrightarrow X \hooklongleftarrow Y_+$ be a bordism;
let us denote by $\iota_\pm: Y_\pm \rightarrow X$  the embeddings, by
$\iota_\pm^*: \Pic (X)\rightarrow \Pic (Y_\pm)$, $\iota_\pm^*:\NU(X)\rightarrow \NU(Y_\pm)$  the restriction
maps, and by $\iota^\pm_*:\Nu(Y_\pm)\rightarrow \Nu(X)$ the maps of the space of 1-cycles induced by push-forward.

We will now define right inverses for $\iota_\pm^*$ exploiting the $H$-action on $X$.

\begin{definition} \label{def:extend_div}
Given a prime divisor $D\in \Div (Y_-)$ we define $\iota^-_\#(D)\in\Div (X)$ to be the closure in $X$ of the pullback of $D$  via the projection $X^-(Y_-)\rightarrow Y_-$. 
\end{definition}

We note that, by construction, 
$\iota^-_\#(D)\cap Y_+$ is a divisor in $Y_+$ equal to $\psi_*(D)$.
The map $\iota^-_\#$ induces  homomorphisms $\iota^\pm_{\#}:\Pic (Y_\pm)\rightarrow \Pic (X)$
and $\iota^\pm_{\#}:\NU(Y_\pm)\rightarrow \NU(X)$.
In fact, if $f\in \C(Y_-)$ is a rational function and $f'\in \C(X)$ is its pull-back via
$X^-(Y_-)\rightarrow Y_-$, then the valuation of $f'$ at $Y_+$ is zero, and thus
$\iota^-_\#(\div_{Y_-}(f))=\div_X(f')$. Therefore we have the following:

\begin{lemma}\label{lem:psistar}
The birational transformation $\psi:Y_- \dasharrow Y_+$
described in Lemma \ref{lem:birational_map_b} induces isomorphisms
$\psi_*=\iota^*_+\circ\iota_{\#}^-:\NU(Y_-)\rightarrow \NU(Y_+)$ and
$\psi_*^{-1}=\iota^*_-\circ\iota_{\#}^+:\NU(Y_+)\rightarrow
\NU(Y_-)$.
\end{lemma}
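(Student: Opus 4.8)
The plan is to verify the two asserted formulas $\psi_*=\iota^*_+\circ\iota_{\#}^-$ and $\psi_*^{-1}=\iota^*_-\circ\iota_{\#}^+$ at the level of Néron--Severi spaces, using the fact — already recorded just before the statement — that for a prime divisor $D$ on $Y_-$ the divisor $\iota^-_\#(D)\cap Y_+$ equals $\psi_*(D)$. First I would recall from Lemma~\ref{lem:bordism=iso-in-codim1} that $\psi\colon Y_-\dashrightarrow Y_+$ is an isomorphism in codimension one, so that $\psi_*\colon\Div(Y_-)\to\Div(Y_+)$ and $\psi_*\colon\Pic(Y_-)\to\Pic(Y_+)$ are isomorphisms; in particular the induced maps on $\NU(Y_\pm)$ are isomorphisms, and it suffices to check that the composite $\iota^*_+\circ\iota_{\#}^-$ agrees with $\psi_*$ on the classes of prime divisors, since these span $\NU(Y_-)$.

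Next I would make the identification of the restriction $\iota^*_+$ with intersecting with $Y_+$ explicit. Given a prime divisor $D$ on $Y_-$, the divisor $\iota^-_\#(D)$ on $X$ is by Definition~\ref{def:extend_div} the closure of the pullback of $D$ under $X^-(Y_-)\to Y_-$; as already observed, this divisor meets $Y_+$ in the prime divisor $\psi_*(D)$, and moreover — using that $Y_-\hooklongrightarrow X\hooklongleftarrow Y_+$ is a bordism, so $Z_+\subset Y_+$ has codimension at least two by Corollary~\ref{cor:codim1}, and $\psi$ is an isomorphism away from $Z_-$, $Z_+$ — the intersection $\iota^-_\#(D)\cap Y_+$ is generically transverse and reduced of the right dimension. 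Hence, at the level of cycle classes, $\iota^*_+\iota^-_\#([D])=[\iota^-_\#(D)\cap Y_+]=[\psi_*(D)]=\psi_*([D])$ in $\NU(Y_+)$. Running the symmetric argument with the roles of $Y_-$ and $Y_+$ exchanged — replacing $X^-(Y_-)\to Y_-$ by $X^+(Y_+)\to Y_+$ and $\iota^-_\#$ by $\iota^+_\#$ — gives $\iota^*_-\iota^+_\#=\psi_*^{-1}$, using that $\psi^{-1}$ is the analogously defined map from $Y_+$ to $Y_-$ (which is clear from Lemma~\ref{lem:birational_map_b}, since $\psi$ is induced by identifying the generic orbit's sink and source).

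Finally I would assemble the two computations: since both sides are linear and agree on the spanning set of prime-divisor classes, the identities $\psi_*=\iota^*_+\circ\iota_{\#}^-$ and $\psi_*^{-1}=\iota^*_-\circ\iota_{\#}^+$ hold on all of $\NU(Y_-)$, respectively $\NU(Y_+)$; that $\psi_*$ and $\psi_*^{-1}$ are mutually inverse isomorphisms is Lemma~\ref{lem:bordism=iso-in-codim1}, so the composites $\iota^*_+\iota^-_\#$ and $\iota^*_-\iota^+_\#$ are isomorphisms as claimed. The one point that needs genuine care — and which I expect to be the main obstacle — is justifying that $\iota^-_\#(D)$ meets $Y_+$ properly and with multiplicity one, i.e.\ that no extraneous components or excess intersection arise along $Z_+$; this is exactly where one uses that $Z_+$ has codimension $\ge 2$ in $Y_+$ (Corollary~\ref{cor:codim1}) together with the $H$-equivariant description $X^+(Y_+)\simeq N_{Y_+/X}$ of Theorem~\ref{thm:BB_decomposition}(2), so that the whole identification takes place away from a codimension-two locus and the numerical classes are unaffected.
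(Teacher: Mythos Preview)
Your proposal is correct and follows the same approach as the paper. In fact, the paper does not give a separate proof of this lemma: it records the key observation that, by construction, $\iota^-_\#(D)\cap Y_+=\psi_*(D)$ for a prime divisor $D\subset Y_-$, and then states the lemma as an immediate consequence. Your write-up is a careful unpacking of exactly this implication, including the (appropriate) remark that the potential excess along $Z_+$ does not affect the divisor class because $\codim(Z_+,Y_+)\ge 2$.
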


We also note that the compositions  $\iota_-^*\circ\iota^-_\#$ and  $\iota_+^*\circ\iota^+_\#$ are the
identity, hence  the exact sequences of Corollary \ref{cor:codim1} split as:
$$\Pic (X) =\iota^-_\#(\Pic (Y_-))\oplus \ZZ [Y_+] = \iota^+_\#(\Pic (Y_+))\oplus \ZZ [Y_-].$$
Moreover $\iota^*_+[Y_+] = N_{Y_+/X}$ and $\iota^*_-[Y_-] = N_{Y_-/X}$ (cf. \cite[Example~2.5.5]{Fult}).

\begin{lemma}\label{lem:Cgenort}
  Let $[C_{\gen}]$ be the class of the closure of a general $H$-orbit
  in $X$.  Then the image of $\iota^{\pm}_{\#}$ is orthogonal, in terms of
  intersection of $\NU(X)$ and $\Nu(X)$, to the class
  $[C_{\gen}]$. That is,
  $$\iota_\#^-(\NU(Y_-))=\iota_{\#}^+(\NU(Y_+))=[C_{\gen}]^\perp$$
\end{lemma}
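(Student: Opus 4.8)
The plan is to compute intersection numbers $\iota^{\pm}_\#(\mathcal{L})\cdot C_{\gen}$ for any line bundle $\mathcal{L}$ on $Y_\pm$, and show they vanish, and then argue by dimension count that the images of $\iota^{\pm}_\#$ are exactly the hyperplane $[C_{\gen}]^\perp$. First I would fix $\mathcal{L}\in\Pic(Y_-)$; by linearity it suffices to treat the case where $\mathcal{L}$ is effective, represented by a prime divisor $D\in\Div(Y_-)$. Then $\iota^-_\#(D)$ is by Definition \ref{def:extend_div} the closure of the pullback of $D$ along the projection $X^-(Y_-)\to Y_-$. The key observation is that a general $H$-orbit $C_{\gen}$, being general, can be chosen with sink $y_-=\lim_{t^{-1}\to 0}$ a general point of $Y_-$, hence lying outside $D$ (once $D\ne Y_-$; if $D=Y_-$ the computation $Y_-\cdot C_{\gen}=1$ is handled separately, but note $[Y_-]$ is not in the image of $\iota^-_\#$ anyway by the splitting $\Pic(X)=\iota^-_\#(\Pic(Y_-))\oplus\ZZ[Y_-]$ noted just above). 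Since $C_{\gen}\subset X^-(Y_-)\cup(\text{stuff away from }Y_-)$ and its intersection with the fibre over $y_-$ is transversal at a point not on the pullback of $D$, while $\iota^-_\#(D)$ restricted to $X^-(Y_-)\cong N_{Y_-/X}$ is exactly $\pi_-^{-1}(D)$, a general orbit meets $\iota^-_\#(D)$ in no point of $X^-(Y_-)$; and the closure adds only points in the boundary, which a general orbit also avoids. Hence $\iota^-_\#(D)\cdot C_{\gen}=0$.

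A cleaner way to organize this last point, which I would prefer, is to use the AM vs FM equality. Take the normalization $f\colon\PP^1\to C_{\gen}$ and a linearization of $\iota^-_\#(D)$ on $\PP^1$. By Lemma \ref{lem:AMvsFM}, $\iota^-_\#(D)\cdot C_{\gen}=\delta(C_{\gen})^{-1}\bigl(\mu(y_+)-\mu(y_-)\bigr)$ where $y_\pm$ are the source and sink of $C_{\gen}$. Now $y_-$ is a general point of $Y_-$ and $y_+=\psi(y_-)$ a general point of $Y_+$. Since $\iota^-_\#(D)\cap Y_-$ is the pullback of $D$ under the identity-ish projection, equal to $D$ itself pulled back — more precisely $\iota^-_*\iota^{-*}[\iota^-_\#(D)]$ relates to $D$ — and since $\iota^-_\#(D)\cap Y_+=\psi_*(D)$ by the remark following Definition \ref{def:extend_div}, one chooses the linearization so that $\mu$ has the same value on the generic sink (not on $D$) and generic source (not on $\psi_*(D)$), forcing $\mu(y_+)=\mu(y_-)$, hence the intersection number is zero. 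This shows $\iota^-_\#(\NU(Y_-))\subseteq[C_{\gen}]^\perp$, and symmetrically for $\iota^+_\#$.

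For the reverse inclusion I would use dimensions. From the split exact sequence recorded above, $\Pic(X)=\iota^-_\#(\Pic(Y_-))\oplus\ZZ[Y_+]$, so after tensoring with $\RR$, $\dim\iota^-_\#(\NU(Y_-))=\rho_X-1$. On the other hand $[C_{\gen}]^\perp$ is a hyperplane in $\NU(X)$, so also of dimension $\rho_X-1$, provided $[C_{\gen}]\ne 0$ in $\Nu(X)$ — which holds because $Y_-\cdot C_{\gen}=1$ by Remark \ref{rem:Btypeintnumber} (a bordism is equalized at its extremal components hence faithful there, so Remark \ref{rem:Btypeintnumber} applies). Since $\iota^-_\#(\NU(Y_-))$ is a subspace of $[C_{\gen}]^\perp$ of equal (finite) dimension, the two coincide; likewise for $\iota^+_\#(\NU(Y_+))$. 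This yields the chain of equalities in the statement.

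The main obstacle I anticipate is making the vanishing $\iota^{\pm}_\#(\mathcal{L})\cdot C_{\gen}=0$ genuinely rigorous rather than "by genericity": one must be careful that $\iota^-_\#(D)$, being a closure, might pick up extra components over the boundary $Z_-$ or over inner fixed components, and that these do not sneak in an extra intersection with the general orbit. The clean resolution is the linearization argument of the second paragraph, which sidesteps all set-theoretic fussing: it only uses that $\mu$ is locally constant on connected fixed loci and that the generic source/sink of $C_{\gen}$ can be taken off the divisors $\iota^-_\#(D)\cap Y_\pm$ (because those are proper subvarieties of $Y_\pm$ as $D\ne Y_-$), together with the freedom to normalize the linearization. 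Everything else is bookkeeping with the already-established split sequences and the faithfulness/transversality of Remark \ref{rem:Btypeintnumber}.
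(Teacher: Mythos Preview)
Your first and third paragraphs are correct and together reproduce the paper's proof exactly. The paper's argument is just your first paragraph in compressed form---``choose a general $y\in Y_-\setminus D$; then the orbit with sink $y$ does not meet $\iota^-_\#(D)$''---and it leaves the equality implicit from the splitting $\Pic(X)=\iota^-_\#(\Pic(Y_-))\oplus\ZZ[Y_+]$ recorded just before the lemma, which you spell out. Your worry in the last paragraph about extra boundary intersections is unfounded: $\iota^-_\#(D)$ is the closure of the irreducible set $\pi_-^{-1}(D)$, hence irreducible, and the closure of a \emph{general} orbit meets $X\setminus X^-(Y_-)$ only at the single point $\psi(y)\in Y_+$, which lies outside $\iota^-_\#(D)\cap Y_+=\psi_*(D)$ since $y\notin D$. (Two minor slips: the splitting has $\ZZ[Y_+]$, not $\ZZ[Y_-]$; and $D$ is a prime divisor \emph{on} $Y_-$, so the case $D=Y_-$ never arises.)

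Your second paragraph, the ``cleaner'' AM vs FM alternative, is circular as written. The difference $\mu(y_+)-\mu(y_-)$ is not something you are free to choose: by Lemma~\ref{lem:AMvsFM} it equals $\delta(C_{\gen})\cdot(\iota^-_\#(D)\cdot C_{\gen})$, the very number you want to show vanishes, and shifting a linearization by a character moves both values by the same amount. There \emph{is} a valid version of this idea---the $H$-invariant effective divisor $\iota^-_\#(D)$ yields (after a character twist) an $H$-invariant section $s$ of the associated line bundle, and at any fixed point where $s$ is nonzero the weight of the linearization must be $0$; since $s(y_-)\ne 0$ and $s(y_+)\ne 0$ this forces both weights to vanish---but you did not supply this mechanism, and in any case it is no simpler than the direct genericity argument. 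Drop the second paragraph; the first already suffices.
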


\begin{proof}
Let $D\subset Y_-$ be a prime divisor. Choose a general $y\in Y_-\setminus D$. Then the orbit with the sink in $y$ does not meet $\iota^-_\#(D)$ hence $D\cdot C_{\gen}=0$. 
\end{proof}

We can summarize the situation with the following diagram:

\begin{equation}\label{diag:bordism}
\xymatrix@R=18pt{&&&\NU(Y_+)\\\NU(Y_-)\ar@/^/[rrru]^{\psi_*}\ar@/_/[rrrd]_{\id}\ar[rr]^{i_\#^-}&&[C_{\gen}]^\perp \subset \NU(X)\ar[ru]_{i_+^*}\ar[rd]^{i_-^*}&\\&&&\NU(Y_-)}
\end{equation}

\subsection{Low rank bordisms}

We will now introduce an invariant measuring the complexity of a bordism, called {\em rank}, and study in detail the cases of rank $0$ and $1$.

\begin{definition}\label{def:rank}
The {\it rank} of a bordism $Y_-\hooklongrightarrow X \hooklongleftarrow Y_+$ is defined as the number $\dim \Nu(X)^H-1$ (cf. Definition \ref{def:H-invariant}).
\end{definition}

\begin{lemma}\label{lem:rank0bordism}
The rank of a bordism
$Y_-\hooklongrightarrow X \hooklongleftarrow Y_+$
is nonnegative, and it is zero if and only if $Y_-\simeq
  Y_+$ and $X$ is an $H$-equivariant $\PP^1$-bundle over $Y_-\simeq Y_+$.
\end{lemma}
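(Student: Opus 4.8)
The claim has two parts: nonnegativity of the rank, and the characterization of rank zero. For nonnegativity, recall that the general orbit closure $C_{\gen}$ has nonzero class in $\Nu(X)$ (it is a curve moving in a family covering $X$, so it cannot be numerically trivial on an ample bundle), and $[C_{\gen}]\in\Nu(X)^H$ by definition. Hence $\dim\Nu(X)^H\geq 1$, so the rank is $\geq 0$.

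For the rank-zero case, one direction is easy: if $X$ is an $H$-equivariant $\PP^1$-bundle $p\colon X\to Y$ over $Y\simeq Y_-\simeq Y_+$, with $Y_\pm$ two disjoint sections, then every orbit closure is (a multiple of) a fiber of $p$, so $\Nu(X)^H$ is spanned by the fiber class and has dimension $1$, giving rank $0$. The real content is the converse. So assume the rank is zero, i.e.\ $\Nu(X)^H=\R[C_{\gen}]$, equivalently $\NU(X)^H=[C_{\gen}]^\perp$ is a hyperplane. The plan is: First, from the splitting $\Pic(X)=\iota^-_\#(\Pic(Y_-))\oplus\Z[Y_+]$ (and the analogous one with $Y_-$) established just before Lemma~\ref{lem:Cgenort}, together with $\iota_\#^-(\NU(Y_-))=[C_{\gen}]^\perp$ from Lemma~\ref{lem:Cgenort}, deduce that $\rho_X=\rho_{Y_-}+1=\rho_{Y_+}+1$. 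Second, observe that $[C_{\gen}]$ spans an extremal ray of $\cNE{X}$: indeed $Y_+\cdot C_{\gen}=1>0$ while $Y_+\cdot C=0$ for every orbit closure $C$ not meeting $Y_+$, and since $\NU(X)^H$ is a hyperplane supporting all $H$-invariant curves while $Y_+$ is one more independent divisor, $[C_{\gen}]$ generates a one-dimensional face. More carefully, I would argue that $[C_{\gen}]^\perp = \NU(X)^H$ contains the pullback (via $\iota^-_\#$) of an ample class of $Y_-$ and hence a big-and-nef class whose zero locus on curves is exactly the ray $\R_{\geq0}[C_{\gen}]$; so this ray is extremal and the associated extremal contraction $\varphi\colon X\to Z$ contracts precisely the curves numerically proportional to $C_{\gen}$, which includes every closure of a one-dimensional orbit.

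Third, analyze $\varphi$. Since $\rho_X-\rho_Z=1$ and $\varphi$ contracts a curve covering $X$ (the $C_{\gen}$ move in a covering family), $\varphi$ is of fiber type. Because $Y_+\cdot C_{\gen}=1$, $Y_+$ is a section of $\varphi$ over its image, so $\varphi$ restricted to $Y_+$ is finite onto $Z$, and being a contraction with $\rho$ dropping by one, I expect $\varphi|_{Y_+}\colon Y_+\to Z$ to be an isomorphism (birational finite morphism onto a normal variety); likewise $\varphi|_{Y_-}\colon Y_-\to Z$. The general fiber of $\varphi$ is a smooth rational curve $F$ with $Y_+\cdot F=Y_-\cdot F=1$ and $-K_X\cdot F=2$ (using the equalization at $Y_\pm$ and AM vs FM / Lemma~\ref{lem:generalizedAMvsFM} applied to $TX$ along $C_{\gen}$: by Corollary~\ref{cor:splittingTangent}-type reasoning the restriction $TX|_{C_{\gen}}$ has degree $2$). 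A fiber-type elementary contraction whose general fiber is a rational curve with $-K_X\cdot F=2$ is a conic bundle; the extra fact that $Y_+$ (a divisor!) is a section forces every fiber to be irreducible and reduced, hence $\varphi$ is a $\PP^1$-bundle. Finally, identifying $Z$ with $Y_-$ and with $Y_+$ via the section isomorphisms exhibits $X$ as the required $H$-equivariant $\PP^1$-bundle, with $\psi\colon Y_-\to Y_+$ the induced isomorphism.

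The main obstacle I anticipate is the step proving that the elementary contraction $\varphi$ is actually a $\PP^1$-bundle rather than merely a conic bundle with possibly singular/non-reduced fibers: one must rule out reducible fibers, and the cleanest way is to use that the $H$-invariant divisor $Y_+$ meets $C_{\gen}$ in a single reduced point (Remark~\ref{rem:Btypeintnumber}) and is disjoint from no fiber, so a reducible fiber $F_1+F_2$ would have $Y_+$ meeting only one component, but then the other component would be an $H$-invariant curve disjoint from both $Y_\pm$ — and one has to check the bordism hypothesis (condition $(\star\star)$, $\nu^+\geq 2$ at inner components, plus $\codim(Z_\pm,Y_\pm)\geq 2$) forbids such curves from forming divisorial loci, ultimately contradicting that $\varphi$ is equidimensional of relative dimension one with $X$ smooth. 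Alternatively, and perhaps more robustly, one invokes that a smooth projective variety with an elementary fiber-type contraction of relative dimension one admitting a section which is a divisor is necessarily a $\PP^1$-bundle (a standard consequence of the classification of extremal contractions of fiber type in relative dimension one, e.g.\ via the fact that such a $\varphi$ is flat and its fibers are conics degenerating to line-pairs, but a global section that is Cartier and meets each fiber in degree one trivializes the conic bundle). I would state this carefully and cite the relevant structure result, as the equivariance of everything then comes for free since all the constructions ($C_{\gen}$, $Y_\pm$, $\varphi$) are canonically attached to the $H$-action.
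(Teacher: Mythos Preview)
Your approach is essentially correct but considerably more elaborate than necessary. The paper's proof is a three-line argument that bypasses Mori theory entirely, and it is worth comparing the two routes.

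The paper observes: if the rank is zero then every orbit closure $C$ is numerically proportional to $C_{\gen}$; since $Y_-\cdot C_{\gen}=Y_+\cdot C_{\gen}=1$ (Remark~\ref{rem:Btypeintnumber}), every orbit closure meets both $Y_-$ and $Y_+$ with positive intersection. But the closure of a one-dimensional orbit meets $X^H$ in exactly two points, its sink and its source. Therefore there can be no inner fixed component at all. Consequently $Z_\pm=\emptyset$, and Lemma~\ref{lem:birational_map_b} immediately gives $Y_-\simeq Y_+$; moreover the proof of that lemma shows $X=X^-(Y_-)\cup X^+(Y_+)$ is the gluing of two line bundles along their common $\C^*$-bundle, i.e.\ an $H$-equivariant $\PP^1$-bundle.

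Your plan reaches the same conclusion via an extremal contraction: you produce a nef $H$-trivial class (using Lemma~\ref{lem:base-locus}), argue that $\R_{\geq 0}[C_{\gen}]$ is extremal, and then invoke conic-bundle structure results to upgrade the resulting fiber-type contraction to a $\PP^1$-bundle using the section $Y_+$. This works, and the step you flagged as the main obstacle (ruling out reducible fibers) is indeed handled by the section argument. But the entire apparatus is avoidable once you notice that the positivity of $Y_\pm$ on \emph{every} orbit closure forces the absence of inner components, after which the BB-decomposition gives the $\PP^1$-bundle for free. What your approach buys is independence from the explicit BB description of $X^\pm(Y_\pm)$; what the paper's approach buys is brevity and the avoidance of any appeal to the cone/contraction theorem or to conic-bundle classification.
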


\begin{proof} Since the action is non trivial, in $\Nu(X)^H$ we have at least the class $[C_{\gen}]$ of the closure of a general orbit, hence, by the above definition, the rank of the bordism is nonnegative. Assume that the rank is zero; again by definition this happens if and only if the closure of every orbit is numerically proportional to $[C_{\gen}]$. By Remark \ref{rem:Btypeintnumber}, $Y_-\cdot
 C_{\gen}=Y_+\cdot C_{\gen}=1$, thus the intersection number of the closure of every
 orbit with the extremal fixed point components is positive, and thus
 there is no other fixed point component (since the closure of an orbit cannot meet three fixed point components).
 Then, by Lemma \ref{lem:birational_map_b}, since the subvarieties $Z_{\pm}\subset Y_{\pm}$ are empty, we obtain that $Y_-\simeq Y_+$ and, by the proof of the same lemma, it follows that $X$ is an $H$-equivariant $\PP^1$-bundle over $Y_-\simeq Y_+$.
\end{proof}

\begin{lemma}\label{lem:Cpm}
Let $Y_-\hooklongrightarrow X \hooklongleftarrow Y_+$ be a bordism of rank $1$. Let $Y_0\subset X^H$ be the union of all the inner fixed point components. Then every nontrivial orbit with sink in $Y_0$ has source in $Y_+$,
and every nontrivial orbit with source in $Y_0$ has 
sink in $Y_-$. Moreover there exists a unique numerical equivalence class of closures of orbits having sink (resp. source) on $Y_0$.
\end{lemma}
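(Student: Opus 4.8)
The plan is to exploit the fact that $\Nu(X)^H$ is two-dimensional, being spanned by $[C_{\gen}]$ together with one more class. First I would observe that, by Corollary \ref{cor:chain}, every inner fixed point component $Y_i$ is joined to $Y_-$ by a chain of $H$-invariant irreducible curves and likewise to $Y_+$; combined with the B-type hypothesis (so that $Y_-\cdot C_{\gen}=Y_+\cdot C_{\gen}=1$ by Remark \ref{rem:Btypeintnumber}) this shows that the closure of an orbit cannot meet three distinct fixed point components, so any $H$-invariant irreducible curve joins exactly two consecutive fixed components. The key numerical input is Corollary \ref{cor:sum_of_orbits}: if $C_1,\dots,C_m$ is such a chain from $Y_+$ to $Y_-$ passing through inner components, then $\sum_i\delta(C_i)[C_i]=\delta(C_{\gen})[C_{\gen}]$ in $\Nu(X)$.

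The main step is to rule out, for an orbit $C$ with source in $Y_+$ and sink in an inner component $Y_0$, that there also exists an orbit $C'$ with source in $Y_0$ and sink in another inner component, i.e.\ a chain of length $\ge 3$. Suppose such a chain $C_1,\dots,C_m$ with $m\ge 3$ exists, $C_1$ meeting $Y_+$ and $C_m$ meeting $Y_-$. Pairing with $[C_{\gen}]$: since each $[C_i]\in\Nu(X)^H$ and $\Nu(X)^H$ is $2$-dimensional containing $[C_{\gen}]$, write $[C_i]=a_i[C_{\gen}]+b_i e$ for a complementary class $e$; from $\sum\delta(C_i)[C_i]=\delta(C_{\gen})[C_{\gen}]$ we get $\sum\delta(C_i)b_i=0$. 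But I claim every $C_i$ with $i\ge 2$ lies in a fibre of the BB-decomposition over an inner component, so $C_i\cdot Y_+=C_i\cdot Y_-=0$, while $C_1\cdot Y_-=0$ and $C_1\cdot Y_+=1$ (resp.\ $C_m\cdot Y_+=0$, $C_m\cdot Y_-=1$); this already forces, using the ``$(\star)$'' splitting $\Pic(X)=\iota_\#^-(\Pic(Y_-))\oplus\Z[Y_+]$ and Lemma \ref{lem:Cgenort}, that the classes $[C_i]$ for $2\le i\le m-1$ lie in $[C_{\gen}]^{\perp}\cap\Nu(X)^H$, which is spanned by $e$. Now I use that the intermediate inner components $Y_i$ all have $\nu^+(Y_i)\ge 2$ and $\nu^-(Y_i)\ge 2$ (this is condition $(\star\star)$, part of the bordism hypothesis, applied at both source and sink) to produce in $\Nu(X)$ a contradiction with $\dim\Nu(X)^H=2$: the closures of orbits through two distinct inner components, together with $[C_{\gen}]$ and $[C_1]$, would give three linearly independent classes in $\Nu(X)^H$. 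Thus no orbit has both source and sink in $Y_0$, which is exactly the first assertion.

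For the last assertion, let $C,\tilde C$ be two orbits with sink in $Y_0$; by the above both have source in $Y_+$. Completing each to a chain $C_1=C, C_2$ with $C_2$ joining the sink of $C$ to $Y_-$ (possible by Corollary \ref{cor:chain}, and $C_2$ is then unique up to numerical class since its source lies in $Y_0$ and it must reach $Y_-$ — this is the part needing the rank-one hypothesis again), Corollary \ref{cor:sum_of_orbits} gives $\delta(C)[C]+\delta(C_2)[C_2]=\delta(C_{\gen})[C_{\gen}]$ and similarly for $\tilde C$. Since $[C_2],[\tilde C_2]\in[C_{\gen}]^{\perp}\cap\Nu(X)^H$ which is one-dimensional, $[C_2]$ and $[\tilde C_2]$ are proportional, and then the two displayed relations force $[C]$ and $[\tilde C]$ to be proportional as well; a symmetric argument handles orbits with source in $Y_0$. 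The hard part will be the bookkeeping in the middle step — making precise that intermediate inner components really do contribute extra independent classes to $\Nu(X)^H$, using $\nu^\pm\ge 2$ together with the BB-decomposition in homology (Theorem \ref{thm:BB_decomposition}(3)) rather than an ad hoc dimension count; I expect this to be where the argument must be written carefully.
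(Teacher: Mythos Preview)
Your argument contains a real gap and misses a much simpler route. The paper's proof hinges on one observation you do not make: the divisors $Y_-$ and $Y_+$ restrict to \emph{linearly independent} linear forms on the two-dimensional space $\Nu(X)^H$. Indeed, since the bordism has rank one, $X$ is not a $\P^1$-bundle (Lemma~\ref{lem:rank0bordism}), so there is an orbit closure meeting $Y_-$ but not $Y_+$; this already shows $[Y_-]$ and $[Y_+]$ are not proportional on $\Nu(X)^H$. Hence $[Y_-]^\perp\cap[Y_+]^\perp\cap\Nu(X)^H=\{0\}$. Now an orbit with both source and sink in $Y_0$ would have closure $C$ with $Y_-\cdot C=Y_+\cdot C=0$, forcing $[C]=0$, which is absurd. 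That is the whole first assertion, in two lines.

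Your proposed ``three independent classes'' contradiction is not established: you never explain why orbits through distinct inner components would yield independent elements of $\Nu(X)^H$, and the appeal to $\nu^\pm\ge 2$ and Theorem~\ref{thm:BB_decomposition}(3) is not a substitute, since that decomposition computes $H_2(X,\Z)$, not $\Nu(X)^H$. Your notation $[C_{\gen}]^\perp\cap\Nu(X)^H$ is also ill-formed: $[C_{\gen}]$ lives in $\Nu(X)$, so its orthogonal is a subspace of $\NU(X)$, and intersecting with $\Nu(X)^H$ makes no sense. For the uniqueness clause you only obtain proportionality of $[C_+]$ and $[\tilde C_+]$, whereas the lemma asserts equality. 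The paper gets equality directly: complete a basis of $\NU(X)^H$ to a basis of $\NU(X)$ by adjoining $[Y_-],[Y_+]$; any two orbit closures with sink in $Y_0$ and source in $Y_+$ have the same intersection with every element of this basis (zero on $\NU(X)^H$ and on $Y_-$, one on $Y_+$ by Remark~\ref{rem:Btypeintnumber}), hence are numerically equivalent.
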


\begin{proof}
First, note that $[Y_-]$ and $[Y_+]$ are not proportional as forms
on $\Nu(X)^H$. Indeed, using Lemma \ref{lem:rank0bordism} we know that $X$ is not a $\PP^1$-bundle; then there
exists an orbit of $H$ whose closure meets, say, $Y_-$ and does not
meet $Y_+$. 
Since $\dim \Nu(X)^H =2$, then  $[Y_-]^\perp\cap[Y_+]^\perp\cap N_1(X)^H=\{0\}$, so the
closure of every nontrivial orbit of $H$ meets either $Y_-$ or
$Y_+$. By Corollary \ref{cor:allequalized} (iv) the action is equalized.

To prove the last statement, let us complete a basis of $\NU(X)^H$ to a basis of $\NU(X)$, by adding $[Y_+]$ and $[Y_-]$. 
Given two $H$-invariant curves $C_+$ and $C_+'$  linking $Y_0$ with $Y_+$, by Remark \ref{rem:Btypeintnumber}, $Y_+ \cdot C_+ =Y_+ \cdot C_+'=1$. On the other hand $C_+$ and $C_+'$ have intersection zero with $Y_-$ and with every $H$-trivial divisor, hence $[C_+] = [C_+']$. The same argument works in the case of orbits linking $Y_0$ with $Y_-$.
\end{proof}

\begin{definition}\label{def:Cpm}
Let $Y_-\hooklongrightarrow X \hooklongleftarrow Y_+$ be a bordism of rank $1$. We will denote by $[C_+]$ (resp. $[C_-]$) the numerical class of the closure of an orbit linking $Y_+$ (resp. $Y_-$) with an inner component, whose existence and uniqueness is guaranteed by the previous Lemma. We represent this situation in the following graph.
\begin{equation}\label{eq:gr1}
\xygraph{
!{<0cm,0cm>;<2cm,0cm>:<0cm,1cm>::}
!{(0,0) }*+{\bullet}="0"
!{(0,-0.35) }*+{{Y_-}}="3"
!{(1.5,0) }*+{\bullet}="1"
!{(1.5,-0.35) }*+{{Y_0}}="4"
!{(3,0) }*+{\bullet}="2"
!{(3,-0.35) }*+{{Y_+}}="5"
"0"-@/^/@[green]"1" _{C_-}
"0"-@/^0.6cm/@[blue]"2" ^{C_{\gen}}
"1"-@/^/@[red]"2" _{C_+}
} 
\end{equation}
\end{definition}

\begin{lemma}\label{rank1bordism-curves}
Let $Y_-\hooklongrightarrow X \hooklongleftarrow Y_+$ be a bordism of rank $1$, and denote by $C_{\gen}$ the closure of a general orbit. Then $\Nu(X)^H$ is generated by  $[C_\pm]$,  in $\Nu(X)$ we have the equality $[C_{\gen}]=[C_+]+[C_-]$, and
$$\iota_{*}^+\Nu(Y_+)\cap \Nu(X)^H=\RR [C_+],\quad \ \
\iota_*^-\Nu(Y_-)\cap \Nu(X)^H=\RR [C_-].$$
\end{lemma}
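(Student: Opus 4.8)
The plan is to combine the basis of $\Nu(X)^H$ coming from Lemma \ref{lem:Cpm} with the splitting of $\NU(X)$ from Corollary \ref{cor:codim1} and the duality between $\iota^{\pm}_*$ and $\iota_{\pm}^*$. First I would check that $[C_+],[C_-]$ is a basis of $\Nu(X)^H$. Both classes lie in $\Nu(X)^H$, which is $2$-dimensional because the bordism has rank $1$. Since the action is equalized (as established in the proof of Lemma \ref{lem:Cpm}), hence faithful by Corollary \ref{cor:allequalized}(iii), Remark \ref{rem:Btypeintnumber} applies; as $\overline{C_+}$ meets $X^H$ only at its source in $Y_+$ and its sink in $Y_0$, one gets $Y_+\cdot C_+=1$, $Y_-\cdot C_+=0$, and symmetrically $Y_+\cdot C_-=0$, $Y_-\cdot C_-=1$. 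Thus the intersection matrix of $\{[C_+],[C_-]\}$ against $\{[Y_+],[Y_-]\}$ is the identity, so $[C_\pm]$ are linearly independent and therefore span $\Nu(X)^H$.

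Next I would prove $[C_{\gen}]=[C_+]+[C_-]$. Fix an inner fixed component $Y_i$. As the bordism satisfies condition $(\star\star)$ of Corollary \ref{cor:codim1} for both $Y_+$ and $Y_-$, we have $\nu^{\pm}(Y_i)\ge 2$, so $\dim\overline{X^{+}(Y_i)}=\dim Y_i+\nu^{+}(Y_i)>\dim Y_i$ and $\overline{X^{+}(Y_i)}$ contains the closure of a $1$-dimensional orbit $C'$ with sink in $Y_i$; by Lemma \ref{lem:Cpm} its source lies in $Y_+$ and $[C']=[C_+]$. Likewise $\overline{X^{-}(Y_i)}$ contains the closure of an orbit $C''$ with source in $Y_i$, sink in $Y_-$, and $[C'']=[C_-]$. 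The chain $(C',C'')$ satisfies the hypotheses of Corollary \ref{cor:sum_of_orbits} (the sink of $C'$ and the source of $C''$ both lie in $Y_i$), and since the action is equalized that corollary gives $[C_+]+[C_-]=[C_{\gen}]$.

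Finally I would compute the intersections with $\iota^{\pm}_*\Nu(Y_\pm)$. By the projection formula, $\iota^{+}_*\colon\Nu(Y_+)\to\Nu(X)$ is the transpose of $\iota_{+}^*\colon\NU(X)\to\NU(Y_+)$ with respect to the perfect intersection pairings. By Corollary \ref{cor:codim1} the map $\iota_{+}^*$ is surjective, and since $Y_+\cap Y_-=\emptyset$ and $\NU(X)=\iota_\#^{+}(\NU(Y_+))\oplus\RR[Y_-]$ (recorded after Lemma \ref{lem:psistar}), its kernel is exactly $\RR[Y_-]$; hence $\iota^{+}_*$ is injective with image $[Y_-]^{\perp}=\{v\in\Nu(X):Y_-\cdot v=0\}$. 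Intersecting with $\Nu(X)^H=\RR[C_+]\oplus\RR[C_-]$ and using $Y_-\cdot C_+=0$, $Y_-\cdot C_-=1$ from the first step, we obtain $\iota^{+}_*\Nu(Y_+)\cap\Nu(X)^H=\RR[C_+]$; the statement for $\iota^{-}_*$ follows by exchanging the roles of source and sink. The main point to get right is this last step: it rests on the duality $\iota^{+}_*=(\iota_{+}^*)^{\mathsf T}$ together with the precise computation $\ker\iota_{+}^*=\RR[Y_-]$, which uses both the splitting of Corollary \ref{cor:codim1} and the disjointness of source and sink; in Step~2 one must also invoke the uniqueness statement of Lemma \ref{lem:Cpm} to guarantee that $C'$ and $C''$ carry the distinguished classes $[C_\pm]$.
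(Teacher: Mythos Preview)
Your proof is correct. The first two steps parallel the paper's argument; note only a harmless sign slip in Step~2: orbits in $X^+(Y_i)$ have their \emph{source} (not sink) in $Y_i$, so the curve you extract from $X^+(Y_i)$ carries the class $[C_-]$, and the one from $X^-(Y_i)$ carries $[C_+]$. After swapping the labels the chain still satisfies the hypotheses of Corollary~\ref{cor:sum_of_orbits}.

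The third step, however, differs genuinely from the paper. The paper argues geometrically: it picks a point $y$ in an inner component $Y$, takes a $2$-plane $\Lambda\subset N^-(Y)_y$, shows that the closure $\overline{\Lambda}\subset X$ is a smooth $\P^2$ meeting $Y_+$ in a line $\Lambda_+$, and observes that $[\Lambda_+]=[C_+]$; this exhibits $[C_+]\in\iota^+_*\Nu(Y_+)$, and then Lemma~\ref{lem:Cgenort} shows the intersection with $\Nu(X)^H$ is proper. Your route is purely linear-algebraic: you identify $\iota^+_*\Nu(Y_+)$ with the hyperplane $[Y_-]^\perp\subset\Nu(X)$ via duality and the splitting recorded after Lemma~\ref{lem:psistar}, then read off the intersection from the pairing already computed in Step~1. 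Your argument is cleaner for the bare statement. The paper's construction, on the other hand, yields the extra geometric fact that $[C_\pm]$ is realised by lines in the fibres of $\P(N^\pm(Y)^\vee)\to Y$; this is explicitly invoked in the proof of Lemma~\ref{lem:smoothZ}, so with your approach that point would need to be supplied separately there.
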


\begin{proof}
The first assertion follows by the assumption on the rank of the bordism, and the fact that $[C_-],[C_+]$ are not proportional. The second follows by Corollary \ref{cor:sum_of_orbits}, and the fact that $C_{\gen}\cdot Y_\pm=C_\pm\cdot Y_{\pm}=1$,  $C_\pm\cdot Y_{\mp}=0$, where the first two equalities follow from Remark \ref{rem:Btypeintnumber}.

For the proof of the last part we note first that, without loss of generality, we may assume that the action is faithful. Since it is of B-type, then (by Remark \ref{rem:Btypeequalized}) it is equalized at $Y_\pm$;  by Corollary \ref{cor:allequalized} (iv) we may conclude that the action is equalized.  

We now consider a point $y$ in an inner fixed point component $Y\subset X$. Since, by Corollary \ref{cor:codim1}, $\rk N^-(Y)=\nu^-(Y)>1$, we may consider $\Lambda\subset X^-(y)$, defined as the image of a two dimensional vector subspace of $ N^-(Y)_y$. Since action is equalized,  the geometric quotient of $\Lambda\setminus\{y\}$ by the action of $H$ is isomorphic to $\P^1$. Denoting by $\Lambda_+:=\overline{\Lambda}\cap Y_+$ we may embed $\Lambda\setminus\{y\}$ into $X^+(Y_+)$, as the complement of $\Lambda_+$ in its inverse image by the projection $X^+(Y_+)\to Y_+$. Since $X^+(Y_+)$ is a line bundle over $Y_+$, it follows  that $\Lambda_+\subset Y_+$ is isomorphic to the quotient of $\Lambda\setminus\{y\}$ by the action of $H$, hence to $\P^1$. In particular, the set $\overline{\Lambda}$, which is a projective variety by construction, is smooth, since it is  the union of the smooth varieties $\Lambda$ and $X^+(\Lambda_+)$. Since every point of $\overline{\Lambda}$ is connected to $y$ by a unique curve of type $C_+$, isomorphic to $\P^1$, we conclude that there exists a $\P^1$-bundle $S=\P(\cO_{\Lambda_+}\oplus\cO_{\Lambda_+}(e))$ over $\Lambda_+\simeq \P^1$ and a birational map $S\to \overline{\Lambda}$ sending a minimal section of $S$ over $\Lambda_+$ to the point $y$. Since $\overline{\Lambda}$ is smooth, it follows that $e=1$ and $\overline{\Lambda} \simeq \P^2$. In particular, the line $\Lambda_+$ will be numerically equivalent to the $H$-equivariant curve $C_+$. 

By Lemma \ref{lem:Cgenort}, the class $[C_{\gen}]$ is not contained in  $\iota_{*}^+\Nu(Y_+)$, therefore the subspace $\iota_{*}^+\Nu(Y_+)\cap \Nu(X)^H$ is proper, hence equal to $\RR[C_+]$. A similar argument with $C_-$ provides the last equality.
\end{proof}
 
\begin{corollary}\label{cor:sinknef}
Let $Y_-\hooklongrightarrow X \hooklongleftarrow Y_+$ be a bordism of rank $1$. Then the  divisors $Y_\pm \subset X$ have nonnegative intersection with every curve whose class is contained in $N_1(X)^H$.
\end{corollary}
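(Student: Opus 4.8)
The plan is to fix an irreducible curve $Z\subset X$ with $[Z]\in\Nu(X)^H$ and prove $Y_+\cdot Z\ge 0$; the case of $Y_-$ will be completely symmetric. The basic dichotomy to exploit is whether or not $Z$ is contained in $Y_+$. If $Z\not\subseteq Y_+$, then $Z$ meets the effective divisor $Y_+$ properly, hence $Y_+\cdot Z\ge 0$ and there is nothing to prove; so the entire content of the statement is concentrated in the case $Z\subseteq Y_+$.

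Assume $Z\subseteq Y_+$. Then $[Z]$ lies in the image $\iota_*^+\Nu(Y_+)$, being the push-forward of the class of $Z$ computed inside $Y_+$; combined with the hypothesis $[Z]\in\Nu(X)^H$, this places $[Z]$ in the intersection $\iota_*^+\Nu(Y_+)\cap\Nu(X)^H$, which by Lemma~\ref{rank1bordism-curves} equals the single ray $\RR[C_+]$. Hence $[Z]=a[C_+]$ for some $a\in\RR$. Since $C_+$ is the closure of a one-dimensional orbit, it is not contained in the fixed locus $Y_+$, so $Y_+\cdot C_+\ge 0$ by properness again, and therefore $Y_+\cdot Z=a\,(Y_+\cdot C_+)$. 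It thus suffices to prove $a\ge 0$.

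The sign of $a$ is where the projectivity of $X$ comes in. If $a<0$, then $a[C_+]=[Z]\in\cNE{X}$, and rescaling by the positive number $-1/a$ gives $-[C_+]\in\cNE{X}$; since $[C_+]\in\cNE{X}$ as well, convexity of the cone forces the whole line $\RR[C_+]\subseteq\cNE{X}$. This is impossible: $\cNE{X}$ contains no line, as any ample class is strictly positive on $\cNE{X}\setminus\{0\}$ while $[C_+]\neq 0$. Therefore $a\ge 0$, so $Y_+\cdot Z\ge 0$, and repeating the argument with $C_-$ and $Y_-$ in place of $C_+$ and $Y_+$ yields $Y_-\cdot Z\ge 0$. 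I expect the one point requiring care to be the observation that Lemma~\ref{rank1bordism-curves} already forces the class of \emph{any} curve contained in $Y_+$ onto the single ray $\RR[C_+]$, so that no $[C_-]$-component can appear; granting this, the nonnegativity of the coefficient $a$ is an immediate consequence of the pointedness of the Mori cone, and the symmetric statement for the sink follows verbatim.
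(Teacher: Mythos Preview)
Your proof is correct and takes essentially the same approach as the paper: reduce to the case $Z\subset Y_\pm$, invoke Lemma~\ref{rank1bordism-curves} to place $[Z]$ on the line $\RR[C_\pm]$, and then pin down the sign of the coefficient. The paper's version is terser---it asserts $[C]\in\R_+[C_-]$ directly and derives a contradiction from $Y_-\cdot C_->0$---whereas you spell out explicitly that a negative coefficient would put a full line inside $\cNE{X}$; but the content is the same.
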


\begin{proof} Assume by contradiction that there exists an effective curve $C$ such that  $[C] \in N_1(X)^H$
and $Y_- \cdot C <0$. Then $C \subset Y_-$ and, by Lemma \ref{rank1bordism-curves}, $[C] \in \R_+[C_-]$. This contradicts that $Y_- \cdot C_- >0$.
\end{proof}

\begin{corollary}\label{rank1bordism-divisors}
Let $Y_-\hooklongrightarrow X \hooklongleftarrow Y_+$ be a bordism of rank $1$, and let $[C_\pm]$ be as in Definition \ref{def:Cpm}. Then the birational
transformation  $\psi:Y_- \dasharrow{~~~~~~~} Y_+$ flips the signs of the intersections, i.e., for every $D\in \NU(Y_-)$
$$\psi_*(D)\cdot [C_+]=-D\cdot [C_-]$$
Moreover
$N_{Y_-/X}\cdot[C_-]=N_{Y_+/X}\cdot[C_+]=1$ and $\psi_*(N_{Y_-/X})=-N_{Y_+/X}$.
\end{corollary}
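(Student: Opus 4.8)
The plan is to unwind the diagram \eqref{diag:bordism} together with Lemma \ref{lem:psistar} and Lemma \ref{rank1bordism-curves}. First, for a divisor $D\in\NU(Y_-)$ we want to compute $\psi_*(D)\cdot[C_+]$. By Lemma \ref{lem:psistar}, $\psi_*(D)=\iota_+^*\iota_\#^-(D)$, so $\psi_*(D)\cdot[C_+]=\iota_+^*(\iota_\#^-(D))\cdot[C_+]$; since $C_+$ is an $H$-invariant curve with source on $Y_+$, it lies in $\iota_+(Y_+)$, and by the projection formula this equals $\iota_\#^-(D)\cdot \iota_*^+[C_+]$, where now the intersection is taken in $X$. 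The key input is the splitting $\Pic(X)=\iota_\#^-(\Pic(Y_-))\oplus\Z[Y_+]$ recorded right after Lemma \ref{lem:psistar}, and the observation, from Lemma \ref{rank1bordism-curves}, that $[C_{\gen}]=[C_+]+[C_-]$ in $\Nu(X)$. By Lemma \ref{lem:Cgenort} the image of $\iota_\#^-$ is precisely $[C_{\gen}]^\perp$, so $\iota_\#^-(D)\cdot[C_{\gen}]=0$, whence $\iota_\#^-(D)\cdot[C_+]=-\iota_\#^-(D)\cdot[C_-]$. Finally, because $C_-$ is an $H$-invariant curve with sink on $Y_-$, it lies in $\iota_-(Y_-)$, and by the projection formula $\iota_\#^-(D)\cdot\iota_*^-[C_-]=\iota_-^*(\iota_\#^-(D))\cdot[C_-]=D\cdot[C_-]$, using that $\iota_-^*\circ\iota_\#^-=\id$. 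Stringing these together gives $\psi_*(D)\cdot[C_+]=-D\cdot[C_-]$.

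For the intersection numbers $N_{Y_-/X}\cdot[C_-]$ and $N_{Y_+/X}\cdot[C_+]$, I would use Remark \ref{rem:Btypeintnumber} (which applies once we note, as in the proof of Lemma \ref{rank1bordism-curves}, that we may assume the action faithful and hence equalized): an orbit closure meeting the sink transversally satisfies $Y_-\cdot C=1$. Since $C_-$ is by definition the closure of an orbit linking $Y_-$ with an inner component, it meets $Y_-$, so $Y_-\cdot[C_-]=1$; and the self-intersection identity $\iota_-^*[Y_-]=N_{Y_-/X}$ (recorded after Lemma \ref{lem:psistar}, citing \cite{Fult}) combined with the projection formula gives $N_{Y_-/X}\cdot[C_-]=[Y_-]\cdot[C_-]=1$. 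The same reasoning with $C_+$, $Y_+$ yields $N_{Y_+/X}\cdot[C_+]=1$. The last identity $\psi_*(N_{Y_-/X})=-N_{Y_+/X}$ then follows from the first (general) part applied to the divisor $D=N_{Y_-/X}$: this gives $\psi_*(N_{Y_-/X})\cdot[C_+]=-N_{Y_-/X}\cdot[C_-]=-1=N_{-Y_+/X}\cdot[C_+]$. To upgrade this numerical equality to an equality of classes in $\NU(Y_+)$, I would invoke that $\psi_*:\NU(Y_-)\to\NU(Y_+)$ is an isomorphism (Lemma \ref{lem:bordism=iso-in-codim1} / Lemma \ref{lem:psistar}) together with the fact that $\Nu(Y_+)^H$-orthogonality pins down classes: more precisely, since $\psi_*(N_{Y_-/X})+N_{Y_+/X}$ pairs to zero against $[C_+]$, which spans $\iota_*^+\Nu(Y_+)\cap\Nu(X)^H$ by Lemma \ref{rank1bordism-curves}, and we already know by the splitting $\Pic(X)=\iota_\#^+(\Pic(Y_+))\oplus\Z[Y_-]$ and $\iota_+^*[Y_+]=N_{Y_+/X}$ that $N_{Y_+/X}$ generates a one-dimensional piece, a dimension count forces equality.

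The main obstacle I anticipate is the very last step: passing from the numerical identity on $[C_\pm]$ to the identity of divisor classes $\psi_*(N_{Y_-/X})=-N_{Y_+/X}$, because a priori $\psi_*(N_{Y_-/X})$ and $-N_{Y_+/X}$ could differ by something in $[C_+]^\perp\cap\NU(Y_+)$. Resolving this cleanly probably requires identifying $N_{Y_+/X}$ inside $\NU(Y_+)$ via the diagram: one has $\iota_+^*[Y_+]=N_{Y_+/X}$ and $[Y_+]$ is, up to the $\iota_\#^-$-image, a generator of the $\Z[Y_+]$-summand, and dualizing the splitting $\NU(X)=\iota_\#^-(\NU(Y_-))\oplus\R[Y_+]$ should show that $N_{Y_+/X}=\iota_+^*[Y_+]$ together with $\psi_*(\NU(Y_-))$ controls everything; concretely, writing any $D'\in\NU(Y_+)$ as $\psi_*(D)$ for a unique $D$, the pairing formula $\psi_*(D)\cdot[C_+]=-D\cdot[C_-]$ shows that the map $D'\mapsto D'\cdot[C_+]$ is nonzero, and its kernel together with the line through $N_{Y_+/X}$ — which is hit with value $1$ — spans $\NU(Y_+)$; so it suffices to check $\psi_*(N_{Y_-/X})$ lies in the appropriate complement, which follows because both $\psi_*(N_{Y_-/X})$ and $N_{Y_+/X}$ are, via $\iota_\#^+$ and the splitting, the restrictions of $[Y_+]$-related divisors on $X$. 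I would be careful to state this last argument as an argument about the rank-one structure rather than a slick one-liner, since that is where the bookkeeping is genuinely needed.
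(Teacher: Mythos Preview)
Your first two parts are essentially the paper's argument. One small slip: you write that $C_+$ ``lies in $\iota_+(Y_+)$'' because it has source on $Y_+$, but this is false --- $C_+$ is the closure of an orbit, meeting $Y_+$ only at one point. What makes your projection formula step legitimate is Lemma \ref{rank1bordism-curves}, which shows $[C_+]\in\iota_*^+\Nu(Y_+)$ (via the line $\Lambda_+\subset Y_+$ constructed there). With that correction the argument goes through and matches the paper's.

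The real issue is the last identity $\psi_*(N_{Y_-/X})=-N_{Y_+/X}$, and you have correctly flagged your own gap: equality of intersection numbers against $[C_+]$ only determines a class in $\NU(Y_+)$ modulo the hyperplane $[C_+]^\perp$, and your attempted ``dimension count'' does not close this. The paper's argument avoids this bookkeeping entirely. Consider the divisor class $[Y_-]-[Y_+]\in\NU(X)$. By Remark \ref{rem:Btypeintnumber}, $(Y_--Y_+)\cdot C_{\gen}=1-1=0$, so $[Y_-]-[Y_+]\in[C_{\gen}]^\perp=\iota_\#^-(\NU(Y_-))$. Since $Y_-\cap Y_+=\emptyset$, one has $\iota_-^*(Y_--Y_+)=N_{Y_-/X}$ and $\iota_+^*(Y_--Y_+)=-N_{Y_+/X}$. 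But $\iota_-^*\circ\iota_\#^-=\id$ and $\iota_\#^-$ is injective, so $\iota_\#^-(N_{Y_-/X})=[Y_-]-[Y_+]$, and then
\[
\psi_*(N_{Y_-/X})=\iota_+^*\iota_\#^-(N_{Y_-/X})=\iota_+^*([Y_-]-[Y_+])=-N_{Y_+/X}
\]
exactly. The point is to exhibit a single class in $\NU(X)$ whose restrictions to both $Y_\pm$ are known, rather than trying to reconstruct the class from one intersection number.
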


\begin{proof} By Lemma
\ref{lem:Cgenort} we know that $\iota_\#^-N^1(Y_-)=\iota_{\#}^+N^1(Y_+)$ is the space orthogonal to $[C_{\gen}]$.
Therefore, recalling that $[C_{\gen}] =[C_-]+[C_+]$ by  Lemma \ref{rank1bordism-curves}, $[C_+]=-[C_-]$ as linear forms on $\iota_\#^{\pm}N^1(Y_\pm)$.
Since, by Lemma \ref{lem:psistar}, one has $\psi_*=\iota^*_+\circ\iota_\#^-$
the conclusion follows.

The statement about the intersection with the normal bundles follows from Remark \ref{rem:Btypeintnumber} and the fact that  the restriction of $Y_\pm$ to itself is $N_{Y_\pm/X}$. Again by Remark \ref{rem:Btypeintnumber}, $(Y_--Y_+)\cdot C_{\gen} =0$. Since $i_-^*(Y_--Y_+)=N_{Y_-/X}$ and $i_+^*(Y_--Y_+)=-N_{Y_+/X}$, from Diagram (\ref{diag:bordism}) we get $\psi_*(N_{Y_-/X})=-N_{Y_+/X}$.
\end{proof}

\begin{lemma}\label{lem:smoothZ}
Let $Y_-\hooklongrightarrow X \hooklongleftarrow Y_+$ be a bordism of rank $1$. For every inner fixed point component $Y_i$ we set $Z_-^i:=\overline{X^+(Y_i)} \cap Y_-$. Then $Z_-^i \simeq\P(N^+(Y_i)^\vee)$, and, via this isomorphism, the lines in the fibers of $Z_-^i \to Y_j$ are curves whose numerical class is $[C_-]$. Moreover, the exceptional locus of the birational map $\psi$ is the disjoint union of the varieties $Z_-^i$ and, in particular, its reduced structure is smooth.
\end{lemma}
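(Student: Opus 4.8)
The plan is to work with the BB-cell $X^+(Y_i)\simeq N^+(Y_i)$ and the ``take the sink'' map. Since the bordism has rank one, the action is equalized (see Lemma~\ref{lem:Cpm} and the proof of Lemma~\ref{rank1bordism-curves}), and after dividing $H$ by the (finite) kernel of the action we may assume it is faithful; then the $H$-action on the vector bundle $N^+(Y_i)$ is the fibrewise homothety of weight $+1$. By Lemma~\ref{lem:Cpm} every orbit meeting $X^+(Y_i)\setminus Y_i$ has source in $Y_i$ and sink in $Y_-$, so the restriction to $X^+(Y_i)\setminus Y_i$ of the bundle projection $X^-(Y_-)\simeq N_{Y_-/X}\to Y_-$ defines an $H$-invariant morphism $\sigma_i\colon X^+(Y_i)\setminus Y_i\to Y_-$ with image $Z_-^i$. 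As the orbits inside $X^+(Y_i)\setminus Y_i$ are exactly the punctured lines of the fibres of $N^+(Y_i)$, the map $\sigma_i$ factors through a morphism $\bar\sigma_i\colon\P(N^+(Y_i)^\vee)\to Y_-$ onto $Z_-^i$.

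Next I would prove that $\bar\sigma_i$ is bijective onto $Z_-^i$. Given $s\in Z_-^i$, the fibre over $s$ of the line bundle $X^-(Y_-)\simeq N_{Y_-/X}$ is an affine line, so $s$ lies on a \emph{unique} orbit contained in $X^-(Y_-)$; this orbit determines its source, hence the component $Y_i$, the point of $Y_i$ underneath it and the corresponding direction in $N^+(Y_i)$. This gives injectivity of $\bar\sigma_i$ and, applied across different inner components, also the pairwise disjointness of the $Z_-^i$. By Lemma~\ref{eq:codimZ0} one has $\dim\P(N^+(Y_i)^\vee)=\dim Y_i+\nu^+(Y_i)-1=\dim Z_-^i$, so $\bar\sigma_i$ is finite and birational onto $Z_-^i$. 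To upgrade it to an isomorphism I would reuse the local model in the proof of Lemma~\ref{rank1bordism-curves} with the roles of source and sink interchanged: for a two-dimensional subspace $\Lambda\subset N^+(Y_i)_y$ (available since $\nu^+(Y_i)\geq 2$ by Corollary~\ref{cor:codim1}), embedding $\Lambda\setminus\{y\}$ into the line bundle $X^-(Y_-)$ shows that $\overline{\Lambda}\simeq\P^2$ and that $\overline{\Lambda}\cap Y_-$ is a line in it. This forces the differential of $\bar\sigma_i$ to be injective, so $\bar\sigma_i$ is a bijective proper unramified morphism, hence a closed embedding; therefore $Z_-^i\simeq\P(N^+(Y_i)^\vee)$ is smooth. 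Moreover, in $\overline{\Lambda}\simeq\P^2$ the curve $\overline{\Lambda}\cap Y_-$ and the closures of the orbits joining $Y_i$ to $Y_-$ are both lines, so, pushing forward to $X$ and using Definition~\ref{def:Cpm} and Lemma~\ref{lem:Cpm}, the lines of the fibres of $Z_-^i\to Y_i$ have numerical class $[C_-]$.

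For the exceptional locus, Lemma~\ref{lem:birational_map_b} already gives $\Exc(\psi)\subseteq Z_-=\bigsqcup_i Z_-^i$, and by the previous paragraph this union is a disjoint union of smooth varieties. For the reverse inclusion I would fix $s\in Z_-^i$ and a sequence $x_n\to s$ with $x_n\notin Z_-$: the orbit through $x_n$ joins $Y_+$ to $x_n$ and, as $x_n\to s$, degenerates into the chain consisting of an orbit from $Y_+$ to the point $y\in Y_i$ below $s$ followed by the orbit from $y$ to $s$. Hence $\psi(x_n)$, which is the source of the orbit through $x_n$, tends to a point of $Z_+^i\subset Y_+$ depending on the direction of approach; if $\dim Z_+^i>0$ this means $\psi$ has no continuous extension at $s$, while if $\dim Z_+^i=0$ it means $\psi$ contracts the positive–dimensional variety $Z_-^i$ (note $\dim Z_-^i=\dim Y_i+\nu^+(Y_i)-1\geq 1$) to a point. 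In either case $\psi$ is not a local isomorphism at $s$, so $s\in\Exc(\psi)$. Thus $\Exc(\psi)=\bigsqcup_i Z_-^i$ and its reduced structure is smooth.

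The main obstacle is the isomorphism $Z_-^i\simeq\P(N^+(Y_i)^\vee)$: the natural map is easily seen to be a finite birational bijection, but excluding that $Z_-^i$ is only a (possibly non-normal) image requires the local $\P^2$-model, i.e.\ exactly the geometric input already extracted in the proof of Lemma~\ref{rank1bordism-curves}; everything else is bookkeeping with the BB-decomposition and the limit-cycle description of $\psi$.
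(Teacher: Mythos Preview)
Your overall strategy is sound and most of the steps are correct, but the key step --- upgrading the bijection $\bar\sigma_i:\P(N^+(Y_i)^\vee)\to Z_-^i$ to an isomorphism --- has a gap. The $\P^2$-model you invoke shows only that $\bar\sigma_i$ restricts to an isomorphism on each line $\P(\Lambda^\vee)$ in a \emph{fibre} of $\P(N^+(Y_i)^\vee)\to Y_i$; this gives injectivity of $d\bar\sigma_i$ along the fibre direction, but says nothing about the $T_yY_i$-direction. Without that, ``bijective proper unramified'' is not established, and the possibility that $Z_-^i$ is a non-normal image is not excluded.

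The paper's argument avoids this difficulty entirely and is considerably shorter: it observes that, since every orbit with source in $Y_i$ has sink in $Y_-$, the set $X^+(Y_i)\setminus Y_i$ sits inside $X^-(Y_-)\simeq N^-(Y_-)$ and is precisely the punctured line bundle $(N^-(Y_-)_{|Z_-^i})_0$. This is a $\C^*$-principal bundle over $Z_-^i$, so $Z_-^i$ \emph{is} the geometric quotient of $X^+(Y_i)\setminus Y_i$ by $H$. But the same space, viewed as $N^+(Y_i)_0$ with $H$ acting by homotheties (equalization), has geometric quotient $\P(N^+(Y_i)^\vee)$. Two geometric quotients of the same action coincide, so $Z_-^i\simeq\P(N^+(Y_i)^\vee)$ immediately, with no differential computation needed. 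You already have all the ingredients for this: you wrote that $\sigma_i$ is the restriction of the bundle projection $N_{Y_-/X}\to Y_-$, and your bijectivity argument shows that $X^+(Y_i)\setminus Y_i$ is exactly the full preimage of $Z_-^i$; putting these together gives the $\C^*$-torsor structure and finishes the proof in one line. Your treatment of the disjointness of the $Z_-^i$ and of the exceptional locus is fine and close to the paper's.
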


\begin{proof}
Without loss of generality, we may assume that the action is faithful, hence, by Remark \ref{rem:Btypeequalized} and Corollary \ref{cor:allequalized} (iv), we may assume that it is equalized. 

Consider  the set $X^+(Y_i)\setminus Y_i \subset X,$
which is isomorphic, by Theorem \ref{thm:BB_decomposition} (2), to the complementary set $$N^+(Y_i)_0\subset N^+(Y_i)$$ of the zero section of this vector bundle over $Y_i$. Since the rank of the bordism is $1$, by Lemma \ref{rank1bordism-curves}, every orbit which has the source in $Y_i$ has the
sink in $Y_-$. Hence, applying Theorem \ref{thm:BB_decomposition} (2) to $X^-(Y_-)$, we get that $X^+(Y_i)\setminus Y_i$ is isomorphic to the open set 
$$(N^-(Y_-)_{|Z_-^i})_0\subset N^-(Y_-)_{|Z_-^i},$$ 
complementary to the zero section over $Z_-^i$. This is a $\C^*$-principal bundle over $Z_-^i$, so  its projection to $Z_-^i$ is a geometric quotient, whose fibers are the orbits of the action of $H=\C^*$.  Since the $H$-action is  assumed to be equalized, it follows that $H$ acts homothetically on $N^+(Y_i)$, therefore we get $Z_-^i\simeq\P(N^+(Y_i)^\vee)$. The fact that the lines in the fibers of $Z_-^i \to Y_i$ are mapped to the curves whose numerical class is $[C_-]$ follows from the proof of Lemma \ref{rank1bordism-curves}.

The last assertion follows from the fact that, set-theoretically, the exceptional locus of $\psi$ is the disjoint union of the varieties $Z_-^i$; in fact, given a point $y\in \bigcup_i Z^i_-$, there exists a unique orbit $\C^*x$ whose sink is $y$ and whose source lies in an inner fixed point component. Since the inner fixed point components are disjoint, the claim follows.
\end{proof}



\section{Bandwidth one actions}\label{sec:BW1}

In this Section we will characterize polarized pairs $(X,L)$ admitting an action of $H=\C^*$ of bandwidth one. We start by introducing a projective construction. 

\begin{definition}\label{def:drum} Let $Y$ be a normal projective variety with $\rho_Y=2$ and two elementary 
contractions: 
$$
\xymatrix{&Y\ar[dl]_{\pi_-}\ar[dr]^{\pi_+}\\Y_-&&Y_+}
$$
Let $L_\pm\in\Pic(Y)$ be the pullbacks via $\pi_\pm$ of ample line bundles in $Y_\pm$. Then the vector bundle $\cE:=L_-\oplus L_+$ is semiample and there is a contraction $\varphi$ of $\P(\cE)$, with supporting line bundle $\cO_{\P(\cE)}(1)$. The image $\varphi(\P(\cE))$ will be called the {\em drum} associated to the triple $(Y,L_-,L_+)$.
\end{definition}

In order to describe the contraction $\varphi$, let us denote by  $s_\pm:Y\to\P(\cE)$ the sections corresponding to the quotients $\cE\to L_\pm$ and set $E_\pm=s_\pm(Y)$. Moreover let us denote by $\Gamma_\pm$ the image in $s_\pm(Y)$ of a curve of minimal degree in a fiber of $\pi_\pm$ and by $f$ a fiber of the projection $p:\P(\cE) \to Y$.
$$
\xymatrix{&\P(\cE)\ar[d]_p\ar[rr]^\varphi&&X\\&Y\ar[dl]_{\pi_-}\ar[dr]^{\pi_+}
\ar@/^1pc/[u]^{s_-}\ar@/_1pc/[u]_{s_+}
&&\\Y_-&&Y_+&}
$$
Then $\NE(\P(\cE))=\langle [\Gamma_-], [\Gamma_+], [f] \rangle$; in fact this cone is the intersection of the positive halfspaces determined by the nef line bundles $\cO_{\P(\cE)}(1),p^*L_-,p^*L_+$. In particular the exceptional locus of $\varphi$ consists of the two disjoint divisors $E_\pm$ which are contracted to varieties isomorphic to $Y_\pm\subset X$.

\begin{remark} \label{rem:action_on_drum}
Observe that there exists a natural $H$-action on $\P(\cE)$, whose fixed point locus is $s_-(Y)\sqcup s_+(Y)$. It may be defined locally: we first choose a trivialization $\{\cU_i\times \P^1,\,\,i\in I\}$ of the bundle such that the images of the sections $s_-,s_+$ are given by $\cU_i\times\{0\}$, $\cU_i\times\{\infty\}$, and define the action by $t(x,\lambda)=(x,t\lambda)$, for $(x,\lambda)\in \cU_i\times \P^1$, $t\in\C^*$. This action descends via $\varphi$ to an $H$-action on $X$, whose sink and source  are $Y_-$ and $Y_+$, respectively. \end{remark}

\begin{example}\label{ex:toricdrum}
In the context of toric geometry, a construction analogous to drums appears in the literature as the counterpart of the {\em Cayley sum} of polytopes of Convex Geometry (see for instance \cite{Ito}). We start with two birational contractions $\pi_{\pm}:Y\to Y_{\pm}$ of projective toric varieties, and with two very ample line bundles $L_\pm$ on $Y_\pm$, so that the varieties $Y_\pm$ are determined by  two polytopes $P_\pm$ in the space $M\otimes_\Z\R$ (where $M=\Mo(T)$ is the lattice of characters of the torus $T$ acting on $Y$ and $Y_\pm$ and defining them as toric varieties). The Cayley sum $P_-\star P_+$ of the two polytopes, defined as the convex hull of $(P_-\times\{0\})\cup(P_+\times\{1\})\subset (M\oplus \Z)\otimes_\Z \R$,  determines a projective toric variety $X$, which is the drum associated to $(Y,L_-,L_+)$. Note that even if $Y$ and $Y_\pm$ are smooth, the toric variety $X$ is not smooth, unless the two contractions $\pi_\pm$ are the same (in this case $X$ will be a $\P^1$-bundle over $Y_\pm$). 
\end{example}

In general, a drum will not be a smooth variety. The following Lemma gives a characterization of  {\em smooth drums}, in terms of the contractions $\pi_\pm$.

\begin{lemma}
Let $Y$ be a smooth variety of Picard number two, assume that the nef cone of $Y$ is generated by two semiample divisors $L_-$, $L_+$, and let $X$ be the drum associated to $(Y,L_-,L_+)$. Then $X$ is smooth if and only if the contractions $\pi_-:Y\to Y_-$, $\pi_+:Y\to Y_+$  associated to $L_-,L_+$ are projective bundles, and denoting by $F_-$ and $F_+$  the corresponding fibers, $\deg(L_-|_{F_+})=\deg(L_+|_{F_-})=1$. 
\end{lemma}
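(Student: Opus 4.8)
The plan is to analyze the drum $X$ locally over $Y$ via the projection $p\colon \P(\cE)\to Y$, where $\cE = L_-\oplus L_+$, and to understand where and how the contraction $\varphi\colon \P(\cE)\to X$ fails to produce a smooth variety. Recall from Definition~\ref{def:drum} and the discussion following it that $\varphi$ contracts the two disjoint sections $E_\pm = s_\pm(Y)$ along the rays $[\Gamma_\pm]$, so $X$ is smooth away from $\varphi(E_-)\cong Y_-$ and $\varphi(E_+)\cong Y_+$, and by symmetry it suffices to analyze a neighborhood of $\varphi(E_+)$. The morphism $\varphi|_{\P(\cE)\setminus E_-}$ factors through the contraction associated to the extremal ray $[\Gamma_+]$, which I would first identify. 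The key observation is that $\cO_{\P(\cE)}(1)$ restricted to $E_- = s_-(Y)$ is $L_-$, so the ray $[\Gamma_+]$ is the one on which $L_+$-pullback is trivial; its contraction is governed by the fibration structure of $\pi_+\colon Y\to Y_+$ pulled up to $\P(\cE)$.

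The main step: over a fiber $F_+ = \pi_+^{-1}(y_+)$, the restriction $\P(\cE)|_{F_+}$ is a $\P^1$-bundle over $F_+$ with two disjoint sections $E_-|_{F_+}$ and $E_+|_{F_+}$, and $\varphi$ contracts the whole of $\P(\cE|_{F_+}\setminus (E_-|_{F_+}))$'s image — more precisely, $\varphi$ contracts $E_+|_{F_+}$ while the fibers $f$ over points of $F_+$ and the minimal curves $\Gamma_+$ span the fiber of $\varphi$ over the corresponding point of $Y_+$. I would show that $\varphi$ restricted to a neighborhood of $E_+$ is the blowdown of the exceptional divisor $E_+\cong Y$ with normal bundle $N_{E_+/\P(\cE)} = \cO_{\P(\cE)}(-1)|_{E_+}\otimes p^*(\det \cE|_{E_+}) \cong L_+ \otimes L_-^{-1}\otimes$ (appropriate twist), onto $Y_+$. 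For $X$ to be smooth along $\varphi(E_+)$, the subvariety $E_+\cong Y$ must be a projective bundle over its image $Y_+$ via $\pi_+$ (this forces $\pi_+$ to be a $\P^k$-bundle), and the normal bundle of $E_+$ must restrict to $\cO(-1)$ on the fibers $F_+$, which upon unwinding translates exactly to $\deg(L_-|_{F_+}) = 1$ (since $L_+|_{F_+}$ is trivial). The symmetric argument at $E_-$ gives that $\pi_-$ is a projective bundle with $\deg(L_+|_{F_-})=1$.

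Concretely, I would invoke the Nakano/Fujiki--Nakano-type criterion (or the Ando/Andreatta--Wiśniewski characterization of smooth blow-downs): a birational elementary contraction of a smooth projective variety whose exceptional divisor $E$ is contracted to a smooth subvariety $W$ is a smooth blow-up along $W$ if and only if $E\to W$ is a projective bundle and $\cO_E(E)$ restricts to $\cO(-1)$ on its fibers. For the ``only if'' direction, I would observe that if $X$ is smooth, then $\varphi$ is a divisorial contraction with smooth target, and since $Y_\pm$ have Picard number one less than $X$ (as $\rho_Y = 2$, $\rho_{\P(\cE)} = 3$, and both contractions $\varphi$ drop the Picard number by one, giving $\rho_X = 2$ and $\rho_{Y_\pm}$ computed from the bundle structures), the restriction $E_+\to Y_+$ is forced to be equidimensional; combined with smoothness of both $E_+$ and $X$ this gives a $\P^k$-bundle. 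For the ``if'' direction, given that $\pi_\pm$ are $\P^{k_\pm}$-bundles with $\deg(L_\mp|_{F_\pm})=1$, I would directly compute that $\P(\cE)$ near $E_+$ looks like the blow-up of a $\P^1$-bundle over $Y_+$ along a section — i.e., produce $X$ explicitly as that smooth variety and check $\varphi$ is the blow-up.

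The hard part will be pinning down the normal bundle computation and the precise translation of ``$\deg(L_\mp|_{F_\pm}) = 1$'' into the $\cO(-1)$-condition on the fibers of the contracted divisor, since one must carefully track the two twists (the relative $\cO_{\P(\cE)}(1)$ and the pullback $p^*\det\cE$) and restrict everything to a fiber $F_+$ on which $L_+$ is trivial but $L_-$ need not be. A secondary subtlety is ensuring that the pair of conditions on $\pi_-$ and $\pi_+$ is genuinely symmetric and that no compatibility between the two is needed beyond what is stated — i.e., that smoothness at $E_+$ only constrains $\pi_+$ (and $L_-|_{F_+}$), so the two conditions can be verified independently.
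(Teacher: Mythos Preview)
Your overall strategy---reduce the question to whether $\varphi$ is a smooth blowup along $Y_\pm$, then translate this into a normal-bundle/$\cO(-1)$ condition on the fibers---is sound and is essentially the same framework as the paper's proof, but the execution differs. The paper is more terse and citation-based: for the ``if'' direction it computes, via the canonical bundle formula $-K_{\P(\cE)}\equiv 2D - p^*(K_Y+L_-+L_+)$, the length of each contracted ray as $-K_{\P(\cE)}\cdot\ell_\pm=\dim Y-\dim Y_\pm$, and then invokes a characterization of smooth blowups due to Andreatta--Occhetta \cite[Theorem~5.1]{AO2}; for the ``only if'' direction it simply cites \cite[Theorem~1.1]{ESB}, which guarantees that a birational morphism from a smooth variety onto a smooth variety with divisorial exceptional locus is a smooth blowup. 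Your approach instead computes $N_{E_+/\P(\cE)}\cong L_+\otimes L_-^{-1}$ directly (so that on a fiber $F_+$, where $L_+$ is trivial, it restricts to $L_-^{-1}|_{F_+}$) and applies the Nakano criterion. This is a legitimate and more self-contained alternative; the paper's route is shorter but relies on heavier black boxes.

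Two points to fix. First, a minor slip: $\rho_X=1$, not $2$---the morphism $\varphi$ contracts the whole $2$-dimensional face spanned by $[\Gamma_-]$ and $[\Gamma_+]$, so the Picard number drops by two. Second, and more substantively, your ``only if'' argument has a gap: you assume $Y_+\subset X$ is smooth and that $E_+\to Y_+$ is equidimensional, but neither is automatic from $X$ being smooth (an elementary contraction need not have equidimensional fibers, and a priori $Y_+$ is only normal). This is precisely the content of a result like \cite[Theorem~1.1]{ESB} or Ando's theorem, which you will need to invoke at this step; once you do, smoothness of $Y_\pm$ and the projective-bundle structure of $\pi_\pm$ follow simultaneously, and the degree condition $\deg(L_\mp|_{F_\pm})=1$ drops out from the identification of $N_{E_\pm/\P(\cE)}$ with the $\cO(-1)$ of the blowup.
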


\begin{proof} 
Assume that the contractions $\pi_-$ and $\pi_+$ satisfy the above assumptions; 
using the canonical bundle formula for projectivizations we can write
$$
 -K_{\P(\cE)}\equiv 2D-p^*(K_Y+L_-+L_+),
$$
where $D$ is a divisor associated to the line bundle $\cO_{\P(\cE)}(1)$. From the above formula we can compute the lengths of the extremal rays contracted by $\varphi$: if $\ell_\pm$ is  the image in $s_\pm(Y)$ of a line in a fiber of $\pi_\pm$, then 
$$
 -K_{\P(\cE)} \cdot \ell_\pm = -p^*K_Y \cdot \ell_\pm -1 = \dim Y - \dim Y_\pm.
$$
It follows that  $\varphi$ is a smooth blowup by \cite[Theorem 5.1]{AO2}.
On the other hand, if $X$ is smooth, $\varphi$ is a smooth blowup by \cite[Theorem 1.1]{ESB}.  Its exceptional divisor consists of the two sections $s_{\pm}(Y)\simeq Y$, and the restrictions $\varphi_{|s_{\pm}(Y)}:s_{\pm}(Y)\to Y_{\pm}$ are the two projective bundles satisfying the required hypotheses.
\end{proof}


\begin{remark}\label{rem:equalizeddrum}
Note that, given a smooth drum $X$ as above, by construction, the action of $H$ that we have defined  satisfies that its restriction to the image into $X$ of a fiber of $\P(\cE)\to Y$ is the natural one. In particular it is equalized.
\end{remark}

For a Fano manifold $X$, its {\em index} is defined as the maximum positive integer $k$ such that $(1/k)K_X$ is a Cartier divisor. In the following Proposition we show how to write the index of a smooth drum in terms of its defining data.

\begin{proposition}\label{prop:index}
Let $X$ be a smooth drum, constructed upon a manifold $Y$ of Picard number two supporting structures of $\P^{k^-}$\!-bundle and $\P^{k^+}$\!-bundle. Then  the Fano index of $X$ is equal to $k^-\!+k^+\!+2$.
\end{proposition}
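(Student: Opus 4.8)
The strategy is to compute $-K_X$ by pulling everything back to $\P(\cE)$ via the blowup $\varphi$. Recall from the previous lemma that for a smooth drum, $\varphi\colon\P(\cE)\to X$ is the simultaneous blowup of $X$ along the two disjoint codimension $c_\pm:=\dim Y-\dim Y_\pm=k^\pm$ subvarieties $Y_\pm\subset X$, with exceptional divisors $E_\pm=s_\pm(Y)$. Hence
$$
K_{\P(\cE)}=\varphi^*K_X+(k^-\!-1)E_-+(k^+\!-1)E_+ .
$$
On the other hand, the relative Euler sequence for $p\colon\P(\cE)\to Y$ with $\cE=L_-\oplus L_+$ gives the canonical bundle formula
$$
-K_{\P(\cE)}\equiv 2D-p^*(K_Y+L_-+L_+),
$$
where $D$ is a divisor with $\cO_{\P(\cE)}(D)=\cO_{\P(\cE)}(1)$, already used in the proof above. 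Comparing these two expressions will express $\varphi^*K_X$ in terms of $D$, $p^*(\text{stuff on }Y)$, and $E_\pm$.

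\textbf{Key steps.} First I would identify $E_\pm$ inside $\Pic(\P(\cE))$: since $E_\pm=s_\pm(Y)$ is the section attached to the quotient $\cE\to L_\pm$, one has $\cO_{\P(\cE)}(E_\pm)=\cO_{\P(\cE)}(1)\otimes p^*L_\mp^\vee$, i.e.\ $E_\pm\equiv D-p^*L_\mp$ (here $L_\mp$ are the pullbacks to $Y$ of the ample bundles, so $L_-$ is $\pi_-$-trivial and $L_+$ is $\pi_+$-trivial). Plugging $E_-\equiv D-p^*L_+$ and $E_+\equiv D-p^*L_-$ into $K_{\P(\cE)}=\varphi^*K_X+(k^-\!-1)E_-+(k^+\!-1)E_+$ and equating with $-2D+p^*(K_Y+L_-+L_+)$ yields
$$
\varphi^*K_X=-(k^-\!+k^+\!)\,D+p^*\big(K_Y+k^+L_-+k^-L_+\big).
$$
Next I would use the projective bundle structures on $Y$: writing $F_\pm$ for a fiber of $\pi_\pm$, the relative canonical bundle of $\pi_\pm$ is $-(k^\pm\!+1)$ times the relative hyperplane class, which pulls back along the other projection in a controlled way. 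Concretely, since $\rho_Y=2$ and $\Nef(Y)=\langle L_-,L_+\rangle$, one can write $-K_Y\equiv aL_-+bL_+$ for integers $a,b$, and intersecting with a line $\ell_\mp$ in $F_\mp$ (for which $L_-\cdot\ell_+=L_+\cdot\ell_-=1$, $L_+\cdot\ell_+=L_-\cdot\ell_-=0$, and $-K_Y\cdot\ell_\pm=k^\pm\!+1$ by adjunction on the $\P^{k^\pm}$-bundle) gives $a=k^+\!+1$, $b=k^-\!+1$. Therefore $K_Y+k^+L_-+k^-L_+\equiv -L_--L_+$, and
$$
\varphi^*K_X\equiv -(k^-\!+k^+\!)\,D-p^*(L_-+L_+).
$$

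\textbf{Conclusion and main obstacle.} To extract the index I would restrict to the image $C$ in $X$ of the $H$-equivariant curve joining sink and source, i.e.\ a fiber $f$ of $p$, which is disjoint from $E_\pm$ so $\varphi_*f$ is a smooth rational curve with $D\cdot f=1$, $p^*L_\pm\cdot f=0$. Hence $-K_X\cdot\varphi_*f=(k^-\!+k^+\!)+\text{(contribution of }p^*(L_-+L_+)\text{)}$ — but that last term vanishes on $f$, giving $-K_X\cdot\varphi_*f=k^-\!+k^+$, which would be off by $2$; the resolution is that $\varphi$ is \emph{not} an isomorphism near $f$ only if $f$ meets $E_\pm$, which it does not, yet the correct count of $-K_X$ along the general orbit must be done with the general orbit $C_{\gen}$, whose class is $[f]$ plus the classes of the two lines in the fibers of $E_\pm\to Y_\pm$ contracted by $\varphi$ — precisely the content of Corollary~\ref{cor:sum_of_orbits}. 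Alternatively, and more cleanly, I would compute $-K_X\cdot C_{\gen}$ directly via \ref{lem:AMvsFM}: pick the linearization of $-K_X$ given by $\det T_X$; its weights at the sink $Y_-$ and source $Y_+$ are the sums of weights on the normal bundles $N_{Y_\pm/X}$, which for the equalized action of Remark~\ref{rem:equalizeddrum} are $\mp k^\pm$ (one per normal direction). Then, since the bandwidth of $(X,\cO_{\P(\cE)}(1)\text{-polarization})$ equals $1$ — as the weights of $\cO_{\P(\cE)}(1)$ on $E_-,E_+$ differ by $1$ — we get $-K_X\cdot C_{\gen}=k^++k^-$ divided by $|\mu|$... and again I must be careful: the index is $-K_X\cdot C_{\gen}$ divided by $L\cdot C_{\gen}$ for $L$ the ample generator-compatible bundle, and here one must check $L\cdot C_{\gen}=1$ and that $-K_X\cdot C_{\gen}=k^-+k^++2$, the extra $2$ coming from $\delta$-weights of the two "broken" pieces at the inner components — i.e.\ from the two lines $\Gamma_\pm$. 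So the heart of the argument, and the step most prone to error, is the bookkeeping of which curve class to intersect against: the honest general orbit $C_{\gen}$ decomposes (numerically, via Corollary~\ref{cor:sum_of_orbits}) as $[f]+[\Gamma_-]+[\Gamma_+]$ after passing through the inner fixed components created by $\varphi$, and it is this decomposition that supplies the two extra units. I would therefore organize the proof around: (1) the canonical bundle computation on $\P(\cE)$ above, (2) the identification $[C_{\gen}]=[f]+[\Gamma_-]+[\Gamma_+]$ in $\Nu(X)$, giving $-K_X\cdot C_{\gen}=(k^-\!+k^+)+1+1$, and (3) $L\cdot C_{\gen}=1$ for a suitable ample generator, concluding that the Fano index is $k^-\!+k^+\!+2$.
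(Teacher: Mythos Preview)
Your overall strategy---compare the two expressions for $K_{\P(\cE)}$ coming from the blowup formula and the projective-bundle formula---is exactly the paper's approach. The problem is a single bookkeeping error at the very first step, which then forces you into the confused ``fix'' at the end.

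You write $c_\pm=\dim Y-\dim Y_\pm=k^\pm$, but this is the codimension of $Y_\pm$ in $Y$, not in $X$. Since $\varphi$ is birational, $\dim X=\dim\P(\cE)=\dim Y+1$, so $\codim(Y_\pm,X)=k^\pm+1$. The blowup formula therefore reads
\[
K_{\P(\cE)}=\varphi^*K_X+k^-E_-+k^+E_+,
\]
not with coefficients $k^\pm-1$. Redoing your computation with the correct coefficients (and your own, correct, identifications $E_\pm\equiv D-p^*L_\mp$ and $K_Y\equiv-(k^+{+}1)L_--(k^-{+}1)L_+$) gives
\[
-\varphi^*K_X=(k^-{+}k^+{+}2)\,D
\]
on the nose: the $p^*(\cdot)$ term vanishes identically. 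Intersecting with a fiber $f$ of $p$, which satisfies $D\cdot f=1$ and is not contracted by $\varphi$, then yields $-K_X\cdot\varphi_*f=k^-{+}k^+{+}2$, and since $\varphi_*f$ has $L$-degree $1$ this is the index.

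Everything in your ``Conclusion and main obstacle'' paragraph is an attempt to repair a discrepancy that does not exist once the codimension is right. In particular, the $H$-action on a smooth drum has \emph{no} inner fixed components (the bandwidth is one), so there is no decomposition $[C_{\gen}]=[f]+[\Gamma_-]+[\Gamma_+]$ in $\Nu(X)$: the curves $\Gamma_\pm$ live in the exceptional divisors of $\varphi$ and are contracted to points in $X$. The invocation of Corollary~\ref{cor:sum_of_orbits} is therefore misplaced here.
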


\begin{proof}

Let $D$ be a divisor associated to the line bundle $\cO_{\P(\cE)}(1)$; we can compute that: 
$$E_\pm\equiv D- p^*L_\pm.$$
Using the canonical bundle formulas for projectivizations and blowups we can write
$$
-2D+p^*(K_Y+L_-+L_+)\equiv K_{\P(\cE)}\equiv \varphi^*K_X + k^-E_-+k^+E_+.
$$
Using this formula and recalling that $K_Y\equiv-(k^-\!+1)L_- -(k^+\!+1)L_+$ we get
$$-\varphi^*K_X\equiv (k^-\!+k^+\!+2)D.$$
Since $D\cdot f=1$ the proof is finished.
\end{proof}

Note that,  
 as a direct application of Lemma \ref{lem:AMvsFM} and Corollary \ref{cor:allequalized} (ii), a bandwidth one $H$-action on a polarized pair $(X,L)$ is always equalized. 

\begin{example}\label{ex:projdrum}
The simplest example of action of bandwidth $1$ is the action on $X=\P^n$ given by $t(x_0:\dots:x_n)=(x_0:\dots:x_{m-1}:tx_{m}:\dots:tx_n)$, whose source and sink are two disjoint projective subspaces $Y_+\simeq \P^{m-1}$, $Y_-\simeq \P^{n-m}$. Blowing up the extremal fixed point components we obtain two exceptional divisors isomorphic to $Y=\P^{m-1}\times \P^{n-m}$. When $m\neq 1$ and $n\neq m$, $Y$ has Picard number two, and $\P^n$ is the drum associated to the triple $(Y,L_-,L_+)$, where $L_\pm$ are the tautological line bundles on $Y_\pm$. Note that it has been shown in \cite[Theorem~3.1]{RW} that the only example of an $H$-action of bandwidth one that has an isolated extremal fixed point is the action described above, with $m=1$. 
\end{example}

The following statement relates smooth drums with polarized pairs with an action of bandwidth one. 

\begin{theorem}\label{thm:drums} 
 Let $X$ be a smooth projective variety with $\rho_X=1$ different from the projective space, and $L$ be an ample line bundle on $X$.  Then $X$ is a (smooth) drum if and only if there exists a $\C^*$-action on $(X,L)$ of bandwidth one. 
\end{theorem}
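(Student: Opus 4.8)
The "only if" direction is essentially already delivered by the preliminary material: if $X$ is a smooth drum built on $(Y,L_-,L_+)$, then Remark \ref{rem:action_on_drum} equips $X$ with an $H$-action whose sink and source are $Y_-$ and $Y_+$, and Remark \ref{rem:equalizeddrum} tells us the action is equalized. To see the bandwidth is exactly one, I would linearize the tautological bundle $\cO_{\P(\cE)}(1)$ (which descends to an ample line bundle $L$ on $X$ once we check it is the pushforward of an ample, or simply take any ample $L$ and observe via Lemma \ref{lem:AMvsFM} that the equalized action forces the weight difference along a general orbit to equal $L\cdot C_{\gen}$): with the local model $t(x,\lambda)=(x,t\lambda)$ on $\cU_i\times\P^1$, the fibers of $\P(\cE)\to Y$ map to $H$-invariant rational curves $C$ with $\delta(C)=1$, and $\cO_{\P(\cE)}(1)\cdot C = 1$, so by (AM vs FM) the values of $\mu_L$ at $Y_-$ and $Y_+$ differ by one. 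Since by Remark \ref{rem:bandwidth} these are $\mu_{\min}$ and $\mu_{\max}$, the bandwidth is $1$.

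\textbf{The "if" direction} is the substantial part. Suppose $(X,L)$ carries a $\C^*$-action of bandwidth one, with $\rho_X=1$ and $X\neq\P^n$. As noted before the theorem, such an action is automatically equalized. The plan is: first, show neither the sink $Y_-$ nor the source $Y_+$ is a point — if one were, then by \cite[Theorem~3.1]{RW} (quoted in Example \ref{ex:projdrum}) $X$ would be $\P^n$, contradicting the hypothesis; so both $Y_\pm$ are positive-dimensional. Next, apply Lemma \ref{lem:setup_bir}: blowing up $X$ along $Y_+$ and $Y_-$ produces $X_\flat$ with exceptional divisors $Y_\pm^\flat = \P(N^\vee_{Y_\pm/X})$, and since $\rho_X=1$ and both extremal components are positive-dimensional, $Y_-^\flat \hooklongrightarrow X_\flat \hooklongleftarrow Y_+^\flat$ is a bordism. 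I would then argue this bordism has rank $0$: by Lemma \ref{lem:Pic_extremal}(1), every inner fixed component $Y$ of the original action has $\nu^+(Y)\geq 2$ (and symmetrically $\nu^-(Y)\geq 2$), but a bandwidth-one equalized action joins its source to its sink through a single orbit — more precisely, Corollary \ref{cor:sum_of_orbits} together with the fact that $\mu_L$ takes only two consecutive values $\mu_{\min},\mu_{\min}+1$ forces any maximal chain of invariant curves to have length one, i.e. there are no inner fixed components at all, and the bandwidth-one condition similarly constrains the blown-up action. Hence $\Nu(X_\flat)^H$ is one-dimensional and the rank is $0$.

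\textbf{Concluding via Lemma \ref{lem:rank0bordism}:} a rank-$0$ bordism means $Y_-^\flat \simeq Y_+^\flat =: Y$ and $X_\flat$ is an $H$-equivariant $\P^1$-bundle over $Y$, say $X_\flat = \P(\cE)$ for a rank-two bundle $\cE$ on $Y$, with the two fixed sections being $Y_+^\flat$ and $Y_-^\flat$; splitting off the two eigen-sub-line-bundles of the equivariant structure we may write $\cE = L_+ \oplus L_-$ up to twist. The blow-down maps $X_\flat \to X$ restrict on $Y_\pm^\flat \simeq Y$ to the projections $\P(N^\vee_{Y_\pm/X}) \to Y_\pm$, which are therefore themselves projective bundle structures $\pi_\pm: Y \to Y_\pm$, exhibiting $Y$ with $\rho_Y = 2$ (as $\rho_{X_\flat} = \rho_X + 2 = 3$ and the $\P^1$-bundle adds one, giving $\rho_Y = 2$) and two projective bundle structures; moreover the line bundles $L_\pm$ on $Y$ are pulled back from ample bundles on $Y_\pm$ and restrict to degree $1$ on the opposite fibers (this last point comes from $Y_\pm^\flat \cdot C_{\gen} = 1$, Remark \ref{rem:Btypeintnumber}, combined with tracking the tautological bundle). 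By the smoothness criterion (the Lemma preceding Remark \ref{rem:equalizeddrum}), $X$ is then precisely the smooth drum associated to $(Y, L_-, L_+)$, and one checks the $H$-action coincides with the one of Remark \ref{rem:action_on_drum}. \textbf{The main obstacle} I anticipate is the rank-$0$ verification: one must carefully rule out inner fixed components and confirm $\dim\Nu(X_\flat)^H = 1$, using that bandwidth one plus equalized leaves no room for intermediate weights — this is where the interplay of Lemma \ref{lem:Pic_extremal}, Corollary \ref{cor:sum_of_orbits}, and the combinatorics of the weight function $\mu_L$ must be assembled precisely, and where bookkeeping of which line bundle descends to an ample $L$ on $X$ (versus on $X_\flat$) needs care.
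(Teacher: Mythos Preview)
Your overall strategy matches the paper's: blow up $Y_\pm$, observe that a bandwidth-one equalized action has no inner fixed components, identify $Y_-^\flat\simeq Y_+^\flat=:Y$ with $\rho_Y=2$ carrying two projective-bundle structures $\pi_\pm:Y\to Y_\pm$, and recognize $X_\flat\to X$ as the drum contraction. Two remarks, one cosmetic and one substantive.

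\emph{Cosmetic.} The ``rank-$0$ verification'' you flag as the main obstacle is actually immediate, and the paper does not route through Lemma~\ref{lem:rank0bordism} at all. Since the bandwidth is one and the action is equalized, $\mu_L$ takes only two values, so there simply are no inner fixed components; hence $Z_\pm=\emptyset$ and Lemma~\ref{lem:birational_map_b} already gives $Y_-^\flat\simeq Y_+^\flat$ directly. Your detour through bordism rank is correct but unnecessary.

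\emph{Substantive gap.} You do not check that the two projective-bundle structures $\pi_-$ and $\pi_+$ on $Y$ are \emph{different}. This is needed: the drum construction (Definition~\ref{def:drum}) and the smoothness criterion both require $L_-,L_+$ to support the two distinct extremal contractions of $Y$. The paper handles this with a Bend-and-Break argument: if $\pi_-=\pi_+$, take two distinct points $y_1,y_2$ in a common fiber; the orbits through the corresponding points of $Y_-^\flat$ and $Y_+^\flat$ give, after blowing down, a positive-dimensional family of curves in $X$ of $L$-degree one through two fixed points of $X$, contradicting Mori's Bend-and-Break (\cite[Proposition~3.2]{De}). You should insert this step before invoking the smoothness criterion.
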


\begin{proof}
Assume that $X$ is a smooth drum and keep the notation as in Definition \ref{def:drum}. 
We consider the $H$-action on $X$ defined in Remark \ref{rem:action_on_drum}, that is equalized (Remark \ref{rem:equalizeddrum}) and, in particular, faithful. Then, denoting by $L$ the line bundle on $X$ whose pullback to $\P(\cE)$ is $\cO_{\P(\cE)}(1)$ we have, by construction, a linearization of the $H$-action on $L$. By Lemma \ref{lem:AMvsFM}, since the action is equalized, the bandwidth $|\mu|$ of the action on $(X,L)$ is equal to the degree of $L$ on the closure $C$ of a general orbit of the action, which is the image of a fiber of $\P(\cE)\to Y$. Then $|\mu|=L\cdot C=1$. 

Conversely, let $(X,L)$  be a polarized pair supporting an (equalized) $H$-action of bandwidth one, and consider the blowup $\varphi:X_{\flat}\rightarrow X$ at the source $Y_-$ and the sink $Y_+$ of the action, with exceptional divisors $Y_-^\flat$, and $Y_+^\flat$. By Lemma \ref{lem:setup_bir} the action extends to a B-type action on $X_\flat$. Using Lemma \ref{lem:birational_map_b}, since the action has no inner fixed point components, we see that $Y_-^\flat \simeq Y_+^\flat =:Y$. Moreover, since $X$ is not a projective space, we may assume that $\dim Y_\pm>0$ (see Example \ref{ex:projdrum}), and then, by Lemma \ref{lem:Pic_extremal} (1) we have $\rho_{Y_-}=\rho_{Y_+}=1$, and $\rho_Y=2$.
 
We claim that the projective bundle structures $\pi_-\colon Y\to Y_-$ and $\pi_+\colon Y\to Y_+$ are different; if this were not the case, then the orbits joining corresponding points of two fibers of the contraction would be mapped by $\varphi$ to a positive dimensional family of curves of degree one with respect to the ample line bundle $L$, passing by two different points, against the Bend and Break Lemma (see, for instance \cite[Proposition~3.2]{De}).
\end{proof}

\begin{remark}\label{rem:pasquierkan}
One may construct many examples of smooth drums upon rational homogeneous varieties $Y$ of Picard number two with two projective bundle structures. However, these are not the only ones. In fact, there exist examples of non homogeneous varieties of Picard number two with two projective bundle structures (see \cite[Section 2]{Kan}). Moreover, we remark that in the case in which $Y$ is a homogeneous variety of Picard number two with two projective bundle structures, the drum obtained upon it can be either rational homogeneous or non homogeneous horospherical; we refer the reader to \cite{pas} for details. 
\end{remark}



\section{Atiyah flips and bordisms: the case of toric varieties}\label{sect:appendix}
In this Section we explore the relation among Atiyah flips and bordisms in the case of toric varieties. Our main motivation is to state results that can be used locally in Section \ref{sec:BW2} to extend that relation to the framework of smooth complex projective varieties. We use the language of toric geometry as, for example, in \cite{CLS}. In particular, our notation is consistent with that book. In the first subsection we briefly recall what has been done in a greater generality by Reid, \cite{ReidToric}, see also \cite[Lect.~3]{JW-Toric}. Subsection \ref{ssect:cobordism}  explains ideas of cobordism by Morelli and W{\l}odarczyk, \cite{Morelli,Wlodarczyk}. Both subsections serve as preparation to \ref{ssect:toricbordism} where we construct the toric bordism.

\subsection{Toric Atiyah flips}\label{ssec:toricAtiyah}

Let us fix a triple of integers $(r,s,n)$ with $r, s\geq 1$ and
$q:=n-1-r-s\geq 0$.  Take a lattice $N$ of rank $n+1$ generated by $e_0,\dots,e_r,
f_0,\dots,f_s, h_1,\dots,h_q$, that is 
$$N=\bigoplus_{i=0}^r \ZZ\cdot e_i\oplus\bigoplus_{j=0}^s
\ZZ\cdot f_j\oplus\bigoplus_{k=1}^q \ZZ\cdot h_k.$$
By $\Delta\subset N_\RR:=N\otimes_{\Z}\R$ we denote the convex cone generated by the
basis, that is $\Delta= \cone(e_i,f_j,h_k)$, with $i=0,\dots,r$, $j=0,\dots,s$, $k=1,\dots,q$. We also define:
$$\delta_-:=\cone(f_j),\ \ \delta_+:=\cone(e_i),\ \ \delta_0:=\cone(h_k),$$
with $i,j,k$ as above, so that
$\Delta=\delta_-+\delta_++\delta_0$.  We define, for
$i=0,\dots, r$ and $j=0,\dots,s$, the distinguished facets
of $\Delta$: $$\delta^i_-:=\cone(e_m: m\ne i)+\delta_-+\delta_0,\  {\rm
 and} \ \ \delta_+^j:=\cone(f_m: m\ne j)+\delta_++\delta_0.$$ 

We take $v:=\sum_i e_i-\sum_j f_j$ and consider the quotient lattice
$N':=N/\ZZ v$ of rank $n$. We denote by $\Delta'$ the image of $\Delta$ under the
projection $N\rightarrow N'$. By abuse, we denote by
the same names the classes of $e_i$'s, $f_j$'s and $h_k$'s in $N'$, as
well as the cones $\delta$'s defined above.  

The cone $\Delta'$ has two regular triangulations
$\Sigma_-$ and $\Sigma_+$ in which the cones of maximal dimensions
are, respectively:
$$\Sigma_+(n)=\{\delta_+^j: j=0,\dots,s \},\  {\rm and}\ \
\Sigma_-(n)=\{\delta_-^i: i=0,\dots,r \}.$$ The resulting two toric
varieties of dimension $n$ are smooth and we denote them by
$X_{\Sigma_+}$ and $X_{\Sigma_-}$, respectively. Below, whenever the
statement concerns both fans or the associated toric varieties, we
will write $\Sigma_\pm$ and $X_{\Sigma_\pm}$, respectively.

\begin{remark}\label{rem:extend_triang}
We note that the two subdivisions
$$\Delta'=\bigcup_{j=0}^s \delta^j_+ = \bigcup_{i=0}^r \delta^i_-$$
are trivial on the boundary of $\Delta'$. Therefore,
if a fan $\Sigma'_-$ in $N'$ contains a subfan with maximal cones
$\delta_-^i$ and support $\Delta'$ then it can be modified to a fan
$\Sigma'_+$ which contains subdivision of $\Delta'$ by cones
$\delta^j_+$.
\end{remark}

If $\Sigma(\Delta')$ is the fan of faces of $\Delta'$, then we have
birational contractions which come from these subdivisions
$$\begin{tikzcd}X_{\Sigma_-}\ar[r]& X_{\Sigma(\Delta')}&
  X_{\Sigma_+}\ar[l]\end{tikzcd}.$$ Their exceptional sets are $V(\delta_-)\simeq\PP^r\times\CC^q$ and
 $V(\delta_+)\simeq\PP^s\times\CC^q$, respectively, where we use the standard notation introduced in \cite{CLS} for $V(\delta_{\pm})$ to denote the closure of the orbit associated to $\delta_{\pm}$, whose fan is the star of $\delta_{\pm}$.  Moreover, the above contractions restricted to $V(\delta_{\pm})$
are the projections to $\CC^q=V(\delta_-+\delta_+)$. Indeed, the star 
of the cone $\delta_+\in\Sigma_+$ consists of maximal cones of type
$\delta_+^j\times\delta_0$ which yields the fan of $\PP^s\times\CC^q$.
A similar statement follows for $\delta_-$.

On the other hand, the relation $\sum_i e_i - \sum f_j=0$ in $N'$
shows that on $X_{\Sigma_+}$ the line bundle associated to any divisor
$V(\RR_{\geq 0}\cdot e_i)$ is $\cO(-1)$ on fibers of the projection
$\PP^s\times\CC^q=V(\delta_+)\rightarrow \CC^q$ while any divisor
$V(\RR_{\geq0}\cdot f_j)$ is associated to $\cO(1)$. In fact,
$V((\RR_{\geq0}\cdot f_j)+\delta_0)$ is a hyperplane in
$V(\delta_++\delta_0)\simeq\PP^s$ and $V(\RR_{\geq0}\cdot
e_i)+V(\RR_{\geq0}\cdot f_j)$ is a principal divisor on
$X_{\Sigma_+}$. Moreover, $V(\delta_+)$ is the complete intersection
of divisors $V(\RR_{\geq0}\cdot e_i)$ hence the normal to
$V(\delta_+)$ in $X_{\Sigma_+}$, restricted to $V(\delta_++\delta_0)\simeq\PP^s$ is
\begin{equation}(N_{V(\delta_+)/X_{\Sigma_+}})_{|\PP^s}\simeq\cO_{\P^s}(-1)^{\oplus(r+1)}.\label{eq:normalAtiyah}
\end{equation}  Similarly, on $X_{\Sigma_-}$ the
line bundle associated to the divisor $V(\RR_{\geq 0}\cdot e_i)$ is
$\cO(1)$ on fibers of the projection
$\PP^r\times\CC^q=V(\delta_-)\rightarrow \CC^q$, whereas
$V(\RR_{\geq0}\cdot f_j)$ is $\cO(-1)$. We will denote the respective
line bundles on varieties $X_{\Sigma_\pm}$ by $\cO_{\Sigma_{\pm}}(1)$
and $\cO_{\Sigma_{\pm}}(-1)$. We note that $\Pic
(X_{\Sigma_-})=\ZZ\cdot[\cO_{\Sigma_-}(1)]$, and $\Pic
(X_{\Sigma_+})=\ZZ\cdot[\cO_{\Sigma_+}(1)]$.

\begin{definition}\label{def:Atiyahtoric}
The birational transformation defined above
$$\begin{tikzcd}
X_{\Sigma_-}\arrow[leftrightarrow,dashed]{r}&X_{\Sigma_+}
\end{tikzcd}$$ 
will be called {\em toric Atiyah flip of type $(r,s,n)$}, or
{\em Atiyah flop} if $r=s$. The morphism $X_{\Sigma_-} \to X_{\Sigma(\Delta')}$ will be
called a {\em toric small contraction of Atiyah type $(r,s,n)$}. The flip is an isomorphism
outside the exceptional loci of these contractions, that is
$V(\delta_-)$ and $V(\delta_+)$, respectively; the cones $\delta_\pm$
and the varieties $V(\delta_\pm)$ will be called the {\em centers} of the
flip.
\end{definition}

Since the flip is an isomorphism in codimension $1$, the corresponding strict transform  defines an isomorphism between the divisors class groups of $X_{\Sigma_\pm}$, 
under which $\cO_{\Sigma_-}(-1)$ is sent to $\cO_{\Sigma_+}(1)$.

The Atiyah flip can be resolved by a single blowup. Namely, in the lattice
$N'$ we take a vector $u=\sum e_i =\sum f_j$ and consider the
following fan $\Sigma_\#$ in $N'_\RR$ with support $|\Sigma_\#|=\Delta'$ and
such that:
$$\Sigma_\#(n)=\{\RR_{\geq 0}\cdot  u+(\delta_-^i\cap\delta_+^j)+\delta_0:
i=0,\dots,r,\ j=0,\dots,s \}.$$
Then the resulting birational morphisms
$$\begin{tikzcd}
X_{\Sigma_-}&X_{\Sigma_\#}\arrow{l}\arrow{r}&X_{\Sigma_+}
\end{tikzcd}$$ are blowdowns, with exceptional divisors
$V(\RR_{\geq 0}\cdot u)\subset X_{\Sigma_\#}$, $V(\RR_{\geq
  0}\cdot u)\simeq \PP^r\times\PP^s\times\CC^q$, which are mapped to
$V(\delta_-)\subset X_{\Sigma_-}$ and $V(\delta_+)\subset
X_{\Sigma_+}$, respectively.

\subsection{Morelli--W{\l}odarczyk cobordism}\label{ssect:cobordism}
We use the notation of the previous subsection. The Atiyah flip can be described in terms of the action of the
1-parameter group $\lambda^v$ with $v=\sum_{i} e_i-\sum_{j} f_j$ on the toric variety
$X_{\Sigma(\Delta)} \simeq \CC^{n+1}$. Then we have a commuting
diagram associated to the projection $N\rightarrow N'$: 
$$\begin{tikzcd}
  X_{\widetilde{\Sigma}_-}\arrow[hook]{r}\arrow{d}&
  X_{\Sigma(\Delta)}\arrow[dashed]{dl}\arrow{dd}\arrow[dashed]{dr}&
  X_{\widetilde{\Sigma}_+}\arrow[hook']{l}\arrow{d}\\
  X_{\Sigma_-}\arrow{dr}\arrow[leftrightarrow,dashed]{rr}&&X_{\Sigma_+}\arrow{dl}\\
  &X_{\Sigma(\Delta')}& \end{tikzcd}$$ Here the solid arrow $X_{\Sigma(\Delta)}\rightarrow
X_{\Sigma(\Delta')}$ is the categorical quotient of affine
varieties, while the dashed arrows from
$X_{\Sigma(\Delta)}$ are good quotients on open subsets $X_{\widetilde{\Sigma}_\pm}$ of this variety.
In fact, the fan $\Sigma_-$ in $N'_\RR$ which is a division
of the cone $\Delta'$ determines a subfan $\widetilde{\Sigma}_-$ of
$\Sigma(\Delta)$ so that the projection $N\rightarrow N'$ yields the
map of fans $\widetilde{\Sigma}_-\rightarrow\Sigma_-$, which is
bijective on cones. Note that $\widetilde{\Sigma}_-$ has cones of
dimension $\leq n$ and $\widetilde{\Sigma}_-(n)=\{\delta_-^i:
i=0,\dots,r \}$ where $\delta_-^i$ are cones in $N$. The associated
morphism of varieties $X_{\widetilde{\Sigma}_-}\rightarrow
X_{\Sigma_-}$ is a $\CC^*$-bundle so that the map is a geometric
quotient. The same holds for the fan $\Sigma_+$ for which we take the
respective fan $\widetilde{\Sigma}_+$ in $N$, and we get
$X_{\widetilde{\Sigma}_+}\rightarrow X_{\Sigma_+}$.

The two quotient morphisms in the upper part of the
above diagram 
$$\begin{tikzcd}
X_{\Sigma_-}&X_{\widetilde{\Sigma}_-}\arrow{l}\arrow[hook]{r}&
X_{\Sigma(\Delta)}&X_{\widetilde{\Sigma}_+}\arrow[hook']{l}\arrow{r}&X_{\Sigma_+}
\end{tikzcd}$$ will
be called the \textit{Morelli--W{\l}odarczyk cobordism associated to the Atiyah
flip}. A schematic description of the map of fans of the cobordism
associated to the classical 3-dimensional flop is presented in the
following diagram, with the central tetrahedron representing a section
of the 4-dimensional simplicial cone $\cone(e_0,e_1,f_0,f_1)$:
\par\medskip
\centerline{
\begin{tikzpicture}[scale=0.6]
\draw [green] (-7.5,2)--(-9,1)--(-8.5,-2)--(-7,-1)--(-7.5,2);
\draw [blue] (-9,1)--(-7,-1);
\draw [green] (-5.2,1)--(-3.1,2)--(-4.8,-1)--(-3.2,-2);
\draw [green, dashed] (-5.2,1)--(-3.2,-2);
\draw [green] (-4.35,-0.26)--(-3.2,-2);
\draw [blue] (-5.2,1)--(-4.8,-1);
\draw [green] (8.5,2)--(7,1)--(7.5,-2)--(9,-1)--(8.5,2);
\draw [red] (8.5,2)--(7.5,-2);
\draw [green] (3.2,1)--(5.1,2)--(3.2,-1)--(4.8,-2);
\draw [green, dashed] (3.2,1)--(4.8,-2);
\draw [green] (3.8,-0.14)--(3.2,1);
\draw [red] (5.1,2)--(4.8,-2);
\draw [green] (-1.2,1)--(1.1,2)--(-0.8,-1)--(0.8,-2);
\draw [green, dashed] (-1.2,1)--(0.8,-2);
\draw [red] (1.1,2)--(0.8,-2);
\draw [blue] (-1.2,1)--(-0.8,-1);
\node at (1.5,2) {$e_0$}; \node at (1.3,-2) {$e_1$};
\node at (-1.6,1) {$f_0$}; \node at (-1.2,-1) {$f_1$};
\draw [->] (-3,0)--(-2,0);
\draw [->] (3,0)--(2,0);
\draw [->] (-5.5,0)--(-6.5,0);
\draw [->] (5.5,0)--(6.5,0);
\end{tikzpicture}
}
\par

\begin{remark}\label{piecewise-linear} If we decompose
  $N_\RR$ as  $N_\RR=N'_\RR\oplus\RR\cdot v$, where the splitting is given by taking
  $e_1,\dots,e_r,f_0,\dots,f_s$ as the basis of $N'$, then
  the supports $|\widetilde{\Sigma}_\pm|$ are graphs of functions $\epsilon^0_\pm:
  |\Delta'| \rightarrow \RR$ defined by the projection on $\RR\cdot v$,  which are linear on cones in the
  respective fan $\Sigma_\pm$ and such that $\epsilon_\pm^0(e_0)=1$
  and $\epsilon_\pm^0$ on all $f_j$'s and the other $e_i$'s is zero.
  In the language of \cite{CLS}, the function $\epsilon^0_\pm$ is the
  {\em supporting function} of the divisor $-V(\RR_{\geq0}\cdot e_0)$
  on the respective fan $\Sigma_\pm$, see \cite[Sect. 4.2]{CLS}.
  Since the divisor $-V(\RR_{\geq0}\cdot e_0)$ is associated to
  $\cO_{\Sigma_-}(1)$ on $X_{\Sigma_-}$, and $\cO_{\Sigma_+}(-1)$ on
  $X_{\Sigma_+}$, the respective morphism $X_{\widetilde{\Sigma}_\pm}\rightarrow
  X_{\Sigma_\pm}$ is a $\CC^*$-bundle associated to each of these line
  bundles.
\end{remark}

Finally, we note that Morelli, \cite{Morelli}, in the toric case and
W{\l}odarczyk, in general, introduced the notion of birational
cobordism, see \cite[Def.~3]{Wlodarczyk}, which encompasses the
construction explained above.

\subsection{Toric bordism}\label{ssect:toricbordism}
The $\CC^*$-bundle $X_{\widetilde{\Sigma}_-}\rightarrow X_{\Sigma_-}$
can be extended to a line bundle over $X_{\Sigma_-}$ by adding a zero
section. This can be done in two ways, depending on the choice of the
$\CC^*$-action which determines the ``zero'' limits of the action. In
toric terms this is described by two fans $\widehat{\Sigma}_-^+$ and
$\widehat{\Sigma}_-^-$ in $N_\RR$, which are obtained by adding to
$\widetilde{\Sigma}_-$ a ray generated by $-v$ or $v$, respectively, so
that their sets of cones of maximal dimension are, respectively:
$$\widehat{\Sigma}_-^+(n+1)=\{\delta_-^i+\RR_{\geq 0}\cdot (-v)\},\ {\rm and}
\ \ \widehat{\Sigma}_-^-(n+1)=\{\delta_-^i+\RR_{\geq 0}\cdot v\},$$
where $i=0,\dots,r$. Similarly we obtain two fans $\widehat{\Sigma}_+^+$,
$\widehat{\Sigma}_+^-$ in $N_\RR$, by
adding to $\widetilde{\Sigma}_+$ a ray generated by $v$ or $-v$,
respectively:
$$\widehat{\Sigma}_+^+(n+1)=\{\delta_+^j+\RR_{\geq 0}\cdot v\},\  {\rm and}
\ \ \widehat{\Sigma}_+^-(n+1)=\{\delta_+^j+\RR_{\geq 0}\cdot(-v)\},$$
where $j=0,\dots,s$.
\begin{lemma}\label{lem:toriclinebundle}
  $X_{\widehat{\Sigma}_\pm^+}\rightarrow X_{\Sigma_\pm}$ is the total
  space of the line bundle $\cO_{\Sigma_\pm}(1)$, and
  $X_{\widehat{\Sigma}_\pm^-}\rightarrow X_{\Sigma_\pm}$ is the total
  space of $\cO_{\Sigma_\pm}(-1)$.
\end{lemma}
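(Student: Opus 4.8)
The plan is to derive both identifications from the combinatorics of the fans, in three steps: produce the morphisms together with a distinguished section; check that these morphisms are line bundles; and identify the line bundles by computing their divisor classes. Throughout, let $\rho_\bullet$ denote the ray adjoined to $\widetilde{\Sigma}_\pm$ in forming the fan $\widehat{\Sigma}_\pm^{\pm}$ under consideration (so $\rho_\bullet$ is $\RR_{\geq 0}(-v)$ for $\widehat{\Sigma}_-^{+}$ and $\widehat{\Sigma}_+^{-}$, and $\RR_{\geq 0}(v)$ for $\widehat{\Sigma}_-^{-}$ and $\widehat{\Sigma}_+^{+}$). The first step is immediate: since $v$ lies in the kernel of $N\to N'$, the projection sends $\widehat{\Sigma}_-^{\pm}$, $\widehat{\Sigma}_+^{\pm}$ onto $\Sigma_-$, resp.\ $\Sigma_+$, giving morphisms $p\colon X_{\widehat{\Sigma}_\pm^{\pm}}\to X_{\Sigma_\pm}$. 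Since $\widetilde{\Sigma}_\pm$ is a subfan of $\widehat{\Sigma}_\pm^{\pm}$, the $\CC^*$-bundle $X_{\widetilde{\Sigma}_\pm}\to X_{\Sigma_\pm}$ of Remark~\ref{piecewise-linear} is a dense open subset of $X_{\widehat{\Sigma}_\pm^{\pm}}$, with complement the torus-orbit closure $V(\rho_\bullet)$; its fan is the star of $\rho_\bullet$ in $\widehat{\Sigma}_\pm^{\pm}$, which by construction is $\Sigma_\pm$, so $V(\rho_\bullet)\cong X_{\Sigma_\pm}$ is a section of $p$.

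For the second step, the maximal cones of $\widehat{\Sigma}_\pm^{\pm}$ are the $\delta_\pm^i+\rho_\bullet$; since $X_{\Sigma_\pm}$ is smooth, $\delta_-^i$ (resp.\ $\delta_+^j$) is spanned by $n$ of the standard generators of $N$, namely all but $e_i$ (resp.\ all but $f_j$), and $v=\sum e_l-\sum f_m$ rewrites the missing generator as a unimodular combination of those $n$ vectors and the primitive generator of $\rho_\bullet$. Hence every maximal cone of $\widehat{\Sigma}_\pm^{\pm}$ is nonsingular of dimension $n+1$, so $X_{\widehat{\Sigma}_\pm^{\pm}}$ is smooth, and on the affine chart of $\delta_\pm^i$ the morphism $p$ is the coordinate projection $\CC^{n+1}\to\CC^{n}$ forgetting the coordinate dual to the generator of $\rho_\bullet$. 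Gluing these charts exhibits $p$ as the total space of a line bundle $\cL$ on $X_{\Sigma_\pm}$, with zero section $V(\rho_\bullet)$.

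For the last step, since $\Pic(X_{\Sigma_\pm})\cong\ZZ$ it is enough to compute the class of $\cL$, which the gluing data above makes visible (cf.\ \cite[Ch.~7]{CLS}): $\cL=\cO_{X_{\Sigma_\pm}}\big(-\sum_\rho h_\rho D_\rho\big)$, where $h_\rho$ is the coefficient of the generator of $\rho_\bullet$ in the lift to $\widehat{\Sigma}_\pm^{\pm}$ of the ray of $\Sigma_\pm$ through $u_\rho$, computed in the splitting $N_\RR=N'_\RR\oplus\RR v$ of Remark~\ref{piecewise-linear}. In that splitting the only lifted ray of nonzero height is the one through $e_0$: since $e_0-\big(\sum_j f_j-\sum_{i\geq 1}e_i\big)=v$, its lift has height $+1$ along $v$, hence $h_{e_0}=\pm1$ according as $\rho_\bullet$ is $\RR_{\geq 0}(\pm v)$, and therefore $\cL=\cO_{X_{\Sigma_\pm}}(\mp D_{e_0})$. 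Now $\cO_{\Sigma_-}(1)=\cO_{X_{\Sigma_-}}(D_{e_0})$ and $\cO_{\Sigma_+}(-1)=\cO_{X_{\Sigma_+}}(D_{e_0})$ by the description in \S\ref{ssec:toricAtiyah}, and by construction $\widehat{\Sigma}_-^{+},\widehat{\Sigma}_+^{+}$ adjoin the rays $\RR_{\geq 0}(-v),\RR_{\geq 0}(v)$ while $\widehat{\Sigma}_-^{-},\widehat{\Sigma}_+^{-}$ adjoin $\RR_{\geq 0}(v),\RR_{\geq 0}(-v)$; substituting, one gets $\cL=\cO_{\Sigma_\pm}(1)$ for $\widehat{\Sigma}_\pm^{+}$ and $\cL=\cO_{\Sigma_\pm}(-1)$ for $\widehat{\Sigma}_\pm^{-}$, as asserted. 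I expect the main difficulty to lie in exactly this last sign bookkeeping — deciding which of the two completions of the $\CC^*$-bundle (equivalently, which adjoined ray) produces $\cO_{\Sigma_\pm}(1)$ and which produces $\cO_{\Sigma_\pm}(-1)$, in accordance with the normalizations fixed in Remark~\ref{piecewise-linear} and in \S\ref{ssec:toricAtiyah} — the rest being routine toric computation.
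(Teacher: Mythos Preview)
Your proof is correct and follows essentially the same approach as the paper's, just with the details spelled out explicitly. The paper's one-line argument invokes Remark~\ref{piecewise-linear} (which identifies the $\CC^*$-bundle $X_{\widetilde{\Sigma}_\pm}\to X_{\Sigma_\pm}$ with the one associated to $\cO_{\Sigma_\pm}(\pm 1)$ via the support function of $-V(\RR_{\geq 0}\cdot e_0)$) together with the standard toric description of how adjoining a ray to a $\CC^*$-bundle yields the total space of the corresponding line bundle; your three steps unpack precisely this, and your sign bookkeeping in the final step is consistent with the paper's conventions.
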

\begin{proof}
  The claim follows by the above Remark \ref{piecewise-linear} and
  the toric description of a line bundle associated to the respective
  $\CC^*$-bundle.
\end{proof}

Since $\widehat{\Sigma}_\pm^-\cap \widehat{\Sigma}_\pm^+=\widetilde{\Sigma}_\pm$, that correspond to the $\C^*$-bundles $X_{\widetilde{\Sigma}_\pm}\to X_{{\Sigma}_\pm}$, we may observe the following:
\begin{corollary}\label{lem:toricprojbundle}
  The fan $\widehat{\Sigma}_\pm= \widehat{\Sigma}_\pm^- \cup
  \widehat{\Sigma}_\pm^+$ defines a $\PP^1$-bundle over
  $X_{\Sigma_\pm}$, isomorphic to 
  $\pi_{\Sigma_\pm}:
  \PP(\cO\oplus\cO_{\Sigma_\pm}(1)) \rightarrow X_{\Sigma_\pm}$.
\end{corollary}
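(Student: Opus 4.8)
The plan is to obtain the statement from Lemma \ref{lem:toriclinebundle} together with the standard toric description of the projectivization of a direct sum of two line bundles. First I would check that $\widehat{\Sigma}_\pm$ really is a fan: its maximal cones $\delta^\bullet_\pm+\RR_{\geq 0}\cdot v$ and $\delta^\bullet_\pm+\RR_{\geq 0}\cdot(-v)$ pairwise intersect along faces lying in $\widetilde{\Sigma}_\pm$, because $v$ and $-v$ sit on opposite sides of the $n$-dimensional linear span of $|\widetilde{\Sigma}_\pm|$; thus $\widehat{\Sigma}_\pm=\widehat{\Sigma}^+_\pm\cup\widehat{\Sigma}^-_\pm$ with $\widehat{\Sigma}^+_\pm\cap\widehat{\Sigma}^-_\pm=\widetilde{\Sigma}_\pm$, and every cone projects under $N\to N'$ onto a cone of $\Sigma_\pm$. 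Consequently the induced toric morphism $p\colon X_{\widehat\Sigma_\pm}\to X_{\Sigma_\pm}$ has one-dimensional fibers, and over each maximal cone of $\Sigma_\pm$ its fiber is glued from the two rays of the $\PP^1$-fan; hence $p$ is a locally trivial $\PP^1$-bundle.

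To identify which $\PP^1$-bundle, put $\cL:=\cO_{\Sigma_\pm}(1)$ and recall the two standard charts of $\pi_{\Sigma_\pm}\colon\PP(\cO\oplus\cL)\to X_{\Sigma_\pm}$: the complement of the section $\PP(\cL)$ is the total space of $\cL$, the complement of the section $\PP(\cO)$ is the total space of $\cL^\vee=\cO_{\Sigma_\pm}(-1)$, and the two overlap exactly along the $\CC^*$-bundle associated to $\cL$ (the complement of both sections), glued by sending a nonzero vector of $\cL$ to the linear functional it determines, i.e. by $v\mapsto v^{-1}$. By Lemma \ref{lem:toriclinebundle} these total spaces are $X_{\widehat\Sigma^+_\pm}$ and $X_{\widehat\Sigma^-_\pm}$, while $X_{\widetilde\Sigma_\pm}\to X_{\Sigma_\pm}$ is exactly that $\CC^*$-bundle, being the complement of the zero section $V(\RR_{\geq 0}\cdot(-v))$ in $X_{\widehat\Sigma^+_\pm}$ and of the zero section $V(\RR_{\geq 0}\cdot v)$ in $X_{\widehat\Sigma^-_\pm}$. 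Since $\widehat\Sigma_\pm$ glues $\widehat\Sigma^+_\pm$ to $\widehat\Sigma^-_\pm$ precisely along $\widetilde\Sigma_\pm$, it remains to match the two gluings; and this holds because the two zero-section rays are the antipodal rays $\RR_{\geq 0}\cdot(\mp v)$, so the fiber coordinate gets inverted across the overlap — which is exactly the passage from $\cO_{\Sigma_\pm}(1)$ to $\cO_{\Sigma_\pm}(-1)$ of Lemma \ref{lem:toriclinebundle}, i.e. the map $v\mapsto v^{-1}$. Therefore $X_{\widehat\Sigma_\pm}\cong\PP(\cO\oplus\cO_{\Sigma_\pm}(1))$ over $X_{\Sigma_\pm}$ and $p=\pi_{\Sigma_\pm}$.

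I expect the only part calling for a moment of care to be this last compatibility of gluings; given the explicit antipodal rays it should be routine, but one can also sidestep it by quoting the standard formula for the fan of $\PP(\cO\oplus\cL)$ over a toric base (cf. \cite{CLS}) and checking, with the supporting function of $\cL=\cO_{\Sigma_\pm}(1)$ as computed in Remark \ref{piecewise-linear}, that this recipe reproduces $\widehat\Sigma_\pm$.
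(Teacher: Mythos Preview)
Your proposal is correct and follows essentially the same approach as the paper, which treats the corollary as an immediate observation from Lemma \ref{lem:toriclinebundle} together with the equality $\widehat{\Sigma}_\pm^-\cap \widehat{\Sigma}_\pm^+=\widetilde{\Sigma}_\pm$ (identifying the overlap with the $\CC^*$-bundle $X_{\widetilde{\Sigma}_\pm}\to X_{\Sigma_\pm}$). You have simply made explicit the fan check and the gluing compatibility that the paper leaves to the reader.
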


The $\PP^1$-bundle $\pi_{\Sigma_\pm}$ has two sections
associated to the splitting of
the bundle $\cO\oplus\cO_{\Sigma_\pm}(1)$, whose associated divisors, that we denote by $D_{\Sigma_\pm}^0,
D_{\Sigma_\pm}^1 \subset X_{\widehat{\Sigma}_\pm}$, have normal bundles $\cO_{\Sigma_\pm}(1)$ and
$\cO_{\Sigma_\pm}(-1)$, respectively. In toric terms we may describe them as: 
$$\begin{array}{l}D_{\Sigma_-}^0=V(\RR_{\geq
  0}\cdot(-v)),\,\, D_{\Sigma_-}^1=V(\RR_{\geq 0}\cdot(v)) \subset
X_{\widehat{\Sigma}_-},\\[2pt]
D_{\Sigma_+}^0=V(\RR_{\geq 0}\cdot(v)),\,\,
D_{\Sigma_+}^1=V(\RR_{\geq 0}\cdot(-v)) \subset X_{\widehat{\Sigma}_+},\end{array}$$ and these
divisors are the components of the fixed point locus of the action of
the 1-parameter group $\lambda^v$.

\par\medskip

Now let us deal with the fan $\widehat{\Sigma}_-$ and the $\PP^1$-bundle
$\pi_{\Sigma_-}: X_{\widehat{\Sigma}_-}\rightarrow X_{\Sigma_-}$.  By
construction $\delta_-+\RR_{\geq 0}\cdot
v=\cone(f_0,\dots,f_s,v)\in\widehat{\Sigma}_-$, and the star of the cone
$\delta_-+\RR_{\geq 0}\cdot v$ in $\Sigma_-$ contains the cones
$\delta_-^i+\RR_{\geq 0}\cdot v$ for $i=0,\dots, r$.  Thus
$V(\cone(f_0,\dots,f_s,v))\simeq \PP^{r}\times\CC^q\subset D_{\Sigma_-}^1$. Since in
the lattice $N$ we have the relation $f_0+\cdots+f_s+v=e_0+\cdots+e_r$, we
are in the situation of Section \ref{ssec:toricAtiyah} and we may conclude the
following:

\begin{lemma}\label{lem:fandivison-flipbordism}
  The cone $\Delta+\RR_{\geq 0}\cdot v$ admits the following two
  regular triangulations in $N_\RR$ which are trivial on the boundary
  of this cone:
  $$\Delta+\RR_{\geq 0}\cdot v =
  \bigcup_{i=0}^r (\delta_-^i+\RR_{\geq 0}\cdot v) =
  \bigcup_{j=0}^s (\delta_+^j+\RR_{\geq 0}\cdot v) \cup \Delta.$$
\end{lemma}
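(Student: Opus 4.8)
The plan is to verify Lemma~\ref{lem:fandivison-flipbordism} by a direct combinatorial inspection, building on the decompositions of $\Delta'$ already established in Remark~\ref{rem:extend_triang}. First I would recall the two subdivisions $\Delta'=\bigcup_{i=0}^r\delta_-^i=\bigcup_{j=0}^s\delta_+^j$ of the $n$-dimensional cone $\Delta'\subset N'_\RR$, which are trivial on $\partial\Delta'$. The strategy is to ``cone'' these subdivisions over the new ray $\RR_{\geq 0}\cdot v$ to obtain subdivisions of the $(n+1)$-dimensional cone $\Delta+\RR_{\geq 0}\cdot v$, and to check that on the source side no extra cone is needed, while on the sink side the single extra top-dimensional cone $\Delta$ appears.

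The first equality $\Delta+\RR_{\geq 0}\cdot v=\bigcup_{i=0}^r(\delta_-^i+\RR_{\geq 0}\cdot v)$ should be the easier one. Here one observes that $v=\sum_i e_i-\sum_j f_j$ lies in the interior of the cone spanned by $\{e_i\}\cup\{-f_j\}$, and in particular the segment from $v$ to any point of $\delta_-=\cone(f_j)$ passes through $\delta_+=\cone(e_i)$; consequently $\Delta\subset\cone(v,f_0,\dots,f_s,h_1,\dots,h_q)=\delta_-+\delta_0+\RR_{\geq 0}\cdot v$. Since the $\delta_-^i$ already cover $\Delta'\supset\delta_-+\delta_0$ and the added ray $v$ is ``above'' all of them in the splitting $N_\RR=N'_\RR\oplus\RR\cdot v$ adapted so that $v$ is the extra coordinate direction (cf.\ Remark~\ref{piecewise-linear}), the cones $\delta_-^i+\RR_{\geq 0}\cdot v$ are $(n+1)$-dimensional, have pairwise disjoint interiors (inherited from the $\delta_-^i$), and their union is exactly $\Delta+\RR_{\geq 0}\cdot v$. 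Regularity (smoothness) follows because each $\delta_-^i$ is a smooth cone and adjoining the primitive vector $v$, which together with the generators of $\delta_-^i$ forms part of a lattice basis of $N$, keeps it smooth. Triviality on the boundary is inherited from the corresponding property of the $\delta_-^i$ in $\Delta'$ together with the fact that $\RR_{\geq 0}\cdot v$ is added uniformly.

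For the second equality, the point is that $v$ is \emph{not} in the cone $\cone(e_i,h_k)+\RR_{\geq 0}\cdot v$ ``from below'' relative to the $\delta_+^j$: the cone $\Delta=\cone(e_i,f_j,h_k)$ sits on the far side of the hyperplane supporting the $\delta_+^j$'s, so coning the $\delta_+^j$ over $v$ covers only the part of $\Delta+\RR_{\geq 0}\cdot v$ lying ``above'' $\Delta'$ in the adapted splitting, and one must add $\Delta$ itself to recover the whole cone. Concretely, I would check that $\bigcup_j(\delta_+^j+\RR_{\geq 0}\cdot v)$ has support equal to $\{x+sv:x\in\Delta',\,s\ge 0\}$ intersected with the halfspace where the $v$-coordinate is at least $\epsilon^0_+$ of the $N'$-part, while $\Delta$ fills in the complementary region (the $v$-coordinate between the graph of $\epsilon^0_-$ — which is $0$ on all $e_i,f_j$ — and that of $\epsilon^0_+$); since $\Delta=\cone(e_0,\dots,e_r,f_0,\dots,f_s,h_1,\dots,h_q)$ is smooth and meets each $\delta_+^j+\RR_{\geq 0}\cdot v$ along a common face, the union is a regular fan with support $\Delta+\RR_{\geq 0}\cdot v$. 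Again triviality on the boundary is immediate since on $\partial(\Delta+\RR_{\geq 0}\cdot v)$ neither subdivision introduces new rays.

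The main obstacle I anticipate is purely bookkeeping: carefully identifying which facets of $\Delta+\RR_{\geq 0}\cdot v$ are hit by which pieces, and confirming that the interiors of the cones $\delta_+^j+\RR_{\geq 0}\cdot v$ together with $\mathrm{int}(\Delta)$ are disjoint and exhaust the interior of $\Delta+\RR_{\geq 0}\cdot v$. This is best handled by fixing the section of the $(n+1)$-dimensional cone given by an affine hyperplane transverse to $v$ (as in the tetrahedron picture preceding this lemma) and checking the claim for the induced polyhedral subdivisions of a simplex, where it reduces to the elementary fact — essentially Remark~\ref{rem:extend_triang} applied one dimension up — that a pulling/pushing subdivision at a vertex produces exactly these two triangulations differing by the single simplex $\Delta$. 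Smoothness of all cones involved follows since each is generated by a subset of the fixed lattice basis $e_0,\dots,e_r,f_0,\dots,f_s,h_1,\dots,h_q$ of $N$ together with $v$, and $v$ together with any $n$ of the basis vectors omitting one $e_i$ (resp.\ one $f_j$) is again a lattice basis.
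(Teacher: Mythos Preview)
Your direct-verification approach is workable in principle but differs substantially from the paper's argument, and one of your intermediate geometric claims is actually false.

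The paper does not verify the two subdivisions by hand at all. It simply observes, in the sentence immediately preceding the lemma, that in the lattice $N$ one has the single linear relation
\[
f_0+\cdots+f_s+v \;=\; e_0+\cdots+e_r
\]
among the generators $e_0,\dots,e_r,f_0,\dots,f_s,v,h_1,\dots,h_q$ of the cone $\Delta+\RR_{\geq 0}\cdot v$. This is \emph{exactly} the situation of Section~\ref{ssec:toricAtiyah} with parameters $(r,s+1,n+1)$: the $e_i$'s play the same role, the set $\{f_0,\dots,f_s,v\}$ plays the role of the $f_j$'s, and the $h_k$'s are unchanged. The two triangulations in the lemma are then nothing but the $\Sigma_-$ and $\Sigma_+$ of that section for this new instance; in particular the ``extra'' cone $\Delta$ in the second subdivision is precisely the cone $\delta_+^{s+1}$ obtained by omitting the new generator $v$ from the $f$-list. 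Regularity and triviality on the boundary come for free from Remark~\ref{rem:extend_triang}. This is a one-line proof, and it explains structurally \emph{why} the second subdivision has exactly one extra maximal cone.

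In your argument, the claim ``the segment from $v$ to any point of $\delta_-$ passes through $\delta_+$, consequently $\Delta\subset\cone(v,f_0,\dots,f_s,h_1,\dots,h_q)$'' is incorrect whenever $r\geq 1$: writing a point of the segment as $t\sum_i e_i+\sum_j((1-t)b_j-t)f_j$, the $f_j$-coefficients vanish simultaneously only if all $b_j$ are equal, and an individual $e_i$ cannot be expressed as a nonnegative combination of $v,f_j,h_k$ since substituting $v=\sum e_i-\sum f_j$ forces negative coefficients on the other $e_m$'s. Your support-function picture from Remark~\ref{piecewise-linear} is the right tool if you insist on a direct check --- the correct statement is that $\bigcup_i(\delta_-^i+\RR_{\geq 0}\cdot v)$ is the epigraph of $\epsilon^0_-$ over $\Delta'$, while $\bigcup_j(\delta_+^j+\RR_{\geq 0}\cdot v)$ is the epigraph of $\epsilon^0_+$, and $\Delta$ is precisely the region between the two graphs --- but the paper's reduction to Section~\ref{ssec:toricAtiyah} avoids all of this bookkeeping.
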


Let us now set:
$$\widehat{\Sigma}:=\widehat{\Sigma}_-^+\cup\Sigma(\Delta)
\cup\widehat{\Sigma}_+^+,$$ which is a fan in $N_\R$, and consider the corresponding toric variety:
$$X_{\widehat{\Sigma}}=X_{\widehat{\Sigma}_-^+}\cup X_{\Sigma(\Delta)} \cup
X_{\widehat{\Sigma}_+^+}.$$

\begin{proposition}\label{prop:toric_bordism-construction}
The variety
$X_{\widehat{\Sigma}}$ admits the action of a 1-parameter group $\lambda^v$, with
sink and source  $X_{\Sigma_-}$ and $X_{\Sigma_+}$;  the only inner fixed
point component  is $V(\delta_-+\delta_+)\simeq\CC^q$. Moreover, the variety
$X_{\widehat{\Sigma}}$ admits two $\lambda^v$-equivariant Atiyah flips
$$\begin{tikzcd}
X_{\widehat{\Sigma}_-}\arrow[leftrightarrow,dashed]{rr}\arrow{dr}&&
X_{\widehat{\Sigma}}\arrow[leftrightarrow,dashed]{rr}\arrow{dl}\arrow{dr}&&
X_{\widehat{\Sigma}_+}\arrow{dl}\\
&X_{\widehat{\Sigma}'_-}&&X_{\widehat{\Sigma}'_+} \end{tikzcd}$$ where
$\widehat{\Sigma}'_-=\widehat{\Sigma}_-^+\cup\Sigma(\Delta+\RR_{\geq 0}\cdot v)$ and 
$\widehat{\Sigma}'_+=\widehat{\Sigma}_+^+\cup\Sigma(\Delta+\RR_{\geq 0}\cdot (-v))$.
\end{proposition}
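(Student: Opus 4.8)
The plan is to extract all three assertions directly from the combinatorics of $\widehat{\Sigma}$ and the standard toric dictionary (as in \cite{CLS}); beyond the constructions of \S\ref{ssec:toricAtiyah}--\S\ref{ssect:toricbordism} no further geometric input is needed. First I would check that $\widehat{\Sigma}_-^+$, $\Sigma(\Delta)$ and $\widehat{\Sigma}_+^+$ glue to a fan: since the $e_i$- and $f_j$-coordinates of $v=\sum_i e_i-\sum_j f_j$ are $+1$ and $-1$, the facets of $\Delta$ swept out in the direction $-v$ (resp. $+v$) are exactly the $\delta_-^i$ (resp. the $\delta_+^j$), i.e. the maximal cones of $\widetilde{\Sigma}_-$ (resp. $\widetilde{\Sigma}_+$); hence $|\widehat{\Sigma}_-^+|\cup\Delta=\Delta+\R_{\geq0}(-v)$, $|\widehat{\Sigma}_+^+|\cup\Delta=\Delta+\R_{\geq0}v$, and the supports of any two of the three pieces meet along $|\widetilde{\Sigma}_-|$, $|\widetilde{\Sigma}_+|$, or $\bigcup_{i,j}(\delta_-^i\cap\delta_+^j)$, on which the fans coincide. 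Thus $\widehat{\Sigma}$ is a fan with support $(\Delta+\R_{\geq0}(-v))\cup(\Delta+\R_{\geq0}v)=\Delta+\R v$, and it is smooth because $\Sigma(\Delta)$ yields $\C^{n+1}$ while, by Lemma~\ref{lem:toriclinebundle}, $\widehat{\Sigma}_\pm^+$ are total spaces of line bundles over the smooth $X_{\Sigma_\pm}$.

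\emph{The $\lambda^v$-action.} The subtorus $\lambda^v\colon\C^*\to T_N$ acts on $X_{\widehat{\Sigma}}$, and its fixed components are the orbit closures $V(\sigma)$ for $\sigma\in\widehat{\Sigma}$ minimal with $v\in\operatorname{span}(\sigma)$. A cone of $\widehat{\Sigma}$ spanned by part of the basis has $v$ in its span exactly when it contains $\delta_-+\delta_+$, and any cone having $v$ or $-v$ among its rays contains $\R_{\geq0}v$ or $\R_{\geq0}(-v)$ as a face; so the minimal such cones are $\delta_-+\delta_+$, $\R_{\geq0}v$ and $\R_{\geq0}(-v)$. Computing stars in $\widehat{\Sigma}$ shows that the star of $\R_{\geq0}(\pm v)$ is $\Sigma_\pm$, so $V(\R_{\geq0}v)\cong X_{\Sigma_+}$ and $V(\R_{\geq0}(-v))\cong X_{\Sigma_-}$, and that the star of $\delta_-+\delta_+$ is $\cone(h_1,\dots,h_q)$ in $N/\operatorname{span}(\delta_-+\delta_+)$, so $V(\delta_-+\delta_+)\cong\C^q$. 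For the sink and the source, for a general $x$ in the dense torus $\lim_{t\to0}\lambda^v(t)\cdot x$ lies in $V(\R_{\geq0}v)=X_{\Sigma_+}$, as $v$ belongs to the relative interior of the ray $\R_{\geq0}v\in\widehat{\Sigma}$; arguing likewise with $-v$ and using the conventions of \S\ref{ssec:Cstar}, one concludes that $X_{\Sigma_-}$ is the sink, $X_{\Sigma_+}$ the source, and hence that $V(\delta_-+\delta_+)\cong\C^q$ is the only inner fixed component.

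\emph{The two Atiyah flips.} For the first one, $\widehat{\Sigma}_-=\widehat{\Sigma}_-^-\cup\widehat{\Sigma}_-^+$ (the $\PP^1$-bundle $\PP(\cO\oplus\cO_{\Sigma_-}(1))$ over $X_{\Sigma_-}$, Corollary~\ref{lem:toricprojbundle}), $\widehat{\Sigma}$, and $\widehat{\Sigma}'_-=\widehat{\Sigma}_-^+\cup\Sigma(\Delta+\R_{\geq0}v)$ all have support $\Delta+\R v$ and coincide away from the subcone $C:=\Delta+\R_{\geq0}v$; over $C$, Lemma~\ref{lem:fandivison-flipbordism} gives that $\widehat{\Sigma}_-$ restricts to the triangulation $\{\delta_-^i+\R_{\geq0}v\}_i$, $\widehat{\Sigma}$ to $\{\Delta\}\cup\{\delta_+^j+\R_{\geq0}v\}_j$, and $\widehat{\Sigma}'_-$ to the trivial subdivision of $C$. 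These two triangulations of $C$ are exactly the pair of regular triangulations of \S\ref{ssec:toricAtiyah} attached to the linear relation $\sum_i e_i=v+\sum_j f_j$ (an Atiyah configuration of type $(r,s+1,n+1)$, the $h_k$ being the neutral rays). Hence $X_{\widehat{\Sigma}_-}\dashrightarrow X_{\widehat{\Sigma}}$ factors through $X_{\widehat{\Sigma}'_-}$ as a toric Atiyah flip in the sense of Definition~\ref{def:Atiyahtoric} (small, its centers being copies of $\PP^r\times\C^q$ and $\PP^{s+1}\times\C^q$ of codimension $\geq2$), and it is $\lambda^v$-equivariant because it is a toric, hence $T_N$-equivariant, map. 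The second flip $X_{\widehat{\Sigma}}\dashrightarrow X_{\widehat{\Sigma}_+}$ over $X_{\widehat{\Sigma}'_+}$ is the mirror statement obtained by exchanging $v\leftrightarrow-v$ and $\widehat{\Sigma}_-^+\leftrightarrow\widehat{\Sigma}_+^+$: the modification now takes place inside $C':=\Delta+\R_{\geq0}(-v)$, whose two triangulations $\{\Delta\}\cup\{\delta_-^i+\R_{\geq0}(-v)\}_i$ and $\{\delta_+^j+\R_{\geq0}(-v)\}_j$ (coming from $\widehat{\Sigma}$ and $\widehat{\Sigma}_+$) again form an Atiyah configuration, for $\sum_j f_j=-v+\sum_i e_i$.

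The genuinely delicate point is the last step: verifying that the three fans really do agree outside $C$ and restrict to the claimed triangulations of $C$, and matching that pair of triangulations with the abstract toric Atiyah flip of \S\ref{ssec:toricAtiyah} after replacing the type $(r,s,n)$ by $(r,s+1,n+1)$. This is essentially a re-reading of Lemma~\ref{lem:fandivison-flipbordism} together with the sweeping computation of the first step; the rest is a routine application of the toric dictionary.
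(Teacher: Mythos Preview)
Your proof is correct and follows essentially the same strategy as the paper's: the Atiyah flips are obtained from the two triangulations of $\Delta+\R_{\geq0}v$ (resp.\ $\Delta+\R_{\geq0}(-v)$) supplied by Lemma~\ref{lem:fandivison-flipbordism}, identified as a toric Atiyah flip of type $(r,s+1,n+1)$ via the relation $\sum_i e_i=v+\sum_j f_j$, and the fixed locus is read off from the three-piece covering $X_{\widehat{\Sigma}}=X_{\widehat{\Sigma}_-^+}\cup X_{\Sigma(\Delta)}\cup X_{\widehat{\Sigma}_+^+}$. The only cosmetic difference is that the paper identifies the extremal fixed components by noting that $\lambda^v$ acts by homotheties on the line bundles $X_{\widehat{\Sigma}_\pm^+}\to X_{\Sigma_\pm}$ (Lemma~\ref{lem:toriclinebundle}), whereas you invoke the general toric criterion that fixed loci are the $V(\sigma)$ for $\sigma$ minimal with $v\in\operatorname{span}(\sigma)$; both approaches are equivalent here.
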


\begin{proof} The two triangulations of the cone $\Delta+\RR_{\geq 0}\cdot v$ provide a toric Atiyah flip  of type $(r,s+1,n+1)$ of the variety  $X_{\widehat{\Sigma}_-^-}$ centered at $V(\delta_-+\RR_{\geq 0}\cdot v)$.
By Remark \ref{rem:extend_triang}, since $\widehat{\Sigma}_-=\widehat{\Sigma}_-^+\cup \widehat{\Sigma}_-^-$, we may extend it to a toric Atiyah flip of $X_{\widehat{\Sigma}_-}$ and the
first part of the statement follows.

The fixed point locus of the action of
$\lambda^v$ can be computed by looking at its restriction to each of the three torus invariant covering sets. The varieties $X_{\widehat{\Sigma}_-^+}$ and $X_{\widehat{\Sigma}_+^+}$ are total spaces of line bundles on which $\lambda^v$ acts by
homotheties. Therefore their fixed point components, which will be the sink and the source of the action on $X_{\widehat{\Sigma}}$, are the zero sections of the bundles, which are isomorphic to $X_{\Sigma_-}$ and $X_{\Sigma_+}$, respectively.  The
third covering set is the affine space $V(\Delta)$ on which the statement about
the only inner fixed point component can be verified easily. In view of Corollary 
\ref{lem:toricprojbundle}, the last statement is the content of  Lemma
\ref{lem:fandivison-flipbordism} applied for $\widehat{\Sigma}_-$ (and its
counterpart for $\widehat{\Sigma}_+$) which is the following:
$$\Delta+\RR_{\geq 0}\cdot (-v) = \bigcup_{j=0}^s (\delta_+^j+\RR_{\geq
0}\cdot (-v)) = \bigcup_{i=0}^r (\delta_-^i+\RR_{\geq 0}\cdot (-v)) \cup
\Delta.$$
\end{proof}

Thus we have the following
diagram, equivariant with respect to the action of $\lambda^v$, which is
built upon the Atiyah flip $\begin{tikzcd}
  X_{\Sigma_-}\arrow[leftrightarrow,dashed]{r}&X_{\Sigma_+}:
\end{tikzcd}$
$$\begin{tikzcd}
X_{\widetilde{\Sigma}_-}\arrow[hook]{r}\arrow{d}&
  X_{\Sigma(\Delta)}\arrow[dashed]{dl}\arrow{dd}\arrow[dashed]{dr}&
  X_{\widetilde{\Sigma}_+}\arrow[hook']{l}\arrow{d}\\
  X_{\Sigma_-}\arrow[hook]{dr}\arrow[hook]{d}\arrow[leftrightarrow,dashed]{rr}
  &&X_{\Sigma_+}\arrow[hook']{dl}\arrow[hook']{d}\\
  X_{\widehat{\Sigma}_-^+}\arrow[hook]{r}\arrow[bend left=30]{u}&X_{\widehat{\Sigma}}&
  X_{\widehat{\Sigma}_+^+}\arrow[hook']{l}\arrow[bend right=30]{u}
\end{tikzcd}$$

The central dashed two-end arrow is the 
Atiyah flip, and the rational maps in the upper part come from the
Morelli--W{\l}odarczyk cobordism induced by the action of
$\lambda^v$. The hooked arrows are embeddings, and the only upward
arrows are projections of line bundles which, on the intersection
$X_{\widehat{\Sigma}_\pm^+}\cap X_{\Sigma(\Delta)} =
X_{\widetilde{\Sigma}_\pm}$, are the quotients in the upper part of the
diagram.
\begin{definition}\label{def:toric_bordism}
The above  $\lambda^v$-equivariant embeddings
 $$\begin{tikzcd}
   X_{\Sigma_-}\arrow[hook]{r}&X_{\widehat{\Sigma}}&
   X_{\Sigma_+}\arrow[hook']{l}
 \end{tikzcd}$$ will be called the {\em toric bordism} associated to the toric Atiyah flip.
\end{definition}
 
 We now define a line bundle $L$ over $X_{\widehat{\Sigma}}$ which is
 associated to the sum of the $\lambda^v$-invariant divisors
 $V(\RR_{\geq 0}\cdot v)+V(\RR_{\geq 0}\cdot(-v))$. We note that the variety $X_{\widehat{\Sigma}}$ is not complete, and so 
Definition \ref{def:bandwidth} does not apply.  Nevertheless we may still claim that the pair $(X,L)$, together with the action of $\lambda^v$, has bandwidth
$2$, in the following sense:

\begin{lemma}
  In the above situation, $L_{|X_{\Sigma_\pm}}=\cO_{\Sigma_{\pm}}(1)$
  and the natural linearization of the action of $\lambda^v$ on $L$
  assigns to source and sink the values $+1$ and $-1$, and for
  $V(\delta_-+\delta_+)$ value 0.
\end{lemma}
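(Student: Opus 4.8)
The plan is to verify the three assertions by direct computation with the toric/lattice data set up in Section~\ref{ssect:toricbordism}, using that the line bundle $L$ is defined by the torus-invariant divisor $V(\RR_{\geq 0}\cdot v)+V(\RR_{\geq 0}\cdot(-v))$ on $X_{\widehat{\Sigma}}$. The key observation is that everything is determined by the supporting function of this divisor on the fan $\widehat{\Sigma}$, together with the description of the $\lambda^v$-action and its fixed components given by Proposition~\ref{prop:toric_bordism-construction}.

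First I would treat the restriction $L_{|X_{\Sigma_\pm}}$. By Lemma~\ref{lem:toriclinebundle} the open sets $X_{\widehat{\Sigma}_\pm^+}$ are total spaces of $\cO_{\Sigma_\pm}(1)$, with zero sections $X_{\Sigma_\pm}$ embedded as the sink and source. On $X_{\widehat{\Sigma}_-^+}$ the relevant ray is $\RR_{\geq 0}\cdot(-v)$, which generates the zero section $D^0_{\Sigma_-}=V(\RR_{\geq 0}\cdot(-v))$; the ray $\RR_{\geq 0}\cdot v$ is not present in $\widehat{\Sigma}_-^+$, so $V(\RR_{\geq 0}\cdot v)$ does not meet this chart at its zero section in the relevant way, and restricting the divisor $V(\RR_{\geq 0}\cdot(-v))$ to the zero section recovers its normal bundle. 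By the discussion preceding Corollary~\ref{lem:toricprojbundle}, $D^0_{\Sigma_-}$ has normal bundle $\cO_{\Sigma_-}(1)$, whence $L_{|X_{\Sigma_-}}=\cO_{\Sigma_-}(1)$; the computation for $X_{\Sigma_+}$ is identical after swapping the roles of $v$ and $-v$ as dictated by the definition of $\widehat{\Sigma}_+^+$.

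Next I would compute the weights at sink, source and inner component. On $X_{\widehat{\Sigma}_-^+}$, $\lambda^v$ acts by homotheties on the line bundle $\cO_{\Sigma_-}(1)$ (it is the $\CC^*$-bundle $X_{\widetilde{\Sigma}_-}$ completed by a zero section), so the natural linearization of $L$ has a single nonzero weight on the fiber; the divisor $V(\RR_{\geq 0}\cdot(-v))$ is the zero section and $V(\RR_{\geq 0}\cdot v)$ is the section at infinity. Pairing the supporting function of $V(\RR_{\geq 0}\cdot v)+V(\RR_{\geq 0}\cdot(-v))$ with the primitive generators of these rays — equivalently, reading off the weights of $\lambda^v$ on the fibers of $L$ over the two sections — gives value $-1$ at $X_{\Sigma_-}$ and, by the symmetric argument on $X_{\widehat{\Sigma}_+^+}$, value $+1$ at $X_{\Sigma_+}$, after normalizing the linearization (adding a character of $H$) so that the values are symmetric. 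For the inner component $V(\delta_-+\delta_+)\simeq\CC^q$, which lies in the affine chart $X_{\Sigma(\Delta)}=\CC^{n+1}$, neither $v$ nor $-v$ is a ray of $\Sigma(\Delta)$; the divisors $V(\RR_{\geq 0}\cdot v)$ and $V(\RR_{\geq 0}\cdot(-v))$ meet this chart only through their intersections with $X_{\widetilde{\Sigma}_\pm}$, and restricting $L$ to $V(\delta_-+\delta_+)$ — which is fixed and sits ``in the middle'' between the two sections — gives the trivial bundle with weight $0$. Concretely this follows because the supporting function of the divisor $V(\RR_{\geq 0}\cdot v)+V(\RR_{\geq 0}\cdot(-v))$ vanishes on the cone $\delta_-+\delta_+$.

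The main obstacle will be bookkeeping of signs and the precise normalization of the linearization: the ``natural'' linearization must be pinned down unambiguously (via its action on $\cO(V(\RR_{\geq 0}\cdot v)+V(\RR_{\geq 0}\cdot(-v)))$ as a sub-line-bundle of the function field), and then one must check that with this choice the values are exactly $+1$, $-1$, $0$ rather than, say, $+c$, $-c$, $0$ for some other constant. This amounts to verifying that the two generators $v$ and $-v$ are primitive in $N'$ and that the invariant sections cutting out $V(\RR_{\geq 0}\cdot(\pm v))$ transform with weight exactly $\pm 1$ under $\lambda^v$ — a direct consequence of the pairing $\langle v, \text{(dual of }v)\rangle = 1$. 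Once the normalization is fixed, the rest is a routine unwinding of the toric dictionary of \cite[Sect.~4.2]{CLS} together with Lemmas~\ref{lem:toriclinebundle} and~\ref{lem:fandivison-flipbordism}.
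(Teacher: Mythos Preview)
The paper gives no proof of this lemma: it is stated as a direct consequence of the toric setup and left to the reader. Your approach is the natural one and is essentially correct---identify $V(\RR_{\geq 0}\cdot(\pm v))$ with the sink and source $X_{\Sigma_\mp}$, compute the restriction of $L$ as the normal bundle of each (which is $\cO_{\Sigma_\pm}(1)$ by the discussion before Corollary~\ref{lem:toricprojbundle}) plus a trivial contribution from the disjoint other component, and then read off the weights from the Cartier data $\{m_\sigma\}$ of the divisor.

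Two small points. First, your phrasing ``does not meet this chart at its zero section in the relevant way'' is imprecise; what you mean is simply that $V(\RR_{\geq 0}\cdot v)=X_{\Sigma_+}$ is \emph{disjoint} from $X_{\Sigma_-}$, so $\cO(X_{\Sigma_+})_{|X_{\Sigma_-}}$ is trivial. Second, you hedge by saying you might need to ``add a character to make the values symmetric,'' but in fact the canonical linearization of $\cO(D)$ for the torus-invariant divisor $D=V(\RR_{\geq 0}\cdot v)+V(\RR_{\geq 0}\cdot(-v))$ already has Cartier datum $m_\Delta=0$ on the cone $\Delta$ (since neither $\pm v$ is a ray of $\Delta$), forcing weight $0$ at the inner component without any normalization; the values $\pm 1$ at source and sink then follow from the primitivity of $\pm v$ as rays and the fact that the Cartier data on the cones $\delta_\pm^i+\RR_{\geq 0}\cdot(\mp v)$ pair to $\pm 1$ with $v$. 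So the ``obstacle'' you flag is real but dissolves once you fix the Cartier datum on $\Delta$ to be zero, which is forced---no character needs to be added.
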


Finally, we recall that the results contained in this Section are meant to be used locally in Section \ref{sec:BW2}.  For the reader's convenience we rephrase the main results of this Section using the (non-toric) notation that we will use in Section \ref{sec:BW2}.  We set: $$Y'=X_{\Sigma(\Delta')},\,\,Y_\pm=X_{\Sigma_\pm},\,\,X_\pm=X_{\widehat{\Sigma}_\pm},\,\,X'_\pm=X_{\widehat{\Sigma}'_\pm},\mbox{ and }X=X_{\widehat{\Sigma}}.$$

\begin{corollary}\label{cor:toricbordism}
Given a toric Atiyah flip
$$\begin{tikzcd}
Y_-\arrow[leftrightarrow,dashed]{rr}\arrow{dr}&&Y_+\arrow{dl}\\
&Y'&\end{tikzcd}$$
there exists a toric bordism
$\begin{tikzcd}
  Y_-\arrow[hook]{r}&X&Y_+\arrow[hook']{l}
\end{tikzcd}$
with two Atiyah flips
$$\begin{tikzcd}
X_-\arrow[leftrightarrow,dashed]{rr}\arrow{dr}&&
X\arrow[leftrightarrow,dashed]{rr}\arrow{dl}\arrow{dr}&&
X_+\arrow{dl}\\
&X'_-&&X'_+ \end{tikzcd}$$
such that $X_\pm$  are $\PP^1$-bundles over $Y_\pm$.
\end{corollary}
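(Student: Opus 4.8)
The statement only recasts, in the notation just fixed, the constructions carried out in this section, so the plan is to assemble them. Given the toric Atiyah flip $Y_-\dashrightarrow Y_+$ over $Y'$, by Definition \ref{def:Atiyahtoric} it comes with a type $(r,s,n)$; first I would recover the attendant data of Subsection \ref{ssec:toricAtiyah}: the rank $n+1$ lattice $N$, the cone $\Delta=\delta_-+\delta_++\delta_0$, the vector $v=\sum_i e_i-\sum_j f_j$, and the fans $\Sigma_\pm$ and $\Sigma(\Delta')$, so that $Y_\pm=X_{\Sigma_\pm}$, $Y'=X_{\Sigma(\Delta')}$, and the given flip is precisely the one described there. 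Then I would set $X:=X_{\widehat{\Sigma}}$, with $\widehat{\Sigma}=\widehat{\Sigma}_-^+\cup\Sigma(\Delta)\cup\widehat{\Sigma}_+^+$ as in Subsection \ref{ssect:toricbordism}, and take $Y_-\hooklongrightarrow X\hooklongleftarrow Y_+$ to be the $\lambda^v$-equivariant embeddings of Definition \ref{def:toric_bordism}.

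With this dictionary in place, the two $\lambda^v$-equivariant Atiyah flips $X_-\dashrightarrow X\dashrightarrow X_+$, together with the intermediate small contractions onto $X'_-$ and $X'_+$, where $X_\pm=X_{\widehat{\Sigma}_\pm}$ and $X'_\pm=X_{\widehat{\Sigma}'_\pm}$, are exactly the ones produced in Proposition \ref{prop:toric_bordism-construction}: nothing is needed here beyond transcribing that statement into the new names. Likewise, the claim that $X_\pm$ are $\PP^1$-bundles over $Y_\pm$ is Corollary \ref{lem:toricprojbundle}, which identifies $X_{\widehat{\Sigma}_\pm}$ with $\PP(\cO\oplus\cO_{\Sigma_\pm}(1))$ over $X_{\Sigma_\pm}=Y_\pm$, the two distinguished sections being cut out by the splitting of the bundle (the divisors $D^0_{\Sigma_\pm}$, $D^1_{\Sigma_\pm}$).

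Since every ingredient has already been established, I do not expect a genuine obstacle; the only points requiring care are purely bookkeeping --- keeping straight the identifications $Y'=X_{\Sigma(\Delta')}$, $Y_\pm=X_{\Sigma_\pm}$, $X_\pm=X_{\widehat{\Sigma}_\pm}$, $X'_\pm=X_{\widehat{\Sigma}'_\pm}$, $X=X_{\widehat{\Sigma}}$ --- together with recording (as in Proposition \ref{prop:toric_bordism-construction}) that, although $X$ is not complete, so that Definition \ref{def:bandwidth} does not literally apply, the action of $\lambda^v$ on it still has sink and source $Y_-$ and $Y_+$ and a single inner fixed component $V(\delta_-+\delta_+)$. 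This is exactly the local model that Section \ref{sec:BW2} will glue into a global bordism.
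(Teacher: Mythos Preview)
Your proposal is correct and matches the paper's approach exactly: the corollary is stated without proof in the paper, preceded only by the dictionary $Y'=X_{\Sigma(\Delta')}$, $Y_\pm=X_{\Sigma_\pm}$, $X_\pm=X_{\widehat{\Sigma}_\pm}$, $X'_\pm=X_{\widehat{\Sigma}'_\pm}$, $X=X_{\widehat{\Sigma}}$, because it is nothing more than a restatement of Proposition~\ref{prop:toric_bordism-construction}, Corollary~\ref{lem:toricprojbundle}, and Definition~\ref{def:toric_bordism} in this notation. Your identification of the ingredients and the bookkeeping caveats are exactly what is needed.
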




\section{Atiyah flips and bordisms: the case of projective varieties}\label{sec:BW2}

In this section we study the close relation among bordisms of rank one and Atiyah flips in the framework of smooth complex projective varieties, based on the toric constructions discussed in Section \ref{sect:appendix}; the  main results of the section are stated in  Theorems \ref{flip=>bordism}  and \ref{bordism=>unique}.




\begin{definition}\label{def:Atiyah.assumptions}
An {\em Atiyah flip} 
is a birational transformation fitting in a commutative diagram
of 
projective varieties
$$\begin{tikzcd}
Y_-\arrow[rightarrow,dashed,"\psi"]{rr}\arrow["\varphi_-"']{dr}
&&Y_+\arrow["\varphi_+"]{dl}\\
&Y'&
\end{tikzcd}$$
satisfying the following conditions:
\begin{enumerate}[leftmargin=*]
\item $Y_-, Y_+$ are smooth, $Y'$ is normal, $\varphi_-, \varphi_+$
are birational, proper and surjective morphisms.
 
\item The morphisms $\varphi_-$, $\varphi_+$
can be locally (in analytic or etal\'e topology)
identified with  toric small contractions of Atiyah type (see Definition \ref{def:Atiyahtoric}).
 In particular, the exceptional loci
 $Z_\pm:=\Exc(\varphi_\pm)$ will be smooth varieties, 
possibly disconnected: their irreducible components are in one to one correspondence, and we denote them by $Z_-^j$, $Z_+^j$, $j\in J$, respectively; for each $j\in J$, the image $\varphi_{\pm}(Z_{\pm})$ is an irreducible component $Y^j_0$ of the exceptional locus $Y_0:=\Exc(\varphi_\pm^{-1})$, and the restrictions $\varphi_-: Z_-^j\rightarrow  Y_0^j$, $\varphi_+: Z_+^j\rightarrow Y_0^j$ are projective 
bundles.
\item There exist  $\varphi_{\pm}$-ample line bundles $\cN_\pm$ on
$Y_\pm$ such that
\begin{enumerate}
\item $\Pic (Y_\pm)=\varphi_\pm^*\Pic (Y')\oplus\ZZ\cdot \cN_\pm$;
\item The restriction of $\cN_\pm$ to every fiber of $\varphi_\pm: Z_\pm^j\rightarrow Y_0^j$ is $\cO(1)$;
\item $\psi_*(\cN_-)=-\cN_+$.
\end{enumerate}
\end{enumerate}
In order to encompass the data which is essential for our definition
we will write $$\begin{tikzcd}[cramped]
  (Y_\pm,\cN_\pm)\arrow["\varphi_\pm"]{r}&Y'\supset Y_0
\end{tikzcd}$$
for the Atiyah flip $\psi$ 
defined above. We will say that $\varphi_\pm$ 
are {\em small contractions of Atiyah type}. 
\end{definition}

\begin{remark}\label{rem:Atiyah.assumptions}
We note that the first two conditions are local. In particular, for every component $Z^j_{-}\subset Z_-$, the restrictions $\varphi_-:Z_-^j\rightarrow Y_0^j$, $\varphi_+:Z_+^j\rightarrow Y_0^j$ are $\P^{r_j}$-bundles and $\P^{s_j}$-bundles, respectively, with $r_j=\codim(Z^j_{+},Y_+)-1$, $s_j=\codim(Z^j_{-},Y_-)-1$. However, the values $r_j,s_j$ may depend on $j$. In this sense a global Atiyah type, as introduced for the toric Atiyah flips in Section \ref{ssec:toricAtiyah}, is not defined. However, note that condition (2) implies  (see Equation (\ref{eq:normalAtiyah}) in Section \ref{ssec:toricAtiyah}, noticing that locally $Z^j_- \subset Y_- $ coincides with $V(\delta_-) \subset X_{\Sigma_-}$) that the normal of a fiber of $\varphi_-:Z_-^j\rightarrow Y_0^j$
is isomorphic to $$\cO_{\P^{r_j}}(-1)^{\oplus(s_j+1)}\oplus\cO_{\P^{r_j}}^{\oplus(\dim Y_--r_j-s_j-1)}.$$ In particular, by the adjunction formula, $(K_{Y_-})_{|\P^{r_j}}=\cO_{\P^s}(s_j-r_j)$. But the line bundle  $\cN_-$ of condition (3) is defined globally, and so we may write
 $$K_{Y_-}+(r_j-s_j)\cN_-\in\varphi_-^*\Pic (Y'),$$ and using condition (3b) we conclude that the integer 
 $d:=s_j-r_j$ does not depend on $j$. 
 In particular, if $s_j-r_j\geq 0$ then $K_{Y_-}$ is $\varphi_-$-nef
and $-K_{Y_+}$ is $\varphi_+$-nef.
\end{remark}

The following result shows that Atiyah flips can be realized geometrically by means of bordisms of rank one.

\begin{theorem}\label{flip=>bordism}
Suppose that 
$\begin{tikzcd}[cramped](Y_\pm,\cN_\pm)\arrow["\varphi_\pm", shift right = 1]{r}&Y'\supset Y_0 \end{tikzcd}$
 is an Atiyah flip, and let $n=\dim Y_\pm $. 
 Then there exists
a smooth variety $X$ of dimension $n+1$ with a faithful action of
$H=\CC^*$, such that
\begin{enumerate}[leftmargin=*]
\item  $Y_-\hooklongrightarrow X \hooklongleftarrow Y_+$
  is a bordism of rank $1$,
\item the inner fixed point components of the action are isomorphic to the irreducible components $Y_0^j$ of $Y_0$,
\item there is an isomorphism $N_{Y_\pm/X}\simeq \cN_\pm$,
\item the birational transformation $\psi:Y_-\dashrightarrow Y_+$ 
induced by the bordism is the Atiyah flip $\begin{tikzcd}[cramped](Y_\pm,\cN_\pm)\arrow["\varphi_\pm", shift right = 1]{r}&Y'\supset Y_0 \end{tikzcd}$.
\end{enumerate}

\end{theorem}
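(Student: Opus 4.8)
The strategy is to globalize the toric construction of Section \ref{sect:appendix} (Corollary \ref{cor:toricbordism}) using the fact that, by Definition \ref{def:Atiyah.assumptions}(2), the small contractions $\varphi_\pm$ are \emph{locally} (in the analytic or \'etale topology) toric small contractions of Atiyah type. First I would use the line bundle $\cN_-$ on $Y_-$ to build the $\PP^1$-bundle $X_-:=\PP(\cO_{Y_-}\oplus\cN_-)\to Y_-$, and likewise $X_+:=\PP(\cO_{Y_+}\oplus\cN_+)\to Y_+$; each carries the natural fiberwise $H$-action with the two sections $D^0_\pm$ (normal bundle $\cN_\pm$) and $D^1_\pm$ (normal bundle $\cN_\pm^\vee$) as sink and source (following Corollary \ref{lem:toricprojbundle}). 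I would then perform the Atiyah flip of $X_-$ along the component $Z_-^j\hookrightarrow D^0_-\subset X_-$ (embedded as a minimal section of $X_-|_{Z_-^j}$ over $Y_0^j$), which by condition (3b) and Remark \ref{rem:Atiyah.assumptions} has exactly the local normal-bundle data of the toric model; the resulting variety is the desired $X$. The local toric picture of Proposition \ref{prop:toric_bordism-construction} and Corollary \ref{cor:toricbordism} guarantees that this flip exists, is $H$-equivariant, produces a smooth $X$, and identifies $X_+$ as the opposite flip; the $H$-action glues to a global action on $X$ with sink $Y_-$, source $Y_+$, and inner fixed components $V(\delta_-+\delta_+)\cong Y_0^j$.

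The bulk of the argument is then a list of verifications. For smoothness of $X$ and the existence of the flip I would cover $Y'$ by analytic/\'etale opens over which $\varphi_\pm$ are toric of Atiyah type $(r_j,s_j,n)$, apply Proposition \ref{prop:toric_bordism-construction} on each such chart to produce the local bordism $X_{\widehat\Sigma}$, and check that these local models agree on overlaps (they do, because away from the centers $Z_\pm$ the flip is an isomorphism and $X_\pm$ are honest $\PP^1$-bundles), so they patch to a smooth projective $X$ with a global $H$-action. Claim (3), $N_{Y_\pm/X}\cong\cN_\pm$, is read off from the construction: $Y_\pm$ sits in $X$ as it sits in $X_\pm$ (the $\PP^1$-bundle survives the flip near its source/sink since the centers lie in the opposite fixed divisor), so $N_{Y_\pm/X}=N_{D^0_\pm/X_\pm}=\cN_\pm$. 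For (1), that $Y_-\hooklongrightarrow X\hooklongleftarrow Y_+$ is a bordism, I need: the action is of B-type (clear, $Y_\pm$ are divisors), faithful (it is homothetical on fibers of $X_\pm\to Y_\pm$, so by Corollary \ref{cor:allequalized}(iii) it is faithful, hence equalized at $Y_\pm$), and the Picard-number condition ($\star\star$) of Corollary \ref{cor:codim1}, i.e.\ $\nu^\pm(Y_0^j)\ge 2$ — but by Remark \ref{rem:Atiyah.assumptions}, $\nu^-(Y_0^j)=r_j+1\ge 2$ and $\nu^+(Y_0^j)=s_j+1\ge 2$ since $r_j,s_j\ge 1$. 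The rank of the bordism is $1$ because $\dim\Nu(X)^H$ equals the number of distinct numerical classes of orbit closures, and by construction there are exactly the classes $[C_+],[C_-]$ of orbits meeting an inner component (plus $[C_{\gen}]=[C_+]+[C_-]$): this follows from $\rho_{X}=\rho_{Y_-}+1$ and condition (3a), $\Pic(Y_\pm)=\varphi_\pm^*\Pic(Y')\oplus\ZZ\cN_\pm$, which forces $\dim\Nu(X)^H=2$.

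Finally, for (2) and (4) I would argue that the birational map $\psi':Y_-\dashrightarrow Y_+$ induced by the bordism (Lemma \ref{lem:birational_map_b}) coincides with the given $\psi$. By Lemma \ref{lem:smoothZ} the exceptional locus of $\psi'$ is $\bigsqcup_j Z_-^j$ with $Z_-^j\cong\PP(N^+(Y_0^j)^\vee)$, and the fibers of $Z_-^j\to Y_0^j$ are $[C_-]$-curves; tracking through the local toric models these $Z_-^j$, their images $Y_0^j$, and the induced birational map are exactly those of the Atiyah flip $\begin{tikzcd}[cramped](Y_\pm,\cN_\pm)\arrow["\varphi_\pm", shift right = 1]{r}&Y'\supset Y_0\end{tikzcd}$, since both are isomorphisms outside codimension $\ge 2$ and agree on the dense open complement of the centers. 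Equality of two birational maps that agree on a dense open set finishes (4); the identification of inner components with $Y_0^j$ in (2) is then immediate. \textbf{Main obstacle.} The delicate point is the globalization: checking that the local toric flips of Section \ref{sect:appendix} actually glue to a single smooth projective $X$ with a global $H$-action, and that the flip exists globally (not merely formally-locally). This requires verifying that the flipping contraction $X_-\to Y''$ (contracting the $[C_-]$-curves) is a genuine projective morphism with the right local structure — one can get this from condition (3a)/(3b) guaranteeing that $-\cN_-$ restricted to the flipping extremal face is relatively ample in the appropriate sense, so that the flip is constructed as $\Proj$ of a sheaf of algebras over $Y''$, matching the toric local models by uniqueness of flips. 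Handling this carefully, together with the equivariance of all the gluing data, is where the real work lies.
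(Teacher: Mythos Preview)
Your proposal is correct and follows essentially the same route as the paper: build $X_-=\PP_{Y_-}(\cO\oplus\cN_-)$ with its fiberwise $H$-action, perform an Atiyah flip of $X_-$ along the copy of $Z_-$ sitting in the appropriate section, and verify (1)--(4) using the local toric models of Section \ref{sect:appendix}. Two small points: first, the flip center must lie in the section with normal bundle $-\cN_-$ (your $D^1_-$, the ``minimal section'' you mention), not in $D^0_-$ as you write --- this is just a notational slip but worth fixing. Second, the paper handles your ``main obstacle'' by first constructing the small contraction $\phi_-^-:X_-\to X'_-$ explicitly as the morphism supported by the semiample divisor $\cM_-=\pi_-^*(\varphi_-^*\cM')+Y_-^1$ (with $\cM'$ sufficiently ample on $Y'$), and only then building the flip as $\Proj_{X'_-}\bigoplus_{m\ge 0}(\phi_-^-)_*\cO(-mY_-^0)$, checking existence locally via the toric picture; this makes the projectivity and global existence transparent rather than leaving them as a gluing problem.
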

\begin{proof}
This proof is a global version of the construction performed in the toric setting in Section \ref{ssect:toricbordism} (see Corollary \ref{cor:toricbordism}). We take the projective bundle
$$\begin{tikzcd}\pi_-: X_-:=\PP_{Y_-}(\cO\oplus\cN_-)\arrow{r}&
Y_-\end{tikzcd}$$
with tautological bundle $\cO(1)$, that is $(\pi_-)_*\cO(1)=\cO\oplus\cN_-$,
and consider its two sections $Y_-^0,\ Y_-^1$ associated to the splitting; then
$j^*_0\cO(1)=\cO$, $j_1^*\cO(1)=\cN_-$, where $j_0$ and $j_1$ denote the inclusions. Note that the normal bundles of  $Y_-^0,\ Y_-^1$ in $X_-$ are $-\cN_{-}$ and $\cN_{-}$ respectively. 

First we show that there exists a small contraction $\phi_{-}^-\colon X_{-}\to X'_{-}$ with exceptional locus $j_0(Z_{-})$, where $Z_- \subset Y_-$ is the exceptional locus of $\varphi_-$.
Note that, by construction, $\cO(Y_-^1) =\cO(1)$ and 
\begin{equation}\label{eq:531}
Y_-^1-Y_-^0=\pi_-^*\cN_- \quad \mbox{in } \Pic (X_-).
\end{equation}
Let us consider now a sufficiently ample line bundle $\cM'$ on $Y'$. Since, by the definition of Atiyah flip, $\cN_-$ is $\varphi_-$-ample, then we may assume that $\varphi_-^*\cM' + \cN_-$ is ample on $Y_-$. Set
 $$\cM_-:=\pi_-^*(\varphi_-^*\cM')+Y^1_-.$$ 
By definition it is  nef on $X_-$ (since its pushforward is $\varphi^*_{-}\cM' \oplus (\varphi^*_{-}\cM'+\cN_-)$). Let us prove that, for a sufficiently ample $\cM'$,  $\cM_-$ is semiample.

To this end, we note that we can assume that $\cM'$ is
basepoint free on $Y'$, so for a positive integer $a$  the base points of $a\cM_-$
are contained in the divisor $Y_-^1$. We will conclude by showing that the base points of $a\cM_-$
are also contained in the divisor $Y_-^0$.
By equation (\ref{eq:531}) we can write:
$$a\cM_-=a\pi_-^*(\varphi_-^*\cM'+\cN_-)+aY^0_-,$$  
so it is enough to observe that, since $\varphi_-^*\cM'+\cN_-$ is ample, $a\pi_-^*(\varphi_-^*\cM'+\cN_-)$ is globally generated for $a \gg 0$.

Thus we have shown that $\cM_-$ is the supporting divisor of a contraction $\phi_-^-:X_-\rightarrow X_-'$. 
Since $j_1^*Y_-^1=\cN_-$, then $\cM_-$ is ample on
$Y_-^1$,  so the restriction of $\phi_-^-$ to $Y_-^1$ is an embedding; moreover $j_0^*Y_-^1$ is trivial, hence the restriction of $\cM_-$ to $Y_-^0$ is $\varphi_-^*\cM'$, and  the restriction of $\phi_-^-$ to $Y_-^0$ is $\varphi_{-}$. Furthermore, since $\cM_-$ is $\pi_-$-ample, then $\Exc(\phi_-^-) =j_0(\Exc(\varphi_-))$, and the composition $\varphi_-\circ\pi_-$ factors via $\phi_-^-$, 
so that we have a morphism $\Pi_-: X_-'\rightarrow Y'$ fitting in the commutative diagram:
$$
\xymatrix{X_-\ar[r]^{\phi_-^-}\ar[d]_{\pi_-}&X'_-\ar[d]^{\Pi_-}\\
Y_-\ar[r]_{\varphi_-}&Y'}
$$

We will show now that the small contraction $\phi_-^-$ admits a flip $\psi_-:X_-\dashrightarrow X$.  
Note that if such a flip exists, the variety $X$ can be described as $X:=\Proj_{X'_-} (\bigoplus_{m\geq 0} (\phi_-^-)_*\cO(-mY_-^0))$. In particular, its existence can be proved locally analytically around every point of $X'_-$, and this holds because the variety $X_-$   
coincides locally with the toric variety $X_{\widehat\Sigma_-}$ from Corollary \ref{lem:toricprojbundle}, and the restriction of $\phi_-^-$ to $X_{\widehat\Sigma_-}$ coincides locally with the small contraction of $X_{\widehat\Sigma_-}$ whose flip is the variety $X_{\widehat{\Sigma}}$  (see also Corollary \ref{cor:toricbordism}).

Let us denote the corresponding small contraction of $X$ by $\phi_-^+:X\to X'_-$, so that we have a commutative diagram:
$$\begin{tikzcd}
X_-\arrow[rightarrow,dashed,"\psi_-"]{rr}\arrow["\phi_-^-"']{dr}
&&X\arrow["\phi_-^+"]{dl}\\
&X'_-&
\end{tikzcd}$$
The strict transforms in $X$ of $Y_-^1$ and $Y_-^0$ are isomorphic, respectively, to $Y_-$ and $Y_+$, so, abusing notation, we will denote them by $Y_-,Y_+\subset X$.


Let us  show that $X$ supports a bordism of rank one with the properties listed in the statement. We take the $\CC^*$-action on $\cO\oplus\cN_-$ with weights $0$ and $1$, which descends to a faithful action on 
$\PP_{Y_-}(\cO\oplus\cN_-)$, with sink $Y^1_-$ and source $Y^0_-$. This is a bordism of rank zero by Lemma \ref{lem:rank0bordism}. Since the exceptional locus of $\phi_-^-$ is contained in a fixed point component, the $H$-action descends  to an action on $X'_-$ and, since $Y_-^0$ is $H$-invariant, it extends to the $\cO_{X'_-}$-algebra $\bigoplus_{m\geq 0} (\phi_-^-)_*\cO(-mY_-^0)$ and, subsequently, to its relative projectivization $X$. By construction, the map $\psi_-:X_-\dashrightarrow X$ is $H$-equivariant (on the open set where it is defined), and the sink and source of the $H$-action on $X$ are $Y_-$, and $Y_+$. 


Since $X$ was constructed by glueing toric bordisms, the inner fixed point components of the $H$-action in $X$ are obtained by glueing analytic sets biholomorphic to the sets $V(\delta_-+\delta_+)$ of Proposition \ref{prop:toric_bordism-construction}. In particular, we have that the only inner fixed point components of the action are isomorphic to the connected components $Y_0^j$, which shows (2). 

The same applies to the sets $\overline{X^\pm(Y_0^j)}\cap Y_{\pm}$, so that we may conclude that $\bigcup_j \overline{X^\pm(Y_0^j)}\cap Y_{\pm}=Z_{\pm}$. Moreover, the irreducible components of the indeterminacy locus of $\psi_-$ are $Z^j_-\subset Y_-\subset X_-$, with $\codim(Z^j_-, X_-)=\codim(Z^j_-,Y_-)+1>1$; on the other hand, the irreducible components of the indeterminacy locus of $\psi^{-1}_-$ are  $\overline{X^\pm(Y_0^j)}\subset X$, and so  $\codim(\overline{X^\pm(Y_0^j)}, X)=\codim(Z^j_+,Y_+)>1$, and we conclude that the action of $H$ on $X$ is a bordism, by definition. Moreover, since this tells us that the exceptional locus of the birational transformation $\psi: Y_-\dashrightarrow Y_+$ induced by the bordism coincides with the exceptional locus of the original Atiyah flip, item (4) of the statement follows. 

Every orbit of the action is contracted by $\Pi_- \circ \phi_-^+$, hence the pullback of $\Pic (Y')$, which has codimension two in $\Pic (X)$, is $H$-trivial, 

 so the rank of the bordism is at most one; it is in fact one since there exist inner fixed point components (see Lemma \ref{lem:rank0bordism}). This concludes (1).

We finish by proving (3). Since $\phi_-^-$ is an isomorphism in a neighborhood of $Y_-^1$, it follows that we have isomorphisms $N_{Y_-/X} \simeq N_{Y_-^1/X_-} \simeq \cN_-$. On the other hand the restriction of $\psi_-$ to $Y_-^0$ is the birational transformation of the original flip, hence, by property $(3\text{c})$ in Definition \ref{def:Atiyah.assumptions} we have $N_{Y_+/X} \simeq \psi_{-*}(N_{Y_-^0/X_-}) \simeq \psi_{-*}(-\cN_-) \simeq \cN_+$.
\end{proof}

\begin{corollary}
In the setup of Theorem \ref{flip=>bordism}, there exists an Atiyah flip: 
$$\begin{tikzcd}
X_-=\PP_{Y_-}(\cO\oplus\cN_-)\arrow[rightarrow,dashed,"\psi_-"]{rr}\arrow["\phi_-^-"',pos=0.40]{dr}
&[-25pt]&X\arrow["\phi_-^+"]{dl}\\
&X'_-&
\end{tikzcd}$$
\end{corollary}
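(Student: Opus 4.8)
The plan is to extract the Atiyah flip $\psi_-$ directly from the construction carried out in the proof of Theorem \ref{flip=>bordism}, verifying that the diagram already produced there satisfies all three defining conditions of an Atiyah flip (Definition \ref{def:Atiyah.assumptions}). First I would recall that in that proof we built $X_-=\PP_{Y_-}(\cO\oplus\cN_-)$, a semiample divisor $\cM_-=\pi_-^*(\varphi_-^*\cM')+Y^1_-$ supporting a small contraction $\phi_-^-:X_-\to X_-'$, and its flip $\psi_-:X_-\dashrightarrow X$ with the other small contraction $\phi_-^+:X\to X_-'$. Thus the commutative triangle is already in place, so it only remains to check conditions (1)--(3) for this triangle.

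For condition (1): $X_-$ is smooth as a projective bundle over the smooth variety $Y_-$, $X$ is smooth by Theorem \ref{flip=>bordism}, and $X_-'$ is normal since it is the image of the contraction $\phi_-^-$ of a smooth (hence normal) variety; the morphisms $\phi_-^\pm$ are birational, proper and surjective by construction. For condition (2): locally analytically around each point of $X_-'$, the triple $(X_-,X_-',\phi_-^-)$ is identified with the toric model $(X_{\widehat\Sigma_-}, X_{\widehat\Sigma'_-}, \phi)$ of Section \ref{ssect:toricbordism} (this identification was exactly the mechanism used to produce the flip $X$), so $\phi_-^-$ is a toric small contraction of Atiyah type; the exceptional locus of $\phi_-^-$ is $j_0(Z_-)=j_0(\Exc(\varphi_-))$, which is smooth because $Z_-$ is smooth (Definition \ref{def:Atiyah.assumptions}(2)), and its irreducible components $j_0(Z_-^j)$ contract via $\phi_-^-$ to $Y_0^j$ as $\P^{r_j}$-bundles — again inherited from $\varphi_-$. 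Correspondingly the components of $\Exc(\phi_-^+)$, which are the $\overline{X^\pm(Y_0^j)}\subset X$ computed in the proof of Theorem \ref{flip=>bordism}, contract to $Y_0^j$ as $\P^{s_j}$-bundles. For condition (3), one takes $\cN_-^{X_-}:=Y_-^0$ (equivalently $\cO(-Y_-^0)$, whichever sign convention makes it $\phi_-^-$-ample) on $X_-$ and $\cN_+^{X}:=Y_+$ on $X$: property (3a) follows from $\Pic(X_-)=\pi_-^*\Pic(Y_-)\oplus\ZZ\,\cO(1)$ together with $\Pic(Y_-)=\varphi_-^*\Pic(Y')\oplus\ZZ\cN_-$ and the relation $Y_-^1-Y_-^0=\pi_-^*\cN_-$ (Equation (\ref{eq:531})), which rearranges $\Pic(X_-)$ as $(\Pi_-\circ\phi_-^-)^*\Pic(Y')\oplus \pi_-^*\cN_-\oplus\ZZ[Y_-^0]$ — and the first two summands are precisely the pullback of $\Pic(X_-')$ since the flip preserves Picard groups and $\Pic(X_-')$ has corank one in $\Pic(X_-)$; property (3b) is the statement that $\cO(-Y_-^0)$ restricts to $\cO(1)$ on the fibers of $j_0(Z_-^j)\to Y_0^j$, which is local and was verified in the toric model (cf. the computation that $V(\RR_{\geq0}\cdot e_i)$ is $\cO(-1)$ on the relevant fibers in Section \ref{ssec:toricAtiyah}); property (3c), $\psi_{-*}(\cN_-^{X_-})=-\cN_+^X$, is the identity $\psi_{-*}[Y_-^0]=-[Y_+]$, which holds because $Y_+\subset X$ is by definition the strict transform of $Y_-^0$ and the normal bundles have opposite sign — indeed $N_{Y_-^0/X_-}=-\cN_-$ while $N_{Y_+/X}=\cN_+=-\psi_{-*}\cN_-$ by Theorem \ref{flip=>bordism}(3) combined with Definition \ref{def:Atiyah.assumptions}(3c).

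The step I expect to require the most care is the precise bookkeeping in condition (3a) — identifying $(\Pi_-\circ\phi_-^+)^*\Pic(Y')$ inside $\Pic(X)$ and checking that $[Y_+]$ (or rather $[Y_-^0]$ pulled across the flip) gives the complementary $\ZZ$-summand with the correct $\phi_-^+$-ampleness sign; this is mostly a diagram chase through the maps $\pi_-$, $\phi_-^-$, $\psi_-$, $\phi_-^+$, $\Pi_-$, using that flips induce isomorphisms on Picard groups and that $\phi_-^\pm$ are elementary (corank-one) contractions. Everything else is either a direct quotation of Theorem \ref{flip=>bordism} and its proof or a local toric verification already carried out in Section \ref{sect:appendix}, so the corollary is essentially a repackaging of work already done.
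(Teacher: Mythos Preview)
Your overall strategy matches the paper's exactly: conditions (1) and (2) of Definition~\ref{def:Atiyah.assumptions} are already contained in the proof of Theorem~\ref{flip=>bordism}, so only condition (3) needs checking. The paper proceeds slightly more cleanly by setting $\cN_-^-:=-Y_-^0$, computing via Equation~(\ref{eq:531}) that $(-Y_-^0)|_{j_0(Z_-)}=\cN_-|_{Z_-}$ (giving (3b) on the $X_-$ side from the original flip's (3b)), then \emph{defining} $\cN_-^+:=-\psi_{-*}(\cN_-^-)$ so that (3c) holds by fiat, and finally verifying (3b) on the $X$ side by showing that $\cN_-^+|_{Y_+}=\cN_+$.

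Your verification of (3c), however, contains a genuine sign error. You assert $\psi_{-*}[Y_-^0]=-[Y_+]$, but since $Y_+$ is by construction the strict transform of $Y_-^0$ under an isomorphism in codimension one, the pushforward satisfies $\psi_{-*}[Y_-^0]=+[Y_+]$; the divisor class does not change sign. The observation that the normal bundles $N_{Y_-^0/X_-}=-\cN_-$ and $N_{Y_+/X}=\cN_+$ have ``opposite sign'' is correct but irrelevant to computing $\psi_{-*}$ on the ambient divisors themselves. Once you commit to the $\phi_-^-$-ample choice $\cN_-^{X_-}=-Y_-^0$ (as you yourself suggest) and keep $\cN_+^X=Y_+$, condition (3c) reads $\psi_{-*}(-Y_-^0)=-Y_+$, which is precisely the strict-transform identity $\psi_{-*}(Y_-^0)=Y_+$ --- no minus sign required. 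With this correction your argument goes through and agrees with the paper's.
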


\begin{proof}
We have already shown in the proof of Theorem \ref{flip=>bordism} that $\psi_-$ is locally a toric Atiyah flip, 
so we only need to check the condition (3) of the definition.

By Equation (\ref{eq:531}) we have 
$$Y_-^0|_{j_0(Z_-)}\simeq -\pi^*_-\cN_-|_{j_0(Z_-)} \simeq -\cN_-|_{Z_-},$$
hence $Y^0_-$ restricts to $\cO(-1)$ on the fibers of $\phi_-^-$ and, setting $\cN_-^-:=-Y_-^0 \in \Pic (X_-)$ we can write
$$\Pic (X_-)=\phi_-^{-*}\Pic (X_-')\oplus\ZZ[\cN_-^-].$$
On the other hand, we set $\cN_-^+:=-\psi_{-*}(\cN_-^-)$. As we have seen, the restriction of $\cN_-^-$ to $Y_-^0\subset X_-$ is $\cN_-$, hence its  restriction to each fiber of the contraction $\phi_-^-$ is isomorphic to $\cO(1)$; on the other hand the restriction of $\cN_-^+$ to $Y_+\subset X$ equals $-\psi_*((\cN_-^-)_{|Y_-^0})=-\psi_*(\cN_-)=\cN_+$ and so its restriction to each fiber of $\phi_-^+$ is also isomorphic to  $\cO(1)$. 
\end{proof}

\begin{remark}\label{rem:atiyaflip_twoways}
In the proof of the Theorem \ref{flip=>bordism} we have constructed the variety $X$ as the Atiyah flip of a $\P^1$-bundle $\PP_{Y_-}(\cO\oplus\cN_-)$. If we start from $Y_+$, we will obtain the same variety $X$ and, in particular we have a diagram of Atiyah flips:
$$\begin{tikzcd}
\PP_{Y_-}(\cO\oplus\cN_-)\arrow[rightarrow,dashed,"\psi_-"]{rr}\arrow["\phi_-^-"',pos=0.40]{dr}
&[-25pt]&X\arrow["\phi_-^+"]{dl}\arrow["\phi_+^-"']{dr}\arrow[leftarrow,dashed,"\psi_+",pos=0.63]{rr}&&[-25 pt]
\PP_{Y_+}(\cO\oplus\cN_+)\arrow["\phi_+^+",pos=0.40]{dl}\\
&X'_-\arrow["\Pi_-"']{dr}&&X'_+\arrow["\Pi_+"]{dl}&\\
&& Y'&&
\end{tikzcd}$$ 
with $H$-equivariant arrows. 
In order to see this, we note that locally the construction of $X$ can be done in toric terms, and  
then the result follows from Proposition \ref{prop:toric_bordism-construction}.
\end{remark}

\begin{corollary}\label{cor:flip=>bordism}
In the setup of Theorem \ref{flip=>bordism}, there exists an ample line bundle $L$ on $X$ such that the bandwidth of the $H$-action on the  pair $(X,L)$ is two.
\end{corollary}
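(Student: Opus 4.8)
The plan is to realize the polarization as a twist of the sum $D:=Y_-+Y_+$ of the two extremal fixed divisors, and to reduce the bandwidth to a single intersection number. First I would record the setup: by Theorem~\ref{flip=>bordism} the $H$-action on $X$ is a faithful rank-one bordism with $N_{Y_\pm/X}\simeq\cN_\pm$; being of B-type and faithful it is equalized at $Y_\pm$ (Remark~\ref{rem:Btypeequalized}), hence equalized by Corollary~\ref{cor:allequalized}(iv) (cf.\ Lemma~\ref{lem:Cpm}). Writing $C_{\gen}$ for the closure of a general orbit and $[C_\pm]$ for the classes of Definition~\ref{def:Cpm}, one has $[C_{\gen}]=[C_+]+[C_-]$ by Lemma~\ref{rank1bordism-curves}. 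The crucial reduction is that for \emph{any} ample $L$ on $X$ with any linearization, the bandwidth of the action on $(X,L)$ equals $L\cdot C_{\gen}$: applying Lemma~\ref{lem:AMvsFM} to $C_{\gen}$ gives $\mu_L(Y_+)-\mu_L(Y_-)=\delta(C_{\gen})\,(L\cdot C_{\gen})=L\cdot C_{\gen}$, and by Remark~\ref{rem:bandwidth} the left-hand side is $\mu_{\max}-\mu_{\min}$. So it suffices to produce an ample $L$ with $L\cdot C_{\gen}=2$.

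Next I would compute with $D=Y_-+Y_+$. Since $Y_-\cap Y_+=\emptyset$ and an orbit closure meets only the two fixed components it links, Remark~\ref{rem:Btypeintnumber} yields $Y_\pm\cdot C_\pm=1$ and $Y_\pm\cdot C_\mp=0$, hence $D\cdot C_\pm=1$ and $D\cdot C_{\gen}=2$. As $D$ need not be ample, I would correct it by a pullback from $Y'$: let $p:=\Pi_-\circ\phi_-^+\colon X\to Y'$, which contracts every orbit closure (proof of Theorem~\ref{flip=>bordism}); then $p^*\cM'\cdot C_{\gen}=0$ for every $\cM'\in\Pic(Y')$, so $L:=D+p^*\cM'$ still has $L\cdot C_{\gen}=2$. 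By the standard relative-to-absolute ampleness principle, $L$ will be ample for $\cM'$ sufficiently ample on $Y'$ as soon as $D$ is $p$-ample, which would conclude the argument.

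The step I expect to be the main obstacle is exactly the $p$-ampleness of $D=Y_-+Y_+$. I would establish it via the relative cone of curves $\cNE{X/Y'}\subset\Nu(X/Y')$. One has $\dim\Nu(X/Y')=2$, since the proof of Theorem~\ref{flip=>bordism} shows that $p^*\Pic(Y')$ has corank two in $\Pic(X)$. Moreover, by Remark~\ref{rem:atiyaflip_twoways} the morphism $p$ factors through both small contractions $\phi_-^+\colon X\to X'_-$ and $\phi_+^-\colon X\to X'_+$, each of relative Picard number one over $Y'$, whose exceptional fibres are the orbit closures linking $Y_0$ with $Y_-$, resp.\ with $Y_+$; hence they contract the two non-proportional rays $\RR_{\geq0}[C_-]$ and $\RR_{\geq0}[C_+]$, which must then be the two extremal rays of the two-dimensional cone $\cNE{X/Y'}$. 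Since $D\cdot C_+=D\cdot C_-=1>0$, the relative Kleiman criterion gives that $D$ is $p$-ample, and we are done. A variant of this last step, more in the spirit of Section~\ref{sect:appendix}, is to verify $p$-ampleness of $D$ étale-locally over $Y'$ on the toric model $X_{\widehat{\Sigma}}\to X_{\Sigma(\Delta')}$ of Corollary~\ref{cor:toricbordism}: there $D$ restricts to the sum $V(\RR_{\geq0}\cdot v)+V(\RR_{\geq0}\cdot(-v))$ of the extremal toric boundary divisors, and the data computed at the end of Section~\ref{sect:appendix} (linearization weights $+1,-1,0$ at source, sink and inner component) is precisely the strict convexity needed for relative ampleness over $X_{\Sigma(\Delta')}$.
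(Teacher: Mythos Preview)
Your argument is correct and follows essentially the same route as the paper: the same line bundle $L=Y_-+Y_++p^*\cM'$ with $p=\Pi_-\circ\phi_-^+$, and the same computation $L\cdot C_{\gen}=2$ combined with Lemma~\ref{lem:AMvsFM} (using $\delta(C_{\gen})=1$ from faithfulness). The paper's version is simply terser---it invokes Remark~\ref{rem:Btypeintnumber} directly for $Y_\pm\cdot C_{\gen}=1$ and leaves the $p$-ampleness of $Y_-+Y_+$ (hence the ampleness of $L$ for $\cM'\gg0$) implicit, whereas you spell it out via the relative cone $\cNE{X/Y'}$.
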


\begin{proof}
With the notation of the proof of Theorem \ref{flip=>bordism}, the line bundle: 
$$L:=Y_-+Y_++(\Pi_-\circ\phi_-^+)^*\cL',$$ 
with $\cL'$ a sufficiently ample line bundle on $Y'$, satisfies the required conditions. 
In fact, by Remark \ref{rem:Btypeintnumber}, $L \cdot C_{\gen} =2$, and so $|\mu|=2$ by Lemma \ref{lem:AMvsFM}, recalling that the action on $X$ is faithful, hence $\delta(C_{\gen})=1$.
\end{proof}

The following statement tells us that the rank $1$ bordism $Y_-\hooklongrightarrow X \hooklongleftarrow Y_+$ obtained in Theorem \ref{flip=>bordism} upon an Atiyah flip $\begin{tikzcd}[cramped](Y_\pm, (Y_\pm)_{|Y_\pm})\arrow["\varphi_\pm", shift right = 1.2]{r}&Y'\supset Y_0 \end{tikzcd}$ is unique.

\begin{theorem}\label{bordism=>unique}
Let $\begin{tikzcd}[cramped](Y_\pm, (Y_\pm)_{|Y_\pm})\arrow["\varphi_\pm", shift right = 1.2]{r}&Y'\supset Y_0 \end{tikzcd}$ be an Atiyah flip. Then there exists a unique bordism $Y_-\hooklongrightarrow X \hooklongleftarrow Y_+$ of rank $1$ whose induced birational transformation $\psi:Y_-\dashrightarrow Y_+$ is the Atiyah flip.
 
\end{theorem}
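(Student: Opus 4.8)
The plan is to prove uniqueness by showing that every rank-one bordism $Y_-\hooklongrightarrow X\hooklongleftarrow Y_+$ whose induced birational transformation is the given Atiyah flip $\psi$ is $H$-equivariantly isomorphic to the bordism $X^{\circ}$ produced in Theorem \ref{flip=>bordism}. The strategy is to realize both $X$ and $X^{\circ}$ as the flip of one and the same small contraction $\phi_-^-\colon X_-\to X_-'$ with $X_-=\PP_{Y_-}(\cO\oplus\cN_-)$, and then invoke uniqueness of flips.

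First, as in the proof of Lemma \ref{rank1bordism-curves}, I would pass to the faithful quotient, so the action may be assumed faithful and hence equalized. Next I identify the normal bundles: by Corollary \ref{rank1bordism-divisors} we have $N_{Y_-/X}\cdot[C_-]=1$, and by Lemma \ref{lem:smoothZ} the fibres of $\varphi_-\colon Z_-^j\to Y_0^j$ carry curves of class $[C_-]$; combining this with conditions (3a)--(3b) of Definition \ref{def:Atiyah.assumptions} pins down $N_{Y_-/X}$ up to a pullback from $Y'$, and the normalization built into the statement ($\cN_\pm$ being the self-intersection) gives $N_{Y_-/X}\simeq\cN_-$, whence $N_{Y_+/X}\simeq\cN_+$ by Corollary \ref{rank1bordism-divisors}. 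Now form $X_-=\PP_{Y_-}(\cO\oplus\cN_-)$ with sections $Y_-^0$ (source), $Y_-^1$ (sink) and the canonical small contraction $\phi_-^-\colon X_-\to X_-'$ supported by $\cM_-=\pi_-^*\varphi_-^*\cM'+Y_-^1$ for $\cM'$ sufficiently positive on $Y'$, exactly as in the proof of Theorem \ref{flip=>bordism}. By Theorem \ref{thm:BB_decomposition} (2) the dense cell $X^-(Y_-)\subset X$ is $H$-equivariantly the total space of $N_{Y_-/X}\simeq\cN_-$, i.e.\ $X_-\setminus Y_-^0$; this yields an $H$-equivariant birational map $\beta\colon X_-\dashrightarrow X$, which extends to an isomorphism on $X_-\setminus j_0(Z_-)$ (restricting on $Y_-^0\setminus Z_-$ to $\psi$), sends $Y_-^1$ to $Y_-$ and $Y_-^0$ to $Y_+$, and is an isomorphism in codimension one: indeed $\codim(j_0(Z_-^j),X_-)\ge 2$ (since $\nu^-(Y_0^j)\ge 2$), and by Lemma \ref{lem:smoothZ} and the proof of Theorem \ref{flip=>bordism} the locus of $X$ where $\beta^{-1}$ is undefined is $\bigcup_j\overline{X^{+}(Y_0^j)}$, of codimension $\codim(Z_+^j,Y_+)\ge 2$.

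It remains to show that $\beta$ is the flip $\psi_-$ of $\phi_-^-$. For this I would argue analytically locally around each inner component $Y_0^j$: the action being equalized, Theorem \ref{thm:BB_decomposition} (2) linearizes the cells $X^{\pm}(Y_0^j)$, and the bordism hypothesis forces $X$ to be, in an analytic neighbourhood of $Y_0^j$, the toric bordism $X_{\widehat{\Sigma}}$ of Proposition \ref{prop:toric_bordism-construction} of the local type $(r_j,s_j)$ with $r_j=\codim(Z_+^j,Y_+)-1$ and $s_j=\codim(Z_-^j,Y_-)-1$. Under this identification $\beta$ is locally the toric Atiyah flip $X_{\widehat{\Sigma}_-}\dashrightarrow X_{\widehat{\Sigma}}$, and the local morphisms $X_{\widehat{\Sigma}}\to X_{\Sigma(\Delta')}$ glue with the morphism $X^-(Y_-)\to Y_-\to Y'$ (factoring through $X_-'$) to give $\phi_-^+\colon X\to X_-'$ with $\phi_-^+\circ\beta=\phi_-^-$ wherever defined; this exhibits $X$ as $\Proj_{X_-'}\big(\bigoplus_{m\ge 0}(\phi_-^-)_*\cO_{X_-}(-mY_-^0)\big)$, i.e.\ as the flip of $\phi_-^-$. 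Since $X^{\circ}$ is the same flip, $X\simeq X^{\circ}$ over $X_-'$, hence over $Y'$, and all maps being $H$-equivariant the isomorphism is $H$-equivariant and respects $Y_\pm$ and their normal bundles. The main obstacle is precisely this local rigidity: proving that an equalized rank-one bordism is, near each inner fixed component, forced to be the toric bordism of the corresponding local toric Atiyah flip. Once this is granted, the rest is formal, combining the results of Sections \ref{sec:Btype_Bordism} and \ref{sect:appendix} with the uniqueness of flips.
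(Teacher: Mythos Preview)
Your proposal is correct and follows essentially the same strategy as the paper: show that any rank-one bordism $X$ realizing $\psi$ must be the Atiyah flip of the small contraction $\phi_-^-\colon\PP_{Y_-}(\cO\oplus\cN_-)\to X'_-$, then invoke uniqueness of the flip; the key step in both is the local identification of $X$ near each inner component with the toric bordism $X_{\widehat{\Sigma}}$ of Proposition \ref{prop:toric_bordism-construction}.

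The one substantive difference is how the morphism $X\to X'_-$ is produced. You construct it by gluing local toric morphisms with a morphism on the big cell $X^-(Y_-)$; this is workable but delicate, and your description contains a slip: the local toric morphism you need is $X_{\widehat{\Sigma}}\to X_{\widehat{\Sigma}'_-}$, not $X_{\widehat{\Sigma}}\to X_{\Sigma(\Delta')}$ (the latter is the local model of $Y'$, not of $X'_-$). The paper avoids this gluing altogether: it builds the contraction $\eta\colon X\to Y'$ globally from the $H$-trivial line bundle $\cM=\iota_\#^-\varphi_-^*\cM'$, which is basepoint-free on all of $X$ by Lemma \ref{lem:base-locus}, and then factors through $\eta_-\colon X\to X'_-$ by contracting the extremal ray $\RR_+[C_+]$ (detected via Corollary \ref{cor:sinknef}). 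This global construction is cleaner and sidesteps the need to verify that your local pieces patch.
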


\begin{proof}
Let $Y_-\hooklongrightarrow X \hooklongleftarrow Y_+$ be a bordism of rank $1$ whose induced birational map $\psi:Y_-\dashrightarrow Y_+$ is the Atiyah flip
$\begin{tikzcd}[cramped](Y_\pm, (Y_\pm)_{|Y_\pm})\arrow["\varphi_\pm", shift right = 1.2]{r}&Y'\supset Y_0 \end{tikzcd}$.
In order to prove the unicity, we will show that $X$ is the Atiyah flip of a $\P^1$-bundle $\P_{Y_-}(\cO\oplus (Y_\pm)_{|Y_\pm})$. We will start by constructing a small contraction $\eta_-:X\to X'_-$, for which we will prove the existence of an Atiyah flip. 

Let $[C_\pm]$ be the classes of invariant curves with respect to $H=\C^*$ in $X$ linking $Y_{\pm}$ with an inner fixed point component (see Definition \ref{def:Cpm}), and 
let $\cM'$ be a very ample line bundle on $Y'$; we pull it back to $Y_-$ and extend it to $X$ (see Definition \ref{def:extend_div}), setting $\cM:=\iota^-_\#\varphi_-^*\cM'$. By construction, $\cM\cdot [C_-]=0$. On the other hand, we may write $\cM=\iota^+_\#\varphi_+^*\cM'$, so we also have $\cM\cdot [C_+]=0$ and, by Lemma \ref{rank1bordism-curves}, we conclude that $\cM\in\Pic(X)^H$.
By Lemma \ref{lem:base-locus} the linear system $\iota^-_\#\varphi_-^*|\cM'|\subset |\cM|$ is base point free on $X$, hence $\cM$ defines a contraction $\eta$ of $X$. Since $\cM$ is $H$-trivial, then $\eta$ contracts the closure of every orbit; moreover the restriction of $\eta$ to $Y_\pm$ is equal to $\varphi_\pm$, so the image of $\eta$ is $Y'$, and the following diagram commutes: 
$$\begin{tikzcd} Y_-\arrow[hook]{r}\arrow["\varphi_-"']{dr}&X\arrow["\eta"]{d}&Y_+\arrow[hook']{l}\arrow["\varphi_+"]{dl}\\ &Y'&\end{tikzcd}$$ 
Since the contractions $\varphi_\pm$  are elementary  and, by the definition of bordism, $\rho_X=\rho_{Y_{\pm}}+1$, we have $\rho_X-\rho_{Y'}=2$, hence
$\Nu(X/Y')=\Nu(X)^H$. Since $Y_- \cdot \, C_+=0$, $Y_- \cdot C_-=1$ and, by Corollary \ref{cor:sinknef}, $Y_-$ is nef on $\Nu(X)^H \cap \cNE{X}$, it follows that $\R_+[C_+]$ is an extremal ray of $\cNE{X}$. In a similar way we show that also  $\R_+[C_-]$ is an extremal ray of $\cNE{X}$. 
Therefore there exist elementary contractions  $\eta_\pm:X \to X'_\pm$, factoring $\eta$, which
contract the extremal rays $\R_+[C_\pm]$,  and whose supporting divisors are  $Y_\pm+k\cM$ for $k \gg 0$. Thus we have a
commutative diagram
$$\begin{tikzcd}Y_-\arrow[hook]{r}
&X\arrow["\ \ \eta_-"']{dl}\arrow["\eta_+"]{dr}\arrow["\eta"]{d}
&Y_+\arrow[hook']{l} \\
 X'_-\arrow{r}&Y'&X'_+\arrow{l}\end{tikzcd}$$ such that
$(\eta_\pm)_{|Y_\pm}= \varphi_{\pm}:Y_{\pm}\rightarrow Y'$.  

We will prove now that $\eta_-$ is a small contraction of Atiyah type, 
and construct the corresponding flip. 
Let us denote by $\bigcup_jZ^j_-$ the exceptional locus of $\varphi_-$ and, as in Remark \ref{rem:Atiyah.assumptions}, $r_j=\codim(Z^j_{+},Y_+)-1$, $s_j=\codim(Z^j_{-},Y_-)-1$, for every $j$.
We claim first that $\Exc(\eta_-)$ is equal to $\bigcup_j\overline{X^+(Y_j)}$, and that $\overline{X^+(Y_j)}$ is a $\P^{r_j+1}$-bundle over $Y_j$, for every inner fixed point component $Y_j$ of the bordism. 

Let $F$ be an irreducible component of a fiber of $\eta_-$. Since it intersects $Y_-$ along a fiber of $\varphi_-$, which is isomorphic to $\P^{r_j}$ for some $j$, it follows that $\dim F\leq r_j+1$. 
On the other hand, by Lemma \ref{lem:smoothZ}, every fiber of $\varphi_-$ is contained in a unique variety isomorphic to $\P(N^+(Y_j)^\vee_y)$, for some $y$ contained in an inner fixed point component $Y_j$. Since the lines in $\P(N^+(Y_j)^\vee_y)$ correspond to curves in the class $[C_-]$, we conclude that the fibers of $\eta_-$ are all isomorphic to $\P(N^+(Y_j)^\vee_y)\simeq \P^{r_j+1}$. This concludes the claim.

%

We construct now the Atiyah flip of $\eta_-$ by glueing toric Atiyah flips, so that, in particular, condition (2) of Definition \ref{def:Atiyah.assumptions} is fulfilled. For every point $y$ of an inner fixed point component $Y_j$, we may take an analytic neighborhood $U$ of $y\in Y_j$, biholomorphic to $\C^q$, and a $H$-invariant analytic open set in $X$ which is $H$-biholomorphic to $(N_{Y_j/X})_{|U}\simeq\C^{\dim X}$ (see \cite[Theorem~2.5]{BB}), that we identify with the toric variety $X_{\Sigma(\Delta)}$ introduced in Section \ref{ssec:toricAtiyah}. Following Section \ref{ssect:cobordism}, the $H$-action inherited by $X_{\Sigma(\Delta)}$ defines two good $H$-quotients, $X_{\Sigma_\pm}$, of two open subsets in $ X_{\Sigma(\Delta)}$, and a birational transformation among them, that we denote by $\psi_{\circ}$. By construction, this map is the restriction of the birational map $\psi:Y_-\dashrightarrow Y_+$, which is an Atiyah flip, by assumption, so we conclude that $\psi_\circ$ is a toric Atiyah flip. We then consider the toric variety $X_{\widehat{\Sigma}}$ constructed in Proposition \ref{prop:toric_bordism-construction} which is isomorphic to the analytic closure of $X_{\Sigma(\Delta)}$ into $X$, consisting of the open set $X_{\Sigma(\Delta)}$, together with the limiting points of all the orbits of the $H$-action. 
In particular, the second part of Proposition \ref{prop:toric_bordism-construction} allows us to claim that the subset $X_{\widehat{\Sigma}}\subset X$ admits a toric Atiyah flip to a $\P^1$-bundle $X_{\widehat{\Sigma}_-}$ whose associated Atiyah type contraction is, by construction, the restriction of $\eta_-:X\to X'_-$. Glueing these toric Atiyah flips we obtain the flip  $X\dashrightarrow \PP_{Y_-}(\cO\oplus (Y_-)_{|Y_-})$ of the contraction $\eta_-$.

Finally, we note that the line bundle $Y_-$ satisfies the condition (3) of Definition \ref{def:Atiyah.assumptions}, so we may conclude that $X\dashrightarrow \PP_{Y_-}(\cO\oplus(Y_-)_{|Y_-})$ is an Atiyah flip.  
\end{proof}

\section{Bandwidth two actions and Atiyah flips}\label{sec:BW22}

We will now show how to apply the machinery we have just developed to the case of polarized pairs $(X,L)$ that support bandwidth two $H$-actions, showing that, under certain conditions,
they are determined by the sink and the source together with their normal bundles (see Proposition \ref{prop:Atiyah_adjoint}). We conclude the Section by presenting some examples  of bandwidth two actions supported on rational homogeneous varieties (Section \ref{ssec:HomoBW2}), and discuss how they are determined by their sink and source (see Corollary \ref{cor:adjointBW2}).

We will choose, for bandwidth two pairs, the linearization $\mu_L$ of $L$ whose weights at the source and the sink are, respectively, $1$ and  $-1$,
and we will denote the source and the sink, respectively, by $Y_+$ and $Y_-$.
Let us start with the following general  observation.

\begin{lemma} Let $(X,L)$ be a polarized pair supporting an  $H$-action of bandwidth two, equalized at the source $Y_+$ and the sink $Y_-$. Then the action is equalized. Moreover,  if there exists an inner fixed point component of the action $Y_0^j$,  the degree of $L$ on the $H$-invariant curves with sink or source in $Y_{0}^j$ is equal to one.
\label{lem:equalizedSS}
\end{lemma}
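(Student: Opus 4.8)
The plan is to determine the values taken by the chosen linearization on the fixed components, and then to apply Corollary~\ref{cor:allequalized}(iv). Write $\mu=\mu_L$, so that $\mu(Y_+)=1$ and $\mu(Y_-)=-1$; by Remark~\ref{rem:bandwidth} these are $\mu_{\max}$ and $\mu_{\min}$, hence $\mu$ takes values in $\{-1,0,1\}$ on $\cY$.

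The first step is to check that every inner fixed component $Y'$ satisfies $\nu^+(Y')\geq 1$ and $\nu^-(Y')\geq 1$. Since $\mu(Y_-)=\mu_{\min}$, the AM vs FM equality (Lemma~\ref{lem:AMvsFM}) applied to $f^{*}L$ along any $H$-invariant curve $C$, together with the ampleness of $L$, shows that $C$ cannot have its source in $Y_-$; hence $X^+(Y_-)=Y_-$, and by Theorem~\ref{thm:BB_decomposition}(2) this gives $\nu^+(Y_-)=0$. Taking $m=0$ in the homology decomposition of Theorem~\ref{thm:BB_decomposition}(3) shows that exactly one component has $\nu^+=0$, so it must be $Y_-$; therefore $\nu^+(Y')\geq 1$ for every inner $Y'$, and symmetrically $\nu^-(Y')\geq 1$. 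In particular each inner component is the source of some $H$-invariant curve and the sink of another (this can also be extracted from Corollary~\ref{cor:chain}).

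Next, for any $H$-invariant curve $C$ with source $y_+$ and sink $y_-$, normalizing $f\colon\PP^1\to C$ and applying Lemma~\ref{lem:AMvsFM} to $f^{*}L$ yields
$$\mu(y_+)-\mu(y_-)=\delta(C)\cdot\deg(f^{*}L)=\delta(C)\,(L\cdot C)\geq 1,$$
since $\delta(C)\geq 1$ and $L\cdot C\geq 1$. Combining this with the previous step: an orbit with source in an inner component $Y'$ gives $\mu(Y')\geq\mu(y_-)+1\geq 0$, while an orbit with sink in $Y'$ gives $\mu(Y')\leq\mu(y_+)-1\leq 0$, so $\mu(Y')=0$ for every inner component. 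Now, since $\mu$ takes the value $0$ exactly on the inner components and the values $\pm1$ on $Y_\pm$, the strict inequality $\mu(y_+)\geq\mu(y_-)+1$ forces that whenever $y_+$ lies in an inner component one must have $\mu(y_-)=-1$, i.e. $y_-\in Y_-$. Hence every nontrivial orbit has its source in $Y_+$ or its sink in $Y_-$; as the action is equalized at $Y_+$ and $Y_-$ by hypothesis, Corollary~\ref{cor:allequalized}(iv) yields that the whole action is equalized.

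For the last assertion, the action being equalized gives $\delta(C)=1$ for every $H$-invariant curve $C$, by Corollary~\ref{cor:allequalized}(ii). If $C$ has its sink in an inner component $Y_0^j$, then $L\cdot C=\mu(y_+)-\mu(Y_0^j)=\mu(y_+)\leq\mu_{\max}=1$, and $L\cdot C\geq 1$ by ampleness, so $L\cdot C=1$; the case of a curve with source in $Y_0^j$ is symmetric. The only step that needs input beyond bookkeeping with the AM vs FM equality is the first one: the positivity $\nu^\pm(Y')\geq 1$ for inner components, which is where the homology part of the BB-decomposition genuinely enters.
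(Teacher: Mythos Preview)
Your proof is correct and follows the same strategy as the paper's: verify the hypothesis of Corollary~\ref{cor:allequalized}(iv) (every nontrivial orbit has source in $Y_+$ or sink in $Y_-$) and then apply Lemma~\ref{lem:AMvsFM} with $\delta(C)=1$ for the degree statement. The paper's write-up simply invokes Corollary~\ref{cor:allequalized}(iv) and leaves that verification implicit, while you spell it out carefully.

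One small remark on efficiency: your appeal to the homology decomposition of Theorem~\ref{thm:BB_decomposition}(3) to obtain $\nu^\pm(Y')\geq 1$ for inner components is heavier than necessary. By Remark~\ref{rem:sinksource} (together with Theorem~\ref{thm:BB_decomposition}(2)), the sink $Y_-$ is the \emph{unique} component with $\nu^+=0$ and the source $Y_+$ the unique one with $\nu^-=0$, since $X^\pm(Y)$ is dense if and only if $\nu^\mp(Y)=0$. So $\nu^\pm(Y')\geq 1$ for every inner $Y'$ is immediate from the definitions, without the homology input; this is also how Corollary~\ref{cor:chain} uses it. Your closing comment that ``the homology part of the BB-decomposition genuinely enters'' is therefore not quite right, but it does not affect the validity of the argument.
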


 \begin{proof}
The action is equalized by Corollary \ref{cor:allequalized} (iv). In particular, if $C_\pm$ is the closure  of an orbit linking an inner fixed point component $Y_0^j$ with $Y_\pm$, then the degree of $L$ on $C_\pm$ is equal to one by Lemma \ref{lem:AMvsFM} and Corollary \ref{cor:allequalized} (ii). 
\end{proof}

We will now show how to construct a bordism of rank one out of a bandwidth two pair $(X,L)$.

 \begin{lemma}\label{lem:BW2bordism}
 Let $(X,L)$ be a polarized pair supporting an  $H$-action of bandwidth two, equalized at the sink $Y_-$ and source $Y_+$. Assume moreover that $\rho_X=1$, that the sink and the source have positive dimension, and that $X$ has at least one inner fixed point component.  Let $\alpha:X_\flat \to X$ be the blowup of $X$ along $Y_-\cup Y_+$, and denote by $Y_-^\flat$ and $Y_+^\flat$ the exceptional divisors.  Then $L^\flat:=\alpha^*L-Y_-^\flat-Y_+^\flat$ is a nonzero $H$-trivial divisor and the extended $H$-action on the blowup is a bordism of rank one.
 \end{lemma}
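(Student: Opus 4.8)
The plan is to first bring the extended action on $X_\flat$ into the shape of a bordism via Lemma~\ref{lem:setup_bir}, then to identify $L^\flat$ as a nonzero $H$-trivial divisor, and finally to pin down the rank by a dimension count in $\NU(X_\flat)$.

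First I would observe that $Y_+$ and $Y_-$ have codimension at least two in $X$. Indeed, $X$ is a Fano manifold of Picard number one by Lemma~\ref{lem:uniruled}, so $\Pic(X)=\Z\cL$ with $\cL$ ample; since $\dim Y_\pm>0$, Lemma~\ref{lem:Pic_extremal}(1) applied to the sink gives $\nu^+(Y_+)\ge 2$, and as $Y_+$ is the source $\nu^+(Y_+)=\codim_X Y_+$, whence $\codim_X Y_+\ge 2$; symmetrically $\codim_X Y_-=\nu^-(Y_-)\ge 2$. As $Y_+$ and $Y_-$ are disjoint (distinct connected components of $X^H$), $\rho_{X_\flat}=\rho_X+2=3$. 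By Lemma~\ref{lem:setup_bir} the action extends to a B-type action on $X_\flat$, equalized at the source $Y_+^\flat$ and the sink $Y_-^\flat$, and $Y_-^\flat\hooklongrightarrow X_\flat\hooklongleftarrow Y_+^\flat$ is a bordism, so its rank is defined. Since $\alpha$ is an $H$-equivariant isomorphism over $X\setminus(Y_+\cup Y_-)$, and the inner fixed components of $X$ are disjoint from $Y_\pm$, they are exactly the inner fixed components of this bordism; by hypothesis there is at least one, so Lemma~\ref{lem:rank0bordism} shows the rank is positive.

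Next I would prove that $L^\flat=\alpha^*L-Y_-^\flat-Y_+^\flat$ is $H$-trivial. The action on $X$ is equalized by Lemma~\ref{lem:equalizedSS}, and, normalizing $\mu_L$ so that $\mu_L(Y_+)=1$ and $\mu_L(Y_-)=-1$, one sees from Lemma~\ref{lem:AMvsFM} and the chain construction of Corollary~\ref{cor:chain} that $\mu_L$ vanishes on every inner fixed component (a point of such a component is an interior fixed point of a chain of orbits joining $Y_-$ to $Y_+$, along which $\mu_L$ is strictly monotone). Let $C$ be the closure of an orbit of the extended action on $X_\flat$, with source and sink $x_\pm$; it is not contracted by $\alpha$, so $\alpha(C)$ is an orbit closure in $X$ and $\alpha^*L\cdot C=L\cdot\alpha(C)=\mu_L(\alpha(x_+))-\mu_L(\alpha(x_-))$ by Lemma~\ref{lem:AMvsFM}. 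On the other hand the extended action is a faithful B-type action, so Remark~\ref{rem:Btypeintnumber} gives $Y_+^\flat\cdot C\in\{0,1\}$, equal to $1$ precisely when $x_+\in Y_+^\flat$; since an orbit cannot have its source on the sink component and the remaining fixed components of $X_\flat$ are the inner components of $X$ (where $\mu_L=0$), this reads $Y_+^\flat\cdot C=\mu_L(\alpha(x_+))$, and likewise $Y_-^\flat\cdot C=-\mu_L(\alpha(x_-))$. Subtracting, $L^\flat\cdot C=0$; since $\Nu(X_\flat)^H$ is spanned by classes of orbit closures, $L^\flat\in\NU(X_\flat)^H$.

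Finally, $L^\flat$ is numerically nontrivial: for a line $f$ contained in a fibre $\PP^{\codim_X Y_+-1}$ of $Y_+^\flat=\PP(N_{Y_+/X}^\vee)\to Y_+$ one has $\alpha^*L\cdot f=Y_-^\flat\cdot f=0$ and $Y_+^\flat\cdot f=-1$ (the exceptional divisor restricts to $\cO(-1)$ on fibre lines), hence $L^\flat\cdot f=1$. Thus $\dim\NU(X_\flat)^H\ge 1$; combining this with $\dim\Nu(X_\flat)^H+\dim\NU(X_\flat)^H=\rho_{X_\flat}=3$ and Definition~\ref{def:rank}, the rank of the bordism equals $\dim\Nu(X_\flat)^H-1=2-\dim\NU(X_\flat)^H\le 1$, which together with the positivity obtained in the first step forces it to be exactly $1$ (and $\NU(X_\flat)^H=\RR[L^\flat]$). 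The step requiring the most care is the $H$-triviality of $L^\flat$ — concretely, the vanishing of $\mu_L$ on inner fixed components and the bookkeeping of which orbit closures meet $Y_\pm^\flat$ — while the codimension estimate and the final count are routine consequences of the cited results.
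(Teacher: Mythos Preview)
Your proof is correct and follows essentially the same approach as the paper's: invoke Lemma~\ref{lem:setup_bir} for the bordism structure, verify that $L^\flat$ is $H$-trivial by computing its intersection with orbit closures via AM vs FM, check nontriviality on an exceptional fibre, and bound the rank above by $1$ via $\rho_{X_\flat}=3$ and below by $1$ via Lemma~\ref{lem:rank0bordism}. The paper is terser on the $H$-triviality step, simply observing that in bandwidth two every orbit closure meets the sink or the source (which, combined with $\delta=1$ and the degree count, immediately gives $L^\flat\cdot C=0$); your detour through $\mu_L=0$ on inner components is equivalent but slightly more laborious than necessary, since $\mu_L$ is $\Z$-valued and strictly between $-1$ and $1$ there.
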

  
 \begin{proof}
 The fact that the action on $X_\flat$ is a bordism follows by Lemma \ref{lem:setup_bir}. 
Since the closure of every $1$-dimensional orbit meets the sink or the source, by Lemma \ref{lem:AMvsFM} and Corollary \ref{cor:allequalized} (ii) the line bundle $L^\flat$ has intersection number zero with the closure of all the orbits of the action. Moreover, taking a curve $F$ contracted by $\alpha$, we get $L^\flat\cdot F>0$, therefore $L^\flat$ is nontrivial. In particular $L^\flat$ is an $H$-trivial divisor, so the rank of the bordism is at most $\rho_{X_\flat}-1-1=1$. If it were zero, then $X_\flat$ would be a $\P^1$-bundle by Lemma \ref{lem:rank0bordism}, contradicting the assumption on the existence of an inner component. 
 \end{proof}

\begin{remark} In the above lemma, the assumption on the positive dimension of the sink and the source is necessary. In fact, by Lemma
 \ref{lem:Pic_extremal}, if one of the extremal components is a point, then there exists an inner component $Y_j$ for which either $\nu^+(Y_j)$ or $\nu^-(Y_j)$ is equal to one, against the definition of bordism.
\end{remark}

 We will now show that the birational transformation induced by such a bordism is an Atiyah flip, under some positivity conditions on the conormal bundles of the sink and the source. 
 
\begin{proposition}\label{prop:Atiyah_adjoint}
Let $(X,L)$ be as in Lemma \ref{lem:BW2bordism}. Assume moreover that the vector bundles $N_{Y_{\pm}/X}^\vee \otimes L$ are semiample. 
 Let $Y_-^\flat\hooklongrightarrow X_\flat \hooklongleftarrow Y_+^\flat$
be the bordism induced on the blowup $X_\flat$ of $X$ along sink and source, and let $\psi:  Y_-^\flat\dashrightarrow Y_+^\flat$ be the induced birational transformation.
Then there exist projective varieties $Y_0 \subset Y'$ and contractions $\varphi_\pm: Y_\pm^\flat \to Y'$ such that $\psi$ is the birational transformation of an Atiyah flip
$\begin{tikzcd}[cramped]
  (Y_\pm^\flat,(Y_\pm^\flat)_{|Y_\pm^\flat})\arrow["\varphi_\pm",shift right = 1]{r}&Y'\supset Y_0
\end{tikzcd}$.
\end{proposition}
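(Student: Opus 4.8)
I will verify directly the three conditions of Definition~\ref{def:Atiyah.assumptions} for the bordism $Y_-^\flat\hooklongrightarrow X_\flat\hooklongleftarrow Y_+^\flat$, taking $\cN_\pm:=(Y_\pm^\flat)_{|Y_\pm^\flat}=N_{Y_\pm^\flat/X_\flat}$. The first step is to produce $Y'$ and the contractions $\varphi_\pm$ out of the divisor $L^\flat=\alpha^*L-Y_-^\flat-Y_+^\flat$ of Lemma~\ref{lem:BW2bordism}. Since $Y_\pm^\flat=\PP(N_{Y_\pm/X}^\vee)$ with $(Y_\pm^\flat)_{|Y_\pm^\flat}=\cO_{\PP(N_{Y_\pm/X}^\vee)}(-1)$, while $(\alpha^*L)_{|Y_\pm^\flat}=p_\pm^*(L_{|Y_\pm})$ for the bundle projection $p_\pm$ and $(Y_\mp^\flat)_{|Y_\pm^\flat}=0$, I get $L^\flat_{|Y_\pm^\flat}=p_\pm^*(L_{|Y_\pm})+\cO_{\PP(N_{Y_\pm/X}^\vee)}(1)=\cO_{\PP(N_{Y_\pm/X}^\vee\otimes L)}(1)$, which is semiample by hypothesis. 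As $L^\flat$ is $H$-trivial (Lemma~\ref{lem:BW2bordism}) and $(mL^\flat)_{|Y_-^\flat}$ is base point free for $m\gg 0$, Lemma~\ref{lem:base-locus} shows $mL^\flat$ is base point free on $X_\flat$; let $\eta\colon X_\flat\to Y'$ be the associated contraction onto a normal variety. Since $L^\flat$ has degree zero on every orbit closure, $\eta$ contracts all orbit closures, so $H$ acts trivially on $Y'$; and because, in bandwidth two, every $H$-invariant curve of $X_\flat$ meets $Y_-^\flat$ or $Y_+^\flat$, one gets $Y'=\eta(Y_-^\flat)$, and matching sinks with sources of orbits along $\psi$ gives $\eta(Y_-^\flat)=\eta(Y_+^\flat)=Y'$. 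I set $\varphi_\pm:=\eta_{|Y_\pm^\flat}\colon Y_\pm^\flat\to Y'$.

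Next I would identify $\varphi_\pm$ as small birational contractions of the required shape. As $L^\flat$ is trivial on the fibres of the principal bundle $X^-(Y_-^\flat)\setminus Y_-^\flat\to Y_-^\flat$, the restriction of $\eta$ to the big cell is $\varphi_-$ composed with the bundle projection, so $\varphi_-$ is precisely the contraction of $Y_-^\flat$ defined by $L^\flat_{|Y_-^\flat}$. Now $\rho_{Y_-^\flat}=2$ (since $\rho_{Y_-}=1$ by Lemma~\ref{lem:Pic_extremal}(1)); one extremal ray of $\cNE{Y_-^\flat}$ is the class $[\ell]$ of a line in a fibre of $p_-$, on which $L^\flat_{|Y_-^\flat}$ has degree $1$, so the nef non-ample divisor $L^\flat_{|Y_-^\flat}$ must contract the other ray. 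By Lemma~\ref{lem:smoothZ} the exceptional locus of $\psi$ is $\bigsqcup_j Z_-^j$ with $Z_-^j\simeq\PP(N^+(Y_j^0)^\vee)$, whose fibre lines have class $[C_-]$ with $L^\flat\cdot[C_-]=0$; hence the contracted ray is $\RR_{\ge 0}[C_-]$, and by Lemma~\ref{eq:codimZ0} together with the bordism condition of Corollary~\ref{cor:codim1}, $\codim(Z_-^j,Y_-^\flat)=\nu^-(Y_j^0)\ge 2$. Thus $\varphi_-$ is small, birational, $\dim Y'=\dim Y_-^\flat$, and $\Exc(\varphi_-)=\bigsqcup_j Z_-^j$. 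Since $L^\flat_{|Y_j^0}=L_{|Y_j}$ is ample, $\eta_{|Y_j^0}$ is a closed embedding onto $Y_0^j:=\eta(Y_j^0)\simeq Y_j^0$, and $\varphi_-$ restricted to $Z_-^j$ is the bundle projection $\PP(N^+(Y_j^0)^\vee)\to Y_0^j$; symmetrically for $\varphi_+$. Setting $Y_0:=\bigcup_j Y_0^j$, this establishes condition (1) and the ``projective bundle'' part of (2).

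The completion of (2) — that $\varphi_\pm$ are, analytically locally, toric small contractions of Atiyah type — is where the local toric model enters, and I expect this to be the main obstacle. The action on $X_\flat$ is equalized (Lemma~\ref{lem:equalizedSS} and Corollary~\ref{cor:allequalized}(iv), as every orbit meets $Y_\pm^\flat$). Around a point $p\in Y_j^0$, the Bia{\l}ynicki-Birula linearization theorem (\cite[Theorem~2.5]{BB}) yields an $H$-invariant analytic neighbourhood $H$-biholomorphic to the linear representation $T_pX_\flat=\CC^{q}\oplus\CC^{\nu^+(Y_j^0)}\oplus\CC^{\nu^-(Y_j^0)}$, i.e. to $X_{\Sigma(\Delta)}$ with the $\lambda^v$-action; adjoining the limit points of its orbits produces an $H$-invariant open subset $H$-biholomorphic to the toric bordism $X_{\widehat{\Sigma}}$ of Proposition~\ref{prop:toric_bordism-construction}, in which $Y_-^\flat$ corresponds to $X_{\Sigma_-}$ and $\eta$ restricts, up to a $\varphi_-$-trivial twist of the polarization (both maps collapse exactly the $\PP^{r_j}$-fibres of $Z_-^j\to Y_0^j$), to the toric small contraction $X_{\Sigma_-}\to X_{\Sigma(\Delta')}$. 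By Proposition~\ref{prop:toric_bordism-construction} the same holds for $\varphi_+$, and $\psi$ is locally the corresponding toric Atiyah flip; this gives (2). The delicate point here is to check that the globally defined $L^\flat$ restricts, over each of these charts, to the toric supporting divisor up to a $\varphi_\pm$-trivial summand, so that the global and the toric contractions agree.

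Finally I would dispatch condition (3). Part (c), $\psi_*(\cN_-)=-\cN_+$, is exactly Corollary~\ref{rank1bordism-divisors} applied to the bordism $X_\flat$. Part (b): $\cN_-\cdot[C_-]=(Y_-^\flat)_{|Y_-^\flat}\cdot[C_-]=Y_-^\flat\cdot[C_-]=1$ by Remark~\ref{rem:Btypeintnumber}, so $\cN_-$ restricts to $\cO(1)$ on every fibre of $\varphi_-\colon Z_-^j\to Y_0^j$, and likewise for $+$. Part (a): in $\Pic(Y_-^\flat)\simeq\ZZ^2$, using $\Pic(Y_-)=\ZZ(L_{|Y_-})$ (Lemma~\ref{lem:Pic_extremal}(1)) and the basis $\{p_-^*(L_{|Y_-}),\cO_{\PP(N_{Y_-/X}^\vee)}(1)\}$, one has $\cN_-=(0,-1)$ and $L^\flat_{|Y_-^\flat}=(1,1)$, so $\{\cN_-,L^\flat_{|Y_-^\flat}\}$ is another $\ZZ$-basis; since $\varphi_-^*\Pic(Y')$ is the saturated subgroup of divisors vanishing on the contracted curves (class $[C_-]$), which equals $\ZZ\,L^\flat_{|Y_-^\flat}$ — consistently with the local toric description where the divisors pulled back from $X_{\Sigma(\Delta')}$ are exactly the integral $\varphi_-$-trivial ones — we obtain $\Pic(Y_-^\flat)=\varphi_-^*\Pic(Y')\oplus\ZZ\cN_-$, and symmetrically for $+$. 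Hence $\psi$ is the birational transformation of the Atiyah flip
$$\begin{tikzcd}[cramped](Y_\pm^\flat,(Y_\pm^\flat)_{|Y_\pm^\flat})\arrow["\varphi_\pm"]{r}&Y'\supset Y_0\end{tikzcd}$$
with inner data $Y_0\subset Y'$ as constructed above.
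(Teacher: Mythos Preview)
Your proof is correct and follows essentially the same route as the paper's: both use $L^\flat$ and Lemma~\ref{lem:base-locus} to produce $Y'$, identify $\varphi_\pm$ as the contractions of the rays $[C_\pm]$ via $\rho_{Y_\pm^\flat}=2$ and Lemma~\ref{lem:smoothZ}, invoke the equalized action together with the Bia{\l}ynicki-Birula local linearization to obtain the toric Atiyah model around each inner component, and verify condition~(3) from the rank-one bordism relations. The only cosmetic differences are that the paper defines $\varphi_\pm$ directly on $Y_\pm^\flat$ and then matches $Y'_-=Y'_+$ via $\psi_*(L^\flat_-)=L^\flat_+$ (rather than first building a global $\eta$), and in the local step it uses only the cobordism quotients $X_{\Sigma_\pm}\subset Y_\pm^\flat$ rather than the full $X_{\widehat{\Sigma}}$ --- in particular, your ``delicate point'' about matching supporting divisors is unnecessary, since once $\varphi_-$ is the elementary contraction of $[C_-]$ its local shape is forced.
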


\begin{proof}
By Lemma \ref{lem:BW2bordism} the line bundle $L^\flat:=\alpha^*L-Y_-^\flat - Y_+^\flat$ and its multiples are $H$-trivial. Then,  by Lemma \ref{lem:base-locus}, $mL^\flat$ is globally generated for $m\gg0$; in fact the restriction of $L^\flat$ to $Y_-^\flat$ is  the tautological bundle of $Y_-^\flat:=\P(N^\vee_{Y_-/X}\otimes L)$, which is semiample by assumption. 
Let us denote by $L^\flat_\pm$ the restrictions $i^*_\pm L^\flat \in \Pic(Y_\pm^\flat)$, and by $\varphi_\pm:Y_\pm^\flat \to Y'_\pm$ the corresponding contractions of $Y_\pm^\flat$. By Lemma \ref{lem:Pic_extremal} the Picard number of $Y_{\pm}$ is one, hence the Picard number of $Y_\pm^\flat$ is two. Since, by Lemma \ref{lem:BW2bordism}, $L_-^\flat \cdot C_-= L_+^\flat \cdot C_+=0$ and $L^\flat$ is not trivial on $Y_\pm^\flat$, the contractions $\varphi_\pm$ are the contractions of the rays in $\NE(Y_\pm^\flat)$ generated by the classes $[C_\pm]$.  

Thus Corollary \ref{cor:sum_of_orbits} gives that $[C_{\gen}]\cdot L^\flat=0$ and, by Lemma \ref{lem:Cgenort}, $L^\flat \in i_\#^\pm(\Pic(Y_\pm^\flat))$, hence $L^\flat=i_\#^\pm L^\flat_\pm$. By Lemma \ref{lem:psistar} we have $\psi_*(L^\flat_-)=L^\flat_+$; it follows that $Y'_-=Y'_+:=Y'$ and the following diagram commutes:
$$\begin{tikzcd}
Y_-^\flat\arrow[rightarrow,dashed,"\psi"]{rr}\arrow["\varphi_-"']{dr}
&&Y_+^\flat\arrow["\varphi_+"]{dl}\\
&Y'&
\end{tikzcd}$$ 
In particular, by Lemma \ref{lem:smoothZ}, the exceptional locus of $\varphi_-$ is the disjoint union of smooth varieties $Z_-^j=\P(N^+(Y_0^j)^\vee)$, where $Y_0^j$ are the inner fixed point components of the action.

In order to conclude that $\psi$ is an Atiyah flip, we first show that it is locally a toric Atiyah flip. Since, by Lemma \ref{lem:equalizedSS},  the $H$-action is equalized, then, for every point $y$ of an inner fixed point component $Y_0^j$, we may take an analytic neighborhood $U$ of $y\in Y_0^j$, biholomorphic to $\C^q$, and an $H$-invariant analytic open set in $X_\flat$ which is $H$-biholomorphic to $(N_{Y_0^j/X})_{|U}$ (see \cite[Theorem~2.5]{BB}), 
which is then $H$-biholomorphic to a toric variety $X_{\Sigma(\Delta)}$, as in Section \ref{ssect:cobordism}. In the notation of that section, the $H$-action defines two good $H$-quotients $X_{\Sigma_\pm}$, of two open subsets in $X_{\Sigma(\Delta)}$, and a birational transformation among them that is a toric Atiyah flip. Since these two varieties are naturally embedded in $Y_\pm^\flat$, we conclude that the map $\psi$ is locally a toric Atiyah flip. 

Condition (3) of  Definition \ref{def:Atiyah.assumptions} is now easy to check: we have seen that $\varphi_{\pm}$ are elementary, and $Y_-^\flat$ (resp. $Y_+^\flat$) restricts to $\cO(1)$ on the fibers of $\varphi_-$ (resp. $\varphi_+$).   Since, by Remark \ref{rem:Btypeintnumber} $(Y_-^\flat-Y_+^\flat)\cdot C_{\gen} =0$ we have, by Lemma \ref{lem:Cgenort}, $Y_-^\flat-Y_+^\flat \in i_\#^\pm(\Pic(Y_\pm^\flat))$, hence $Y_-^\flat-Y_+^\flat=i^-_\#Y_-^\flat|_{Y_-^\flat}$. By Lemma \ref{lem:psistar} we then  have $\psi_*(Y_-^\flat)=-Y_+^\flat$.
\end{proof}

We may finally apply Theorem \ref{bordism=>unique}, to get the following:

\begin{corollary}\label{cor:uniqueBW2}
Let $(X,L)$ be a polarized pair supporting an  $H$-action of bandwidth two, equalized at the sink $Y_-$ and source $Y_+$, which are both positive dimensional. Assume moreover that $\rho_X=1$, that there exists an inner fixed point component, and that the vector bundles $N_{Y_{\pm}/X}^\vee \otimes L$ are semiample. Then $X$ is uniquely determined by $(Y_{\pm}, N_{Y_{\pm}/X})$.
\end{corollary}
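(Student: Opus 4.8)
The statement follows by combining Proposition~\ref{prop:Atiyah_adjoint} with Theorem~\ref{bordism=>unique}, so the plan is simply to assemble these ingredients and check that the data $(Y_\pm, N_{Y_\pm/X})$ suffices to recover everything. First I would invoke Lemma~\ref{lem:BW2bordism} to replace $(X,L)$ by the blowup $\alpha\colon X_\flat\to X$ along $Y_-\cup Y_+$, obtaining a bordism of rank one $Y_-^\flat\hooklongrightarrow X_\flat\hooklongleftarrow Y_+^\flat$, with exceptional divisors $Y_\pm^\flat = \P(N_{Y_\pm/X}^\vee)$ and $L^\flat = \alpha^*L - Y_-^\flat - Y_+^\flat$ an $H$-trivial line bundle whose restriction to $Y_\pm^\flat$ is the tautological bundle of $\P(N_{Y_\pm/X}^\vee\otimes L)$. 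Here the hypotheses ($\rho_X=1$, sink and source positive dimensional, an inner component exists, and $N_{Y_\pm/X}^\vee\otimes L$ semiample) are exactly what is needed.

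Next I would apply Proposition~\ref{prop:Atiyah_adjoint}: the induced birational map $\psi\colon Y_-^\flat\dashrightarrow Y_+^\flat$ is the birational transformation of an Atiyah flip $\begin{tikzcd}[cramped](Y_\pm^\flat,(Y_\pm^\flat)_{|Y_\pm^\flat})\arrow{r}&Y'\supset Y_0\end{tikzcd}$, where the polarizing line bundles $\cN_\pm := (Y_\pm^\flat)_{|Y_\pm^\flat} = N_{Y_\pm^\flat/X_\flat}$ are precisely the normal bundles of the exceptional divisors in the blowup. Then Theorem~\ref{bordism=>unique} applies verbatim to this Atiyah flip: the rank one bordism $Y_-^\flat\hooklongrightarrow X_\flat\hooklongleftarrow Y_+^\flat$ whose induced birational transformation is $\psi$ is unique, so $X_\flat$ is determined by the Atiyah flip data. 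Finally, since $X$ is recovered from $X_\flat$ by contracting the two exceptional divisors $Y_\pm^\flat$ (blowing down along the $\P^1$-bundle structures that identify them as projectivized normal bundles), $X$ itself is determined.

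It remains to trace through which concrete pieces of data enter. The varieties $Y_\pm^\flat = \P(N_{Y_\pm/X}^\vee)$ are determined by $(Y_\pm, N_{Y_\pm/X})$ alone; the polarizations $\cN_\pm = N_{Y_\pm^\flat/X_\flat}$ are likewise intrinsic to the projective bundles $\P(N_{Y_\pm/X}^\vee)$ (they are the tautological bundles up to twist by a pullback, which is immaterial for an Atiyah flip since Definition~\ref{def:Atiyah.assumptions}(3) only pins down $\cN_\pm$ modulo $\varphi_\pm^*\Pic(Y')$ — and the contractions $\varphi_\pm$ are themselves the extremal contractions of $Y_\pm^\flat$, determined by $\rho_{Y_\pm^\flat}=2$ once we know $Y_\pm^\flat$). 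The contraction $Y' \leftarrow Y_\pm^\flat$ and the center $Y_0\subset Y'$ are therefore determined, and hence so is the Atiyah flip; the isomorphism class of $X_\flat$ follows from Theorem~\ref{bordism=>unique}, and that of $X$ by the blowdown.

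\textbf{Main obstacle.} The only subtle point is the compatibility of the two bridges — verifying that the Atiyah flip produced by Proposition~\ref{prop:Atiyah_adjoint} carries exactly the line-bundle data that Theorem~\ref{bordism=>unique} requires as input (the theorem is phrased for flips with polarization $(Y_\pm)_{|Y_\pm}$, and one must check $\cN_\pm = (Y_\pm^\flat)_{|Y_\pm^\flat}$ matches). This is already implicit in the last paragraph of the proof of Proposition~\ref{prop:Atiyah_adjoint}, where $\psi_*(Y_-^\flat) = -Y_+^\flat$ is established, so the gluing is essentially bookkeeping rather than a genuine difficulty; I expect no serious obstruction beyond carefully quoting the right statements.
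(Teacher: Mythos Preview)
Your proposal is correct and follows essentially the same approach as the paper: blow up along $Y_-\cup Y_+$ via Lemma~\ref{lem:BW2bordism}, apply Proposition~\ref{prop:Atiyah_adjoint} to identify the induced map as an Atiyah flip with the right polarizations, invoke Theorem~\ref{bordism=>unique} to pin down $X_\flat$, and then observe that since $\rho_{Y_\pm^\flat}=2$ the flip itself is determined by $(Y_\pm,N_{Y_\pm/X})$. Your discussion of the ``main obstacle'' is more explicit than the paper's one-line appeal to $\rho_{Y_\pm^\flat}=2$, but the content is the same.
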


\begin{proof}
Let $\alpha:X_\flat \to X$ be the blowup of $X$ along $Y_-\sqcup Y_+$; then, by Lemma \ref{lem:BW2bordism}, the induced $H$-action is a bordism $Y_-^\flat\hooklongrightarrow X_\flat \hooklongleftarrow Y_+^\flat$, which, by Proposition \ref{prop:Atiyah_adjoint}, satisfies the assumptions of Theorem \ref{bordism=>unique}. It follows that $X_\flat$ is uniquely determined by the Atiyah flip 
$\begin{tikzcd}[cramped](Y_\pm^\flat,(Y_\pm^\flat)_{|Y_\pm^\flat})\arrow["\varphi_\pm",shift right = 1]{r}&Y'\supset Y_0 \end{tikzcd}$; since $\rho_{Y_\pm^\flat}=2$, the flip is uniquely determined by  $(Y_{\pm}, N_{Y_{\pm}/X})$.
\end{proof}

\subsection{Examples: bandwidth two varieties and short gradings}\label{ssec:HomoBW2}

In this section we present some examples of polarized varieties supporting an action of $H=\C^*$ of bandwidth two, that is equalized at the sink and source. Our examples will be adjoint varieties of semisimple algebraic groups, and we will show, using the machinery developed previously, that those of Picard number one  are determined by the sink and the source of the action, together with their normal bundles. We refer the interested reader to \cite[Section~3]{CARRELL} for general results on torus actions on rational homogeneous spaces.
 
Given a simple algebraic group $G$, its associated adjoint variety is, by definition, the minimal orbit of the (Grothendieck) projectivization of the dual of the adjoint representation $\P(\fg^\vee)$. Let us denote it by $X_{{\aj}} =G/P$, where $P$ is the parabolic subgroup associated to the choice of a set of nodes $I\subset D$ of the Dynkin diagram $\cD$ of $G$; more precisely, $I=\{1,{n}\}$ in the case in which $G$ is of type $\DA_{n}$, and  $I$ consists of a unique root in the rest of the cases:
\setlength{\tabcolsep}{3pt}
\begin{table}[!h!]
\caption{Adjoint varieties of simple Lie algebras.}
\begin{tabular}{|r|c|c|c|c|c|c|c|c|c|}
\hline
type& $\DA_n$ & $\DB_n$ & $\DC_n$ & $\DD_n$ & $\DE_6$ & $\DE_7$ & $\DE_8$ & $\DF_4$ & $\DG_2$\\ [2pt]\hline
adjoint& $\DA_n(1,n)$ & $\DB_n(2)$ & $\DC_n(1)$ & $\DD_n(2)$ & $\DE_6(2)$ & $\DE_7(1)$ & $\DE_8(8)$ & $\DF_4(1)$ & $\DG_2(2)$\\[2pt]\hline
\end{tabular}
\label{tab:adjoint}
\end{table}

We will denote by $L_{\aj}$ the restriction to $X_{\aj}\subset \P(\fg^\vee)$ of the tautological bundle $\cO_{\P(\fg^\vee)}(1)$. This is the ample generator of the Picard group of $X_{\aj}$ in all the cases of Picard number one, with the exception of $\DC_{m}(1)\simeq\P^{2m-1}$, for which $L_{\aj}\simeq \cO_{\P^{2m-1}}(2)$, and it is equal to $\cO_{\P(T_{\P^m})}(1)$ in the case of $\DA_m(1,m)\simeq \P(T_{\P^m})$.

In this section we will show how to define $H$-actions  with prefixed bandwidth on adjoint varieties, by studying at the polytope of roots of the algebra, and describe completely the $H$-actions of bandwidth two. In particular, we will give geometric descriptions of these actions in the cases of classical type. We will not discuss the well known case of $\P^n=\P(V)$, on which bandwidth two actions are determined by the choice of a decomposition $V=V_{-}\oplus V_0 \oplus V_{+}$, and the choice of three characters $m_-,m_0,m_+$ whose sum is zero. In particular, we may assume that the identity component of the automorphism group of the adjoint variety $X_{\aj}$  is equal to the adjoint group $G_{\ad}$, associated to the Lie algebra $\fg$ (cf. \cite[Theorem~2, p. 75]{Akh}).

Given a nontrivial action of $H$ on $X_{\aj}$, we may always substitute $H$ by the image of the homomorphism $H\to\Aut(X_{\aj})$, and assume that the action of $H$ is faithful, given by a monomorphism $H\to G_{\ad}$, and set $G:=G_{\ad}$.

%
%

First of all, we complete $H$ to a maximal torus $\To\subset  G$, consider the corresponding Cartan decomposition of $\fg$ with respect to $\To$, and choose a base of positive simple roots of $\fg$. Following \cite[Lemma~3.12]{CARRELL}, the set of points of $X_{\aj}=G/P$ fixed by $\To$ is equal to $\{wP,\,\,w\in W\}$, where $W$ denotes the Weyl group of $G$. On the other hand we may locate these points among the fixed points of the projectivization of the adjoint representation (which can be read out of the adjoint representation of $\fg$). Since $G/P$ is the orbit in $\P(\fg^\vee)$ of the class of a highest weight vector of the adjoint representation (cf. \cite[Claim~23.52]{FH}), it follows that the fixed points of the action of $\To$ on $X_{\aj}$ are in one to one correspondence with the long roots of $\fg$. Moreover the weight of the action on the restriction of $L_{\aj}$ to the fixed point corresponding to a long root $\beta$ is $-\beta$. In particular the weights of the action of $H$ on $X_{\aj}$ can be then computed by looking at the image of these roots by the induced map on the lattices of characters:
$$
\Mo(\To)\to\Mo(H).
$$
To give such a map is equivalent to give a $\Z$-grading on the Lie algebra $\fg$. See \cite[Section~2.3]{Tev05} for an account on these gradings. They are determined by the choice of a simple root $\alpha_j$, i.e., of an index $j\in D$; moreover, since we want the $H$-action to be nontrivial on the adjoint variety $X_{\aj}$, $j$ must be chosen outside of the set $I$ defining the parabolic subgroup $P$. Then the corresponding $\Z$-grading is given by:
$$
\fg=\fh\oplus \bigoplus_{m\in \Z}\fg_m,
$$
where $\fg_m$ is the direct sum of the eigenspaces $\fg_\beta$, where $\beta$ is a root containing $\alpha_j$ as a summand with multiplicity $m$, and $\fh$ is the Lie algebra of $\To$. It is then known that $\fg':=\fh\oplus \fg_0$ is a Lie subalgebra of $\fg$, and every $\fg_m$ is a $\fg'$-module.

The associated $H$-action on $\fg$ is defined by associating the weight $-m$ to the subspace $\fg_m$, $m\in \Z\setminus\{0\}$, and $0$ to the subspace $\fg'$.

\begin{remark}\label{rem:SSHomoBW2}
By construction, the induced action on $\P(\fg^\vee)$ leaves $X_{\aj}$ invariant, and the restriction of this action to $X_{\aj}$ has fixed point components: 
$$
Y_{m}:=X_{\aj}\cap \P(\fg_m^\vee),\,\,m\neq 0, \quad Y_0=X_{\aj}\cap \P(\fg'^\vee).
$$
Moreover, the subgroup $G'\subset G$ defined by having Lie algebra $\fg'\subset\fg$, acts via the adjoint representation of $G$ transitively on every fixed component $Y_{m}$, $m\neq 0$, and $Y_0$.  In other words, these varieties are rational homogeneous quotients of $G'$, obtained as closed $G'$-orbits in the representations $\P(\fg_{m}^\vee)$, $m\neq 0$, and $\P(\fg')$, respectively. In particular $Y_{-m}$ and $Y_m$ will be isomorphic rational homogeneous manifolds for every $m\neq 0$. 
Furthermore, let us denote by $M$ the maximal value of $m$, so that the bandwidth of the $H$-action on $(X_{\aj},L_{\aj})$ is $2M$. Then $Y_{\pm M}$ will be the extremal fixed point components of the action (so we will write $Y_{\pm}=Y_{\pm M}$), and they can be described as the rational homogeneous varieties associated to the marking of the Dynkin diagram of $\fg'$ in the nodes of $I$.  On the other hand, whenever $\fg'$ contains a long root of $\fg$, the subvariety $Y_0$ will be the adjoint variety of the Lie algebra $\fg'$. Otherwise, it is empty (this will be the case in which $\fg$ is of type $\DC_m$). Note that the Dynkin diagram of $\fg'$, which is obtained by deleting the node $j$ on the Dynkin diagram of $\fg$, is not necessarily connected; if it is not, then $Y_0$ will consist of the union of more than one irreducible component (this will be the case in which $\fg$ is of type $\DA_m$). \end{remark}

We note now the following property of the fixed point components of the  $H$-actions on adjoint varieties, that we will use later on.

\begin{lemma}\label{lem:conormalnef}
Let $Y_m \subset X_{\aj} \subset \P(\fg^\vee)$ be a fixed point component as above, and denote by $L_{\aj}$ the restriction of the hyperplane line bundle on $\P(\fg^\vee)$. Then $N^\vee_{Y_m/X_{\aj}}\otimes L_{\aj}$ is globally generated.
\end{lemma}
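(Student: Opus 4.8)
The statement concerns a fixed point component $Y_m = X_{\aj}\cap\PP(\fg_m^\vee)$ of an $H$-action on an adjoint variety, and asserts that $N^\vee_{Y_m/X_{\aj}}\otimes L_{\aj}$ is globally generated. The natural strategy is to exploit the homogeneity described in Remark \ref{rem:SSHomoBW2}: $Y_m$ is a closed orbit of the group $G'$ (with Lie algebra $\fg'=\fh\oplus\fg_0$) inside $\PP(\fg_m^\vee)$, so all the bundles in sight are $G'$-homogeneous, and global generation can be checked fiberwise at a single point, in terms of $\fg_0$-representation theory. The plan is to identify $N_{Y_m/X_{\aj}}$ at a point $x\in Y_m$ corresponding to a long root $\beta$ (with $\beta$ having $\alpha_j$-multiplicity $m$) in terms of root spaces, and then to check that $H^0(Y_m, N^\vee_{Y_m/X_{\aj}}\otimes L_{\aj})$ surjects onto the fiber at $x$.

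First I would recall the local description of the adjoint variety near $x$. The tangent space $T_{X_{\aj},x}$ is a subquotient of $\fg$ determined by the root $\beta$: concretely, writing $\mathfrak{n}_\beta$ for the span of root spaces $\fg_\gamma$ with $\langle\gamma,\beta^\vee\rangle > 0$ modulo the line $\fg_\beta$, one has $T_{X_{\aj},x}\simeq \fg/\fp_x$ where $\fp_x$ is the parabolic stabilizing $x$. The tangent space to $Y_m$ at $x$ is the part coming from $\fg_0$ (the weight-zero part under the $H$-grading), and the normal bundle fiber $N_{Y_m/X_{\aj},x}$ is the complement, i.e. the contribution of $\bigoplus_{m'\neq m}\fg_{m'}$ (more precisely, of those root spaces $\fg_\gamma \subset T_{X_{\aj},x}$ with $\alpha_j$-multiplicity different from $m$). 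Twisting by $L_{\aj}$ shifts weights by $-\beta$ (since the weight of $H$ on $(L_{\aj})_x$ is $-\beta$, as noted just before the lemma). So $N^\vee_{Y_m/X_{\aj},x}\otimes (L_{\aj})_x$ is, as a representation of the stabilizer of $x$ in $G'$, a sum of root lines whose weights I can write down explicitly; the point is to show that each such weight is a weight of the $G'$-representation $H^0(Y_m, N^\vee_{Y_m/X_{\aj}}\otimes L_{\aj})$, and that the evaluation map is $G'$-equivariant and nonzero on each isotypic piece.

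The cleanest route is probably not to compute case by case, but to use a general position / convexity argument about roots: $N^\vee_{Y_m/X_{\aj}}\otimes L_{\aj}$ is globally generated if and only if the highest weight $\lambda$ of $L_{\aj}|_{Y_m}$ minus the weights appearing in $N_{Y_m/X_{\aj}}$ are all dominant (or in the convex hull / saturated set of dominant weights below $\lambda$) for the relevant Levi/parabolic of $G'$. Since $L_{\aj}|_{Y_m}$ is a very ample (in fact the minimal-orbit) line bundle on the rational homogeneous $Y_m$ — being the restriction of $\cO_{\PP(\fg_m^\vee)}(1)$ — its highest weight $\lambda$ is dominant, and one checks using the structure of $\fg_m$ as an irreducible (or completely reducible) $\fg_0$-module with highest weight $\lambda$ that $\lambda$ dominates the weights of the conormal directions. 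I would phrase this via the standard fact that for a rational homogeneous space $Z=G'/Q$ embedded by a dominant line bundle $\cL_\lambda$, a homogeneous bundle $\cF$ with fiber a $Q$-module all of whose weights $\mu$ satisfy that $\lambda+\mu$ is dominant (equivalently, $\cF\otimes\cL_\lambda$ is generated by its sections at the base point) is globally generated — and then reduce to checking the weight inequality for the explicit conormal weights $-\gamma$ with $\gamma$ a root of $\alpha_j$-multiplicity $\neq m$ occurring in $T_{X_{\aj},x}$.

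\textbf{Main obstacle.} The delicate point is the weight bookkeeping: one must verify uniformly, across all the Dynkin types in Table \ref{tab:adjoint} and all admissible choices of the grading node $j\notin I$, that every conormal weight twisted by $\lambda$ remains a section-weight of the relevant $G'$-bundle. I expect this to reduce to the inequality $\langle\beta-\gamma,\varpi^\vee\rangle\geq 0$ for the fundamental coweights $\varpi^\vee$ of $\fg'$, where $\beta$ is a long root realizing $Y_m$ and $\gamma$ ranges over tangent roots to $X_{\aj}$ at $x$ of $\alpha_j$-multiplicity $\neq m$; since $\beta$ is a highest root in its $\fg_0$-isotypic component and $\gamma$ lies "below" it in the root order, this should follow from generalities about the partial order on weights, but making it airtight — especially in the non-simply-laced cases $\DB_n, \DC_n, \DF_4, \DG_2$ where long vs.\ short roots interact with the grading — is where the real work lies. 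An alternative, perhaps safer, fallback is to argue more geometrically: realize $N^\vee_{Y_m/X_{\aj}}\otimes L_{\aj}$ as a quotient (or subsheaf) of a globally generated bundle coming from the ambient $\PP(\fg^\vee)$, using the Euler sequence of $X_{\aj}\hookrightarrow\PP(\fg^\vee)$ restricted to $Y_m$, together with the splitting of $\fg^\vee$ into $H$-weight spaces; this would sidestep the root computation at the cost of tracking the conormal sequence of $Y_m\subset X_{\aj}$ twisted appropriately.
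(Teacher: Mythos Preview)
Your primary approach—identifying conormal weights and checking dominance for $G'$—would in principle work, but it is far more laborious than necessary and, as you yourself note, the non-simply-laced cases would require care. The paper's proof is essentially your ``fallback'' idea, executed in two lines without any root bookkeeping at all.

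Concretely, the paper simply observes that the square of inclusions
\[
\begin{array}{ccc}
Y_m & \hooklongrightarrow & X_{\aj}\\[2pt]
\cap & & \cap\\[2pt]
\P(\fg_m^\vee) & \hooklongrightarrow & \P(\fg^\vee)
\end{array}
\]
yields an injection of vector bundles
\[
N_{Y_m/X_{\aj}}\;\hookrightarrow\;\bigl(N_{\P(\fg_m^\vee)/\P(\fg^\vee)}\bigr)_{|Y_m}\;\simeq\;\cO(1)^{\oplus(\dim\fg-\dim\fg_m)}.
\]
Dualizing and twisting by $L_{\aj}=\cO(1)_{|Y_m}$ gives a surjection from a trivial bundle onto $N^\vee_{Y_m/X_{\aj}}\otimes L_{\aj}$, which is therefore globally generated. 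No representation theory, no Borel--Weil, no case analysis is needed: the only input is that the normal bundle of a linear subspace of projective space is a sum of $\cO(1)$'s. Your fallback sketch was heading in this direction, but you framed it via the Euler sequence of $X_{\aj}$ in $\P(\fg^\vee)$; the cleaner route is to use directly the normal bundle of the linear subspace $\P(\fg_m^\vee)\subset\P(\fg^\vee)$, which avoids tracking the conormal sequence of $Y_m$ in $X_{\aj}$ altogether.
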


\begin{proof}
It is enough to note that the inclusions:
$$
\xymatrix{Y_m\ar@{^{(}->}+<2ex,0ex>;[r]\ar@{_{(}->}-<0ex,2ex>;[d]&X_{\aj}\ar@{_{(}->}-<0ex,2ex>;[d]\\\P(\fg_m^\vee)\ar@{^{(}->}+<4ex,0ex>;[r]&\P(\fg^\vee)}
$$
provide an inclusion: $N_{Y_m/X_{\aj}}\hookrightarrow (N_{\P(\fg_m^\vee)/\P(\fg^\vee)})_{|Y_m}\simeq\cO(1)^{\oplus (\dim\fg-\dim\fg_m)}$. 
\end{proof}

We will now focus on the case in which $M=1$, so that the bandwidth of the $H$-action on the polarized pair $(X_{\aj},L_{\aj})$ will be equal to two. 
Let us now describe concretely the examples of bandwidth two actions on adjoint varieties:

\begin{proposition}\label{prop:HomoBW2}
Let $X_{\aj}$ be the adjoint variety associated to a simple Lie algebra $\fg$, and assume that $X_{\aj}$ is not a projective space. Then the faithful $H$-actions of bandwidth two on $X_{\aj}$ are in $1$--$1$ correspondence with the short $\Z$-gradings on $\fg$, that is, the gradings whose only weights are $-1,0,1$. \end{proposition}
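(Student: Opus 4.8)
The plan is to reduce the statement to a short computation in the root lattice of $\fg$, through the dictionary between faithful $H$-actions on $X_{\aj}$ and one-parameter subgroups of $G_{\ad}$.

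First I would set up this dictionary. Since $X_{\aj}\neq\PP^n$, we have $\Aut^0(X_{\aj})=G_{\ad}$ as recalled above, so a faithful $H$-action on $X_{\aj}$ is precisely a monomorphism $H\hookrightarrow G_{\ad}$. After conjugating (which changes the action only up to isomorphism) I may assume the image lies in the fixed maximal torus $\To$, and, after replacing the cocharacter by a $W$-conjugate, I may take it to be a dominant cocharacter $\lambda\colon\C^*\to\To$; the action is faithful rather than merely nontrivial exactly when $\lambda$ is primitive in the cocharacter lattice of $G_{\ad}$. That lattice is the coweight lattice, with $\ZZ$-basis the fundamental coweights $\omega_1^\vee,\dots,\omega_r^\vee$ dual to the simple roots, so I can write $\lambda=\sum_j c_j\,\omega_j^\vee$ with $c_j\ge 0$. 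By the discussion preceding the statement, the associated $\ZZ$-grading is $\fg=\fh\oplus\bigoplus_{m\neq 0}\fg_m$ with $\fg_m=\bigoplus_{\langle\beta,\lambda\rangle=m}\fg_\beta$, the $H$-weight on $\fg_m$ being $-m$, and every nontrivial $\ZZ$-grading of $\fg$ arises in this way from a $\lambda$ unique up to $W$. It therefore suffices to characterize, in terms of $\lambda$, when this grading is short and when the corresponding action has bandwidth two.

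Next I would carry out the root computation. Write $\widetilde\alpha=\sum_j n_j\alpha_j$ for the highest root of $\fg$, so that every $n_j\ge 1$. Since $\lambda$ is dominant and $\widetilde\alpha-\beta$ is a nonnegative integral combination of simple roots for every root $\beta$, applying this to $\beta$ and to $-\beta$ yields $|\langle\beta,\lambda\rangle|\le\langle\widetilde\alpha,\lambda\rangle=\sum_j c_jn_j$ for every root $\beta$. Hence the top nonzero graded piece is $\fg_M$ with $M=\langle\widetilde\alpha,\lambda\rangle$ (which is nonzero, as $\fg_{\widetilde\alpha}\subseteq\fg_M$), and by Remark~\ref{rem:SSHomoBW2} the bandwidth of the action equals $2M=2\sum_j c_jn_j$. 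Thus the bandwidth is two precisely when $\sum_j c_jn_j=1$; since the $c_j$ are nonnegative integers and the $n_j$ positive integers, this forces a single $c_j$ to equal $1$ for a node with $n_j=1$, all the other $c_i$ vanishing, i.e.\ $\lambda=\omega_j^\vee$ for a node $j$ of mark one in $\widetilde\alpha$. For such a $\lambda$ the grading is automatically short, because $\langle\beta,\lambda\rangle$ is the coefficient of $\alpha_j$ in $\beta$, whose absolute value is at most $n_j=1$; and conversely a short nontrivial grading has $M\le 1$ and $M\ge 1$, hence $M=1$, the bandwidth is two, and (as just computed) $\lambda=\omega_j^\vee$, which is primitive, so the action is faithful. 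These two constructions are mutually inverse, and they identify both the faithful bandwidth-two $H$-actions and the short $\ZZ$-gradings with the set of nodes of mark one in $\widetilde\alpha$, whose list for each Dynkin type is classical.

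I expect the only genuinely delicate point to be the first paragraph: making precise the equivalence under which ``faithful action $\leftrightarrow$ $\ZZ$-grading'' is a bijection. This relies on $\Aut^0(X_{\aj})=G_{\ad}$ (exactly where the hypothesis $X_{\aj}\neq\PP^n$ is used, ruling out type $\DC$), together with the conjugacy of maximal tori in $G_{\ad}$ and the weight description of $L_{\aj}$ at the fixed points recalled before the statement. Once this dictionary is in place, the root-system bookkeeping above is routine.
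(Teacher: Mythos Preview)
Your proposal is correct and follows the same route the paper takes: the surrounding discussion and Remark~\ref{rem:SSHomoBW2} already establish that faithful $H$-actions correspond to (primitive, dominant) cocharacters of $\To$, that the associated $\ZZ$-grading has top piece at $M=\langle\widetilde\alpha,\lambda\rangle$, and that the bandwidth equals $2M$, so bandwidth two is exactly $M=1$, i.e.\ a short grading. The paper does not write out a separate proof of the proposition, and your version is in fact more explicit than the paper's, particularly in identifying the cocharacter lattice of $G_{\ad}$ with the coweight lattice and in checking that bandwidth two forces $\lambda=\omega_j^\vee$ (hence primitivity and faithfulness come for free).
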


The complete list of the adjoint varieties admitting a faithful $H$-action of bandwidth two, that can be read out of the list of short gradings in simple Lie algebras (cf. \cite[p.~42]{Tev05}), is provided in Tables \ref{tab:exceptbw2}, and  \ref{tab:classicbw2}.

\begin{table}[!h!]
\caption{Bandwidth two $H$-actions associated to short gradings for the exceptional Lie algebras.}
\begin{tabular}{|l|c|c|c|c|}
\hline
type&$X_{\aj}$&short gradings&$Y_\pm$&$Y_{0}$\\
\hline\hline
$\DG_2$&$\DG_2(2)$&none&&\\\hline
$\DF_4$&$\DF_4(1)$&none&&\\\hline
$\DE_6$&$\DE_6(2)$&$1,6$&$\DD_5(5)$&$\DD_5(2)$\\\hline
$\DE_7$&$\DE_7(1)$&$7$&$\DE_6(1)$&$\DE_6(2)$\\\hline
$\DE_8$&$\DE_8(8)$&none&&\\\hline
\end{tabular}
\label{tab:exceptbw2}
\end{table}

\begin{table}[!h!]
\caption{Bandwidth two $H$-actions associated to short gradings for the Lie algebras of classical type.}
\begin{tabular}{|l|c|c|c|c|}
\hline
type&$X_{\aj}$&short gradings&$Y_\pm$&$Y_{0}$\\
\hline\hline
	\multirow{2}{*}{$\DA_{m}$}&\multirow{2}{*}{$\DA_{m}(1,m)$}&\multirow{2}{*}{$r\in\{2,m-1\}$}&$\DA_{r-1}(1)\,\,\,\times$&$\DA_{r-1}(1,r-1)\,\,\,\sqcup$\\
	&&&$\DA_{m-r}(m-r)$&$\DA_{m-r}(1,m-r)$\\\hline
$\DB_{m}$&$\DB_{m}(2)$&$1$&$\DB_{m-1}(1)$&$\DB_{m-1}(2)$\\\hline
$\DC_{m}$&$\DC_{m}(1)$&$m$&$\DA_{m-1}(1)$&$\emptyset$\\\hline
$\DD_{m}$&$\DD_{m}(2)$&$1$&$\DD_{m-1}(1)$&$\DD_{m-1}(2)$\\\hline
$\DD_m$&$\DD_m(2)$&$m-1,m$&$\DA_{m-1}(2)$&$\DA_{m-1}(1,m-1)$\\\hline
\end{tabular}
\label{tab:classicbw2}
\end{table}

\begin{remark}\label{rem:downgrading}
In the language of \cite[Section~2.5]{BWW}, the choice of a short grading corresponds to a particular case of {\em downgrading}, namely to a projection of the root polytope of the group $G$ onto a line which sends all the vertices to three points, associated to sink, source and central components.
\end{remark}

A remarkable feature of these bandwidth two actions is that they are completely determined by their sink and source. 
\begin{corollary}\label{cor:adjointBW2}
Let $(X,L)$ be a polarized pair supporting an action of $H=\C^*$ of bandwidth two, equalized at the sink $Y_-$ and the source $Y_+$. Assume moreover that $\rho_X=1$, that $X$ has at least one inner fixed point component,  
and that $Y_\pm$ are isomorphic to one of the following rational homogeneous varieties:
$$
\DB_{m-1}(1),\quad \DD_{m-1}(1),\quad \DA_{m-1}(2),\quad \DD_5(5), \quad \DE_6(1).
$$ 
Then $X$ is, respectively, isomorphic to the  adjoint variety $X_{\aj}$: 
 $$\DB_{m}(2),\quad \DD_{m}(2),\quad \DD_{m}(2),\quad \DE_6(2), \quad \DE_7(1),$$ provided that  $N_{Y_\pm/X}$ is isomorphic to $N_{Y_{\pm} /X_{\aj}}$. Moreover the isomorphism is an $H$-isomorphism of the pairs $(X,L)$ and $(X_{\aj},L_{\aj})$.
\end{corollary}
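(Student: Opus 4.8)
The plan is to deduce the statement from the uniqueness result Corollary~\ref{cor:uniqueBW2}, applied both to $(X,L)$ and to the adjoint variety $X_{\aj}$ endowed with the bandwidth two $H$-action coming from the appropriate short grading (the one marked in Tables~\ref{tab:exceptbw2}--\ref{tab:classicbw2} for the relevant simple Lie algebra).

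First I would verify that $(X,L)$ meets the hypotheses of Corollary~\ref{cor:uniqueBW2}: $Y_\pm$ are positive dimensional, $\rho_X=1$, and an inner fixed component exists by assumption, so the only thing to check is that $N^\vee_{Y_\pm/X}\otimes L$ is semiample. The key point here is that $L$ must be the ample generator of $\Pic(X)\cong\Z$: writing $L=k\ell$ with $\ell$ the generator and recalling that bandwidth scales linearly in multiples of the polarization (Remark~\ref{rem:bandwidth}), one has either $k=1$, or $k=2$ and then the same $H$-action has bandwidth one on $(X,\ell)$; the latter is impossible since a bandwidth one action has no inner fixed component (see the proof of Theorem~\ref{thm:drums}), whereas our $H$-action has one. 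Hence, by Lemma~\ref{lem:Pic_extremal}, $L_{|Y_\pm}$ generates $\Pic(Y_\pm)\cong\Z$, and since $L_{\aj}$ is the ample generator of $\Pic(X_{\aj})$ in all the listed cases, the given identification of $Y_\pm$ with the sink and source of $X_{\aj}$ sends $L_{|Y_\pm}$ to $(L_{\aj})_{|Y_\pm}$. Combining this with the hypothesis $N_{Y_\pm/X}\simeq N_{Y_\pm/X_{\aj}}$ and Lemma~\ref{lem:conormalnef}, we obtain $N^\vee_{Y_\pm/X}\otimes L\simeq N^\vee_{Y_\pm/X_{\aj}}\otimes L_{\aj}$, which is globally generated, hence semiample.

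Next I would check that $(X_{\aj},L_{\aj})$ with the chosen short grading action also satisfies the hypotheses of Corollary~\ref{cor:uniqueBW2}: it has Picard number one, bandwidth two by Proposition~\ref{prop:HomoBW2}, a nonempty inner component $Y_0$ as recorded in the tables, and $N^\vee_{Y_\pm/X_{\aj}}\otimes L_{\aj}$ globally generated by Lemma~\ref{lem:conormalnef}; it is moreover equalized at sink and source because, for a short grading, every $H$-weight on $T_{X_{\aj}}$ at a fixed point equals the $\alpha_j$-coefficient of a root (see Remark~\ref{rem:SSHomoBW2}), hence lies in $\{-1,0,1\}$, so the nonzero normal weights at the source are all $+1$ and at the sink all $-1$. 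By construction the data $(Y_\pm, N_{Y_\pm/X_{\aj}})$ of this action is exactly the data occurring in the statement.

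Finally, Corollary~\ref{cor:uniqueBW2} tells us that such a polarized pair is determined up to isomorphism by the pair $(Y_\pm, N_{Y_\pm/\cdot})$; since $(X,L)$ and $(X_{\aj},L_{\aj})$ share this data, they are isomorphic, and in particular $X\simeq X_{\aj}$. Because the proof of Corollary~\ref{cor:uniqueBW2} passes through the blowup of sink and source (Lemma~\ref{lem:BW2bordism}), the identification of the induced birational map with an Atiyah flip (Proposition~\ref{prop:Atiyah_adjoint}), and the uniqueness of the bordism attached to that flip (Theorem~\ref{bordism=>unique}), the resulting isomorphism is automatically $H$-equivariant; and since $L$ and $L_{\aj}$ are both the ample generator of the Picard group carrying the normalized linearization (weight $+1$ at the source and $-1$ at the sink), it is an isomorphism of polarized pairs. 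I expect the main obstacle to be precisely the semiampleness verification --- that is, pinning down $L$ as the ample generator and matching $L_{|Y_\pm}$ with $(L_{\aj})_{|Y_\pm}$ --- after which the argument is an assembly of results already established.
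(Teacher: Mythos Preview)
Your proposal is correct and follows essentially the same route as the paper: verify the hypotheses of Corollary~\ref{cor:uniqueBW2} for both $(X,L)$ and the adjoint model, then invoke the uniqueness statement. The only noticeable difference is how you pin down $L$ as the ample generator---you argue by scaling the bandwidth and ruling out $k=2$ via the absence of inner components in bandwidth one, whereas the paper simply observes (via Lemma~\ref{lem:equalizedSS}) that $L$ has degree $1$ on the invariant curves $C_\pm$ linking an inner component to $Y_\pm$, which forces $L$ to be primitive; the paper's version is a bit more direct, but both are valid.
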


\begin{proof}
Since $\rho_X=1$ and, by Lemma \ref{lem:equalizedSS}, $L$ has degree $1$ on $H$-invariant curves with source or sink on an inner component, then $L$ is the ample generator of $\Pic(X)$. Moreover, its restriction to $Y_\pm$ generates $\Pic(Y_\pm)$, by Lemma \ref{lem:Pic_extremal}.  In particular $L_{|Y_{\pm}}\simeq L_{\aj|Y_{\pm}}$ and using Lemma \ref{lem:conormalnef} we know that $N^\vee_{Y_\pm/X}\otimes L$ is globally generated, hence applying Corollary \ref{cor:uniqueBW2} the variety $X$ is uniquely determined by $(Y_\pm,N_{Y_\pm/X})$.
\end{proof}

In the cases in which $G$ is of classical type, the examples of short gradings and of the corresponding fixed point components admit some projective descriptions, that we include below.

\begin{example}\label{ex:segre}
Let $V$ be a complex vector space of dimension $n+2$, and let $\P(V)$ be its Grothendieck projectivization. We will consider the manifold $X_{\ad}:=\P(T_{\P(V)})$, that we will identify with the closed subset:
$$
\P(T_{\P(V)})=\{(P,H)\in\P(V)\times\P(V^\vee)|\,\,\mbox{and }P\in H\}\subset\P(V)\times\P(V^\vee).
$$
In order to define a bandwidth two $H$ action on $X_{\ad}$ 
we consider a decomposition:
$$
V=V_{-}\oplus V_{+}, \quad \dim V_{-}\in\{1,\dots,n+1\},
$$
and an $H$-action on $V$ defined as:
$$
t(v_-+v_+):=v_-+tv_+, \quad\mbox{where }t\in H,\,\,v_-\in V_{-}, \,\,v_+\in V_+.
$$
The corresponding actions on $\P(V)$ and $\P(V^\vee)$ have two fixed point components, $\P(V_{-}),\P(V_{+})\subset \P(V)$, $\P(V_{-}^\vee),\P(V_{+}^\vee)\subset \P(V^\vee)$. Now we consider the induced action on $X_{\ad}$, that  has precisely four fixed point components:
$$
\begin{array}{l}\vspace{0.2cm}
Y_-:=\P(V_{-})\times \P(V_{+}^\vee),\\\vspace{0.2cm} Y_{0,-}:=\P(T_{\P(V_{-})}),\,\,\, Y_{0,+}:=\P(T_{\P(V_{+})}),\\\vspace{0.2cm} Y_+:=\P(V_{+})\times \P(V_{-}^\vee).
\end{array}
$$
It is a straightforward computation to check that the $H$-action extends to the tautological line bundle $\cO(1)$, and
one may compute the weights of the action on the restriction of $\cO(1)$ to $Y_-,Y_{0,-},Y_{0,+},Y_+$, obtaining the values $-1,0,0,1$, respectively. For instance: the restriction of $\cO(1)$ to $Y_-$ is equal to the tensor product of the tautological line bundles $\cO_{\P(V_{-})}(1)\otimes\cO_{\P(V_{+}^\vee)}(1)$; since the weight of the $H$-action on $\cO_{\P(V_{-})}(1)$ is equal to zero, and the weight of the $H$-action on $\cO_{\P(V_{+}^\vee)}(1)$ is equal to $-1$, it follows that the weight of the $H$-action on $\cO_{Y_-}(1)$ is equal to $-1$. \qed
\end{example}

\begin{example}\label{ex:quadric}
We consider now the cases of type $\DB$ and $\DD$, whose corresponding adjoint varieties $X_{\ad}$ are quadric Grassmannians, parametrizing lines contained in smooth quadrics.

We start with of an $(n+2)$-dimensional smooth quadric $\Q^{n+2}\subset \P^{n+3}=\P(V)$, and define an action of $H$ on $\Q^{n+2}$ as follows: choose two points $P_{\pm}\in \Q^{n+2}$ satisfying that the line joining them is not contained in $\Q^{n+2}$; denote by $T_{\pm}\subset\P^{n+3}$ the projective tangent spaces of $\Q^{n+2}$ at $P_{\pm}$. Their common intersection with $\Q^{n+2}$ is a smooth quadric $\Q^{n}$ of dimension $n$. Choose a set of homogeneous coordinates $(x_0:x_1:x_2:\dots:x_{n+3})$ satisfying that $P_{-}=(1:0:0:\dots:0)$, $P_{+}=(0:1:0:\dots:0)$, and $T_{+}$ $T_-$ are given by the equations $x_1=0$, and $x_0=0$, respectively. We then consider the action on $\P^{n+3}$ given by:
$$
t(x_0:x_1:\dots:x_{n+3}):=(t^{-1}x_0:tx_1:x_2:\dots:x_{n+3}), \quad t\in H,
$$
which leaves invariant the quadric $\Q^{n+2}$, by construction. It has  bandwidth two, and fixed point sets $\{P_-\}$, $\Q^n$, and $\{P_+\}$. Its $1$-dimensional orbits are open sets either in the lines joining $P_\pm$ with $\Q^n$, or in smooth conics passing by $P_-$ and $P_+$.

Now we consider the induced action of $H$ on the adjoint variety $X_{\ad}$, which is the $(2n+1)$-dimensional quadric Grassmannian of lines in $\Q^{n+2}$. This action is of course the restriction of the induced action on the projectivization of $\bigwedge^2V$, which has three fixed subspaces, corresponding to weights $-1$, $0$, $1$. The intersection of these spaces with $X_{\ad}$ are the fixed point components of the $H$-action on $X_{\ad}$: 
$$
Y_\pm:=\{\mbox{lines in $\Q^{n+2}$ passing by $P_{\pm}$}\},\quad Y_{0}= \{\mbox{lines in $\Q^{n}$}\}.
$$
Note that, identifying every line of $Y_\pm$ with its intersection with $\Q^n$, we get isomorphisms $Y_\pm\simeq\Q^n$, while the central component $Y_0$ is the quadric Grasmmannian of lines in the quadric $\Q^n$. \qed
\end{example}

\begin{example}\label{ex:quadriceven} Quadric Grassmannians of lines on a quadric of even dimension admit yet another $H$-action of bandwidth two, that we will describe here.

Let $n+2=2m$ be an even positive integer, and $\Q^{n+2}\subset\P^{n+3}=\P(V)$ be a smooth $(n+2)$-dimensional quadric. We take two disjoint projective spaces of maximal  dimension $\P^m_{\pm}\subset\P^{m+2}$, and homogeneous coordinates $(x_0:\dots:x_{n+3})$ in $\P^{n+3}$ such that $\Q^{n+2}$, $\P^m_-$ and $\P^m_+$ are given, respectively, by the equations:
$$
\left(\sum_{i=0}^mx_ix_{m+i+1}=0\right),\quad (x_{m+1}=\dots=x_{n+3}=0),\quad  (x_{0}=\dots=x_{m}=0).
$$
We consider the $H$-action on $\P^{n+3}$ given by:
$$
t(x_{0}:\dots:x_{m}:x_{m+1}:\dots:x_{n+3})=(x_{0}:\dots:x_{m}:tx_{m+1}:\dots:tx_{n+3}),$$
which clearly leaves $\Q^{n+2}$ invariant. Its fixed point components are obviously $\P^{m}_{\pm}$. As in the previous example, we consider the induced action on  the Grassmannian of lines in $\Q^{n+2}$, $X_{\ad}\subset\P(\bigwedge^2V)$. By twisting the action with a character, we may assume that the weights are $-1,0,1$, corresponding to fixed point components:
$$
Y_\pm=\{\mbox{lines in $\P^m_{\pm}$}\},\quad Y_{0}=\{\mbox{lines in $\Q^{n+2}$ meeting both $\P^m_{\pm}$}\}.
$$
In order to describe $Y_0$, we note first that it is naturally embedded in $\P^{m}_-\times\P^m_+$. Moreover, given a point $P_-\in \P^m_-$, the intersection of the tangent hyperplane to $\Q^{n+2}$ at $P_-$ meets $\P^m_+$ in a hyperplane $\rho(P_-)\in \P^{m\vee}_+$; this correspondence defines an isomorphism $\rho:\P^m_-\to\P^{m\vee}_+$, so that
$$
Q\in\rho(P_-) \iff P_-+Q\subset\Q^{n+2}.
$$
By means of this correspondence one may then easily show that $Y_0$ is isomorphic to $\P(T_{\P^m_-})$, and to $\P(T_{\P^m_+})$. \qed
\end{example}



\section{Bandwidth three actions and Cremona transformations}\label{sec:BW3}
In this section we consider the case of a smooth variety $X$ together with an ample line bundle $L$, and an equalized action of $H=\C^*$ on the pair $(X,L)$, such that the bandwidth of the action is equal to three. As usual, we will denote the sink and the source of the action by $Y_-$, $Y_+$, and we will focus on the case in which $Y_-,Y_+$ are isolated points.  
In order to understand its importance, let us recall the following result from \cite{RW}: 
\begin{theorem}\label{thm:bw3}\cite[Theorem~3.6]{RW}
Let $(X,L)$ be a polarized pair with an equalized $\C^*$-action of bandwidth three, such that its sink and source are isolated points, and assume   
 that $\dim X=n\geq 3$. Then one of the following holds:
\begin{itemize}
\item [(1)] $X=\P(\cV)$ is a projective bundle over $\P^1$, with $\cV=\cO(1)^{n-1}\oplus\cO(3)$, or $\cO(1)^{n-2}\oplus\cO(2)^2$, and $L$ is the corresponding tautological bundle $\cO_{\P(\cV)}(1)$.
  \item [(2)] $X=\P^1\times\Q^{n-1}$, and $L=\cO(1,1)$.
  \item [(3)] $n\geq 6$ is divisible by 3, $X$ is Fano of Picard number one, and $-K_X=\frac{2n}{3}L$. The inner fixed points components are two
    smooth subvarieties of dimension $\frac{2n}{3}-2$.
  \end{itemize}
\end{theorem}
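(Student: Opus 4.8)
The plan is to read off the numerical data of the action and then split into the cases $\rho_X=1$ and $\rho_X\ge2$. First I would normalize: twisting the linearization by a character (Remark \ref{rem:bandwidth}) I may assume $\mu_L$ takes the values $0,1,2,3$, with the isolated sink $Y_-=\{y_-\}$ at $0$ and the isolated source $Y_+=\{y_+\}$ at $3$, so the inner components lie over the weights $1,2$ (and the weights $0,3$ are occupied only by $y_-,y_+$, since a point at the minimal resp. maximal weight must be the sink resp. source). As the action is equalized, $\delta(C)=1$ for every $H$-invariant rational curve $C$ (Corollary \ref{cor:allequalized}), so the AM vs FM equality (Lemma \ref{lem:AMvsFM}) gives $L\cdot C=\mu_L(\text{source of }C)-\mu_L(\text{sink of }C)\in\{1,2,3\}$; in particular $L\cdot C_{\gen}=3$. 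Since the source $y_+$ is a point, Corollary \ref{cor:splittingTangent} applied to $C_{\gen}$ (joining $y_+$ to $y_-$) gives $T_X|_{C_{\gen}}\cong\cO(2)^{n}$, hence $-K_X\cdot C_{\gen}=2n$; as $C_{\gen}$ is then very free, $X$ is rationally connected, so $\rho_X=b_2(X)$. Finally Theorem \ref{thm:BB_decomposition}(3) together with Lemma \ref{lem:CaSo} (applied with $Y_-$, whose Picard group is trivial) yields $\Pic(X)=\bigoplus_{Y:\,\nu^+(Y)=1}\Z\,[\overline{X^-(Y)}]$, so $\rho_X$ equals the number of inner components $Y$ with $\nu^+(Y)=1$, and dually also the number with $\nu^-(Y)=1$.

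If $\rho_X=1$, I would argue as follows. There must be an inner component: otherwise every orbit runs $y_-\to y_+$ with $L\cdot C_{\gen}=3$, so $X$ is swept by a positive-dimensional family of rational curves through the two points $y_\pm$, contradicting Bend--Break. Hence $\Pic(X)=\Z L$ and, by Lemma \ref{lem:uniruled}, $X$ is Fano; writing $-K_X=rL$, the relations $-K_X\cdot C_{\gen}=2n$ and $L\cdot C_{\gen}=3$ give $r=2n/3$, so $3\mid n$. By Lemma \ref{lem:Pic_extremal}(2) there is a unique inner component $Y$ with $\nu^+(Y)=1$, and dually a unique $Y'$ with $\nu^-(Y')=1$. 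Using Corollary \ref{cor:splittingTangent} and the identity $\dim Y+\nu^+(Y)+\nu^-(Y)=n$ one checks $Y$ cannot have weight $2$ (that would force $\dim Y=4n/3-2>n-2$), so it has weight $1$, and dually $Y'$ has weight $2$; a count of the Bia\l{}ynicki--Birula cells, using $\rho_X=1$, then shows $\{Y,Y'\}$ is the entire set of inner components. Applying Corollary \ref{cor:splittingTangent} to the inverse action (whose source is $y_-$) along an orbit $C$ of $L$-degree $1$ from $y_-$ to $Y$ gives $T_X|_C\cong\cO(2)\oplus\cO(1)^{\dim Y}\oplus\cO^{\nu^-(Y)}$, hence $2+\dim Y=-K_X\cdot C=r=2n/3$, i.e. $\dim Y=2n/3-2$, and symmetrically $\dim Y'=2n/3-2$. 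It remains to rule out $n=3$: then the inner components would be two isolated points and $X$ a del Pezzo threefold of index $2$ with a very restricted $\C^*$-action, which a direct inspection of such threefolds excludes; so $n\ge6$, and we are in case (3).

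If $\rho_X\ge2$, then $X$ (being rationally connected) carries a $K_X$-negative elementary contraction, which is $H$-equivariant; I would exploit that $H$-invariant curves have $L$-degree $\le3$, together with the orbit-chain relation $[C_{\gen}]=[C_1]+[C_2]+[C_3]$ from Corollary \ref{cor:sum_of_orbits} (distributing the degree $3=2+1=1+1+1$ among base and fibre directions), to show that one of the contractions is a fibre type contraction onto $\P^1$ carrying the bandwidth-one part of the action. Then either it is a $\P^1$-fibration, in which case smoothness of $X$ makes it a $\P^1$-bundle $\P(\cV)$ over $\P^1$, and matching the three fibre weights of the linearized action forces $\cV=\cO(1)^{n-1}\oplus\cO(3)$ or $\cO(1)^{n-2}\oplus\cO(2)^2$ with $L=\cO_{\P(\cV)}(1)$ — case (1) — or its general fibre supports a bandwidth-two action with isolated sink and source and is therefore a smooth quadric $\Q^{n-1}$, forcing $X=\P^1\times\Q^{n-1}$ with $L=\cO(1,1)$ — case (2).

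The main obstacle is the $\rho_X\ge2$ step: proving the fibre-type contraction has exactly one of these two shapes and pinning down $\cV$ (resp. the quadric bundle) precisely requires careful bookkeeping of the $H$-weights on the pieces of the contraction together with an appeal to the classification of elementary fibre-type contractions in this dimension; the exclusion of $n=3$ from case (3), which rests on the classification of del Pezzo threefolds admitting a $\C^*$-action, is a lighter but genuine point.
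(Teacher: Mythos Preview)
The paper does not contain a proof of this statement: Theorem~\ref{thm:bw3} is quoted verbatim from \cite[Theorem~3.6]{RW} and used only as an input to the classification carried out in Section~\ref{sec:BW3}. So there is no in-paper argument to compare your proposal against.

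That said, your outline is broadly reasonable and uses the tools the paper does develop (Lemma~\ref{lem:AMvsFM}, Corollary~\ref{cor:splittingTangent}, Lemma~\ref{lem:Pic_extremal}, Corollary~\ref{cor:sum_of_orbits}). The $\rho_X=1$ part is essentially complete up to two points you flag yourself: ruling out $n=3$, and the claim that the two distinguished inner components $Y,Y'$ exhaust $\cY\setminus\{Y_-,Y_+\}$; for the latter you should make explicit that any additional inner component $Y''$ would have $\nu^\pm(Y'')\ge2$ and then show this is incompatible with the BB decomposition of $H_m(X,\Z)$ for $m=2$ and $m=2n-2$ together with $\rho_X=1$. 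Your $\rho_X\ge2$ analysis is only a sketch: you have not yet justified that some $H$-equivariant extremal contraction is of fiber type over a curve, nor that a $\P^{n-1}$-fibration over $\P^1$ with sections must be a projective bundle (smoothness alone does not force this without more, e.g.\ Tsen or a monodromy argument), nor that the quadric fibration is a product. These are the genuine gaps; the rest of the numerics is sound.
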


This statement was there used to classify contact Fano manifolds of Picard number one and dimensions $11$ and $13$ satisfying certain assumptions (cf. \cite[Theorem~5.3]{RW}). In order to extend this way of approaching LeBrun--Salamon conjecture to higher dimensions, one needs to classify the varieties appearing in case (3) of the above Theorem. We will fulfill here this task by using the birational machinery developed in the preceding sections. The applications of this result to LeBrun--Salamon conjecture will be the goal of a forthcoming paper (\cite{WORS2}).

Before getting into details we briefly present here an example (see \cite[Example~3.9]{RW}) of an action of type (3) in the list of Theorem \ref{thm:bw3}.

\begin{example}\label{ex:bw3_C33}
Let $V$ be a $6$-dimensional complex vector space endowed with a nondegenerate skew-symmetric form $\omega$, $X:=\DC_3(3)$ the Lagrangian Grassmannian of $3$-dimensional subspaces in $V$ isotropic with respect to $\omega$, and $G=\SP(V)$ the group of automorphisms of $V$ that preserve $\omega$. It is known that $X$ is homogeneous with respect to $G$, and that it can be seen as the unique closed orbit of the projectivization $\P(W)$ of a representation $W$ of $G$ of dimension $14$. Let $H_{\max}$ be a Cartan subgroup of $G$. The set $\P(W)^{H_{\max}}$ consists of $14$ points, corresponding to the $14$ weights of the representation $W$. Among them there are exactly $8$ points that belong to $X^{H_{\max}}$, that correspond to the weights of maximal length of $W$, and one may check that these weights are the vertices of a cube in the lattice of weights $\Mo(H)$. One may then consider the projection of these weights to one of the diagonals of the cube, corresponding to a map of lattices $\Mo(H)\to \Z$ (sending the vertices of the cube to three consecutive elements in the lattice $\Z$) and to the choice of a $1$-dimensional torus $H\subset H_{\max}$. Then it follows that $X$ has four $H$-fixed point components and that the extremal ones are isolated points.
\begin{figure}[H]
\includegraphics[width=6cm]{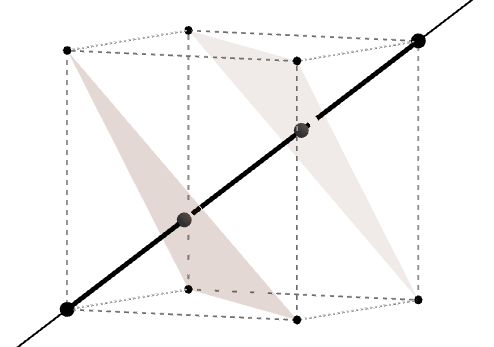}
\caption{The weights of the action of a maximal torus on $\DC_3(3)$ and its reduction to an $H$-action of bandwidth $3$.}
\label{fig:cube}
\end{figure}
Furthermore, one may show that there are exactly two inner fixed point components $Y_1$, $Y_2$ of the action, isomorphic to two Veronese surfaces $v_2(\P^2)$, lying in two $5$-dimensional projective subspaces of $\P(W)$ fixed by $H$. Each line in $X$ passing through the sink $Y_-$ (resp. the source $Y_+$) of the action meets $Y_1$ (resp. $Y_2$) in exactly one point, so that $Y_1$ (resp. $Y_2$) can be interpreted as a parameter space for the family of lines in $X$ passing through $Y_-$ (resp. $Y_+$). 
\end{example}

One may in fact check that we have a similar situation in other three rational homogeneous varieties, namely $\DA_5(3)$, $\DD_6(6)$ and $\DE_7(7)$, which are those for which the family of lines passing through a point is a {\it Severi variety} (cf. \cite[III~Definition~2.3]{Z}). The goal of this Section is to prove that these varieties are completely determined by admitting a $\C^*$-action of bandwidth $3$ with the properties required above, and to relate these properties to birational transformations of $X$.

\subsection{Cremona and Severi}\label{ssec:Cremona_Severi}

Let us assume, from now on, that $(X,L)$ is a polarized pair as in Theorem  \ref{thm:bw3}, case $(3)$. By adding a character to the linearization of the action, we may assume that $\mu_L(Y_-)=0$. Then $\mu_L(Y_+)=3$, and the action has weights $1$ and $2$ on the two inner fixed point components, that we denote by $Y_1$ and $Y_2$.
We will set:
$$
n=:3m,\,\,\,m\geq 1,
$$ 
so that $-K_X=2mL$, and $\dim Y_1=\dim Y_2=2m-2$. 

We denote an orbit and its closure in $X$ by using the same letter. The action is equalized hence, by Corollary \ref{cor:allequalized} (i),  the closures of all orbits are smooth rational curves.
Moreover, by Lemma \ref{lem:AMvsFM},  the intersection number of the line bundle $L$ with the closure of an orbit (i.e., the degree of the restriction of $L$ to the closure of an orbit) is obtained as the difference between the value of the linearization at the source and at the sink of the orbit. In the following graph the intersection with $L$ is
presented as the subscript at each of the letters.
\begin{equation}\label{eq:gr1}
\xygraph{
!{<0cm,0cm>;<3cm,0cm>:<0cm,1cm>::}
!{(0,0) }*+{\bullet_{Y_-}}="0"
!{(1,0) }*+{\bullet_{Y_1}}="1"
!{(2,0) }*+{\bullet_{Y_2}}="2"
!{(3,0)}*+{\bullet_{Y_+}}="3"
"0"-@/^/@[green]"1" _{A_1}
"2"-@/^/@[green]"3" _{B_1}
"0"-@/_1cm/@[blue]"2" _{A_2}
"1"-@/_1cm/@[blue]"3" _{B_2}
"1"-@/^/@[red]"2" _{C_1}
"0"-@/^1cm/@[brown]"3" ^{C_3}
} 
\end{equation}
The closure of a general orbit $C_3$, which is of degree $3$ with
respect to $L$, can degenerate, as a curve in $X$, to the $1$-cycles $A_1+B_2$, $A_2+B_1$ and
$A_1+C_1+B_1$.

Let us denote by $\varphi:X_\flat\to X$ the blowup of $X$ along $Y_-,Y_+$, by $Y_-^\flat$, $Y_+^\flat$ the corresponding exceptional divisors, which are isomorphic to $\P^{n-1}$, and set 
$$Z_-^j:=\ol{X_{\flat}^+(Y_{j})}\cap Y_-^\flat\subset Y_-^\flat  \quad Z_+^j:=\ol{X_{\flat}^-(Y_{j})}\cap Y_+^\flat\subset Y_+^\flat,$$ 
(see Figure \ref{fig:orbits}). 
As in Lemma \ref{lem:setup_bir}, the $\CC^*$-action on $X$ lifts up to a B-type $H$-action on $X_\flat$, whose sink and source are $Y_-^\flat$, $Y_+^\flat$ and we have a birational map 
$$\begin{tikzcd}\psi:Y_-^\flat\arrow[rightarrow,dashed]{r}&Y_+^\flat\end{tikzcd}$$
which, by Lemma \ref{lem:birational_map_b}, is defined on $Y_-^\flat \setminus (Z_-^1 \cup Z_-^2)$. 
The main result of this Section describes the map $\psi$. We first recall the following:

\begin{definition}\label{def:Cremona}
A {\em Cremona transformation} is a birational map $\psi:\P^n\dashrightarrow\P^n$. If $V\subset \HH^0(\P^n,\cO(d))$, $d\geq 1$, denotes the linear system (without fixed components) defining $\psi$, the {\em base scheme} of $\psi$ is the subscheme of $\P^n$ defined by the ideal sheaf in $\cO_{\P^n}$ generated by $V$. If the base scheme of $\psi$ is smooth and connected, we say that $\psi$ is a {\em special Cremona transformation}  (cf. \cite{ESB}). 
\end{definition}

\begin{theorem}\label{thm:bandwidth3->Cremona}
Let $(X,L)$ be a polarized pair which satisfies Theorem \ref{thm:bw3} $(3)$. 
Then the birational maps $\psi$ and $\psi^{-1}$ are special Cremona
transformations defined by linear systems of quadrics. In particular, $m=2,3,5,9$ and the centers of the transformations are projectively equivalent to a Severi variety in $\PP^{3m-1}$.
\end{theorem}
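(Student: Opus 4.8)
The plan is to first understand the birational map $\psi: Y_-^\flat \dashrightarrow Y_+^\flat$ between the two exceptional projective spaces $\P^{n-1}$ in terms of the $\C^*$-action, and then to show that the defining linear system consists of quadrics, whose base locus is the center $Z_-^j$ inside $Y_-^\flat$ (and analogously $Z_+^j$ for the inverse).

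\medskip\noindent\textbf{Step 1: Reconstructing the linear system.} First I would observe that, by the general picture of Section~\ref{sec:Btype_Bordism}, the divisor $L^\flat := \varphi^* L - Y_-^\flat - Y_+^\flat$ is $H$-trivial and its restriction to $Y_-^\flat \simeq \P^{n-1}$ gives a line bundle $\cO_{\P^{n-1}}(d)$ for some $d \geq 1$; by Lemma~\ref{lem:base-locus} this line bundle is globally generated on $X_\flat$, and pushing the corresponding linear system to $Y_-^\flat$ and $Y_+^\flat$ realizes $\psi$ as a map $\P^{n-1} \dashrightarrow \P^{n-1}$ defined by a sublinear system of $|\cO(d)|$. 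Using the restriction identity $\iota_-^*[Y_+^\flat] = N_{Y_-^\flat/X_\flat}$ and $\iota_+^*[Y_-^\flat] = N_{Y_+^\flat/X_\flat}$, together with $\iota_-^*[Y_-^\flat] = \cO_{\P^{n-1}}(-1)$ (the exceptional divisor restricts to $\cO(-1)$ on a blown-up point), one computes that $L^\flat|_{Y_-^\flat}$ and $L^\flat|_{Y_+^\flat}$ are each $\cO(d)$ with $d$ determined by the weights of the action — here I expect $d=2$ to fall out from the fact that a general orbit $C_3$ has $L$-degree $3$ and degenerates to $A_1 + C_1 + B_1$, i.e.\ the general orbit meets each of $Y_-^\flat, Y_+^\flat$ with multiplicity one and the inner part contributes the remaining degree, so that $L^\flat \cdot C_3^\flat = 1$; pushing down, the strict transform of a general orbit (a $\P^1$) meets the linear system on $Y_-^\flat$ in degree two.

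\medskip\noindent\textbf{Step 2: Identifying the base scheme with the center.} Next I would identify the base locus of $\psi$ with $Z_-^1 \cup Z_-^2 \subset Y_-^\flat$. Since $\psi$ is the map on $H$-quotients coming from Lemma~\ref{lem:birational_map_b}, it is an isomorphism on $Y_-^\flat \setminus (Z_-^1 \cup Z_-^2)$, so the base scheme is supported there. By Lemma~\ref{eq:codimZ0} we have $\codim(Z_-^j, Y_-^\flat) = \nu^-(Y_j)$; with $\dim Y_j = 2m-2$ and $\dim X = 3m$ one gets $\nu^+(Y_j) + \nu^-(Y_j) = 3m - (2m-2) = m+2$, and equalization together with the symmetry of the two inner components (weights $1$ and $2$) forces $\nu^+(Y_j) = \nu^-(Y_j)$ is not automatic, but tracking the weights of $A_1, C_1, B_1$ via AM vs FM pins down $\nu^\pm(Y_j)$. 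The expectation is that each $Z_-^j$ is a smooth subvariety of $\P^{n-1}$ of dimension $2m-2$ — precisely a Severi-type variety — and that the linear system of quadrics through $Z_-^1 \sqcup Z_-^2$ is exactly what defines $\psi$. To make "special Cremona transformation" in the sense of Definition~\ref{def:Cremona} one needs the base scheme smooth and connected; here it is a priori a disjoint union of two pieces, so I would need to argue that in fact $Z_-^1 \cup Z_-^2$ is connected (or reinterpret Ein--Shepherd-Barron's theorem appropriately — this is the subtle point, and I would resolve it by noting that $\psi$ being a quadro-quadric Cremona transformation of $\P^{n-1}$ with the inverse also quadratic forces, by the classification in \cite{ESB}, the base locus to be a Severi variety, which is automatically irreducible).

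\medskip\noindent\textbf{Step 3: Invoking the classification of special Cremona transformations.} Finally, once $\psi$ and $\psi^{-1}$ are shown to be Cremona transformations both defined by linear systems of quadrics (quadro-quadric), I would invoke the Ein--Shepherd-Barron classification: a special Cremona transformation of $\P^N$ of bidegree $(2,2)$ has base locus a Severi variety, hence $N = 3m-1$ with $3m-1 \in \{5, 8, 14, 26\}$, i.e.\ $m \in \{2, 3, 5, 9\}$, and the center is projectively equivalent to one of $v_2(\P^2) \subset \P^5$, $\P^2\times\P^2 \subset \P^8$, $\G(2,6)\subset\P^{14}$, $\mathbb{OP}^2 \subset \P^{26}$. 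The main obstacle I anticipate is Step~2 — rigorously showing that the linear system is \emph{exactly} the complete system of quadrics through the center (no higher-degree contributions, no fixed components, the scheme structure is reduced) and reconciling the possibly-disconnected $Z_-^1 \sqcup Z_-^2$ with the connectedness hypothesis in the definition of special Cremona transformation; I would handle this by first proving the bidegree is $(2,2)$ using intersection numbers of orbit closures (the curve $C_3$ and its degenerations $A_1 + C_1 + B_1$, $A_1 + B_2$, $A_2 + B_1$ in the graph \eqref{eq:gr1}), and then deduce the structure of the base scheme a posteriori from the classification rather than directly.
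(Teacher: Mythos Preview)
Your proposal has a genuine gap at the central geometric point, and Step~1 as written does not work.

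\textbf{The base locus is not $Z_-^1 \cup Z_-^2$.} You assume throughout that $\psi$ is undefined on $Z_-^1 \cup Z_-^2$ and then worry about the disconnectedness. In fact the key lemma (Lemma~\ref{lem:BW3_exc_loc} in the paper) shows that $\nu^+(Y_1)=1$ (by Bend and Break on the lines $A_1$) and hence $\nu^-(Y_1)=m+1$; so the two inner components are \emph{not} symmetric in the way you suggest. The crucial observation is that $\psi$ \emph{extends} over $Z_-^2\setminus Z_-^1$: through a point $x_0\in Z_-^2\setminus Z_-^1$ there is a unique $H$-invariant $1$-cycle of type $A_2+B_1$, because $\nu^-(Y_2)=1$, so the orbit landing at $x_2\in Y_2$ continues uniquely to $Y_+^\flat$. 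Thus the exceptional locus of $\psi$ is exactly $Z_-^1\simeq Y_1$, which is smooth and connected, and the connectedness issue you raise simply does not arise.

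\textbf{Step 1 breaks because $L^\flat$ is not $H$-trivial in bandwidth three.} You invoke Lemma~\ref{lem:base-locus} for $L^\flat=\varphi^*L-Y_-^\flat-Y_+^\flat$, but $L^\flat\cdot C_3=3-1-1=1\neq 0$, so that lemma does not apply and you cannot read off the linear system this way. The paper does not attempt to identify the linear system directly. Instead it first resolves $\psi$ geometrically (Proposition~\ref{prop:birBW3}): blowing up $X_\flat$ along the two $\PP^1$-bundles $\cU_1=\overline{X_\flat^+(Y_1)}$ and $\cU_2=\overline{X_\flat^-(Y_2)}$ and then contracting via Nakano's criterion produces a rank-zero bordism, hence a $\PP^1$-bundle over $Y:=Y_-'\simeq Y_+'$; this exhibits $Y$ simultaneously as the blowup of $Y_-^\flat$ along $Z_-^1$ and of $Y_+^\flat$ along $Z_+^2$. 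Since $\psi$ is resolved by a \emph{single} smooth blowup, the base scheme is $Z_-^1$ with its reduced structure, so $\psi$ is special. The degree is then computed on $Y$: from $-K_Y=m(H_1+H_2)$ and the length $m$ of the extremal ray of the second blowdown one gets $-K_Y\cdot\tilde\ell=m(3-m_1)=m$, hence $m_1=2$, and symmetrically $m_2=2$. Only then is Ein--Shepherd-Barron invoked.
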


Let us start by showing that the birational map $\psi$ can be extended to $Y_-^\flat \setminus Z_-^1$.

\begin{lemma}\label{lem:BW3_exc_loc}
We have $\nu^+(Y_1)=\nu^{-}(Y_2)=1$, and there exist isomorphisms
$$Z_-^1\simeq Y_1,\,\,\, \ol{X_\flat^+(Y_1)}\simeq\P(\cO_{Y_1}\oplus \cO_{Y_1}(L)),\,\,\,Z_+^2\simeq Y_2,\,\,\,  \ol{X_\flat^-(Y_2)}\simeq\P(\cO_{Y_2}\oplus \cO_{Y_2}(L)).$$
Moreover, the exceptional locus of the map $\psi:Y_-^\flat\dashrightarrow Y_+^\flat$ is equal to $Z_-^1$ and the exceptional locus of $\psi^{-1}$ is equal to $Z_+^2$.
\end{lemma}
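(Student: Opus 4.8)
\emph{Overview.} The plan is to extract everything from the Bia{\l}ynicki--Birula structure of the action together with the numerical identity $-K_X\equiv 2mL$ of Theorem~\ref{thm:bw3}$(3)$. First I would compute the normal invariants $\nu^{\pm}$ at the inner components; recall that, the action being equalized --- hence faithful by Corollary~\ref{cor:allequalized}$(iii)$ --- the isolated sink and source satisfy $\nu^-(Y_-)=\nu^+(Y_+)=n$ and $\nu^+(Y_-)=\nu^-(Y_+)=0$, and that the character of the natural linearization of $\det T_X=-K_X$ at a point of a fixed component $Y$ equals $\nu^+(Y)-\nu^-(Y)$ (the sum of the weights on $T_X$, each of which is $0$, $+1$ or $-1$). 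By Lemma~\ref{lem:Pic_extremal}$(2)$ every inner component has $\nu^+\ge 1$ and $\nu^-\ge 1$, so the orbit $A_1$ joining $Y_-$ and $Y_1$, of $L$-degree one, exists; applying the \mbox{AM vs FM} equality of Lemma~\ref{lem:AMvsFM} to $-K_X$ along $A_1$, with $\delta(A_1)=1$, gives
$$2m=-K_X\cdot A_1=\bigl(\nu^+(Y_1)-\nu^-(Y_1)\bigr)+n=\nu^+(Y_1)-\nu^-(Y_1)+3m,$$
whence $\nu^+(Y_1)-\nu^-(Y_1)=-m$; together with $\nu^+(Y_1)+\nu^-(Y_1)=\codim(Y_1,X)=m+2$ this forces $\nu^+(Y_1)=1$, $\nu^-(Y_1)=m+1$. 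The identical computation for the action $t\mapsto t^{-1}$ --- which exchanges $Y_1$ with $Y_2$ and $\nu^+$ with $\nu^-$ --- yields $\nu^-(Y_2)=1$, $\nu^+(Y_2)=m+1$.

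\emph{The cone and the $\P^1$-bundle.} Since $\nu^+(Y_1)=1$, Theorem~\ref{thm:BB_decomposition}$(2)$ identifies the cell $X^+(Y_1)$ with the total space of the line bundle $N^+(Y_1)$ over $Y_1$, whose punctured fibres are the $A_1$-orbits; the closure in $X$ of each such orbit is a smooth rational curve of $L$-degree one through the point $Y_-$, and distinct $A_1$-orbits point in distinct directions at $Y_-$ (near $Y_-$ the action is a homothety on $T_{Y_-}X$, so its orbits through $Y_-$ are exactly the punctured lines). Hence $\overline{X^+(Y_1)}\subset X$ is a cone over $Y_1$ with vertex $Y_-$; since $Y_-$ is a smooth point of $X$ lying on no other component of $\Exc(\varphi)$, the strict transform of this cone under the blowup $\varphi\colon X_\flat\to X$ is its blowup at the vertex, which is a $\P^1$-bundle over $Y_1$ having the exceptional divisor $Z_-^1$ as a section; thus $Z_-^1\cong Y_1$, and comparing $N^+(Y_1)$ with $\varphi^*L$ and $\Exc(\varphi)$ along $Z_-^1$ identifies this bundle with $\P(\cO_{Y_1}\oplus\cO_{Y_1}(L))$. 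The mirror argument, run with the $L$-degree-one orbits $B_1$ joining $Y_2$ and $Y_+$ and with the vertex $Y_+$, gives $Z_+^2\cong Y_2$ and $\overline{X_\flat^-(Y_2)}\cong\P(\cO_{Y_2}\oplus\cO_{Y_2}(L))$.

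\emph{The exceptional locus.} Here we must show that the locus where the birational map $\psi$ fails to be a morphism is exactly $Z_-^1$, and likewise $Z_+^2$ for $\psi^{-1}$. By Lemma~\ref{lem:birational_map_b}, $\psi$ is an isomorphism on $Y_-^\flat\setminus(Z_-^1\cup Z_-^2)$, so --- as $Y_-^\flat\cong\P^{n-1}$ is smooth and $Y_+^\flat$ is projective --- its indeterminacy locus is closed of codimension $\ge 2$ and contained in $Z_-^1\cup Z_-^2$, while by Lemma~\ref{eq:codimZ0} (when $Z_-^2\neq\emptyset$) we have $\codim(Z_-^2,Y_-^\flat)=\nu^-(Y_2)=1$ and $\codim(Z_-^1,Y_-^\flat)=\nu^-(Y_1)=m+1\ge 2$. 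I would then show that $\psi$ extends across $Z_-^2$: for $x\in Z_-^2\setminus Z_-^1$ the unique orbit with sink $x$ is an $A_2$-orbit, and because $\nu^-(Y_2)=1$ the orbits with sink in $Y_2$ are parametrized by $Y_2$, so the $B_1$-orbit whose sink is the source $q\in Y_2$ of that $A_2$-orbit is determined by $x$; hence every limit of general orbits ($C_3$'s) whose sink tends to $x$ is the single reducible $1$-cycle consisting of the $A_2$-orbit through $x$ and the $B_1$-orbit through $q$, so $\psi$ has the unique limit value ``source of that $B_1$-orbit'' at $x$ and is therefore a morphism there (contracting $Z_-^2$ onto $Z_+^2$). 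The indeterminacy locus is thus contained in $Z_-^1$, and equals it, because at $x\in Z_-^1$ the corresponding limit cycle begins with the pinned $A_1$-orbit through $x$ but its degree-two tail varies in the positive-dimensional family of orbits with sink equal to the source $y_1\in Y_1$ of that $A_1$-orbit (here $\nu^-(Y_1)=m+1\ge 2$), so the limit value depends on the direction of approach. Running the same argument for $\psi^{-1}$ --- now the relay through $Y_1$ is rigid since $\nu^+(Y_1)=1$, whereas indeterminacy is forced along $Z_+^2$ since $\nu^+(Y_2)=m+1\ge 2$ --- identifies the exceptional locus of $\psi^{-1}$ with $Z_+^2$.

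\emph{Main obstacle.} The delicate points are in the second step --- proving that the closure of the Bia{\l}ynicki--Birula cell is genuinely the smooth $\P^1$-bundle $\P(\cO\oplus\cO(L))$, i.e.\ that $Z_-^1\cong Y_1$ as varieties rather than merely set-theoretically, and pinning down the twist by $\cO_{Y_1}(L)$ --- and in the third step the bookkeeping needed to make the ``limit cycle'' argument rigorous: identifying the flat limit of the family of general orbits along $Z_-^2$ and checking the uniqueness and algebraicity of the resulting extension of $\psi$.
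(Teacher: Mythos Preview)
Your argument is correct and, for the description of $\overline{X_\flat^+(Y_1)}$ and the exceptional locus, follows essentially the paper's proof. The one genuine difference is your computation of $\nu^+(Y_1)=1$: you read it off from AM vs FM applied to $-K_X$ together with the index formula $-K_X\equiv 2mL$ of Theorem~\ref{thm:bw3}(3), whereas the paper argues geometrically that if $\nu^+(Y_1)\geq 2$ then there would be a positive--dimensional family of $L$-degree one curves (the $A_1$'s) through $Y_-$ and a fixed point of $Y_1$, contradicting Bend and Break. Your approach is cleaner here because it exploits the numerical input from Theorem~\ref{thm:bw3}(3) directly; the paper's argument is more self-contained in that it only uses the $L$-degree of $A_1$ and does not require knowing the index in advance.

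For the second step, where you flagged the obstacle, the paper's phrasing avoids the ``cone in $X$'' picture and works entirely in $X_\flat$: since $\nu^+(Y_1)=1$, both $X_\flat^+(Y_1)$ and the restriction of the line bundle $X_\flat^-(Y_-^\flat)$ to $Z_-^1$ are line bundles (over $Y_1$ and over $Z_-^1$ respectively) whose complements of the zero sections coincide and form a $\C^*$-principal bundle; quotienting by $H$ gives the isomorphism $Y_1\simeq Z_-^1$ directly as varieties, and the closure is then the union of two line bundles glued along this $\C^*$-bundle, hence a $\P^1$-bundle. The twist is pinned down by noting that $\varphi^*L$ restricts to a unisecant divisor on this $\P^1$-bundle, trivial on the section $Z_-^1\subset Y_-^\flat$ and equal to $L_{|Y_1}$ on the section $Y_1$, so the bundle is $\P(\cO\oplus\cO_{Y_1}(L))$. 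This makes your ``delicate points'' routine.
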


\begin{proof}
The first part of the statement may be obtained from \cite[Lemma 2.9 (3)]{RW} or, alternatively in our setting, as follows. Note first that $\nu^+(Y_1)=1$; otherwise, the family of closures of orbits $A_1$ passing through $Y_-$ and a given point of $Y_1$ would be positive dimensional, and we would get a contradiction with the fact that these curves have degree one with respect to $L$ by means of the Bend and Break lemma (see \cite[Proposition~3.2]{De}).

Since $\nu^+(Y_1)=1$, by Theorem \ref{thm:BB_decomposition} (2)  $X^+_\flat(Y_1)=X^+(Y_1)$ is a line bundle over $Y_1$; similarly, $X_\flat^-(Z_-^1)$ is a line bundle over $Z_-^1$. Their intersection is then an $H$-principal bundle over $Y_1$ and $Z_-^1$, and so, quotienting by the action of $H$, the isomorphism $Y_1\simeq Z_-^1$ follows. Moreover
$$\ol{X^+_\flat(Y_1)}=X^+_\flat(Y_1)\cup X_\flat^-(Z_-^1)$$ is a $\P^1$-bundle, that can be seen as the Grothendieck projectivization of the pushforward to $Y_1$ of the unisecant divisor $L$. The fact that $Z_-^1$ and $Y_1$ are two disjoint sections of this $\P^1$-bundle provides the isomorphism $\ol{X_\flat^+(Y_1)}\simeq\P(\cO_{Y_1}\oplus \cO_{Y_1}(L))$ by standard arguments. The other two isomorphisms regarding $Y_2$ are analogous.

For the second part, we need to prove that the birational map $\psi$  can be extended on $(Z_-^2\setminus Z_-^1)\subset Y_-^\flat$ . Let $x_0\in Z_-^2\setminus Z_-^1$ be a point. There exists a unique invariant $1$-cycle $\Gamma$ of $L$-degree $3$ passing through $x_0$, constructed as follows: let $O$ be  unique $1$-dimensional orbit converging to $x_0$ at $\infty$, and let $x_2$ be its limit at zero; since $x_0\in Z_-^2\setminus Z_-^1$, it follows that $x_2\in Y_2$; then, since $\nu^-(Y_2)=1$, there exists a unique orbit $O'$ converging to $x_2$ at $\infty$, and we have $\Gamma=\overline{O}+\overline{O'}$.
\begin{figure}[H]
\includegraphics[width=7cm]{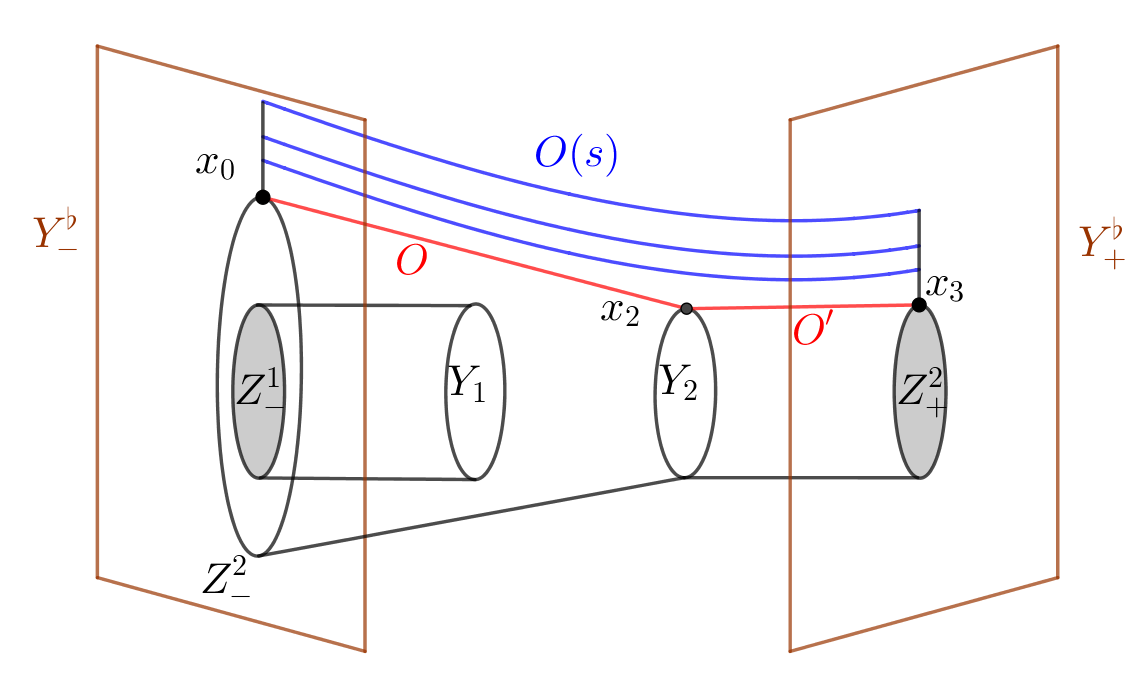}
\caption{Proof of Lemma \ref{lem:BW3_exc_loc}.}
\label{fig:orbits}
\end{figure}

Let us denote by $x_3$ the limit of $O'$ at zero. Taking any holomorphic curve $\gamma(s)$ converging to $x_0$ when $s$ goes to $0$, with $\gamma(s)\in Y_-^\flat\setminus Z_-^2$ for $s\neq 0$, the images $\psi(\gamma(s))$ are obtained by considering, for every $s$, the unique orbit $O(s)$ having limit $\gamma(s)$ when $t$ goes to $\infty$; the closures $\overline{O(s)}$ must then converge, when $s$ goes to zero, to the unique $H$-invariant $1$-cycle passing through $x_0$, which is $\Gamma$, and it follows that
$$\lim_{s\to 0}\psi(\gamma(s))=\Gamma\cap Y_+^\flat=\overline{O'}\cap Y_+^\flat=x_3.$$ 
Since this limit does not depend on the choice of the curve $\gamma$, it follows that the map $\psi$ extends to $x_0$. This finishes the proof.
\end{proof}

Set $\cU_1:=\ol{X^+_\flat(Y_1)}$, $\cU_2:=\ol{X^-_\flat(Y_2)}$, and recall that, by Lemma \ref{lem:BW3_exc_loc}, we have isomorphisms $\cU_i\simeq\P(\cO_{Y_i} \oplus \cO_{Y_i}(L))$, for $i=1,2$. 

\begin{proposition}\label{prop:birBW3}
The variety $X_\flat$ is birationally equivalent to a $\P^1$-bundle. More concretely, the blowup of $X_\flat$ along $\cU_1$ and $\cU_2$ admits a birational contraction onto a $\P^1$-bundle over the blowup $Y'_-$ of $Y_-^\flat$ along $Z_-^1$. Furthermore, denoting by $Y'_+$  the blowup of $Y_+^\flat$ along $Z_+^2$, we have an isomorphism $Y'_-\simeq Y'_+$ such that the birational map $\psi:Y_-^\flat\dashrightarrow Y_+^\flat$ factors as:
$$
\xymatrix@R=15pt{&Y'_-\simeq Y'_+\ar[ld]_{\mbox{\tiny blowup}}\ar[rd]^{\mbox{\tiny blowup}}&\\Y_-^\flat\ar@{-->}[rr]^\psi &&Y_+^\flat}
$$
\end{proposition}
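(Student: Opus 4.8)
The plan is to reduce the statement to the local toric picture developed in Section \ref{sect:appendix} and then globalize, exactly as in the proofs of Theorem \ref{flip=>bordism} and Theorem \ref{bordism=>unique}. First I would record the geometry around the inner components. By Lemma \ref{lem:BW3_exc_loc} we have $\nu^+(Y_1)=\nu^-(Y_2)=1$, and the closures of the orbits of type $A_1$ (resp. $B_1$) sweep out $\cU_1=\overline{X_\flat^+(Y_1)}\simeq\PP(\cO_{Y_1}\oplus\cO_{Y_1}(L))$ (resp. $\cU_2$). The key remaining inner-curve family is $C_1$, joining $Y_1$ with $Y_2$; since $X_\flat$ is a B-type action with $\rho_{X_\flat}=\rho_X+2=3$ and two inner components, one first checks (arguing as in Lemma \ref{lem:Cpm}) that every orbit with sink in $Y_1\cup Y_2$ meets $Y_+^\flat$ and every orbit with source there meets $Y_-^\flat$, except for the orbits of type $C_1$ linking the two inner components; so the bordism structure here is governed by a chain $Y_-^\flat\,|\,Y_1\,|\,Y_2\,|\,Y_+^\flat$. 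I would then identify $\overline{X_\flat^+(Y_2)}\setminus Y_2$ and $\overline{X_\flat^-(Y_1)}\setminus Y_1$ with the complement of the zero section in $N^+(Y_1)$ restricted to suitable loci, exactly as in Lemma \ref{lem:smoothZ}, to see that $\overline{X_\flat^+(Y_2)}\cap Y_-^\flat$ and $\overline{X_\flat^-(Y_1)}\cap Y_+^\flat$ are projective bundles over $Y_2$, $Y_1$ respectively.

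Next I would blow up. Let $\beta:\widehat X\to X_\flat$ be the blowup along the two smooth fixed-point loci $\cU_1$ and $\cU_2$; since these are $H$-invariant, the $H$-action lifts. The $H$-orbit closures of type $A_1$ inside $\cU_1\subset X_\flat$ are the fibers of $\cU_1\to Y_1$, and after blowing up one expects (as in the proof of Theorem \ref{bordism=>unique}, where $\overline{X^+(Y_j)}$ became a $\PP^{r_j+1}$-bundle over $Y_j$) that the exceptional behavior of the $H$-action on $\widehat X$ becomes that of a bordism with no ``deep'' chains: every orbit of the lifted action now has its sink in (the strict transform of) $Y_-^\flat$ or in an exceptional divisor that maps down to $Z_-^1$. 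Concretely, I would show that $\widehat X$ carries a contraction onto a $\PP^1$-bundle over the blowup $Y'_-:=\Bl_{Z_-^1}Y_-^\flat$, obtained by running exactly the toric model: locally around a point of $Y_1$ we are in the situation of Section \ref{ssec:toricAtiyah}–\ref{ssect:toricbordism}, the blowup along $\cU_1$ is the blowup $X_{\Sigma_\#}\to X_{\widehat\Sigma_-}$ resolving the flip, and the resulting morphism to a $\PP^1$-bundle is the composite $X_{\Sigma_\#}\to X_{\widehat\Sigma_-^+}\cup\dots$ followed by the projection of Corollary \ref{lem:toricprojbundle}. Gluing these local models over $Y_1$ (and symmetrically over $Y_2$) gives the global birational contraction $\Bl_{\cU_1,\cU_2}X_\flat\dashrightarrow(\text{$\PP^1$-bundle over }Y'_-)$.

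Then I would extract the factorization of $\psi$. On $Y_-^\flat$ the exceptional locus of $\psi$ is $Z_-^1$ by Lemma \ref{lem:BW3_exc_loc}, and by the description of $\overline{X_\flat^+(Y_1)}$ as $\PP(\cO_{Y_1}\oplus\cO_{Y_1}(L))$ with $Z_-^1\simeq Y_1$ a section, the blowup $Y'_-=\Bl_{Z_-^1}Y_-^\flat$ is exactly the locus one needs to resolve $\psi$: the two sections $Z_-^1\subset Y_-^\flat$ and $Y_2$-parametrized locus get separated. Symmetrically, $Y'_+:=\Bl_{Z_+^2}Y_+^\flat$ resolves $\psi^{-1}$. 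The isomorphism $Y'_-\simeq Y'_+$ then follows from Lemma \ref{lem:birational_map_b} applied to the (resolved) B-type structure: $Y'_-\setminus(\text{exc})$ and $Y'_+\setminus(\text{exc})$ are each the common quotient of the union of orbits linking them, and since after the blowup along $\cU_1,\cU_2$ the map becomes an isomorphism in codimension one with smooth, separated indeterminacy resolved, the two blowups are literally the same variety; equivalently, one glues the local toric isomorphisms $X_{\widehat\Sigma'_-}\simeq X_{\widehat\Sigma'_+}$ from Proposition \ref{prop:toric_bordism-construction}. Composing $Y'_-\simeq Y'_+$ with the two blowdown maps gives the claimed square.

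The main obstacle I expect is the gluing step: the local toric models of Section \ref{sect:appendix} are indexed by points of $Y_1$ (and $Y_2$), but the global normal bundle $N^+(Y_1)$ need not be a direct sum of line bundles, so the ``type $(r,s,n)$'' is only defined fiberwise (this is exactly the phenomenon flagged in Remark \ref{rem:Atiyah.assumptions}). One must therefore check that the local constructions—blowup, small contraction, flip, $\PP^1$-bundle projection—are canonical enough to patch into global morphisms, which is done by exhibiting them as relative Proj of explicit graded $\cO$-algebras built from the $H$-action (as in the ``$X=\Proj_{X'_-}\bigoplus_{m\ge0}(\phi_-^-)_*\cO(-mY_-^0)$'' argument inside the proof of Theorem \ref{flip=>bordism}), so that the gluing is automatic. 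A secondary subtlety is verifying that $\widehat X\to(\PP^1\text{-bundle})$ is a genuine birational \emph{morphism} and not merely a rational map, i.e. that no extra flip is needed after blowing up $\cU_1$ and $\cU_2$; this is where the hypothesis $\nu^+(Y_1)=\nu^-(Y_2)=1$ is essential, since it makes $\cU_1,\cU_2$ into $\PP^1$-bundles whose blowup exactly undoes the two small contractions $\eta_\pm$ of the rank-one bordism.
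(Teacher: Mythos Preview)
Your approach diverges substantially from the paper's, and the route you propose has a real gap.

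The paper does \emph{not} use the toric gluing machinery of Section~\ref{sect:appendix} here. Instead it argues directly: after blowing up $X_\flat$ along $\cU_1,\cU_2$ to obtain $X'_\flat$, it computes the normal bundle $N_{\cU_1/X_\flat}|_{A_1}\simeq\cO_{\PP^1}(-1)^{m+1}$ using Corollary~\ref{cor:splittingTangent} (the splitting of $TX$ on orbits out of an isolated source), and then applies the Nakano contractibility criterion to produce a smooth blowdown $\varphi':X'_\flat\to X'$ contracting the strict transforms of the $A_1$- and $B_1$-curves. The numerical equivalences of Corollary~\ref{cor:sum_of_orbits} then show that \emph{all} closures of $H$-orbits in $X'$ become numerically equivalent, so the induced action on $X'$ is a bordism of rank \emph{zero}, and Lemma~\ref{lem:rank0bordism} forces $X'$ to be an $H$-equivariant $\PP^1$-bundle over $Y'_-\simeq Y'_+$. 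The factorization of $\psi$ follows immediately.

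Your proposal, by contrast, tries to import the rank-one bordism framework and the toric flip models of Section~\ref{sect:appendix}. The obstacle is that the $H$-action on $X_\flat$ is \emph{not} a bordism: since $\nu^+(Y_1)=1$ (Lemma~\ref{lem:BW3_exc_loc}), condition $(\star\star)$ of Corollary~\ref{cor:codim1} fails, so there are no small contractions $\eta_\pm$ of the kind you invoke at the end, and the toric Atiyah-flip models of Section~\ref{ssec:toricAtiyah} (which require $r,s\geq 1$) do not match the local picture at $Y_1$. Concretely, the subvariety $\cU_1=\overline{X^+_\flat(Y_1)}$ is a \emph{divisor} in $X_\flat$ (it has dimension $2m-1=n-1$), not the center of a small contraction, so ``blowing up $\cU_1$ undoes a small contraction'' is not the right geometry. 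What you actually need is a reason why the exceptional divisor over $\cU_1$ can be blown down \emph{in another direction}, and that is precisely what the Nakano-type normal bundle computation supplies; your gluing sketch does not provide a substitute for this step.
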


\begin{proof}
Denote by $\alpha:X'_\flat \to  X_\flat$ the blowup of $X_\flat$ along $\cU_1$ and $\cU_2$, with exceptional divisors $\cU^\flat_1$ and $\cU^\flat_2$. Denote by $Y_-'$ and $Y_+'$ the strict transforms of the divisors $Y_-^\flat$ and $Y_+^\flat$ (see Figure \ref{fig:bw3}). 

\begin{figure}[H]
$
\xymatrix@R=20pt{\includegraphics[width=5cm]{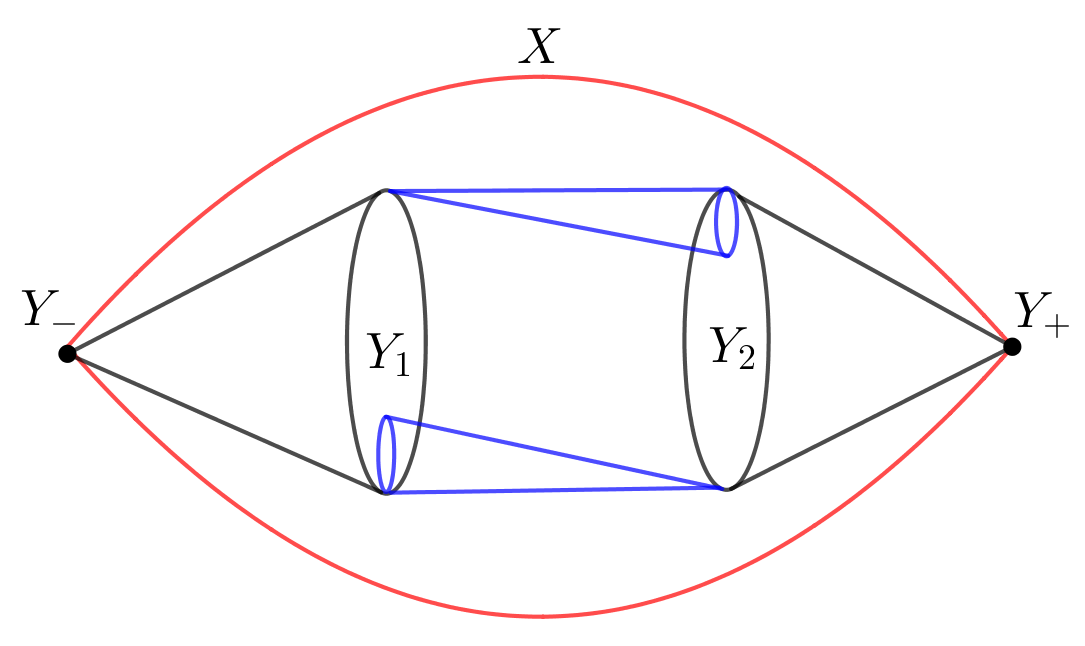}\ar@{-->}[d]&\includegraphics[width=4.5 cm]{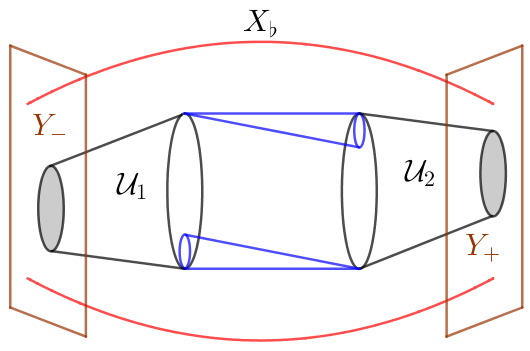}\ar[]!<0ex,10ex>;[l]!<0ex,10ex>_(.47){\varphi}\\
\includegraphics[width=5.4cm]{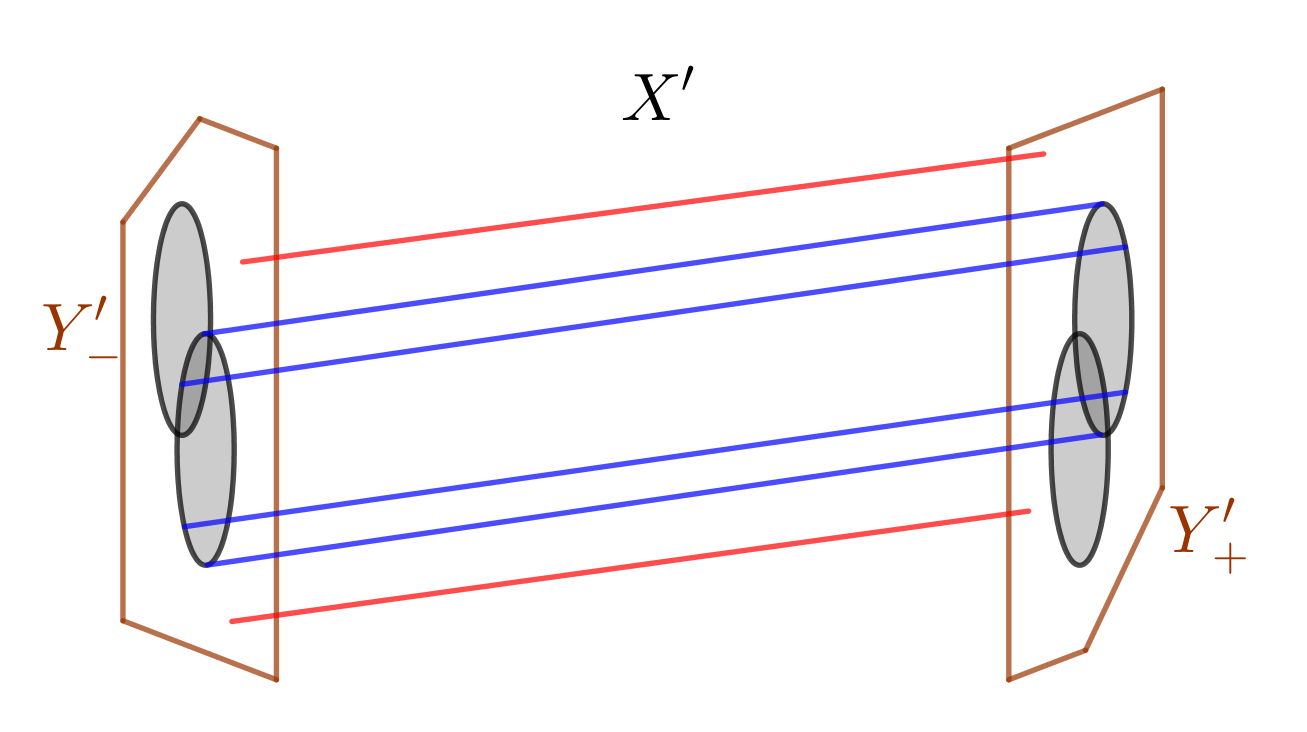}&\includegraphics[width=5cm]{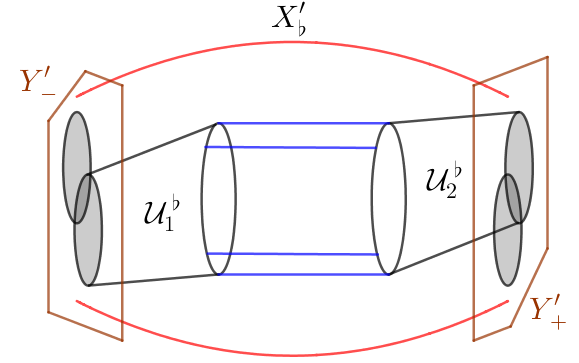}\ar[u]^(.85){\alpha}\ar[]!<0ex,10ex>;[l]!<0ex,10ex>_(.45){\varphi'}}
$
\caption{Birational transformation of a bandwidth three variety with isolated extremal fixed points into a $\P^1$-bundle.
}
\label{fig:bw3}
\end{figure}

Note that $\cU_1$ intersect $Y_-^\flat$ transversally along a variety isomorphic to $Y_1$ (see Lemma \ref{lem:BW3_exc_loc}). 
Hence the divisor $\cU^\flat_1$ is a $\P^1$-bundle over $\cU^\flat_1 \cap Y_-'$, which is the exceptional divisor of the blowup of $Y_-^\flat$ along the variety $Z_-^1 \simeq Y_1$. An analogous statement holds for $\cU_2^\flat$.

Now we will consider the closures $A_i,B_i,C_i$ of the orbits of the $H$-action in $X$, introduced at the beginning of the section, and denote their strict transforms into $X_\flat$ by the same symbols.  By Corollary \ref{cor:splittingTangent}, we have $N_{A_1/X} \simeq \cO(1)^{2m-2} \oplus \cO^{m+1}$, hence, by looking at the differential of $\varphi\colon X_\flat\to X$, we get  $N_{A_1/X_\flat} \simeq \cO^{2m-2} \oplus \cO(-1)^{m+1}$. Moreover from the sequence
$$ 0 \to N_{A_1/\cU_1} \to N_{A_1/X_\flat} \to N_{\cU_1/X_\flat}|_{A_1}\to 0$$
we get $N_{\cU_1/X_\flat}|_{A_1} \simeq \cO_{\P^1}(-1)^{m+1}$. 
Denote by $A'_{1}$ and $B'_{1}$ the minimal sections of $\cU_1^\flat$ and $\cU_2^\flat$ over curves of type $A_{1}$ and $B_{1}$; one can easily compute that $\cU^\flat_1 \cdot A'_{1}=\cU^\flat_2 \cdot B'_{1}=K_{X'_\flat}\cdot A'_{1} =K_{X'_\flat}\cdot B'_{1}=-1$.  By Nakano contractibility criterion (see \cite[Theorem 3.2.8]{BS})
there exists a smooth blowup $\varphi':X'_\flat \to X'$ contracting the curves of type $A'_{1}$ and $B'_{1}$ whose exceptional divisors are $\cU^\flat_1$ and $\cU^\flat_2$. 

Let us denote the strict transforms in $X'_\flat$ of the closure of orbits different from $A_1$ and $B_1$ by adding a prime. The orbit graph is the same as the graph (\ref{eq:gr1}) and, since the action is equalized, by Corollary \ref{cor:sum_of_orbits}, we have the following numerical equivalences of cycles: 
\begin{equation} C'_{3} \equiv A'_{1}+B'_{2} \equiv A'_{1}+C'_{1}+A'_{2} \equiv A'_{2}+B'_{1}. \label{eq:numeq}\end{equation}

The closure of the orbits in $X'$ are the images of the $1$-cycles $C'_{3}$, $B'_{2}$, $A'_{2}$ and
$C'_{1}$ which, by formula (\ref{eq:numeq}), are all numerically equivalent, since $A'_{1}$ and $B'_{1}$ are contracted by $\varphi'$. In particular, the induced $H$-action on $X'$ is a bordism of rank zero. Denoting again by $Y_-'$ and $Y_+'$ the images via $\varphi'$  of  $Y_-'$ and $Y_+'$, 
we can use Lemma \ref{lem:rank0bordism}
to get that $X'$ is a $H$-equivariant $\PP^1$-bundle over ${Y_-'}$. 

Finally, since in a similar way we get that $X'$ is  a $\PP^1$-bundle over ${Y_+'}$, we conclude that  ${Y_-'}$ is isomorphic to ${Y_+'}$, by means of an isomorphism that identifies the sink and the source of the $1$-dimensional orbits of the induced $H$-action. Hence, by the way in which $\psi$ has been defined, we get the commutativity of the diagram in the statement.
\end{proof}

Identifying $Y_-^\flat$ and $Y_+^\flat$ with $\P^{n-1}$, the birational map $\psi$ is given by a linear system whose base scheme is supported on $Z_-^1$. Since we have shown that $\psi$ is resolved by a single blowup along $Z_-^1$, it follows that the base scheme of the system is $Z_-^1$ with the reduced structure. A similar statement holds for $\psi^{-1}$, so  we may conclude that:

\begin{corollary}\label{cor:specialCremona}
The birational map $\psi:Y_-^\flat\dashrightarrow Y_+^\flat$ and its inverse are special Cremona transformations.
\end{corollary}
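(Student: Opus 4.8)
The plan is to deduce this directly from Proposition~\ref{prop:birBW3} and Lemma~\ref{lem:BW3_exc_loc}. First I would fix the identifications $Y_-^\flat\simeq Y_+^\flat\simeq\P^{n-1}$, where $n=3m=\dim X$; these are available because in Theorem~\ref{thm:bw3}~$(3)$ the sink and the source are isolated points, so $Y_\pm^\flat$ are the projective spaces occurring as exceptional divisors of the blowup $\varphi\colon X_\flat\to X$. Under these identifications $\psi$ is a birational self-map of $\P^{n-1}$, hence a Cremona transformation in the sense of Definition~\ref{def:Cremona}; I would write $V\subseteq\HH^0(\P^{n-1},\cO(d))$ for the linear system (without fixed components) that defines it, $\cI_B\subset\cO_{\P^{n-1}}$ for the ideal generated by $V$, and $B:=V(\cI_B)$ for its base scheme. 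By Definition~\ref{def:Cremona} it suffices to show that $B$, and the analogous base scheme of $\psi^{-1}$, is smooth and connected.

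The set-theoretic part is immediate: by Lemma~\ref{lem:BW3_exc_loc} the indeterminacy locus of $\psi$ is exactly $Z_-^1\simeq Y_1$, and $Y_1$, being a fixed point component of an $H$-action, is smooth and irreducible by \cite[Main Theorem]{IVERSEN}; hence $B_{\red}=Z_-^1$ is smooth and connected. It remains to check that $B$ carries no nonreduced or embedded structure. For this I would invoke Proposition~\ref{prop:birBW3}: the single blowup $\pi\colon Y'_-\to Y_-^\flat$ along $Z_-^1$ resolves $\psi$, i.e. $\psi\circ\pi$ is a morphism, with irreducible exceptional divisor $E$ (irreducible because $Z_-^1$ is smooth and connected). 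Since $\psi\circ\pi$ is a morphism and $\psi$ is already defined away from $Z_-^1$, the scheme-theoretic base locus of the pulled-back system $\pi^*V$ is a Cartier divisor supported on $E$, so $\cI_B\cdot\cO_{Y'_-}=\cO_{Y'_-}(-aE)$ for some $a\ge1$; combining this with $\cI_{Z_-^1}\cdot\cO_{Y'_-}=\cO_{Y'_-}(-E)$ and with the fact that $B_{\red}=Z_-^1$ is a smooth center, one concludes that $B=Z_-^1$ with its reduced structure. This last implication — that a single blowup along a smooth reduced center cannot principalize a strictly larger base ideal supported on that center — is the one point I expect to require care; everything else is formal.

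Finally, the inverse transformation is handled symmetrically: by Lemma~\ref{lem:BW3_exc_loc} the indeterminacy locus of $\psi^{-1}$ is $Z_+^2\simeq Y_2$, again smooth and irreducible, and by Proposition~\ref{prop:birBW3} — through the isomorphism $Y'_-\simeq Y'_+$, with $Y'_+$ the blowup of $Y_+^\flat$ along $Z_+^2$ — the map $\psi^{-1}$ is resolved by the single blowup of $Y_+^\flat$ along $Z_+^2$. The same argument then shows that the base scheme of $\psi^{-1}$ is $Z_+^2$ with reduced structure, smooth and connected. Hence both $\psi$ and $\psi^{-1}$ are special Cremona transformations.
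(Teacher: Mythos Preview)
Your proposal is correct and follows essentially the same route as the paper: identify $Y_\pm^\flat$ with $\P^{n-1}$, use Lemma~\ref{lem:BW3_exc_loc} to pin down the set-theoretic base locus as $Z_-^1$ (resp.\ $Z_+^2$), and then invoke Proposition~\ref{prop:birBW3} to say that a single smooth blowup along this locus resolves $\psi$ (resp.\ $\psi^{-1}$), forcing the base scheme to be reduced. The paper's argument is in fact the short paragraph immediately preceding the corollary, and it is no more detailed than yours on the reducedness step you flagged; your caution there is well placed, since blowing up $\cI_{Z}$ does principalize $\cI_Z^a$ for any $a\ge1$, so the implication as you phrased it is not literally true without further input (e.g.\ the computation $H_2=2H_1-E$ carried out just afterwards in the proof of Theorem~\ref{thm:bandwidth3->Cremona}).
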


Let us recall the following Theorem, due to Ein and Shepherd-Barron (cf. \cite[Theorem 2.6]{ESB}), and the classification of Severi varieties proved by Zak (cf. \cite[Theorem 4.7]{Z}):

\begin{theorem}\label{thm:ESB}
Let $F:\P^{N}\dashrightarrow\P^N$ be a birational transformation, satisfying that its base locus scheme $Z$ is a connected nonempty smooth subvariety. Assume that $F$ and its inverse are both defined by linear systems of quadrics. Then $Z\subset \P^N$ is a projectively equivalent to a Severi variety:
\begin{enumerate}
\item $Z 
\simeq v_2(\P^{2})\subset \P^{5}$
 ($\dim(Z)=2$, Veronese surface);
\item $Z 
\simeq \P^{2}\times \P^{2}\subset \P^{8}$ ($\dim(Z)=4$, Segre variety);
\item $Z 
\simeq\G(1,5)\subset \P^{14}$ ($\dim(Z)=8$, Grassmann variety); 
\item $Z \simeq \DE_6(1)\subset \P^{26}$ ($\dim(Z)=16$, Cartan variety).
\end{enumerate}
\end{theorem}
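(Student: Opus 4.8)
The statement is recalled as the combination of two external results, and the plan is to sketch each: the structural theorem of Ein--Shepherd-Barron \cite{ESB} on quadro-quadric special Cremona transformations, and Zak's classification of Severi varieties \cite{Z}. For the first, let $Z=\Bs(F)\subset\P^N$ be the (smooth, connected, and, by standard considerations, nondegenerate) base locus, of codimension $c$, and let $\sigma\colon W=\Bl_Z\P^N\to\P^N$ be the blowup, with exceptional divisor $E$, a $\P^{c-1}$-bundle over $Z$. Since the defining linear system of $F$ consists of quadrics with $Z$ as reduced base scheme and $F^{-1}$ is again given by quadrics, the strict transform of that system is base-point free on $W$ and realizes a second contraction $\sigma'\colon W\to\P^N$ which is the blowup $\Bl_{Z'}\P^N$ along the smooth connected base locus $Z'=\Bs(F^{-1})$. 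Writing $\Pic(W)=\Z\,\sigma^*\cO(1)\oplus\Z E$, one gets $\sigma'^*\cO(1)=\sigma^*\cO(2)-E$ and, symmetrically, $\sigma^*\cO(1)=\sigma'^*\cO(2)-E'$.

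Substituting these relations into the two expressions $K_W=\sigma^*K_{\P^N}+(c-1)E=\sigma'^*K_{\P^N}+(c'-1)E'$ and comparing coefficients in $\Pic(W)$ gives, after an elementary computation, $\dim Z=\dim Z'=:n$ and $N=\frac{3n+4}{2}$ (in particular $n$ is even, $c=\frac{n+4}{2}$). Next I would check $\Sec(Z)\subsetneq\P^N$: otherwise a general point lies on a secant line $\ell$ with $\ell\cap Z$ consisting of two points and $\ell\not\subset Z$, and every quadric of the system restricts on $\ell\cong\P^1$ to a section of $\cO(2)$ vanishing at those two points; all such restrictions are then proportional, so $F$ collapses $\ell$ to a point, contradicting birationality. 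Hence $Z^n\subset\P^{(3n+4)/2}$ is a smooth nondegenerate variety whose secant variety is not the whole space, i.e. a \emph{Severi variety}. Invoking Zak's theorem then forces $Z$ to be projectively equivalent to one of $v_2(\P^2)\subset\P^5$, the Segre variety $\P^2\times\P^2\subset\P^8$, the Grassmannian $\G(1,5)\subset\P^{14}$, or the Cartan variety $\DE_6(1)\subset\P^{26}$, in dimensions $n=2,4,8,16$; this proves the statement.

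The main obstacle is Zak's classification of Severi varieties. Its proof is substantial, resting on the fine geometry of tangent and secant varieties --- Zak's theorems on tangencies and on linear normality, the analysis of the secant defect (which turns out to equal $1$) and of the second fundamental form, and ultimately the identification of the four varieties as the projectivized rank-one loci in the Jordan algebras of Hermitian $3\times 3$ matrices over the four normed composition algebras. In the present context this is treated as an input; the genuinely new work is the reduction carried out in Corollary \ref{cor:specialCremona}, which shows that the map $\psi$ attached to a bandwidth three action and its inverse satisfy the hypotheses of the theorem, so that with $N=3m-1$ and $\dim Z=2m-2$ (Lemma \ref{lem:BW3_exc_loc}) the classification yields $m\in\{2,3,5,9\}$.
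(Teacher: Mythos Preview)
The paper does not prove this theorem: it is \emph{recalled} as a black box, with the attribution ``due to Ein and Shepherd-Barron (cf.~\cite[Theorem~2.6]{ESB}), and the classification of Severi varieties proved by Zak (cf.~\cite[Theorem~4.7]{Z})'', and no argument is supplied. Your proposal is therefore not competing with any proof in the paper; it is an outline of the content of those two references.

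That outline is correct. The Picard-group computation on the common blowup $W$ is right: from $H'=2H-E$, $H=2H'-E'$ and the two expressions for $K_W$ one gets $c=c'$ and $3c=N+4$, hence $N=\tfrac{3n+4}{2}$ with $n=\dim Z$. The secant argument is the standard one, and together with the dimension equality places $Z$ exactly on the Severi line of Zak's theorem on linear normality, so Zak's classification applies. One small point of phrasing: ``secant variety is not the whole space'' is not by itself the definition of a Severi variety --- one also needs $N=\tfrac{3n}{2}+2$, which you have just established; it would be cleaner to say this explicitly. Also, the fact that the resolution $W$ of $F$ is simultaneously $\Bl_Z\P^N$ and $\Bl_{Z'}\P^N$ (rather than requiring further blowups) is exactly where the hypothesis that both base schemes are smooth is used; you are implicitly invoking this, and it is worth flagging.

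Your final paragraph correctly identifies Zak's theorem as the deep input and describes how the paper uses Theorem~\ref{thm:ESB}; that is exactly the role it plays there.
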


We may now conclude the proof of Theorem \ref{thm:bandwidth3->Cremona}. Let us identify $Y_-^\flat$ and $Y_+^\flat$ with $\P^{n-1}$, and denote by $Y:=Y'_-\simeq Y'_+$ the common resolution of  $\psi$ and $\psi^{-1}$, obtained by blowing up the exceptional loci $Z_-^1, Z_+^2$:
$$\xymatrix{&Y\ar[ld]_{g}\ar[rd]^{f}&\\\P^{n-1}&&\P^{n-1}}
$$
Denote by $H_1$ (resp. $H_2$) the line bundle $g^*\cO_{\P^{n-1}}(1)$ (resp. $f^*\cO_{\P^{n-1}}(1)$) and by $E_1$ the exceptional divisor of $g$.

\begin{proof}[Proof of Theorem \ref{thm:bandwidth3->Cremona}]
By Corollary  \ref{cor:specialCremona} the map $\psi$ is a special Cremona transformation.
Denote by $(m_1,m_2)$ its type, i.e., $m_1$ and $m_2$ are the degrees of the linear systems defining $\psi$ and $\psi^{-1}$, respectively. 
Then (see \cite[Equation~(2.0.3)]{ESB}) $H_2 \sim m_1H_1-E_1$. In particular the strict transform $\tilde \ell$ of a $m_1$-secant line to $Z_-^1$ is contracted by $f$. Since $f$ is an elementary contraction and $H_1 \cdot \tilde \ell=1$
the numerical class of $\tilde \ell$ generates the extremal ray contracted by $f$, which has length equal to the codimension of $Z_+^2\simeq Y_2$ (cf. Lemma \ref{lem:BW3_exc_loc}) in $\P^{n-1}$ minus one, i.e., to $n-1-(2m-2)-1
=m$.
On the other hand, computing the anticanonical bundle of $Y$ we get 
$$-K_Y \cdot \tilde \ell = nH_1\cdot \tilde \ell -mE_1 \cdot \tilde \ell = m(3-m_1),$$
which implies that $m_1=2$. Repeating the argument with $\psi^{-1}$ we get also that $m_2=2$, and we conclude by Theorem \ref{thm:ESB}.
\end{proof}

\subsection{Bandwidth three varieties with Severi inner components}

In this section we will complete the classification of the polarized pairs admitting an equalized $H$-action of bandwidth three with isolated extremal fixed points (see Theorem  \ref{thm:bw3}). We will start by proving the following:

\begin{proposition}\label{prop:bw3class}
Let $(X,L)$ be a polarized pair as in Theorem \ref{thm:bw3} $(3)$. Set $n:=3m$, $m\geq 1$. Then $m=2,3,5,9$ and, for each $m$ the variety $X$ is unique.
\end{proposition}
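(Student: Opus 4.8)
The plan is to reduce Proposition \ref{prop:bw3class} to the analysis of the special quadratic Cremona transformation $\psi\colon Y_-^\flat \dashrightarrow Y_+^\flat$ already obtained in Theorem \ref{thm:bandwidth3->Cremona}, combined with a reconstruction argument. First I would invoke Theorem \ref{thm:bandwidth3->Cremona}: the birational map $\psi$ and its inverse are special Cremona transformations defined by linear systems of quadrics, whose base schemes $Z_-^1\subset \P^{n-1}$ and $Z_+^2\subset \P^{n-1}$ are smooth connected subvarieties of dimension $2m-2$. By the Ein--Shepherd-Barron classification (Theorem \ref{thm:ESB}) together with Zak's classification of Severi varieties (Theorem \ref{thm:Z}, i.e. \cite[Theorem~4.7]{Z}), this forces $Z_-^1$ to be projectively equivalent to one of the four Severi varieties, hence $2m-2 \in \{2,4,8,16\}$, i.e. $m\in\{2,3,5,9\}$ and $n\in\{6,9,15,27\}$. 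This already gives the first assertion, and simultaneously pins down $Z_-^1$ (and by the same token $Z_+^2$) up to projective equivalence: the Veronese surface $v_2(\P^2)\subset\P^5$, the Segre variety $\P^2\times\P^2\subset\P^8$, the Grassmannian $\G(1,5)\subset\P^{14}$, or the Cartan variety $\DE_6(1)\subset\P^{26}$.

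The heart of the argument is the uniqueness statement for each $m$. Here I would use the birational picture assembled in Proposition \ref{prop:birBW3}: blowing up $X_\flat$ along $\cU_1 = \overline{X^+_\flat(Y_1)}$ and $\cU_2 = \overline{X^-_\flat(Y_2)}$ yields a variety $X'_\flat$ with a birational contraction $\varphi'\colon X'_\flat \to X'$ onto a $\P^1$-bundle over $Y' := Y'_- \simeq Y'_+$, where $Y'_\pm$ is the blowup of $Y_\pm^\flat \simeq \P^{n-1}$ along $Z_\mp$. The point is that every piece of this diagram is now determined: $Y'$ is the blowup of $\P^{n-1}$ along a fixed (Severi) subvariety, the $\P^1$-bundle $X'$ over $Y'$ is determined (it must be the Grothendieck projectivization of the pushforward of $L$, as in the identifications $\cU_i \simeq \P(\cO_{Y_i}\oplus\cO_{Y_i}(L))$ in Lemma \ref{lem:BW3_exc_loc}, and the two blowdown structures $X' \to Y'_\pm$ correspond to the two natural sections), and finally $X_\flat$ is recovered from $X'_\flat$ by blowing down $\cU_1^\flat$, $\cU_2^\flat$ onto $\cU_1$, $\cU_2$ and then $X$ is recovered from $X_\flat$ by blowing down $Y_-^\flat$, $Y_+^\flat$ onto the isolated points $Y_\pm$. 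Running this chain of blowups and blowdowns backwards, starting from the fixed data (the projective equivalence class of the Severi variety, which determines $Y'$ and $X'$ uniquely), reconstructs $(X,L)$ up to isomorphism. Thus for each admissible $m$ the variety $X$ is unique.

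Concretely, for each $m$ the candidate is exhibited by the rational homogeneous varieties $\LG(2,5)\simeq\DC_3(3)$ ($m=2$), $\G(2,5)\simeq\DA_5(3)$ ($m=3$), $S^{15}\simeq\DD_6(6)$ ($m=5$), and $\DE_7(7)$ ($m=9$), each of which carries such a bandwidth three $H$-action by the Severi/Cremona description outlined in Example \ref{ex:bw3_C33} and the discussion following it (the family of lines through a general point of $X$ being a Severi variety). Since the reconstruction procedure shows that $(X,L)$ is determined by $Z_-^1$, and $Z_-^1$ lies in a single projective equivalence class for each $m$, these four varieties exhaust the possibilities, and uniqueness follows.

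The main obstacle I expect is making the reconstruction genuinely rigorous: one must check that all the intermediate blowup/blowdown centers are smooth and that the relevant contractions exist (this is where Nakano's criterion, as used in Proposition \ref{prop:birBW3}, and the explicit normal bundle computations of Corollary \ref{cor:splittingTangent} and Lemma \ref{lem:BW3_exc_loc} are needed), and — more delicately — that the $\P^1$-bundle $X'$ over $Y'$ is uniquely determined by the Severi variety, not merely up to twist by a line bundle; the unisecant divisor $L$ restricted to the orbits (degree computations via Lemma \ref{lem:AMvsFM}, encoded in the orbit graph \eqref{eq:gr1}) is what rigidifies this choice, and one must verify that the two projective bundle structures $X' \to Y'_\pm$ forced by the $H$-action pin down $X'$ completely.
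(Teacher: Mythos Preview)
Your overall strategy is the same as the paper's: reduce to the birational factorization of Proposition~\ref{prop:birBW3} and argue that the $\PP^1$-bundle $X'$ over $Y'$ (the blowup of $\PP^{n-1}$ along the Severi variety) is uniquely determined, so that running the blowup/blowdown sequence backwards reconstructs $X$. The first assertion ($m\in\{2,3,5,9\}$) is indeed immediate from Theorem~\ref{thm:bandwidth3->Cremona} and Theorem~\ref{thm:ESB}, exactly as you say.

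However, the step you yourself flag as the ``main obstacle'' is a genuine gap, and your proposed way of handling it is not correct. You write that $X'$ ``must be the Grothendieck projectivization of the pushforward of $L$, as in the identifications $\cU_i \simeq \PP(\cO_{Y_i}\oplus\cO_{Y_i}(L))$''. But those identifications concern the $\PP^1$-bundles $\cU_i$ over $Y_i$, not $X'$ over $Y'$; there is no direct analogy, and $L$ does not descend to a natural line bundle on $Y'$ in the way you suggest. What one actually has is $X' = \PP_{Y'}(\cO \oplus N_{Y'_-/X'})$, and the whole question is to identify the line bundle $N_{Y'_-/X'}$ on $Y'$ in terms of the Cremona data. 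The paper does this by an explicit intersection-theoretic computation: using $-K_Y = m(H_1+H_2)$ (equation~(\ref{eq:canY})) and the canonical bundle formulas for the blowups $\alpha$ and $\varphi'$, one intersects $K_{Y'_-} - K_{X'}|_{Y'_-}$ with the two extremal curve classes of $Y'\simeq Y$ (a fiber of the exceptional divisor and the strict transform of a secant line), obtaining the values $-1$ and $1$ respectively. This yields $N_{Y'_-/X'} \simeq \cO_Y(H_1-H_2)$, which is visibly determined by the resolution of $\psi$. Without this computation (or an equivalent one), the uniqueness of $X'$, and hence of $X$, is not established.
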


We will use here the notation introduced in subsection \ref{ssec:Cremona_Severi}. In particular, $Y_-$ and $Y_+$ will be the extremal fixed points of the $H$-action, and $Y_1$, $Y_2$ the correspondent inner fixed point components, which are isomorphic Severi varieties by Theorem \ref{thm:bandwidth3->Cremona}. From the same theorem, we know that $\dim X=3m$, $m=2,3,5,9$, and the exceptional divisors $Y_-^\flat,Y_+^\flat$ in the blowup $ X_\flat$ of $X$ along $Y_-,Y_+$ are isomorphic to $\P^{3m-1}$. The birational maps $\psi:Y_-^\flat\dashrightarrow Y_+^\flat$ and $\psi^{-1}$ can be resolved by blowing up the base schemes $Z_-^1\subset Y_-^\flat$, and $Z_+^2 \subset Y_+^\flat$, which by Lemma \ref{lem:BW3_exc_loc} are isomorphic to  $Y_1,Y_2$, that, as we said above, are Severi varieties of dimension $2m-2$. The common resolution is denoted by $Y$. 

Let us denote by $H_1$ and $H_2$ the pullbacks to $Y$ of the hyperplane line bundles via the two blowups. Since the two contractions of $Y$ are blowups of smooth varieties of dimension $2m-2$ we have 
\begin{equation}\label{eq:canY}
-K_Y = m(H_1+H_2).
\end{equation}

\begin{proof}[Proof of Proposition \ref{prop:bw3class}]
In the proof of Proposition \ref{prop:birBW3}  we have shown that $X$ is birationally equivalent to a $\P^1$-bundle $X'$, by means of a precise sequence of smooth blowups and blowdowns (see Figure \ref{fig:bw3}):
$$
X\stackrel{\varphi}{\longleftarrow}X_\flat \stackrel{\alpha}{\longleftarrow}X'_\flat\stackrel{\varphi'}{\longrightarrow} X'.
$$
The $\PP^1$-bundle $X'$ constructed in Proposition \ref{prop:birBW3} can be described as the projectivization of the rank two vector bundle:
$$\cE:= \cO\oplus N_{Y_-'/X'}\simeq \cO\oplus \cO_{Y_-'}(Y_-') \simeq  \cO \oplus \cO(K_{Y_-'}-K_{X'}|_{Y_-'}).$$
The extremal rays of $Y_-' \simeq Y$ are generated by the class $[C_\alpha]$ of a minimal curve contracted by $\alpha$ and by the class $[C_s]$ of a strict transform of a (bi)secant line of $\cU_1 \cap Y_-^\flat$ in $Y_-^\flat$. 
By formula (\ref{eq:canY}) we have $-K_{Y'_0} \cdot [C_\alpha]=-K_{Y'_0} \cdot [C_s]=m$. On the other hand, since
$$-K_{X'_\flat} = -\alpha^*K_{X_\flat}-m(\cU_1^\flat+\cU^\flat_2)=-\varphi'^*K_{X'} -(\cU_1^\flat+\cU^\flat_2)$$
we can compute that   $-K_{X'} \cdot [C_\alpha]=m-1$, $-K_{X'} \cdot [C_s]=m+1$. It follows that   $K_{Y_-'}-K_{X'}|_{Y_-'}\simeq H_1-H_2$.

This shows that $X'$, and consequently $X$, is uniquely determined by the resolution of the Cremona transformation $\psi$, for a choice of the fixed components
$Y_1$ and $Y_2$ among the varieties listed in Theorem \ref{thm:ESB}.
\end{proof}

\begin{theorem}\label{thm:BW3end}
Let $(X,L)$ be a polarized pair as in Theorem \ref{thm:bw3} $(3)$. Then $X$ is one of the following rational homogeneous varieties:
$$\LG(2,5)\simeq\DC_3(3),\,\,\, \G(2,5)\simeq \DA_5(3),\,\,\, S^{15}\simeq \DD_6(6),\,\,\, \DE_7(7),$$
and $L$ is the ample generator of the Picard group.
\end{theorem}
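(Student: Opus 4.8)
The plan is to leverage Proposition \ref{prop:bw3class}, which already tells us that for each admissible $m\in\{2,3,5,9\}$ the variety $X$ is \emph{unique}. Thus the proof of Theorem \ref{thm:BW3end} reduces to exhibiting, for each of these four values of $m$, \emph{one} rational homogeneous variety $X$ carrying an equalized $\C^*$-action of bandwidth three with isolated sink and source whose inner fixed components are Severi varieties of dimension $2m-2$; uniqueness then forces $X$ to be that homogeneous variety. The four candidates are precisely the varieties whose variety of lines through a point is the corresponding Severi variety: for $m=2$ the Veronese surface $v_2(\P^2)$ and $X=\LG(2,5)\simeq\DC_3(3)$; for $m=3$ the Segre $\P^2\times\P^2$ and $X=\G(2,5)\simeq\DA_5(3)$; for $m=5$ the Grassmannian $\G(1,5)$ and $X=S^{15}\simeq\DD_6(6)$; for $m=9$ the Cartan variety $\DE_6(1)$ and $X=\DE_7(7)$.

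The first step is to make the case $m=2$ fully explicit: Example \ref{ex:bw3_C33} already describes the bandwidth-three $\C^*$-action on $\DC_3(3)$ obtained by projecting the weights of a maximal torus onto a diagonal of the weight cube, and identifies the two inner fixed components as Veronese surfaces $v_2(\P^2)$ parametrizing the lines through sink and source. One checks there that $-K_X=\tfrac{2n}{3}L$ with $n=6$, that the action is equalized, and that the extremal fixed components are isolated points, so $(X,L)$ satisfies Theorem \ref{thm:bw3}~(3). The remaining step is to produce the analogous construction for $\DA_5(3)=\G(2,5)$, $\DD_6(6)=S^{15}$ and $\DE_7(7)$: in each case choose a maximal torus $H_{\max}$ of the corresponding simple group, note that the fixed points of $H_{\max}$ on $X$ (which is a minimal orbit in $\P(W)$ for a fundamental representation $W$) correspond to the extreme weights, observe that these form the vertices of a combinatorial cube (this is the classical fact that the incidence variety of lines through a point is a Severi variety, cf.\ \cite[III]{Z}), and project onto the main diagonal to obtain a map of lattices $\Mo(H_{\max})\to\Z$ sending the vertices to three consecutive integers. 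This yields a one-dimensional subtorus $H\subset H_{\max}$ with four fixed components, the two extremal ones isolated points, the two inner ones the families of lines through sink and source, which are exactly the Severi varieties of dimension $2m-2$. That the action is equalized follows from the homogeneity and the explicit weights; that $L=\cO_{\P(W)}(1)|_X$ is the ample generator of $\Pic(X)$ is standard for these minimal homogeneous embeddings (in all four cases $X$ has Picard number one).

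Having exhibited such a pair $(X_{\mathrm{hom}},L)$ for each $m$, Proposition \ref{prop:bw3class} gives that \emph{any} polarized pair $(X,L)$ as in Theorem \ref{thm:bw3}~(3) with that value of $m$ is isomorphic to $X_{\mathrm{hom}}$, and since $L$ is in each case forced to be the ample generator of $\Pic(X)$ (by $-K_X=2mL$ together with the uniqueness of the ample generator when $\rho_X=1$), the theorem follows. For the isomorphisms $\LG(2,5)\simeq\DC_3(3)$, $\G(2,5)\simeq\DA_5(3)$ and $S^{15}\simeq\DD_6(6)$ one only needs the standard dictionary between classical and Dynkin-diagram notation recalled in Section \ref{ssec:notRH}.

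I expect the main obstacle to be the bookkeeping of weights in the three larger cases $\DA_5(3)$, $\DD_6(6)$, $\DE_7(7)$: one must verify that the extreme weights of the relevant fundamental representation really are the vertices of a cube, that the diagonal projection produces exactly bandwidth three (rather than more, which would require a different projection), and that the two inner fixed components are globally the advertised Severi varieties with the correct normal bundle data so that Theorem \ref{thm:bandwidth3->Cremona} and Proposition \ref{prop:bw3class} apply. This is essentially the content of Zak's theory of Severi varieties combined with the representation theory of the exceptional group $\DE_7$, and while conceptually clean it requires care; alternatively, one can bypass the explicit weight computation by invoking the known description of the varieties of lines of these four homogeneous spaces (the ``Severi'' property) directly, which is the route I would take to keep the argument short.
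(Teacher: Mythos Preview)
Your approach is essentially the same as the paper's: reduce via Proposition \ref{prop:bw3class} to exhibiting, for each $m\in\{2,3,5,9\}$, one homogeneous example with the required $\C^*$-action, and construct that action as a one-parameter subgroup of a maximal torus. The paper specifies this subgroup concretely by $\mu_i(\lambda_j)=\delta_{ij}$ (with $i=3,3,6,7$ respectively) and then simply asserts that the verification of equalization, bandwidth three, and isolated sink/source is a straightforward (e.g.\ SageMath) computation on the weights $w\lambda_i$ and the tangent weights $w(-\beta)$.

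Two minor points where your write-up is looser than it needs to be. First, you do not need to verify directly that the inner components are Severi varieties or have the right normal bundles: once the pair satisfies the hypotheses of Theorem \ref{thm:bw3}~(3), Theorem \ref{thm:bandwidth3->Cremona} already forces the inner components to be Severi, and Proposition \ref{prop:bw3class} then gives uniqueness; so the only checks required on the homogeneous side are those listed in Theorem \ref{thm:bw3}~(3). Second, the ``cube of extreme weights with diagonal projection'' picture is literally correct for $\DC_3(3)$ (Example \ref{ex:bw3_C33}) but is not the shape of the weight polytope for $\DA_5(3)$, $\DD_6(6)$, $\DE_7(7)$; the paper's formulation via the cocharacter dual to $\lambda_i$ avoids this and is what actually yields bandwidth three in all four cases.
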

\begin{proof}
In view of Theorem \ref{thm:bw3} and Proposition \ref{prop:bw3class}, it is enough to show that the varieties listed in the statement admit an equalized $H$-action of bandwidth three with isolated extremal fixed points. 

Given one of the varieties above, we write it as $G/P$, where $G$ is a semisimple group of type $\DC_3,\DA_5,\DD_6$ or $\DE_7$, and use the notation introduced in subsection \ref{ssec:notRH}. We consider the action of a maximal torus $T\subset P$ on $G/P$, whose fixed points are known to be the elements of the form $wP$, $w\in W$ (see \cite[Lemma 3.12]{CARRELL}). We can see $G/P$ as the minimal orbit of a projective representation $\P(H^0(G/P,L_i))$, where $L_i$ is the homogeneous ample line bundle associated with the fundamental weight $\lambda_i$ ($i=3,3,6,7$, respectively), cf. \cite[Claim 23.52]{FH}, and then the fixed points correspond to the $T$-invariant subspaces $H^0(G/P,L_i)_{w\lambda_i}$ associated with the weights $w\lambda_i\in\Mo(T)$. On the other hand, we have a weight decomposition of the action of $T$ on the tangent bundle at each of these points: $$T_{G/P,wP}=\bigoplus_{\beta\in\Phi^+(D\setminus \{i\})}\fg_{w(-\beta)}.$$

Now we consider, in each case, the $1$-parameter subgroup $\mu_i:H=\C^*\to T$ given by $\mu_i(\lambda_j)=\delta_{ij}$, for every $j$. It is then a straightforward computation (that one can  perform with SageMath, for instance) to check that the induced $H$-action satisfies the required properties. 
\end{proof}


\bibliographystyle{plain}
\bibliography{bibliomin}
\end{document}